\documentclass[12pt, a4]{article}
\usepackage[left=1in,right=1in,bottom=1in,top=1.25in]{geometry}
\usepackage{amsfonts}
\usepackage{amsmath, amssymb}
\usepackage{graphicx}
\usepackage[font=small,labelfont=bf]{caption}
\usepackage{epstopdf}
\usepackage[pdfpagelabels,hyperindex]{hyperref}
\usepackage{xcolor}
\usepackage{amsthm}
\usepackage{float}
\usepackage{pgfplots}
\usepackage{listings}
\usepackage{longtable}
\usepackage{mathrsfs}
\usepackage{hyperref}
\hypersetup{
    colorlinks=true,
    citecolor = red,
    linkcolor=blue,
    filecolor=magenta,
    urlcolor=red,
    pdftitle={Overleaf Example},
    pdfpagemode=FullScreen,
    }
\providecommand{\norm}[1]{\lVert#1\rVert}
\hypersetup{
pdftitle={THE FRACTIONAL STEFAN PROBLEM: GLOBAL REGULARITY OF THE SELFSIMILAR SOLUTION},
pdfsubject={Mathematics, Number Theory, Combinatorics},
pdfauthor={MARCOS LLORCA AND JUAN LUIS VÁZQUEZ},
pdfkeywords={Finite differences}
}

\sffamily

\newtheorem{thm}{Theorem}[section]
\newtheorem{cor}[thm]{Corollary}
\newtheorem{prop}[thm]{Proposition}
\newtheorem{lem}{Lemma}[section]
\newtheorem{rem}{Remark}[section]
\newtheorem{defn}{Definition}[section]
\numberwithin{equation}{section} 

\newcommand{\ve}{\varepsilon}

\definecolor{energy}{RGB}{114,0,172}
\definecolor{freq}{RGB}{45,177,93}
\definecolor{spin}{RGB}{251,0,29}
\definecolor{signal}{RGB}{203,23,206}
\definecolor{circle}{RGB}{217,86,16}
\definecolor{average}{RGB}{203,23,206}

\colorlet{shadecolor}{gray!20}
\pgfplotsset{compat=1.9}

\usepgflibrary{fpu}

\definecolor{darkblue}{rgb}{0.05, .05, .65}
\definecolor{darkgreen}{rgb}{0.1, .65, .1}
\definecolor{darkred}{rgb}{0.8,0,0}

\definecolor{ao}{rgb}{0.0, 0.0, 1.0}

\newcommand{\nc}{\normalcolor}


\begin{document}

\title{The Fractional Stefan Problem: Global Regularity \\ of the Bounded \nc Selfsimilar Solution}

\author{Marcos Llorca and Juan Luis V\'azquez}

\date{\today}
\maketitle

\begin{abstract}
We study the regularity of the bounded self-similar solution to the one-phase Stefan problem with fractional diffusion posed on the whole line. In terms of the enthalpy $h(x,t)$, the evolution problem reads
\[
\begin{cases}
\partial_t h + (-\Delta)^s \Phi(h) = 0 & \text{in } \mathbb{R}^n \times (0,T),\\[2mm]
h(\cdot,0) = h_0 & \text{in } \mathbb{R}^n ,
\end{cases}
\]
where $u = \Phi(h) := (h-L)_+ = \max\{h-L,0\}$ denotes the temperature, $L>0$ is the latent heat, and $s \in (0,1)$. We prove that the regularity of the self-similar solution depends on $s$, with a critical threshold at $s = 1/2$. More precisely, in the subcritical case $0 < s < 1/2$, the self-similar solution exhibits at least $C^{1,\alpha}$ regularity, with Hölder exponent $\alpha >0$. In contrast, we show that the enthalpy of the self-similar solution is not Lipschitz continuous at the free boundary in the critical case $s=1/2$, as well as in the supercritical case $1/2 < s < 1$. Additional results  are also established concerning the lateral regularity at the free boundary and the asymptotic behavior of the solution profile as $x \to \pm\infty$\nc.
\end{abstract}

\

\tableofcontents

\vskip 1cm

\noindent {\bf Keywords: } Fractional Stefan Problem, self-similar solutions, regularity, asymptotic behaviour.

\

\noindent {\bf 2020 Mathematics Subject Classification.}
     35K55,     35K65,   35R11,    35C06.    35B40.

\medskip

\parskip 4pt
\parindent 10pt

\newpage

\section{Introduction and main results}

\subsection{The Problem}
In this paper we study the regularity of the  bounded selfsimilar solution  of the one-phase Stefan problem with fractional diffusion posed in  the line $\mathbb{R}$. The Fractional Stefan Problem we consider has the form:
\begin{equation}\tag{FSP}
\label{stefan}
\left\{
\begin{aligned}
\partial_t h + (-\Delta)^s \Phi (h) &= 0 \hspace{7mm}\textup{in} \hspace{7mm} \mathbb{R} \times (0,T),\\
h( \cdot,0) &= h_0 \hspace{5.3mm} \textup{on} \hspace{6.2mm} \mathbb{R}.
\end{aligned}
\right.
\end{equation}
In terms of the physical application we have in mind, the nonlinearity is given by the formula
$\Phi(h) = (h -L)_{+} = \textup{max}\{ h-L,0\}$, and the function $h = h(x,t)$ denotes the enthalpy while
$$
u(x,t):= \Phi(h(x,y)) = (h(x,t) -L)_{+}
$$
denotes  the temperature. The constant $L> 0$  represents the \textit{latent heat} at the phase change front and $s\in(0,1)$ is the fractional exponent. We take $0<T \le \infty$ and $h_0 \in L^\infty(\mathbb{R})$. The operator $(-\Delta)^s$ denotes the \textit{fractional Laplacian} acting on the space variables, with parameter $s\in(0,1)$, defined by
\[
    (-\Delta )^s u(x) = c_{n,s} \,\textup{p.v.}\int_{\mathbb{R}} \frac{u(x) - u(y)}{|x-y|^{n+2s}} \hspace{1mm}dy,
 \]
 for some normalizing constant $c_{n,s} > 0$, and p.v. denotes the principal value,  cf. \cite{ADV}\nc.
Without loss of generality in this paper we will set $c_{n,s} =1$ to avoid dealing with unnecessary constants\nc.
We want to study the properties of the special solution that takes on initial data of the form of a step function
 \begin{equation}\label{IC}\tag{IC}
h_0(x):=\left\{
\begin{aligned}
 &L + P_1 \hspace{10mm}\textup{if} \hspace{7mm} x \leq 0, \\
&0 \hspace{19.7mm}\textup{if} \hspace{7mm}x > 0,
\end{aligned}
\right.
\end{equation}
with $P_1 > 0$ a constant. Our study uses as a starting point the results of paper \cite{DEV1}, which establishes the basic theory. Specifically, it proves the existence of a unique self-similar solution of \eqref{stefan}, of the form
 \begin{equation}\label{h.init}
h(x,t) = H(xt^{-\frac{1}{2s}}),
 \end{equation}
where $H$ is bounded, $0<H<L+P_1$, and satisfies the equation
\begin{equation}\label{eq.ssh}
     -\frac{1}{2s}\xi \cdot \nabla H(\xi) + (-\Delta)^s U(\xi) = 0 \qquad \textup{in} \qquad \mathcal{D}'(\mathbb{R}).
 \end{equation}

Let us note that the equation is invariant under the linear scaling $h=A\,\widehat h$ , $u=A\,\widehat u$, $\widehat L=L/A$, for any $A>0$. It follows  that, without loss of generality, the latent heat can be taken as $\widehat L=1$, and then
the equation holds for $\widehat h$ with initial data $\widehat h_0(x) = 1+ (P_1/L)$ for $x<0$. Thus, $\widehat P_1=P_1/L$  is the only essential parameter that matters in the results that follow besides the  fractional exponent $s$.

\smallskip

\noindent{\bf Historical perspective.} We recall that when $s = 1$ we recover the classical Stefan problem, where diffusion is governed by the standard Laplace operator, $\partial_t h =\Delta \Phi (h)$. A rich theory has been developed for this problem over the years, starting with the pioneering work of Lam\'e and Clapeyron (\cite{LC}, 1831) and later Stefan (\cite{Stefan}, 1889). Since then, enormous progress has been achieved by many distinguished specialists. The bounded self-similar solution is then explicit,  due to Neumann \cite{Ne}, 1883. It uses the error function, has a free boundary of the form $X(t)= C\,t^{1/2}$, and has played a role in the theory and the applications of the Stefan model.

Together with the obstacle problem, the classical Stefan problem has served as a fundamental benchmark for studying the mathematics of phase transitions and free boundary problems, lying at the interface of analysis, geometry, and computation. The ensuing literature is vast, both in theory and applications, and references to the main papers and monographs are readily available. Let us simply mention \cite{Ka, Me, Ru}, the last of which contains an extensive historical account. Also noteworthy is the comprehensive bibliography on the Stefan problem and related topics compiled by Tarzia \cite{Ta}, which includes more than 5{,}000 entries.  Over the past half-century, Caffarelli has established fundamental results on the regularity of solutions and free boundaries for the Stefan problem and related models \cite{Ca1, Ca2}, and these techniques continue to be refined today; see, for instance, the work of Figalli et al. \ \cite{FRS}.

\smallskip

\noindent{\bf Preliminaries of our problem.} Here we consider the fractional model  \eqref{stefan} proposed in  \cite{3, 10}. It describes a substance that exists in two phases (typically called ice and water) and undergoes a phase transition at a prescribed temperature, here normalized to $u = 0$. We focus on the \textit{one-phase} Stefan problem, which represents a phase change in which the ice is maintained at zero temperature.
The level $h = L$ represents the so-called \textit{phase-change level} and separates the water region  (namely, the set of points where $u > 0$ or $h > L$)  from the ice region, where $u = 0$ or $h < L$.
The basic theory for this problem was developed by the authors of \cite{DEV1}. Earlier work on the formulation of the model as a boundary heat control problem was carried out in \cite{AtCa}, where the authors established continuity of the temperature. Continuity of the temperature in the classical Stefan problem was proved in the 1980s in \cite{CaEv}.

\smallskip

Since the precise regularity of solutions and free boundaries for this problem is mostly unknown, our goal is to provide both positive and negative results by analysing a relevant class of self-similar solutions with bounded data, which correspond to the Neumann solutions of the classical Stefan Problem. The motivation for this restricted study lies in the prominent role that self-similar solutions usually play in the theory of diffusion equations (and more generally, in many nonlinear elliptic and parabolic equations), where they often serve as indicators of key qualitative features that extend to  broader classes of solutions.

Our study uses as a starting point  paper \cite{DEV1}, which establishes the basic theory. Specifically, it proves the existence of a unique self-similar solution of \eqref{stefan} with initial data \eqref{IC}. It has the selfsimilar form given by formula \eqref{h.init}
with $P_1 > 0$ a constant.  We also put $U=\Phi(H)$ as selfsimilar profile for the temperature. It was  proved  that this selfsimilar $h$ exhibits a phase-change front located at  $ x(t) = \xi_0\,t^{\frac{1}{2s}}$, corresponding to $H(\xi)=L$, $U(\xi)=0$\nc.  It is usually called the \textit{free boundary} and separates the ice region, $x>x(t)$, from the water region, $x<x(t)$. Moreover, the regularity of $H(\xi)$ in the water region $\xi<\xi_0$ is shown to be  $C^{1,\alpha}$ for some $\alpha \in (0,1)$ while the regularity in the ice region $\xi>\xi_0$ is $C^\infty$.
The value of the free boundary constant is always positive, $\xi_0(s)>0$ for all $0<s<1$. See a list of the proven  properties in Theorem  \ref{teo 4} below. The authors do not prove any regularity of the solution near the free boundary point, beyond continuity. This is the main open problem that we want to address  here.

Figure \ref{fig:Comparison.graph} shows the evolution of $h$ for data $L=P_1=1$ and different values of  $s$. This numerical evidence is taken from \cite{DEV1} and has been a motivation our paper.

\begin{figure}[ht!]\center
\includegraphics[width=\textwidth]{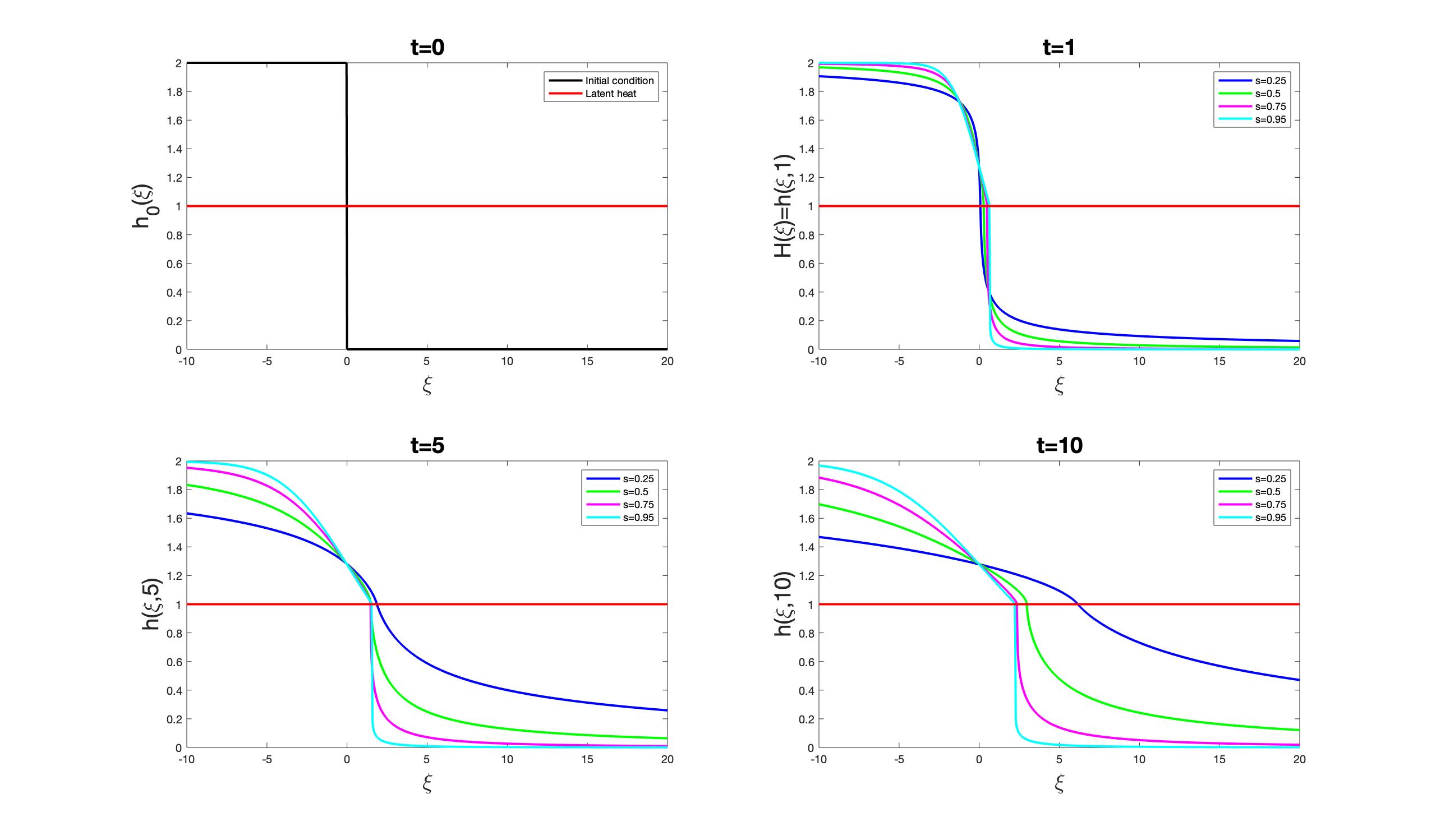}
\caption{Initial  data and profile $h(\xi,t)$ at various times, $L=P_1=1$}
\label{fig:Comparison.graph}
\end{figure}

\subsection{\bf Main Results}
We review the main topics of the paper and state some of the main results for ease of reference.

\noindent {\bf  Regularity in the subcritical case.}  Our main goal is to study the regularity of $H$ near the free boundary point $\xi_0>0$. We find that the regularity of the selfsimilar solution depends on $s$ and actually three regimes appear. The first result we obtain  covers the so-called subcritical regime \nc $s<1/2$ and reads as follows:

\begin{thm}\label{principal}
    Let $s \in (0,\frac{1}{2})$ and let $H$ be the selfsimilar solution of \eqref{stefan} with initial data given by \eqref{IC}. Then,
\begin{itemize}

 \item[a)] Global regularity: \nc $H \in {C}^{1,\alpha}(\mathbb{R})$ for some  $\alpha>0$. Moreover, $H$ satisfies the estimate
\begin{equation}
\label{prin prin}
     \norm{H}_{C^{1,\alpha}(\mathbb{R})}\leq C_s\left(\norm{H}_{L^\infty(\mathbb{R})} + \norm{H}_{C^{0,1}(\mathbb{R})}\right),
\end{equation}
 where $C_s > 0$ is a constant that depends on $s$ and $\alpha$.

\noindent Local regularity: The H\"older exponent $\alpha = 1-2s$ is optimal at  $\xi_0$. We also find local regularity with an exponent  $\alpha^* > 1-2s$  in  all intervals disjoint with  $\xi=0$ and $\xi=\xi_0$\nc.

\item[b)] At the free boundary point we have the Fractional Stefan Formula
\begin{equation}\label{eq1.2}
-H'(\xi_0) = \frac{2s}{\xi_0}\int_{-\infty}^{\xi_0} \frac{U(y)}{|\xi_0 - y|^{1+2s}} \hspace{1mm}dy > 0.
\end{equation}

 \item[c)] Moreover,  $H''(\xi)>0 $ everywhere in the ice interval $(\xi_0,\infty)$ and there is a constant $C_1>0$ such that
\begin{equation}\label{eq1.H''}
    H''(\xi)\,|\xi-\xi_0|^{2s}\ge C_1 \quad \mbox{ for all $\xi>\xi_0$ with $|\xi-\xi_0|$ small.}
\end{equation}
\noindent Consequently,  $H''( \xi_0^+\rm)=+\infty$. \nc
\end{itemize}
\end{thm}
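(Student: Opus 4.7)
The three statements will be obtained together by exploiting the fact that on the ice side $\xi>\xi_0$ the temperature $U$ vanishes and the selfsimilar equation \eqref{eq.ssh} becomes an explicit nonlocal integral identity
\begin{equation*}
H'(\xi)=\frac{2s}{\xi}\,(-\Delta)^{s}U(\xi)=-\frac{2s}{\xi}\int_{-\infty}^{\xi_0}\frac{U(y)}{(\xi-y)^{1+2s}}\,dy,\qquad \xi>\xi_0.
\end{equation*}
For part (b), letting $\xi\to\xi_0^{+}$ in this formula and using continuity of $H'$ at $\xi_0$ (which is part of (a) and will be invoked a posteriori) yields \eqref{eq1.2}; positivity is immediate since $U>0$ on an interval to the left of $\xi_0$ by the basic theory. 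For part (c), differentiating once more in $\xi$ on $(\xi_0,\infty)$ gives
\begin{equation*}
H''(\xi)=\frac{2s}{\xi^{2}}\int_{-\infty}^{\xi_0}\frac{U(y)}{(\xi-y)^{1+2s}}\,dy+\frac{2s(1+2s)}{\xi}\int_{-\infty}^{\xi_0}\frac{U(y)}{(\xi-y)^{2+2s}}\,dy,
\end{equation*}
both integrands nonnegative, so $H''>0$ on the ice side. The quantitative bound \eqref{eq1.H''} comes from the second integral: by \eqref{eq1.2} we have $H'(\xi_0)<0$, hence $U(y)\ge c(\xi_0-y)$ for $y$ close to $\xi_0^{-}$; the substitution $\eta=\xi_0-y$ followed by the rescaling $\eta=\varepsilon\tau$ with $\varepsilon=\xi-\xi_0$ turns the relevant contribution into $\varepsilon^{-2s}\int_{0}^{\delta/\varepsilon}\tau(1+\tau)^{-(2+2s)}\,d\tau$, whose limit as $\varepsilon\to 0^{+}$ is a finite positive constant, giving the stated lower bound and in particular $H''(\xi_0^{+})=+\infty$.

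For part (a), the strategy is a bootstrap via Hölder regularity of the fractional Laplacian, which is effective precisely because $2s<1$. The basic theory of \cite{DEV1} already provides $H\in C^{1,\alpha}$ on the water side and $C^{\infty}$ on the ice side, while the explicit formula above bounds $H'$ uniformly on the ice side. Matching these one-sided bounds across $\xi_0$ shows that $H$, and hence $U=(H-L)_{+}$, is globally Lipschitz on $\mathbb{R}$. The standard Hölder estimate for the fractional Laplacian of a bounded Lipschitz function in the range $2s<1$ then yields
\begin{equation*}
(-\Delta)^{s}U\in C^{0,1-2s}(\mathbb{R}),\qquad \|(-\Delta)^{s}U\|_{C^{0,1-2s}}\le C_{s}\bigl(\|U\|_{L^{\infty}}+\|U\|_{C^{0,1}}\bigr),
\end{equation*}
and the identity $H'(\xi)=\frac{2s}{\xi}(-\Delta)^{s}U(\xi)$ transfers this estimate to $H'$, proving \eqref{prin prin}. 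The factor $1/\xi$ is harmless because $\xi_0>0$ places the origin strictly inside the water region, where $U$ is already $C^{1,\alpha}$ by the basic theory and the equation is locally uniformly nondegenerate. Optimality of $\alpha=1-2s$ at $\xi_0$ is an immediate consequence of \eqref{eq1.H''}: any larger Hölder exponent for $H'$ at $\xi_0$ is incompatible with $H''(\xi_0^{+})=+\infty$. On intervals disjoint from $\{0,\xi_0\}$, the basic theory yields $U\in C^{1,\alpha}$ locally, and the same bootstrap applied to this improved input returns a local exponent $\alpha^{*}>1-2s$.

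\textbf{Main obstacle.} The delicate step is promoting the one-sided bounds on $H'$ into a matched global Lipschitz estimate for $H$ across the free boundary, since the basic theory supplies only continuity at $\xi_0$; one either uses a barrier/comparison argument together with the monotonicity of $H$, or an integral representation showing that the two one-sided derivatives $H'(\xi_0^{\pm})$ coincide. A secondary technical point is tracking how the Hölder constant of $(-\Delta)^{s}U$ degenerates as $s\uparrow 1/2$, which is consistent with the failure of Lipschitz regularity at the critical threshold $s=1/2$ that motivates the rest of the paper.
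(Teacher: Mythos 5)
Your parts (b) and (c) essentially reproduce the paper's own arguments (passing to the limit $\xi\to\xi_0^+$ in the ice-side integral formula, and differentiating it once more and using the linear lower bound $U(y)\ge c(\xi_0-y)$ near $\xi_0^-$, which requires $H'(\xi_0)<0$ and continuity of $U'$ up to $\xi_0^-$), and they are fine \emph{conditional on} part (a). The genuine gap is in part (a), which is the heart of the theorem. You claim that the one-sided regularity from the basic theory ($H\in C^{1,\alpha}(-\infty,\xi_0)$, $H\in C^\infty(\xi_0,\infty)$) together with the explicit ice-side formula can be ``matched across $\xi_0$'' to give a global Lipschitz bound for $H$. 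But the basic theory gives regularity only on the open intervals, with no uniform control as $\xi\to\xi_0^-$: exactly the same one-sided statements hold for $s\ge 1/2$, where Theorems \ref{critical} and \ref{supercritical} show that $H'$ blows up at $\xi_0$ and $H\notin C^{0,1}(\mathbb{R})$, so no such matching can be automatic. Moreover, your claim that the formula
\[
-H'(\xi)=\frac{2s}{\xi}\int_{-\infty}^{\xi_0}\frac{U(y)}{(\xi-y)^{1+2s}}\,dy ,\qquad \xi>\xi_0,
\]
``bounds $H'$ uniformly on the ice side'' is circular: the integral stays bounded as $\xi\to\xi_0^+$ only if $U$ is Lipschitz (or at least $C^{0,\beta}$ with $\beta>2s$) up to $\xi_0$ from the left; with only a bound of the type $U(\xi)\le C(\xi_0-\xi)^s$ available a priori, the right-hand side may diverge like $(\xi-\xi_0)^{-s}$. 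Thus the uniform Lipschitz bound for $U$ up to the free boundary — precisely the input needed for your ``$(-\Delta)^sU$ of a bounded Lipschitz function lies in $C^{0,1-2s}$'' step — is assumed rather than proven. You yourself flag this as the main obstacle and suggest a barrier argument or a matching of one-sided derivatives, but neither is carried out.

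The paper closes this gap by a different mechanism, which your sketch does not contain: it regularizes the nonlinearity ($\Phi_\varepsilon$ smooth and nondegenerate), so the profiles $H_\varepsilon$ are smooth and globally Lipschitz with a priori $\varepsilon$-dependent constants; it proves the $\varepsilon$-uniform improvement estimate \eqref{sec 4 ine} (Chang-Lara--D\'avila $C^{1,\alpha}$ theory away from the free boundary, plus a splitting estimate for $\xi H_\varepsilon'$ that is essentially your ``fractional Laplacian of a Lipschitz function'' computation, but applied to the approximations where the Lipschitz property is actually known); then the interpolation inequality \eqref{87} absorbs the $C^{1,\alpha}$ norm and yields a Lipschitz bound for $H_\varepsilon$ depending only on $L+P_1$, uniformly in $\varepsilon$, after which Arzel\`a--Ascoli and the stability of the very weak formulation transfer the estimate \eqref{prin prin} to the limit $H$. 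Without this regularization-plus-absorption step (or some other argument producing a uniform Lipschitz bound up to $\xi_0$ from the water side), your bootstrap for (a) cannot get started, and with it (b), (c) and the optimality claims would follow just as you indicate.
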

Therefore, for $0<s<1/2$ the solution $H$ cuts the zero temperature level $U=0$ in a regular and transversal way.
The notation  $C^{1,\alpha}(\mathbb{R})=C_b^{1,\alpha}(\mathbb{R})$ denotes the H\"older space consisting of the functions that are bounded with bounded derivatives having a  bounded  $C^{\alpha}$ seminorm in all of $\mathbb{R}$.   The space $C_b^{1,\alpha}(\mathbb{R})$ has a standard norm. Likewise $C^{0,1}(\mathbb{R})$ are the bounded functions with bounded Lipschitz constant. We refer to this kind of uniform bounds as global bounds. The notation $f(\xi_0^+)$ means the limit of $f$ as $\xi\to\xi_0$ with $\xi>\xi_0$. Similarly for $f(\xi_0^-)$. As a complement to item (a) of the Theorem, we recall that $H\in C^{\infty}$ in the open interval $({\xi_0,\infty})$. Finally, the result \ref{eq1.2} is an important control on the singular behaviour at $\xi_0$; it also holds for $1/2\le s <1$\nc.

Proving the  regularity of the solution stated in the theorem implies a  delicate analysis near the free boundary. We want to emphasize that the global regularity is strongly influenced by the regularity of the temperature on the left-hand side of the free boundary, i.e. at $\xi_0^-$, that determines the regularity of the enthalpy on the right-hand side, i.e., at $\xi_0^+$. In the case $s\in (0,1/2)$ we obtain  matching $C^1$ regularity  on both sides.

\normalcolor

In order to prove Theorem \ref{principal} we proceed as follows. First, we define the regularized Fractional Stefan Problem, changing in \eqref{stefan} the function $\Phi$ into a strictly increasing $C^\infty$ function like the  $\Phi_\varepsilon$ described in \eqref{60}. The advantage of the regularized problem is that the solutions are $C^\infty$ and $C^{0,1}$ in $\mathbb{R}$, though the norms of the derivatives may depend on $\varepsilon$. These two properties then allow us to prove the crucial $C^{1,\alpha}$ estimate \eqref{sec 4 ine}. After that we  combining estimate \eqref{sec 4 ine} and the interpolation inequality \eqref{87} to prove estimates for the $C^{0,1}$ and $C^{1,\alpha}$ norms independent of $\varepsilon$. This allows to prove the Lipschitz regularity of the limit solution $H$. Finally, taking the limit in  estimate \eqref{sec 4 ine} we will conclude the $C^{1,\alpha}$ regularity of $H$, i.e., the $C^{\alpha}$ regularity of $H'$ with finite global bounds in all of $\mathbb{R}$.\nc

\medskip

\noindent {\bf Critical and supercritical cases.} The regularity  results for $H(\xi)$ are less positive when $s\ge 1/2$. By the results of \cite{DEV1} we already know that  $H$ is continuous, bounded and monotone in $\mathbb{R}$. Moreover, $H\in C^{1,\alpha}(-\infty,\xi_0)$ and $H\in C^{\infty}(\xi_0,\infty)$. But the $C^1$ regularity at $\xi_0$ breaks down for the two cases $s=1/2$ and $1/2<s<1$.
This is due to the fact that the derivative of the enthalpy is shown to blow up at the free boundary when approaching it from the ice region. This result depends in turn on the existence of an interesting linear subsolution on the left-hand side of the free boundary, a kind of boundary Hopf estimate that we call the ``wedge lemma''. Summing up, we prove that even if the cut at $\xi_0$  is not Lipschitz,  it is  still transversal. We treat the two cases separately for the sake of clarity\nc. Here is the negative result in the critical case $s=1/2$.

\begin{thm}\label{critical}      Let $s = 1/2$ and let $H$ be the selfsimilar solution of \eqref{stefan} with initial data given by \eqref{IC}. Then,
\begin{equation}
\liminf_{\xi\to\xi_0^+}\frac{|H'(\xi)|}{|\log(\xi-\xi_0)|} \geq C>0.
\end{equation}
In particular, $\lim_{\xi\to\xi_0^+} H'(\xi)=-\infty$. Consequently, $H \notin C^{0,1}(\mathbb{R})$. Actually, \eqref{eq1.H''} holds.
    \normalcolor
\end{thm}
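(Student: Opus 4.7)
The plan is to extend the Fractional Stefan Formula \eqref{eq1.2} to points $\xi$ just to the right of the free boundary and combine it with the ``wedge lemma'' announced in the introduction, which provides a linear Hopf-type lower bound for the temperature $U$ on the water side of $\xi_0$. More precisely, on the ice interval $(\xi_0,\infty)$ one has $U\equiv 0$, so the self-similar equation \eqref{eq.ssh} reduces, after setting $s=1/2$, to the pointwise identity
\begin{equation*}
-H'(\xi) \;=\; \frac{1}{\xi}\int_{-\infty}^{\xi_0}\frac{U(y)}{(\xi-y)^{2}}\,dy,\qquad \xi > \xi_0 .
\end{equation*}
Since $\xi>\xi_0$ strictly, the integrand is nonsingular, so no principal value is needed and differentiation under the integral sign in $\xi$ is legitimate. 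This is the natural analogue of \eqref{eq1.2} inside the ice region.

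Next I would invoke the wedge lemma to obtain constants $c,\delta>0$ with $U(y)\geq c(\xi_0-y)$ on $[\xi_0-\delta,\xi_0]$. Restricting the integral above to that sub-interval and setting $\epsilon:=\xi-\xi_0>0$, the substitution $z=\xi_0-y$ produces
\begin{equation*}
-H'(\xi) \;\geq\; \frac{c}{\xi}\int_0^{\delta}\frac{z}{(\epsilon+z)^{2}}\,dz \;=\; \frac{c}{\xi}\left[\,\log\!\left(1+\tfrac{\delta}{\epsilon}\right) \;-\; \tfrac{\delta}{\delta+\epsilon}\,\right].
\end{equation*}
Dividing by $|\log\epsilon|$ and letting $\epsilon\to 0^+$ yields the claimed $\liminf$ inequality; in particular $H'(\xi_0^+)=-\infty$, so $H\notin C^{0,1}(\mathbb{R})$.

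For \eqref{eq1.H''} I would differentiate the identity above in $\xi$ to get
\begin{equation*}
H''(\xi)\;=\; -\frac{H'(\xi)}{\xi} \;+\; \frac{2}{\xi}\int_{-\infty}^{\xi_0}\frac{U(y)}{(\xi-y)^{3}}\,dy,
\end{equation*}
both summands being positive. Applying the wedge lemma to the second term reduces the matter to the elementary integral $\int_0^{\delta} z/(\epsilon+z)^{3}\,dz$, which is of order $1/(2\epsilon)$ as $\epsilon\to 0^+$. Hence $H''(\xi)(\xi-\xi_0)\geq C_1$ near $\xi_0^+$, which is exactly \eqref{eq1.H''} since $2s=1$.

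The main obstacle is not in these direct integral estimates but in the wedge lemma itself: producing a linear lower barrier for $U$ at $\xi_0^-$ in the critical regime $s=1/2$ is delicate because the a priori regularity on the water side is only $C^{1,\alpha}$ without any quantitative lower bound on $|U'|$ at the free boundary. Once that ingredient is established, the logarithmic blow-up of $H'$ and the sharp blow-up rate of $H''$ follow from the elementary computations above.
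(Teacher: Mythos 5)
Your reduction is exactly the one the paper uses: for $\xi>\xi_0$ one has $U\equiv 0$, so the profile equation gives $-H'(\xi)=\frac{1}{\xi}\int_{-\infty}^{\xi_0}U(\eta)|\xi-\eta|^{-2}\,d\eta$, and a linear lower bound $U(\eta)\ge c(\xi_0-\eta)$ near $\xi_0^-$ turns this into a logarithmic divergence; likewise your differentiation argument for $H''$ matches the paper's remark that the proof of \eqref{est.H''} extends to $s\ge 1/2$ once the linear lower bound is available. The problem is that you have not proved the wedge lemma, and that lemma \emph{is} the content of Theorem \ref{critical}'s proof in the paper. It is not an available external result: the introduction only announces it, and Section \ref{eleven} is devoted to establishing it for $s=1/2$. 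The paper does this by passing to the Caffarelli--Silvestre harmonic extension $W(x,y,t)$ of the temperature (for $s=1/2$ the extension is plain harmonic and $(-\Delta)^{1/2}u=-\partial_yW|_{y=0}$, so $\partial_yW(x,0)=\partial_t u(x,0)$ in the water region), constructing a small harmonic subsolution $\tilde u$ on an expanding space-time trapezoid $Q$ strictly inside the support of $u$ whose lateral boundary reaches the free boundary only at the final time, comparing $u$ and $\tilde u$ by a first-touching-point argument in which the Hopf boundary-point lemma applied to the harmonic difference $F=W-w$ rules out interior contact, and finally applying Hopf's lemma to $\tilde u$ at the lateral boundary to produce the linear wedge. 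None of this (nor any substitute for it) appears in your proposal; you explicitly defer it as ``the main obstacle,'' which means the key idea is missing rather than merely sketched.

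It is also worth noting why the gap is not a routine adaptation of the rest of the paper. In the subcritical case the nondegeneracy $|U'(\xi_0^-)|>0$ comes for free from the Stefan formula \eqref{eq1.2} together with $C^{1}$ regularity, and in the supercritical case the paper builds an explicit linear barrier $V$ and verifies the subsolution inequality by direct computation of $(-\Delta)^sV$, where the favorable term is of size $k^{-(2s-1)}$; that computation degenerates exactly at $s=1/2$ because $2s-1=0$, which is why the critical case requires the extension/Hopf machinery. So to complete your argument you must either reproduce the paper's extension-based comparison or supply an alternative quantitative Hopf-type estimate for $U$ at $\xi_0^-$ valid at $s=1/2$; without it, the logarithmic lower bound for $|H'|$ and the bound \eqref{eq1.H''} remain conditional.
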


 The precise formula is proved in Section \ref{eleven}. \nc Finally, for the supercritical case we will prove the next result  about limited regularity:

\begin{thm}\label{supercritical}
    Let $s \in (1/2,1)$ and let $H$ be the selfsimilar solution of \eqref{stefan} with initial data given by \eqref{IC}. Then,
\begin{equation}
\liminf_{\xi\to\xi_0^+} |H'(\xi)|\,(\xi-\xi_0)^{2s-1} \geq C>0,
\end{equation}
for some constant $C$ depending on $s$. In particular $\lim_{\xi\to\xi_0^+} H'(\xi)=-\infty$.  Actually, \eqref{eq1.H''} holds. It follows that $H\not \in C^{0,\alpha}$ for any $\alpha\ge 2-2s$ \nc near the free boundary $\xi_0$.
\end{thm}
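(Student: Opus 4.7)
The plan is to exploit the selfsimilar equation \eqref{eq.ssh} in the ice region together with a linear lower bound for $U$ from the water side (the ``wedge lemma'' mentioned in the introduction), and then to extract explicit singular asymptotics of $H'$ and $H''$ as $\xi\to\xi_0^+$.

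Since $U\equiv 0$ on $(\xi_0,\infty)$, the fractional Laplacian there reduces to a convergent integral, and \eqref{eq.ssh} gives
$$
H'(\xi) \;=\; \frac{2s}{\xi}(-\Delta)^s U(\xi) \;=\; -\frac{2s}{\xi}\int_{-\infty}^{\xi_0}\frac{U(y)}{(\xi-y)^{1+2s}}\,dy,
$$
which is strictly negative. The wedge lemma supplies constants $c,\delta>0$ with $U(y)\ge c(\xi_0-y)$ for $y\in[\xi_0-\delta,\xi_0)$. Restricting the integral to this interval and substituting $z=\xi-y$, $\epsilon:=\xi-\xi_0$, I obtain
$$
|H'(\xi)|\;\ge\;\frac{2sc}{\xi}\int_{\epsilon}^{\epsilon+\delta}\frac{z-\epsilon}{z^{1+2s}}\,dz.
$$
An explicit antiderivative $z^{1-2s}/(1-2s)+\epsilon\, z^{-2s}/(2s)$, together with the fact that $1-2s<0$ so the primitive blows up at the lower endpoint, shows that this integral behaves like $\epsilon^{1-2s}/(2s(2s-1))$ as $\epsilon\to 0^+$. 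This yields the claimed bound $\liminf_{\xi\to\xi_0^+}|H'(\xi)|(\xi-\xi_0)^{2s-1}\ge c/(\xi_0(2s-1))>0$.

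For \eqref{eq1.H''}, I differentiate the representation of $H'$ under the integral:
$$
H''(\xi)\;=\;\frac{2s}{\xi^2}\int_{-\infty}^{\xi_0}\frac{U(y)}{(\xi-y)^{1+2s}}\,dy\;+\;\frac{2s(1+2s)}{\xi}\int_{-\infty}^{\xi_0}\frac{U(y)}{(\xi-y)^{2+2s}}\,dy.
$$
Both terms are strictly positive on $(\xi_0,\infty)$, so $H''>0$ there. The same wedge-lemma bound applied to the second integral, combined with $\int_\epsilon^{\epsilon+\delta}(z-\epsilon)z^{-2-2s}\,dz\sim \epsilon^{-2s}/(2s(1+2s))$ as $\epsilon\to 0^+$, produces $H''(\xi)(\xi-\xi_0)^{2s}\ge C_1>0$ near $\xi_0^+$, and hence $H''(\xi_0^+)=+\infty$. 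Finally, integrating the pointwise lower bound on $|H'|$ from $\xi_0$ to $\xi$ yields $|H(\xi)-H(\xi_0)|\ge C(\xi-\xi_0)^{2-2s}$ for small $\xi-\xi_0>0$, ruling out $H\in C^{0,\alpha}$ near $\xi_0$ for any $\alpha>2-2s$.

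The main obstacle is proving the wedge lemma itself, a one-sided boundary Hopf-type estimate on the water side of the free boundary; it is the only place where something beyond an explicit calculation is needed. Once it is in hand, differentiation under the integral is easily justified by smoothness of the kernel $U(y)/(\xi-y)^{1+2s}$ in $\xi$ for $\xi>\xi_0$ and uniform domination on compact subsets of the ice region, and the rest of the argument reduces to the two explicit primitive computations above.
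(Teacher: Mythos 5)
Your reduction of the theorem to the ``wedge'' bound is correct and is exactly the paper's route: on the ice side the equation gives $H'(\xi)=-\frac{2s}{\xi}\int_{-\infty}^{\xi_0}U(y)(\xi-y)^{-1-2s}\,dy<0$, and inserting a linear lower bound $U(y)\ge c(\xi_0-y)$ near $\xi_0^-$ and computing the primitive yields $|H'(\xi)|\gtrsim(\xi-\xi_0)^{1-2s}$; differentiating under the integral (justified as you say) gives $H''>0$ on $(\xi_0,\infty)$ and $H''(\xi)(\xi-\xi_0)^{2s}\ge C_1$, which is also how the paper obtains \eqref{eq1.H''} for $s\ge 1/2$. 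Your explicit antiderivative computations and the resulting constants are fine. (A very minor point, shared with the paper's own statement: integrating the $|H'|$ lower bound only excludes $C^{0,\alpha}$ for $\alpha>2-2s$ directly; the endpoint $\alpha=2-2s$ needs an extra word, since a one-sided bound of the same order does not by itself contradict membership with a large H\"older constant.)

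The genuine gap is that you do not prove the wedge lemma, and for the supercritical range that lemma \emph{is} the proof: everything else is elementary calculus. You flag it as ``a one-sided boundary Hopf-type estimate'' and set it aside, but the Hopf/Caffarelli--Silvestre extension argument is what the paper uses only in the critical case $s=1/2$; for $s\in(1/2,1)$ the paper closes the gap by a direct barrier construction (Subsection on the linear lower bound in the supercritical section). Concretely, one takes $V(\xi)=C\xi_0$ for $\xi<0$ and $V(\xi)=C(\xi_0-\xi)_+$ for $\xi\ge 0$, with $C>0$ small so that $V\le U$ outside a small interval $J=(\xi_0-a,\xi_0)$, and computes $(-\Delta)^sV$ at points $\xi\in J$ with $k=\xi_0-\xi$: the interval $(0,\xi-k)$ contributes a negative term of size $-\frac{C}{2s-1}k^{-(2s-1)}$, the symmetric window around $\xi$ cancels, and the tail $(\xi_0,\infty)$ contributes $+\frac{C}{2s}k^{-(2s-1)}$; since $2s>1$ the negative term dominates, so $-\xi V'+2s(-\Delta)^sV<0$ on $J$ for $a$ small, and the comparison principle for the nonlocal elliptic equation satisfied by $U$ on $(-\infty,\xi_0)$ yields $V\le U$ up to the free boundary. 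Note that this mechanism genuinely uses $s>1/2$ (for $s<1/2$ the sign of the leading balance reverses), so the wedge cannot be waved through as a generic nondegeneracy fact; without supplying this barrier argument (or an equivalent), your proposal establishes the theorem only conditionally.
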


The result is proved in Section \ref{section 13}. Note that, as $s\to 1^-$, the above result ensures that the enthalpy does not belong any more to a given H\"older space. This is coherent with the fact that in the classical Stefan problem  involving the standard Laplace operator (i.e., the case $s=1$) the enthalpy is discontinuous at the \nc free boundary with a jump discontinuity of value $H(\xi_0^-)-H(\xi_0^+)=L$, where $L$ denotes the latent heat.


\medskip

\noindent\textbf{Structure of the work}

The paper is divided into three blocks corresponding to each of the cases that we study: the subcritical case occupies most of our attention, from Section \ref{section 3} to Section \ref{sec.nine}.
In Section \ref{section 2} we introduce some preliminary results that will be useful. In Section \ref{section 3} we study the regularized Fractional Stefan Problem and we prove existence, uniqueness, selfsimilarity and $C^{1,\alpha}$ estimates of the solutions of the regularized problem. In Section \ref{section 4} we prove an improvement $C^{1,\alpha}$ estimate for the solutions to the regularized problem. Section \ref{section 5} is devoted to prove Theorem \ref{principal}.  The  proof relies on \nc the $C^{1,\alpha}$ estimate of the selfsimilar solutions to the regularized problem, a key point of this paper.  In Subsection \ref{section 7} we prove further regularity in the water region, $(-\infty, \xi_0)$\nc. In Section \ref{section 6}, we study the lateral regularity on both sides of $\xi_0$. We will show that $H' \in C^{1-2s}[\xi_0, \infty)$ but not better, in particular $H \notin C^2(\mathbb{R})$.

Changing topics, Section \ref{sec.nine} is devoted to the asymptotic behaviour of $H$ and $H'$ near $-\infty$.  Suitable notation is introduced. We note in that the difficult estimates are those in the water,  as $\xi\to \infty$, since the limits as $\xi\to \infty$ are easier.

Finally, in Sections \ref{eleven}-\ref{section 13} we prove that the selfsimilar solutions of \eqref{stefan}-\eqref{IC} are not $C^1$ smooth in the critical and supercritical regime. Moreover, we study in Section \ref{twuelve} the decay at $-\infty$ of the selfsimilar solutions when $s = \frac{1}{2}$. The refer the reader to the end of Section \ref{section 13} for more information on the optimal regularity, and Section \ref{sec.op} for comments on the results.

\medskip

\noindent\textbf{Comment on related results}

   There  are a number of works dealing with stationary or evolution equations involving fractional Laplacians
where the regularity of the solutions depends critically on the value of the fractional exponent  $s$, and
$s=1/2$ appears as the critical value. A well-known example is the work \cite{CaVs} where Caffarelli and Vasseur
consider  drift-diffusion scalar equations of the  form
\begin{equation}\label{eq.cvass}
u_t + {\bf v }\cdot \nabla u + (-\Delta)^{s} u=0, \quad \mbox{div\,} {\bf v }=0.
\end{equation}
They look at the case $s=1/2$, motivated by the critical dissipative quasi-geostrophic equation, see below. They prove
that drift-diffusion equations with critical exponent $s=1/2$ and  $L^2$ initial data are locally Holder continuous. As an application they show that solutions of the quasi-geostrophic equation with $L^2$ initial  data and critical diffusion are locally smooth for any space dimension. This in sharp contrast with our results. See also \cite{KiN}.

Another example comes from C\'ordoba and Zoroa \cite{CoZ} who consider the inviscid Surface Quasi-Geostrophic equation, a significant active scalar model with various applications in atmospheric modeling \cite{Pe}. It uses equation \eqref{eq.cvass} plus a special  relation between ${\bf v }$ and $u$. They prove that there exist solutions that lose functional regularity instantly (i.e., from the very beginning) if $s<1/2$ (a case they call supercritical), and this does not happen for $s=1/2$. Curiously, they find that the  growth around the origin is at least logarithmic,  logarithmic growth appears in our result for $s=1/2$.

Regarding nonlinear fractional diffusion problems related to the Stefan problem, let us mention the fractional porous medium equation, \cite{dP11, dP12, dP16}:   the  equation is still $u_t+(-\Delta)^{s}\Phi(u)=0$  but the nonlinearity \nc $\Phi$ is a power or comparable to a power. In that case H\"older regularity is proved for all $0<s<1$. If $\Phi$ is smooth and non-degenerate then the solutions are $C^\infty$  smooth  \cite{6}. However, the theory for this model does not cover the Stefan problem which can be considered as a less regular limit case. Another related model is called porous medium flow with potential pressure \cite{BilerGM, CaffVa2011a} where the equations read $u_t=\nabla \cdot (u\,\nabla p)$, with pressure given by $p=(-\Delta)^{-s} u$. There  bounded solutions have  H\"older regularity for all $0<s<1$, as proved in \cite{CaffSorVaz}. Actually, the case  $s=1/2$ is quite delicate, it is done in \cite{CaffVa2016}.
\nc

\section{Notation and preliminaries}\label{section 2}

We start the technical work with a section on notation and preliminary results that will be useful in the next sections.
Throughout the work we will use the capital letter $H$ to denote the enthalpy of the unique selfsimilar solution of \eqref{stefan} with initial data \eqref{IC}, and the letter $U$ for the selfsimilar profile of the temperature $u(x,t) = U(xt^{-\frac{1}{2s}})$. Sometimes, it will be useful to write the equation of  problem \eqref{stefan} in terms of both variables as
\[
\partial_t h + (-\Delta)^s u = 0 \hspace{7mm}\textup{in} \hspace{7mm} \mathbb{R} \times (0,T),
\]
where $u := \Phi(h) = (h -L)_+$.

We next introduce some useful results that will be  used in the next sections. The first one is an existence and uniqueness theorem  taken from \cite{5} that is valid for a wider class of problems.
\begin{thm}
\label{teo 1}
    Let $s \in (0,1)$ and consider the problem
\begin{equation}
    \label{no 1}
\left\{
\begin{aligned}
\partial_t h + (-\Delta)^s \Phi (h) &= 0 \hspace{7mm}\textup{in} \hspace{7mm} \mathbb{R}^n \times (0,T),\\
h( \cdot,0) &= h_0 \hspace{5.3mm} \textup{on} \hspace{6.5mm} \mathbb{R}^n,
\end{aligned}
\right.
\end{equation}
with $h_0 \in L^\infty(\mathbb{R})$ and $\Phi(\cdot)$ a nondecreasing and locally Lipschitz real function. Then, there exists a unique very weak solution  of \eqref{no 1}.
\end{thm}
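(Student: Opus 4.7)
The plan is to follow the standard approach for nonlinear fractional diffusion equations of filtration type, combining nonlinear semigroup theory (Crandall--Liggett) with an approximation argument on $\Phi$ and on the initial data. First, I would regularize the nonlinearity: replace $\Phi$ by a sequence $\Phi_\varepsilon \in C^\infty(\mathbb{R})$ of strictly increasing, globally Lipschitz functions with $\Phi_\varepsilon \to \Phi$ locally uniformly, and replace $h_0 \in L^\infty$ by a sequence $h_{0,\varepsilon} \in L^1 \cap L^\infty$ converging a.e. and bounded uniformly in $L^\infty$ by $\|h_0\|_\infty$. For the regularized data, I would solve the approximate problem $\partial_t h_\varepsilon + (-\Delta)^s \Phi_\varepsilon(h_\varepsilon) = 0$ by either Hilbert-space variational methods in the energy space associated with $(-\Delta)^{s/2}$, or by the accretive-operator/nonlinear semigroup theory on $L^1(\mathbb{R}^n)$, viewing $A := (-\Delta)^s \Phi_\varepsilon(\cdot)$ as an $m$-accretive operator; either route yields a unique mild, in fact strong, solution $h_\varepsilon$.

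Next, I would prove the standard a priori estimates for the approximate solutions that only depend on $\|h_0\|_\infty$ and not on $\varepsilon$: the $L^\infty$ bound $\|h_\varepsilon(\cdot,t)\|_\infty \le \|h_0\|_\infty$ by a supersolution/comparison argument (constants are solutions for any nondecreasing $\Phi_\varepsilon$); the $L^1$-contraction $\|h_\varepsilon(t) - \tilde h_\varepsilon(t)\|_1 \le \|h_{0,\varepsilon} - \tilde h_{0,\varepsilon}\|_1$ for data in $L^1$, obtained by multiplying the difference equation by $\operatorname{sign}_\delta(\Phi_\varepsilon(h_\varepsilon) - \Phi_\varepsilon(\tilde h_\varepsilon))$ and using the Stroock--Varopoulos/Beurling--Deny inequality for $(-\Delta)^s$; and, for translates, the equicontinuity in space/time that gives compactness.

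With these estimates, I would pass to the limit $\varepsilon \to 0$. Fix a test function $\varphi \in C_c^\infty(\mathbb{R}^n \times [0,T))$ and use the weak formulation
\begin{equation*}
\int_0^T \!\!\int_{\mathbb{R}^n} \bigl( h_\varepsilon \, \partial_t \varphi - \Phi_\varepsilon(h_\varepsilon) (-\Delta)^s \varphi \bigr)\, dx\, dt + \int_{\mathbb{R}^n} h_{0,\varepsilon}(x)\, \varphi(x,0)\, dx = 0.
\end{equation*}
The uniform $L^\infty$ bound plus weak-$*$ compactness gives $h_\varepsilon \rightharpoonup h$ in $L^\infty$-weak-$*$, and on bounded subsets of $\mathbb{R}^n \times (0,T)$ the compactness in $L^1_{\mathrm{loc}}$ (via spatial/temporal translation estimates coming from $L^1$-contraction) lets me take limits in the nonlinear term $\Phi_\varepsilon(h_\varepsilon) \to \Phi(h)$. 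This produces the required very weak solution. To reduce the general $L^\infty$ data to the case just handled one uses approximation by truncations, for which the $L^1$-contraction of the approximants yields the necessary stability.

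For uniqueness, the central tool is the $L^1$-contraction principle at the level of very weak solutions, which is precisely the reason such weak formulations are useful here. I would test the difference of two very weak solutions $h_1, h_2$ against a dual problem of Holmgren type: given $\psi \in C_c^\infty$, solve the backward linear fractional heat equation $-\partial_t \varphi + a_\varepsilon(x,t)(-\Delta)^s \varphi = \psi$, where $a_\varepsilon$ is a regularized version of $(\Phi(h_1)-\Phi(h_2))/(h_1-h_2) \ge 0$. Establishing enough regularity of $\varphi$ and suitable bounds on $(-\Delta)^s \varphi$ uniformly in $\varepsilon$ is the main technical point; this is the classical Oleinik/Kruzhkov duality device adapted to the nonlocal setting, as carried out in \cite{5}. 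The hardest step is the careful handling of this dual linear problem with the degenerate weight $a_\varepsilon$ and of the passage to the limit $\varepsilon \to 0$, since $\Phi$ being only nondecreasing (and in our application actually degenerate through the flat part $h<L$) prevents any strict parabolicity. Once the dual estimate is established, integrating against $\psi$ of arbitrary sign yields $h_1 = h_2$ a.e., completing the proof.
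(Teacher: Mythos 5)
The paper does not actually prove this statement: Theorem \ref{teo 1} is imported verbatim from the reference \cite{5} (Grillo--Muratori--Punzo), so there is no internal argument to compare yours against. That said, your outline is essentially a faithful reconstruction of how the result is obtained in the cited literature: existence by smoothing $\Phi$ and the data, uniform $L^\infty$ bounds and $L^1$-contraction (Stroock--Varopoulos), compactness and passage to the limit in the very weak formulation; and uniqueness by the Ole\u{\i}nik/Holmgren duality device with a regularized coefficient $a_\varepsilon=(\Phi(h_1)-\Phi(h_2))/(h_1-h_2)$, which is precisely the content of \cite{5} for merely bounded very weak solutions. Two cautions if you wanted to make this self-contained rather than a citation: (i) weak-$*$ convergence of $h_\varepsilon$ alone does not let you pass to the limit in $\Phi_\varepsilon(h_\varepsilon)$, and the translation estimates you invoke for strong $L^1_{\mathrm{loc}}$ compactness come from the $L^1$-contraction, hence require $L^1$-type data; the reduction of general $L^\infty$ data by truncation that you mention in one line is therefore a genuine step, not a formality (monotone approximation of the data and stability of very weak solutions are needed). (ii) The uniqueness part is the truly delicate point --- the degenerate weight $a_\varepsilon\ge 0$ forces you to prove uniform estimates on $(-\Delta)^s\varphi$ for the dual problem without any strict parabolicity, and this is where the bulk of the work in \cite{5} lies; as written, your proposal names the difficulty but does not resolve it. So your plan is the right one and matches the route the paper delegates to, but it remains a plan at exactly the points that make the quoted theorem nontrivial.
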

For the concept of very weak solution see Definition \ref{def 2.2}. We will use the next result for selfsimilar solutions, taken from \cite{DEV1}.

\begin{thm}[Properties of the selfsimilar solution in $\mathbb{R}$]
\label{teo 4}
Let $s \in (0,1)$  and $P_1,  L > 0$. Consider the initial data
\begin{equation*}
h_0(x):=\left\{
\begin{aligned}
 &L + P_1 \hspace{10mm} &\textup{if} \hspace{7mm} x \leq 0, \\
&0 \hspace{10mm}        &\textup{if} \hspace{7mm}x > 0,
\end{aligned}
\right.
\end{equation*}
and let $h(x,t) \in L^\infty(\mathbb{R} \times (0,T))$ be the corresponding very weak solution of \eqref{stefan}. Then
\begin{itemize}
    \item[{$a)$}] \textup{(Profile)}
    $h$ and $u$ are selfsimilar with formulas
    \[
    h(x,t) = H(xt^{-1/(2s)}),\quad u(x,t) = U(xt^{-1/(2s)}) \quad\textup{for all} \quad (x,t) \in \mathbb{R} \times (0,T),
    \]
    where the selfsimilar profiles $H$ and $U= (H-L)_+$ satisfy the 1-D nonlocal equation:
    \begin{equation}\label{eq.ssh2}
     -\frac{1}{2s}\xi \cdot \nabla H(\xi) + (-\Delta)^s U(\xi) = 0 \qquad \textup{in} \qquad \mathcal{D}'(\mathbb{R}).
    \end{equation}
    \item[{$b)$}] \textup{(Boundedness and limits)} $0 \leq H(\xi)\leq L+P_1$ for all $\xi \in \mathbb{R}$ and
    \[
    \lim_{\xi \to -\infty} H_\varepsilon(\xi) = L + P_1 \quad \textup{and} \quad  \lim_{\xi \to \infty} H(\xi) =0.
    \]
    \item[{$c)$}] \textup{(Free boundary)} There exists a unique finite value $\xi_0 > 0$ such that $H(\xi_0) = L$. This means that the free boundary of the space-time solution $h(x,t)$ at the level $L$ is given by the curve
    \[
    x = \xi_0 \, t^\frac{1}{2s} \quad \textup{for all}\quad t\in (0,T).
    \]
    Moreover $\xi_0 > 0$ depends only on $s$ and the ratio $P_1/L$.

    \item[{$d)$}] \textup{(Monotonicity)} $H$ is nonincreasing. Moreover $H$ is strictly decreasing in $[\xi_0, +\infty)$.

    \item[{$e)$}] \textup{(Regularity)} $H \in C_b(\mathbb{R})$. Moreover, $H \in C^\infty(\xi_0, \infty)$, $H \in C^{1,\alpha}(-\infty, \xi_0)$ for some $\alpha \in (0,1)$.
\end{itemize}
\end{thm}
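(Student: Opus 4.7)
The plan is to obtain the five items essentially from scaling and the uniqueness statement of Theorem \ref{teo 1}, combined with the comparison principle available in that abstract class and the specific structure of the nonlinearity $\Phi(h)=(h-L)_+$.

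\textbf{Self-similarity (a).} The first step is to observe that both the equation and the initial data \eqref{IC} are invariant under the parabolic dilation $(x,t)\mapsto(\lambda x,\lambda^{2s}t)$ for every $\lambda>0$: the step function $h_0$ only detects the sign of $x$, and $\partial_t+(-\Delta)^s$ scales exactly this way. Hence $h(\lambda x,\lambda^{2s}t)$ is a very weak solution with the same data, so the uniqueness part of Theorem \ref{teo 1} forces $h$ to depend only on $\xi=xt^{-1/(2s)}$. Substituting the ansatz $h(x,t)=H(\xi)$ into \eqref{stefan} and using the homogeneity of $(-\Delta)^s$ yields exactly equation \eqref{eq.ssh2} for $H$ and $U=(H-L)_+$.

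\textbf{Bounds, monotonicity and limits (b), (d).} Boundedness $0\le H\le L+P_1$ follows from the $L^\infty$ comparison principle available for the class covered by Theorem \ref{teo 1}, tested against the constant sub- and supersolutions $0$ and $L+P_1$. Monotonicity uses translation invariance: since $h_0(x+\tau)\le h_0(x)$ for every $\tau>0$, comparing the two corresponding solutions gives $h(x+\tau,t)\le h(x,t)$ for all $t$, which in selfsimilar variables reads $H'\le 0$. For the limits at $\pm\infty$, the cleanest route is to use selfsimilarity in reverse: for any fixed $x\ne 0$, $\xi=xt^{-1/(2s)}\to \mathrm{sign}(x)\,\infty$ as $t\to 0^+$, and continuity of $h$ up to $t=0$ away from $x=0$ recovers $H(-\infty)=L+P_1$ and $H(+\infty)=0$.

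\textbf{Free boundary (c).} Monotonicity together with the limits just obtained produces at least one point $\xi_0$ with $H(\xi_0)=L$. To prove uniqueness of $\xi_0$ and strict decrease past it I would use that \eqref{eq.ssh2} reduces, on the ice side, to the pointwise identity
\begin{equation*}
H'(\xi)\;=\;\frac{2s}{\xi}(-\Delta)^s U(\xi)\;=\;-\frac{2s}{\xi}\int_{-\infty}^{\xi_0}\frac{U(y)}{|\xi-y|^{1+2s}}\,dy,
\end{equation*}
whose right-hand side is strictly negative whenever $\xi>0$ and $U\not\equiv 0$. This gives strict decrease in $[\xi_0,\infty)$ \emph{provided} we already know $\xi_0>0$, and establishing this positivity is the main obstacle of (c): one has to rule out that the water region extends all the way to $\xi=0$, where the coefficient of $H'$ in \eqref{eq.ssh2} vanishes and the argument above degenerates. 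A workable strategy is a barrier comparison with the selfsimilar solution of the pure fractional heat equation $\partial_t u+(-\Delta)^s u=0$ with the same step data, together with the fact that melting (the transfer of mass across $H=L$) must cost a positive amount of time.

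\textbf{Regularity (e).} In the ice region $(\xi_0,\infty)$, the formula for $H'$ above expresses $H'$ as an integral of the bounded function $U$ against the smooth kernel $|\xi-y|^{-(1+2s)}$ over a set separated from $\xi$ by a positive distance, so iterated differentiation under the integral sign upgrades $H$ to $C^\infty(\xi_0,\infty)$. In the water region $u=h-L$ satisfies $\partial_t u+(-\Delta)^s u=0$, a linear fractional heat equation with bounded right-hand side coming from the (bounded) contribution of the ice part to the nonlocal term, and interior $C^{1,\alpha}$ regularity for such equations is classical, giving $H\in C^{1,\alpha}(-\infty,\xi_0)$. The behaviour at the single point $\xi=\xi_0$ is deliberately left untouched at this stage; it is precisely the question taken up in the rest of the paper.
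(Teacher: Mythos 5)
You should first note that the paper never proves Theorem \ref{teo 4}: it is imported as a black box from \cite{DEV1}, so the only meaningful comparison is with the proof there and with the parallel arguments the paper does write out for the regularized problem in Section \ref{section 3}. The soft parts of your outline match that standard route and are essentially fine: scaling invariance plus uniqueness for (a), comparison with the constants $0$ and $L+P_1$ and with translates for the bounds and monotonicity in (b), (d), differentiation under the integral sign in the ice region and interior parabolic estimates for the two local regularity claims in (e). Two small repairs are needed even there: the comparison principle is not part of Theorem \ref{teo 1} as stated (the paper gets it from \cite[Theorem A.1]{DEV1}, cf. Theorem \ref{teo 2.5}), and your identification of the limits at $\pm\infty$ via ``continuity of $h$ up to $t=0$ away from $x=0$'' assumes something not known a priori; the correct argument (used verbatim for the regularized profile in Theorem \ref{teo 2.12}(b)) is that monotonicity and boundedness give existence of the limits, and dominated convergence in the very weak formulation then identifies them with the initial values.

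The genuine gaps are in (c) and in the first claim of (e). For (c), everything hinges on $\xi_0>0$, which you correctly flag as the main obstacle but then dispatch with ``melting must cost a positive amount of time,'' which is not an argument, and with a comparison against the fractional caloric solution with the same step data, which points the wrong way: the natural caloric barrier (data $L+P_1$ for $x\le0$, $L$ for $x>0$, as in Step 1 of Lemma \ref{lemma 9.2}) lies \emph{above} $h$, and the natural lower barrier (the antisymmetric two-phase solution of Step 2) has temperature vanishing for $x\ge 0$; neither forces the level set $\{h=L\}$ into $x>0$. Showing that enough heat crosses $x=0$ to overcome the latent heat, so that the free boundary sits at strictly positive $\xi_0$, is a substantive construction in \cite{DEV1} that your sketch does not supply (the dependence of $\xi_0$ on $s$ and $P_1/L$ only, by contrast, is immediate from the normalization $h=A\widehat h$ noted in the introduction). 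Second, the claim $H\in C_b(\mathbb{R})$ — continuity \emph{across} the free boundary — is exactly the point you declare ``deliberately left untouched,'' yet it is part of the statement and is not soft: for $s=1$ the selfsimilar enthalpy jumps by $L$ at the free boundary, so continuity for $s<1$ cannot follow from monotonicity or comparison alone; in \cite{DEV1} it rests on the continuity theory of \cite{AtCa, CCV}. Moreover, without it your argument for (c) loses its footing twice over: the existence of a point with $H(\xi_0)=L$ uses the intermediate value property, and the pointwise identity for $H'$ on the set $\{U=0\}$, which you use to exclude a flat piece at level $L$ and to get strict decrease, presupposes the regularity in the ice region that is bootstrapped from that continuity.
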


 The regularity of $H$  at $\xi_0$ is under study in this paper. In the next Section  \ref{section 3} we will  introduce  an approximation process by regularization of the nonlinearity. We will \nc then use the following theorem  taken from \cite{6}, that allows us to prove higher regularity of the solution $h(x,t)$ if the nonlinearity  $\phi \in C^{\infty}(\mathbb{R})$ and $\phi$ is non-degenerate.
\begin{thm}
\label{teo 2}
    Let $s \in (0,1)$ and $h$ a  bounded very weak \nc solution to
    \[
    \partial_t h + (-\Delta)^s  \phi\normalcolor(h) = 0 \hspace{7mm}\textup{in} \hspace{7mm} \mathbb{R}^n \times (0,\infty).
    \]
    If $\phi'(x) > 0$ in $\mathbb{R}$ and $ \phi \in C^\infty(\mathbb{R})$ then $h \in C^\infty(\mathbb{R}^n \times (0,\infty))$ and is a solution of the equation in the classical sense.
\end{thm}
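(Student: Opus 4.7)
The plan is a bootstrap argument: first obtain an initial H\"older modulus of continuity for the bounded very weak solution, then iterate Schauder-type estimates for nonlocal parabolic equations, using crucially both the smoothness and the strict monotonicity of $\phi$.

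First I would reduce the equation to a nonlocal parabolic one with a bounded, uniformly positive coefficient. Since $h$ is bounded, its range is contained in a compact interval $[-M,M]$ on which $\phi' \ge c_0 > 0$ by continuity and the hypothesis $\phi'>0$. Setting $u := \phi(h)$ and using $\partial_t u = \phi'(h)\,\partial_t h$, the equation takes the linearized form
\[
\partial_t u + a(x,t)\,(-\Delta)^s u = 0, \qquad a(x,t) := \phi'(h(x,t)) \in [c_0, C_0].
\]
Thus $u$ solves a non-divergence nonlocal parabolic equation whose coefficient $a$ is bounded between two positive constants, so the equation is uniformly nondegenerate.

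Second, I would obtain an initial H\"older regularity. Applying a De Giorgi--Nash--Moser-type theorem for bounded solutions of nonlocal parabolic equations with bounded measurable coefficients (in the spirit of Caffarelli--Chan--Vasseur, or Silvestre) yields $u \in C^\alpha_{\mathrm{loc}}(\mathbb{R}^n \times (0,\infty))$ for some $\alpha \in (0,2s)$ depending only on $n$, $s$, $c_0$, $C_0$. Since $\phi^{-1}$ is smooth on $[-M,M]$, the same regularity transfers to $h = \phi^{-1}(u)$.

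Third, I would bootstrap by Schauder. Once $h \in C^\alpha$, the coefficient $a(x,t) = \phi'(h(x,t))$ is $C^\alpha$ by composition with the smooth map $\phi'$, and nonlocal parabolic Schauder estimates upgrade $u$ by essentially $2s$ derivatives in space (in the parabolic scaling). Formally differentiating the original equation in $x_i$,
\[
\partial_t (\partial_{x_i} h) + (-\Delta)^s\bigl(\phi'(h)\,\partial_{x_i} h\bigr) = 0,
\]
each spatial derivative satisfies a linear nonlocal parabolic equation whose coefficient inherits the regularity already produced. Iterating and using that $\phi \in C^\infty$ propagates any H\"older scale under composition, one obtains arbitrary spatial regularity; time regularity then follows directly from the equation, since $(-\Delta)^s \phi(h)$ becomes as smooth as desired.

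The hard step is clearly the initial $C^\alpha$ estimate: a very weak solution carries no a priori modulus of continuity, and a De Giorgi iteration in the nonlocal parabolic setting must carefully control the tail contributions of $(-\Delta)^s$, which see the whole of $\mathbb{R}^n$ rather than a local neighborhood. Once this first step is secured, the smoothness of $\phi$ combined with the uniform ellipticity $\phi' \ge c_0 > 0$ makes the Schauder bootstrap to $C^\infty$ essentially routine, and the equation is then satisfied pointwise, i.e.\ in the classical sense.
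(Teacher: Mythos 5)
This theorem is not proved in the paper at all: it is imported verbatim from the reference \cite{6} (de Pablo--Qu\'ir\'os--Rodr\'iguez--V\'azquez), so there is no internal proof to compare against; your outline is essentially the argument of that source. Namely: linearize using the boundedness of $h$ and the resulting uniform nondegeneracy $c_0\le\phi'\le C_0$ on the range of $h$, get an initial H\"older modulus from De Giorgi--Nash--Moser-type results for nonlocal parabolic equations with merely bounded coefficients (the paper itself points to \cite{AtCa, CCV} for exactly this ``starting continuity''), and then run a Schauder bootstrap in which $\phi\in C^\infty$ propagates the H\"older scales and the differentiated equation upgrades the regularity step by step, time regularity coming last from the equation. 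One technical remark on your first step: the Caffarelli--Chan--Vasseur theorem is stated for divergence-form (variational) kernels, so it is cleaner to apply it directly to $h$ by writing $(-\Delta)^s\phi(h)(x)=\int \frac{K(x,y,t)\,(h(x,t)-h(y,t))}{|x-y|^{n+2s}}\,dy$ with $K(x,y,t)=\frac{\phi(h(x,t))-\phi(h(y,t))}{h(x,t)-h(y,t)}\in[c_0,C_0]$, rather than to the non-divergence equation $\partial_t u+a(x,t)(-\Delta)^s u=0$ for $u=\phi(h)$, which would require a viscosity-solution framework that a bounded very weak solution does not a priori belong to (some approximation or identification argument would be needed to bridge the solution concepts). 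With that adjustment, and the usual incremental-quotient justification of the formally differentiated equation, your plan coincides with the known proof.
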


\medskip

Using Theorem \eqref{no 1} and Section 6 of \cite{6}  \nc we get

\begin{cor} \label{co 3} Under the above conditions on $\phi$, if $h$ is a very weak solution to
    \[
    \partial_t h + (-\Delta)^s \phi(h) = 0 \hspace{7mm}\textup{in} \hspace{7mm} \mathbb{R}^n \times (0,\infty).
    \]
 with bounded initial data. There is ${\alpha}>0$ such that \ $\partial_t  h, \partial_{x_i }h, (-\Delta)^s h \in C^\alpha (\mathbb{R}^n \times (\tau,\infty)) \cap L^\infty(\mathbb{R}^n \times (\tau,\infty))$ for every $\tau>0$.  The bounds depend on the nonlinearity, on $s$, and the stated bound on the data\normalcolor.
\end{cor}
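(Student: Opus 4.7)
The plan is to leverage Theorem~\ref{teo 2} to reduce the corollary to a matter of producing quantitative bounds. Indeed, since $\phi\in C^\infty(\mathbb{R})$ with $\phi'>0$, Theorem~\ref{teo 2} already gives $h\in C^\infty(\mathbb{R}^n\times(0,\infty))$ and tells us that $h$ solves the equation in the classical sense; in particular each of $\partial_t h$, $\partial_{x_i} h$, and $(-\Delta)^s h$ is well defined pointwise. The remaining task is therefore to obtain uniform $L^\infty$ and uniform $C^\alpha$ control on the strip $\mathbb{R}^n\times(\tau,\infty)$, with constants depending only on $s$, on $\phi$ restricted to the range of $h$, and on $\|h_0\|_{L^\infty}$.

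For the $L^\infty$ part, I would first apply the comparison principle from Theorem~\ref{teo 1} to transfer $\|h_0\|_{L^\infty}$ into a uniform bound on $h$ itself. The bounds on $\partial_t h$ and $\partial_{x_i} h$ then follow from the $L^1$--$L^\infty$ smoothing and translation-invariance estimates for the nonlinear semigroup associated to $\partial_t h+(-\Delta)^s\phi(h)=0$, as developed in Section~6 of~\cite{6}: combined with the bound on $h$, they yield $\|\partial_t h(\cdot,t)\|_{L^\infty}+\|\nabla h(\cdot,t)\|_{L^\infty}\le C(\tau)$ for $t\ge\tau$. The $L^\infty$ bound on $(-\Delta)^s\phi(h)$ follows at once from the equation; the one on $(-\Delta)^s h$ is recovered by splitting its defining principal-value integral into a far part (controlled by $\|h\|_{L^\infty}$) and a near part (controlled by the higher spatial regularity produced by iterating the Hölder theory below).

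For the Hölder part I would invoke the uniform $C^\alpha$ estimates of Section~6 of~\cite{6} for non-degenerate smooth $\phi$, which applied to $h$ itself already give the Hölder regularity of $h$ on $\mathbb{R}^n\times(\tau,\infty)$. To transfer this regularity to the derivatives I differentiate the equation: $w=\partial_{x_i} h$, and analogously $w=\partial_t h$, satisfies the linear nonlocal parabolic equation
\[
\partial_t w+(-\Delta)^s\!\bigl(\phi'(h)\,w\bigr)=0,
\]
with bounded and uniformly positive coefficient $\phi'(h)$, to which the linear De Giorgi--Hölder theory from~\cite{6} applies and produces a $C^\alpha$ bound on $w$. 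The bound on $(-\Delta)^s h$ in $C^\alpha$ is then obtained by combining the equation $\partial_t h = -(-\Delta)^s\phi(h)$ with the chain-rule comparison between $(-\Delta)^s h$ and $(-\Delta)^s\phi(h)$.

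The main obstacle I anticipate is precisely this passage between $(-\Delta)^s\phi(h)$, which the equation controls directly, and $(-\Delta)^s h$, since the fractional Laplacian does not commute with composition by $\phi$. Estimating the resulting commutator needs to exploit simultaneously the smoothness of $\phi$ and the uniform Hölder regularity of $h$; some care is required to pick the exponent $\alpha$ small enough that all three estimates—on $h$, on its first derivatives via the linearised equation, and on the commutator—hold with the same $\alpha$.
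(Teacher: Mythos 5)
The paper does not actually prove this statement: Corollary \ref{co 3} is quoted wholesale from Section~6 of \cite{6} (combined with the existence result of Theorem \ref{teo 1}), so your task was in effect to reconstruct the interior regularity theory of \cite{6}. Your broad outline — qualitative smoothness from Theorem \ref{teo 2}, then quantitative interior bounds for $t\ge\tau$ obtained by linearising around the solution — is in the right spirit, but several of the specific mechanisms you invoke would not work as stated. First, the $L^\infty$ bounds on $\partial_t h$ and $\nabla h$ cannot come from ``$L^1$--$L^\infty$ smoothing and translation-invariance estimates'': the data here is merely bounded (in the application it is a step function, whose distributional derivative is a Dirac mass), and translation invariance plus $L^1$-contraction only yields $L^1$-type increment estimates (this is exactly item (e) of Theorem \ref{teo 2.5}), never an $L^\infty$ gradient bound; the Bénilan--Crandall-type bound $|\partial_t h|\le Ch/t$ requires a homogeneous nonlinearity. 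In \cite{6} these bounds are interior parabolic estimates (Hölder continuity of bounded solutions plus a Schauder-type bootstrap), with the dependence on $\tau$ entering through scaling.

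Second, the linearised equation you write, $\partial_t w+(-\Delta)^s(\phi'(h)w)=0$, is not covered by the ``linear De Giorgi--Hölder theory'' you appeal to: the De Giorgi-type results of \cite{CCV} concern symmetric divergence-form kernels $\int (w(y)-w(x))K(x,y)\,dy$, whereas your operator has the non-symmetric kernel $\phi'(h(y))|x-y|^{-n-2s}$ plus an implicit zeroth-order term, and \cite{6} works instead with the non-divergence form $\partial_t u+a(x,t)(-\Delta)^s u=0$ for $u=\phi(h)$, $a=\phi'(h)$, which is the cleaner route. Third, your treatment of the passage from $(-\Delta)^s\phi(h)$ to $(-\Delta)^s h$ has the logic reversed: the Taylor expansion of $\phi$ shows the commutator integrand is of order $|x-y|^{2\beta-n-2s}$ when $h\in C^\beta$, so one needs $\beta>s$ (locally uniformly), and for $(-\Delta)^s h$ to be pointwise defined and Hölder when $s\ge 1/2$ one needs locally uniform $C^{2s+\epsilon}$ bounds; thus the difficulty is not ``picking $\alpha$ small enough'' but running the bootstrap far enough to reach sufficiently \emph{high} regularity before these steps. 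As written, these three points are genuine gaps rather than presentational issues, even though the overall architecture matches the standard one behind the cited result.
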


For the starting continuity see \cite{AtCa, CCV}\nc. On the other hand, we will need a different regularity result, taken from Theorem 5.2 of reference \cite[Section 5]{ChD} that implies $C^{1,\alpha}$ regularity for some $\alpha>0$ with uniform bounds for the solutions  of some general class of equations\nc:

\begin{thm}
\label{davila}
    Let   $\sigma_o \in (0,2)$, $\sigma \in (\sigma_o, 1)$ and $f \in C([-1,0])$. There is $\rho_o$ depending only on $n, \lambda, \Lambda, \sigma_o$ such that if $I$ is an $\textit{inf sup}$ $(\textit{or sup inf})$ type operator, translation invariant in space and elliptic with respect to the class $\mathcal{L}_1(\sigma, \Lambda, \rho_o)$, and if $h \in C(\overline{B}_1\times [-1,0])$ is a viscosity solution of the equation
    \[
    \partial_t h - Ih = f(t) \qquad \textit{in} \qquad B_2 \times (-1,0],
    \]
    then $h$ is $C^{1,\alpha}$ in space for some universal $\alpha \in (0,1)$. More precisely, there is a constant $C > 0$, depending only on $n$, $\lambda$ and $\sigma$ such that for every $(x,t), (y,s) \in B_{1/4} \times [-1/2,0]\nc$
\begin{equation}\label{form.Hchdv}
    \frac{|\partial_{x_i}h(x,t) - \partial_{x_i}h(y,s)|}{(|x-y|+ |t-s|^{1/\sigma})^\alpha} \leq C\left(\norm{h}_{L^\infty (\overline{B}_2 \times [-1,0])\nc} + \norm{h}_{C(-1,0; L^1 (\omega))}\nc + \norm{f}_{L^{\infty}((-1,0])}\right),
\end{equation}
for $i \in \{1,2,\ldots,n\}$.
\end{thm}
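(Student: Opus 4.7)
The plan is to establish $C^{1,\alpha}$ regularity in the space variable for a translation-invariant nonlocal parabolic equation, following the Caffarelli--Silvestre strategy adapted to the parabolic and asymmetric (inf--sup) setting. First I would reduce to the normalized situation by using the natural parabolic scaling $h_r(x,t) = h(rx, r^\sigma t)$ (which is compatible with the order $\sigma$ of the operator) and the linearity of the estimate in $\|h\|_{L^\infty}$, $\|h\|_{C(-1,0;L^1(\omega))}$ and $\|f\|_{L^\infty}$. This allows me to assume these three quantities are at most a small number $\varepsilon$, and the objective then becomes proving that $h$ admits an affine tangent plane at $(0,0)$ with a quantitative decay of the error.

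Next I would invoke a Krylov--Safonov type result for solutions to equations elliptic with respect to the class $\mathcal{L}_1(\sigma, \Lambda, \rho_o)$ to obtain a baseline $C^{\alpha_0}$ estimate for some $\alpha_0 > 0$ depending only on $n, \lambda, \Lambda, \sigma_o$. This will be a crucial ingredient in a compactness argument. Then I would prove an \emph{approximation lemma}: if $h$ is a normalized viscosity solution to $\partial_t h - Ih = f$ in $B_2 \times (-1,0]$ with all three data quantities sufficiently small, then $h$ is uniformly close in $B_{1/2} \times [-1/2,0]$ to a solution $\bar h$ of a \emph{homogeneous} translation-invariant, extremal problem $\partial_t \bar h - \bar I \bar h = 0$. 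The argument is by contradiction and compactness: a failing sequence $(h_k, I_k)$ would, by the uniform $C^{\alpha_0}$ bound and a stability property of the class $\mathcal{L}_1(\sigma, \Lambda, \rho_o)$, have a limit that is a viscosity solution of a limit equation, contradicting the assumed failure. It is at this compactness step that the parameter $\rho_o$ enters: it controls the asymmetry gap within which the class is stable.

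Once the approximation lemma is available, I would combine it with the $C^{1,\beta}$ regularity for solutions of translation-invariant homogeneous nonlocal Bellman-type equations (which is available as a separate, previously-established result). This gives an affine function $L(x) = a + b\cdot x$ such that $|h(x,t) - L(x)| \le \mu^{1+\alpha}$ in $B_\mu \times [-\mu^\sigma, 0]$ for some $\mu < 1$, $\alpha \in (0, \beta)$. Iterating the approximation lemma on the rescaled functions $\tilde h_k(x,t) = \mu^{-k(1+\alpha)}(h(\mu^k x, \mu^{k\sigma} t) - L_k(\mu^k x))$ — which still solve an equation in the same class with slightly modified data thanks to the translation invariance and the scaling — yields a sequence of affine functions $L_k$ whose slopes converge, providing the tangent plane at $(0,0)$ with the correct decay. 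A standard covering argument then gives the interior estimate \eqref{form.Hchdv}.

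The hard part will be the approximation lemma, specifically the careful bookkeeping of the nonlocal tails: at each scale $\mu^k$ the quantity $\|h\|_{C(-1,0;L^1(\omega))}$ is the natural replacement for the $L^\infty$ control outside $B_2$, and the iteration must ensure that the tail growth is absorbed by the decay $\mu^{k(1+\alpha)}$. This forces the choice of $\alpha$ small enough and uses crucially the polynomial decay of the weight $\omega$ matched to the order $\sigma$. Additional technical care is needed for the asymmetric sup-inf structure, since the class $\mathcal{L}_1$ does not enjoy the full symmetry used in the Caffarelli--Silvestre original framework, and for the time variable, where the estimate $|t-s|^{1/\sigma}$ term in the H\"older quotient must be handled via the parabolic scaling rather than by differentiating in $t$.
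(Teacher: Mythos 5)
You should first note that the paper does not prove Theorem \ref{davila} at all: it is imported verbatim from Chang-Lara and D\'avila \cite{ChD} (their Theorem 5.2) and used as a black box, so there is no internal proof to compare your argument against. Your proposal therefore has to stand on its own as a proof of the cited result.

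As written it has a genuine circularity. Since the operator $I$ is translation invariant and $f$ depends only on $t$, the right-hand side can be removed outright (replace $h$ by $h-\int_{-1}^{t} f(\tau)\,d\tau$), so the theorem \emph{is} the statement of interior $C^{1,\alpha}$ spatial regularity for homogeneous, translation-invariant inf--sup equations elliptic with respect to $\mathcal{L}_1(\sigma,\Lambda,\rho_o)$. Your scheme approximates the solution by a solution of ``a homogeneous translation-invariant extremal problem'' and then invokes, as a ``separate, previously-established result,'' the $C^{1,\beta}$ regularity for translation-invariant homogeneous nonlocal Bellman-type equations --- but that is precisely the theorem to be proved; the compactness/improvement-of-flatness step buys nothing here because the limiting equation lies in the same class as the original one (this perturbative scheme is the right tool for $x$-dependent or non-translation-invariant equations, not for this statement). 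The actual proof in \cite{ChD}, following the elliptic Caffarelli--Silvestre template, goes differently: one applies the baseline H\"older estimate (the nonlocal parabolic Krylov--Safonov result, which you correctly identify as an ingredient) to incremental quotients $\bigl(h(x+he_i,t)-h(x,t)\bigr)/|h|^{\beta}$ and iterates to push the exponent past $1$. The extra kernel regularity encoded in $\mathcal{L}_1(\sigma,\Lambda,\rho_o)$ --- a derivative bound on the kernels at scale $\rho_o$ --- is exactly what allows one to handle the nonlocal tails of these difference quotients, which are controlled only in the weighted space $L^1(\omega)$ rather than in $L^\infty$; your sketch instead assigns $\rho_o$ the role of a ``stability gap'' in a compactness argument, which misidentifies why the class $\mathcal{L}_1$ (rather than the larger class $\mathcal{L}_0$ for which only $C^\alpha$ holds) appears in the hypotheses. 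To repair your outline you would either have to prove the translation-invariant homogeneous case directly (e.g. by the incremental-quotient iteration just described) or exhibit a strictly smaller, independently treated class of operators to approximate with --- neither of which is currently in the proposal.
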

The necessary concepts are carefully defined in the paper, in particular the tail space $L^1 (\omega)$ where $\omega(x)=1/(1+|x|)^{N+\sigma} $ is the appropriate fractional weight. Notice that \cite{ChD} uses the  notation $\sigma=2s$ for the fractional parameter. $N$ is the space dimension, here $N=1$. In our application $f=0$.  The wide class of $\textit{inf sup}$ $(\textit{or sup inf})$ type operators is not needed, we will apply the result to a fractional Laplacian $I=-(-\Delta )^{\sigma/2}$.\nc

\medskip

Finally, we present a functional tool that will be useful to prove the convergence of the regularized problems of next section to the unique solution of \eqref{stefan}-\eqref{IC}.

\begin{thm}[Fr\'echet-Kolmogorov]
    \label{Frechet}
    Let $p \in [1,\infty)$ and $\mathcal{A} \subset L^p(\mathbb{R}^n)$ a subset. Then  $\mathcal{A}$ is relatively compact if only if the following two properties hold:
    \begin{itemize}
        \item [$a)$]{(Equicontinuity) }$\lim_{|h| \to 0} \norm{\tau_h f - f}_{L^p(\mathbb{R}^n)} = 0$ uniformly $\forall f \in \mathcal{A}$.
        \item [$b)$]{(Equitight)} $\lim_{r\to \infty} \int_{|x| > r} |f|^p = 0$ uniformly  $\forall f \in \mathcal{A}$,
    \end{itemize}
    where $\tau_h f := f(x-h)$ denotes the translation of $f$ by an amount $h$.
\end{thm}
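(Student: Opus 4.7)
The plan is to handle the two implications separately, the nontrivial direction being the sufficiency of (a) and (b).

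For necessity, I argue by total boundedness. Given that $\mathcal{A}$ is relatively compact and $\varepsilon>0$, extract a finite $\varepsilon$-net $\{f_1,\dots,f_N\}\subset\mathcal{A}$. The starting point for (a) is the continuity of translation in $L^p$ for each fixed $f\in L^p(\mathbb{R}^n)$, which follows from density of $C_c(\mathbb{R}^n)$ in $L^p$ together with uniform continuity of compactly supported continuous functions. Applied to the finitely many $f_i$, this yields $\|\tau_h f_i - f_i\|_p < \varepsilon$ for $|h|<\delta$ uniformly in $i$; the triangle inequality
\[
\|\tau_h f - f\|_p \leq 2\|f-f_i\|_p + \|\tau_h f_i - f_i\|_p
\]
then transfers the bound to every $f\in\mathcal{A}$. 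Statement (b) is obtained by the same $\varepsilon$-net trick, since $\int_{|x|>r}|f_i|^p\,dx \to 0$ for each $f_i$ by dominated convergence.

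For sufficiency, I proceed via mollification. Let $\rho_\varepsilon$ be a standard smooth mollifier supported in $B_\varepsilon$. The key inequality
\[
\|f*\rho_\varepsilon - f\|_{L^p(\mathbb{R}^n)} \leq \sup_{|h|\leq\varepsilon}\|\tau_h f - f\|_{L^p(\mathbb{R}^n)}
\]
combined with (a) makes the mollified family approach $\mathcal{A}$ uniformly in $L^p$ as $\varepsilon\to 0$. On any fixed ball $B_r$, the restricted family $\{(f*\rho_\varepsilon)|_{\overline{B_r}}:f\in\mathcal{A}\}$ is uniformly bounded by H\"older's inequality, $\|f*\rho_\varepsilon\|_\infty \leq \|f\|_p\|\rho_\varepsilon\|_{p'}$, and equicontinuous via
\[
|(f*\rho_\varepsilon)(x)-(f*\rho_\varepsilon)(y)| \leq \|f\|_{L^p}\,\|\tau_{y-x}\rho_\varepsilon - \rho_\varepsilon\|_{L^{p'}},
\]
exploiting the smoothness of $\rho_\varepsilon$. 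Arzel\`a--Ascoli then makes this family precompact in $C(\overline{B_r})$, and hence in $L^p(\overline{B_r})$.

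The closing step is to assemble a finite $\delta$-net for $\mathcal{A}$ in $L^p(\mathbb{R}^n)$. Given $\delta>0$, choose $r$ by (b) so the tails are uniformly controlled by $(\delta/4)^p$; choose $\varepsilon$ so $\|f*\rho_\varepsilon - f\|_p<\delta/4$ uniformly; and extract a finite $(\delta/4)$-net $\{g_1,\dots,g_M\}$ in $L^p(\overline{B_r})$ from the precompact set of restrictions (extended by zero outside $B_r$). The triangle inequality delivers $\|f - g_i\|_p < \delta$ for every $f\in\mathcal{A}$, so $\mathcal{A}$ is totally bounded in the complete space $L^p(\mathbb{R}^n)$, which upgrades to relative compactness. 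The step I expect to require the most care is the bookkeeping that glues the interior Arzel\`a--Ascoli approximation with the tail decay from (b); a subtlety is that the H\"older-type equicontinuity estimate presumes a uniform $L^p$ bound on $\mathcal{A}$, which is either built in as an implicit hypothesis or extracted from (a) and (b) by a fixed-translation argument before the mollification step can be applied.
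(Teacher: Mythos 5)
The paper never proves Theorem \ref{Frechet}: it is quoted as a classical functional-analytic tool (used only to extract the compact subsequence in Proposition \ref{prop 4.1}), so there is no in-paper argument to compare yours against. Your proposal is the standard textbook proof (necessity via a finite $\varepsilon$-net plus continuity of translations in $L^p$; sufficiency via mollification, Arzel\`a--Ascoli on closed balls, and assembly of a finite net using the tail condition, then total boundedness plus completeness), and it is essentially correct. The one real issue is the point you flag yourself: the bounds $\norm{f*\rho_\varepsilon}_\infty\le\norm{f}_p\norm{\rho_\varepsilon}_{p'}$ and the equicontinuity estimate are uniform over $\mathcal{A}$ only if $\sup_{f\in\mathcal{A}}\norm{f}_p<\infty$, which is not among the stated hypotheses (many textbook versions add it). It is, however, a consequence of $a)$ and $b)$, along the line you sketch: choose $\delta$ from $a)$ with $\norm{\tau_h f-f}_p\le 1$ for $|h|\le\delta$ and all $f$, choose $r$ from $b)$ with $\int_{|x|>r}|f|^p\le 1$, translate by a fixed vector of length $3r$ and telescope the translation estimate in steps of size $\delta$ to get $\norm{\tau_{3r e_1}f-f}_p\le\lceil 3r/\delta\rceil$; since $\tau_{3re_1}f$ restricted to $B_r$ only sees the tail of $f$, this yields $\norm{f}_{L^p(B_r)}\le\lceil 3r/\delta\rceil+1$, hence a uniform bound on $\norm{f}_p$. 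With that lemma inserted before the mollification step, your argument is complete.
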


\section{Regularized Fractional Stefan Problem}\label{section 3}

In this section we introduce and study the regularized Fractional Stefan Problem. The idea is to change the Lipschitz function $\Phi$ in  problem \eqref{stefan} into a family of functions $\Phi_\varepsilon \in C^\infty(\mathbb{R})$ with $\varepsilon \in (0,1)$. We define the regularized Fractional Stefan problems as

\begin{equation} \tag{$P_\varepsilon$}
\label{59}
\left\{
\begin{aligned}
\partial_t h + (-\Delta)^s \Phi_\varepsilon (h) &= 0 \hspace{7mm}\textup{in} \hspace{7mm} \mathbb{R}^n \times (0,T),\\
h( \cdot,0) &= h_0 \hspace{5.3mm} \textup{on} \hspace{6.3mm} \mathbb{R}^n,
\end{aligned}
\right.
\end{equation}
where $h_0 \in L^\infty(\mathbb{R})$, $T > 0$, $s \in (0,1)$ and $\Phi_\varepsilon(x) \in C^\infty(\mathbb{R})$ is a function that we define as
\begin{equation}
\label{60}
\Phi_\varepsilon(x) :=\left\{
\begin{aligned}
 &x -L\hspace{20.7mm}\textup{if} \hspace{7mm} L \leq x, \\
& \psi_\varepsilon(x-L) \nc \hspace{21mm}\textup{if} \hspace{7mm} L- \varepsilon\leq x < L,\\
\varepsilon(x&-(L-\varepsilon)) - 2\varepsilon \hspace{6mm}\textup{if} \hspace{7mm} x < L - \varepsilon,
\end{aligned}
\right.
\end{equation}
with $ L > 0$ a constant and $\psi_\varepsilon(x) \in C^\infty(\mathbb{R})$  such that $\Phi_\varepsilon(x)$ is $C^\infty(\mathbb{R})$ and non-degenerate, in particular \
$$
\psi_\varepsilon(0)=0, \quad  \psi'_\varepsilon(0)=1, \quad  \psi_\varepsilon(-\varepsilon)=-2\varepsilon, \quad \psi'_\varepsilon(-\varepsilon)=\varepsilon,
$$
and $\varepsilon \leq \psi'_\varepsilon(x) \leq 1$. Note that  $1\ge \Phi_\varepsilon'(x) \geq \varepsilon > 0$ for all
$x \in \mathbb{R}$, i.e., $\Phi_\varepsilon(x)$ is a strictly increasing function. We want to study some basic properties of the solutions of \eqref{59} like uniqueness, existence of solutions, regularity and selfsimilarity.

\begin{figure}[ht!]
 \centering
 \includegraphics[width=10cm]{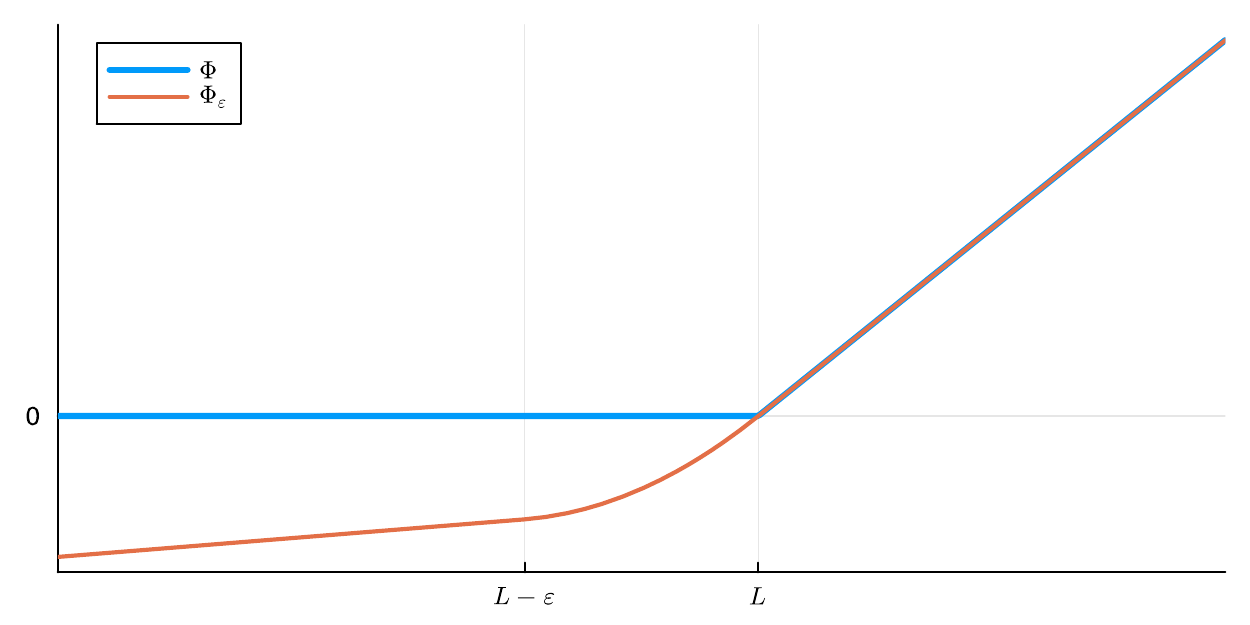}
 \caption{The Stefan nonlinearity $\Phi$ (blue) and its smooth approximation $\Phi_\varepsilon$ (red)}
 \label{fig:1}
\end{figure}

\begin{rem} \rm
We take $\varepsilon \in (0,1)$ small, and  denote by $h_\varepsilon$ the solution of \eqref{59}. Note that \eqref{59} is a family of problems.
We call $u_\varepsilon= \Phi_\varepsilon(h_\varepsilon)$.\end{rem}

\subsection{Existence and uniqueness of  solution}
We will work in the framework of very weak solutions. The definition of weak solution is rather standard and  has been studied in \cite{DEV1,3,4} for a more general family of solutions including \eqref{59}.

\begin{defn}[Bounded very weak solutions]
\label{def 2.2}
    Let $h_0 \in L^\infty(\mathbb{R}^n)$ and $\varepsilon \in (0,1)$. We say that $h_\varepsilon \in L^\infty(\mathbb{R}^n \times (0,T))$ is a very weak solution of \eqref{59} if  for all $ \psi\in C_c^\infty(\mathbb{R}^n \times [0,T))$
    \begin{equation}
        \label{61}
        \int_0^T\int_{\mathbb{R}^n}
        \left( h_\varepsilon\partial_t \psi - \Phi_\varepsilon(h_\varepsilon) \,(-\Delta)^{s}\psi \right) \hspace{1mm}dxdt + \int_{\mathbb{R}^n}h_0(x)\psi(x,0)\hspace{1mm}dx = 0.
    \end{equation}
\end{defn}

\begin{rem} \rm
    An equivalent alternative for \eqref{61} is
    \[
    \partial_t h_\varepsilon + (-\Delta)^{s}\Phi_\varepsilon(h_\varepsilon) = 0 \quad \textup{in} \quad \mathcal{D}'(\mathbb{R}^n \times (0,T)),
    \]
    and
    \[
    \textup{ess}\lim_{t \to 0^+} \int_{\mathbb{R}^n}h_\varepsilon(x,t)\psi(x,t)\hspace{1mm}dx = \int_{\mathbb{R}^n}h_0(x)\psi(x,0)\hspace{1mm}dx \quad \forall  \psi\in C_c^\infty(\mathbb{R}^n \times [0,T)).
    \]
\end{rem}
Now we prove the existence and uniqueness of solution of \eqref{59}.

 \begin{thm}[Existence and uniqueness]
 \label{the 2.4}
     Let $s \in (0,1)$ and $h_0 \in L^\infty(\mathbb{R}^n\times (0,T))$. Then for each $\varepsilon\in (0,1)$  there exists a unique bounded very weak solution of \eqref{59}.
 \end{thm}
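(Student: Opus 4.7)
The plan is to deduce Theorem \ref{the 2.4} directly from Theorem \ref{teo 1}, which is the analogous existence-and-uniqueness statement for the general equation $\partial_t h + (-\Delta)^s \Phi(h) = 0$ with bounded initial data, under the assumptions that $\Phi$ is nondecreasing and locally Lipschitz. So the only thing I would need to check is that the regularized nonlinearity $\Phi_\varepsilon$ defined in \eqref{60} meets these two structural assumptions.

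First I would verify that $\Phi_\varepsilon$ is nondecreasing (in fact, strictly increasing). On the three branches of the piecewise definition, the derivative equals respectively $1$, $\psi_\varepsilon'(\cdot - L)$, and $\varepsilon$. The normalization conditions $\psi_\varepsilon(0)=0$, $\psi_\varepsilon'(0)=1$, $\psi_\varepsilon(-\varepsilon)=-2\varepsilon$, $\psi_\varepsilon'(-\varepsilon)=\varepsilon$ make the piecewise-defined function $\Phi_\varepsilon$ lie in $C^\infty(\mathbb{R})$, while the bound $\varepsilon \le \psi_\varepsilon'(x) \le 1$ yields $\varepsilon \le \Phi_\varepsilon'(x) \le 1$ everywhere on $\mathbb{R}$. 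This immediately gives both monotonicity (even strict) and global Lipschitz continuity with constant $1$, hence in particular local Lipschitz continuity. With both hypotheses of Theorem \ref{teo 1} in hand, I can apply it with $\Phi = \Phi_\varepsilon$ to produce, for each fixed $\varepsilon \in (0,1)$, the unique bounded very weak solution $h_\varepsilon \in L^\infty(\mathbb{R}^n \times (0,T))$ of \eqref{59}.

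I do not expect any genuine obstacle here: the substantial analytical work (compactness, $L^1$-contraction for uniqueness, verification of the distributional identity \eqref{61}, passage to the limit in suitable approximations) is already absorbed into the broader framework that Theorem \ref{teo 1} quotes from \cite{5}. What will matter for the subsequent sections is not this existence statement itself but the two extra features of the regularized nonlinearity that the construction guarantees, namely $\Phi_\varepsilon \in C^\infty(\mathbb{R})$ and $\Phi_\varepsilon' \ge \varepsilon > 0$. Those are exactly the hypotheses needed to invoke Theorem \ref{teo 2} and Corollary \ref{co 3} in order to upgrade $h_\varepsilon$ from a very weak solution to a classical $C^\infty$ solution carrying the quantitative (though $\varepsilon$-dependent) $C^{1,\alpha}$ estimates that drive the rest of the paper.
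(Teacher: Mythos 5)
Your proposal is correct and is essentially identical to the paper's own proof: both verify from the definition \eqref{60} that $\Phi_\varepsilon$ is strictly increasing and globally Lipschitz (with constant $1$, since $\varepsilon \le \Phi_\varepsilon' \le 1$) and then apply Theorem \ref{teo 1} with $\Phi = \Phi_\varepsilon$. The extra remarks about $\Phi_\varepsilon \in C^\infty$ and non-degeneracy are not needed for this statement but correctly anticipate how Theorem \ref{teo 2} and Corollary \ref{co 3} are used later.
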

 \begin{proof}
     Let $\varepsilon \in (0,1)$. By \eqref{60} we know that $\Phi_\varepsilon(x)$ is a globally Lipschitz and strictly increasing function. Then by Theorem \ref{teo 1}, we deduce that there exists a unique  bounded very weak solution of \eqref{59}.
 \end{proof}

 A consequence of the Theorem \ref{the 2.4} and \cite[Theorem A.1]{DEV1} is the next well-posedness result of bounded very weak solutions of \eqref{59}.

\begin{thm}[Well-posedness of bounded solutions]
\label{teo 2.5}
    Let $s\in (0,1)$ and  $h_0 \in L^\infty(\mathbb{R}^n)$. Then for each $\varepsilon \in (0,1)$ there exists a unique very weak solution $h_\varepsilon \in L^\infty(\mathbb{R}^n \times (0,T))$ of \eqref{59}. Moreover,  given a fixed $\varepsilon \in (0,1)$ and two very weak solutions of \eqref{59} $h_\varepsilon,\hat{h}_\varepsilon \in L^\infty(\mathbb{R}^n\times (0,T))$ with initial data $h_0, \hat{h}_0 \in L^\infty(\mathbb{R}^n)$, they have the following properties:
\begin{enumerate}
\item[{$a)$}] $\norm{h_\varepsilon(\cdot,t)}_{L^\infty(\mathbb{R}^n)}$ $\leq \norm{h_0}_{L^\infty(\mathbb{R}^n)}$ $\quad$ for a.e $\quad$ $t \in (0,T)$.
      \item[{$b)$}] If $h_0 \leq \hat{h}_0$ a.e in $\mathbb{R}^n$, then $h_\varepsilon \leq \hat{h}_\varepsilon$ a.e in $\mathbb{R}^n \times (0,T)$.
\item[{$c)$}] If $(h_0 - \hat{h}_0)^+ \in L^1(\mathbb{R}^n)$, then
\[
 \int_{\mathbb{R}^n} (h_\varepsilon(x,t) - \hat{h}_\varepsilon(x,t))^+ \hspace{1mm}dx \leq \int_{\mathbb{R}^n} (h_0(x,t) - \hat{h}_0(x,t))^+ \hspace{1mm}dx \quad \textup{for a.e} \quad t\in (0,T).
 \]
         \item[{$d)$}] If $h_0 \in L^1(\mathbb{R}^n)$, then
         \[
         \int_{\mathbb{R}^n} h_\varepsilon(x,t) \hspace{1mm}dx = \int_{\mathbb{R}^n} h_0(x) \hspace{1mm}dx \quad \textup{for a.e} \hspace{2mm} t\in (0,T).
         \]
         \item[{$e)$}] If $\norm{h_0(\cdot + \xi) - h_0}_{L^1(\mathbb{R}^n)} \rightarrow 0$ as $|\xi|\rightarrow 0^+$, then $h_\varepsilon \in C([0,T] : L^1_{loc}(\mathbb{R}^n))$. Moreover for every $t,s\in [0,T]$ and every compact set $K \subset \mathbb{R}^n$
         \[
         \norm{h_\varepsilon(\cdot,t) - h_\varepsilon(\cdot,s)}_{L^1(K)}\leq \varLambda(|t-s|),
         \]
         where $\varLambda$ is a modulus of continuity depending on $K$ and $\norm{h_0(\cdot + \xi) - h_0}_{L^1(\mathbb{R}^n)}$.
\end{enumerate}
\end{thm}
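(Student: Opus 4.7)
The plan is to obtain Theorem \ref{teo 2.5} as an essentially immediate consequence of Theorem \ref{the 2.4} together with \cite[Theorem A.1]{DEV1}, by verifying that the regularized nonlinearity $\Phi_\varepsilon$ fits the framework of that reference. Since existence and uniqueness are already provided by Theorem \ref{the 2.4}, the only real content is to transfer the quantitative properties (a)--(e) from the general theorem to the present regularized setting.

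First I would check the hypotheses. By construction $\Phi_\varepsilon \in C^\infty(\mathbb{R})$ with $\varepsilon \leq \Phi_\varepsilon'(x) \leq 1$ on all of $\mathbb{R}$, so $\Phi_\varepsilon$ is nondecreasing and globally Lipschitz with Lipschitz constant bounded by $1$, uniformly in $\varepsilon$. This is exactly the class of nonlinearities to which \cite[Theorem A.1]{DEV1} applies, so items (a)--(e) follow by specializing $\Phi = \Phi_\varepsilon$ in that result.

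For orientation I would also indicate the standard mechanism behind each property. Property (a) follows by comparing $h_\varepsilon$ with the stationary supersolution and subsolution $\pm\norm{h_0}_{L^\infty(\mathbb{R}^n)}$, which are trivially solutions because $(-\Delta)^s$ annihilates constants. The order-preserving property (b) is the backbone of the theory and is obtained by a doubling-of-variables argument, or equivalently via the Crandall--Liggett $m$-accretivity of the operator $v \mapsto (-\Delta)^s \Phi_\varepsilon(v)$ in $L^1$, using only that $\Phi_\varepsilon$ is nondecreasing and Lipschitz. Property (c) follows from (b) by testing the equation for $h_\varepsilon - \hat h_\varepsilon$ against smooth approximations of $\mathrm{sgn}_+$, invoking the monotonicity of $\Phi_\varepsilon$ to discard a sign-definite term. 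Property (d) is obtained by inserting spatial cutoffs $\psi_R(x)$ into the weak formulation \eqref{61} and passing $R \to \infty$, exploiting the decay of $(-\Delta)^s \psi_R$; some care is needed because $\Phi_\varepsilon(0) = -2\varepsilon \neq 0$, which is handled by subtracting a constant from $\Phi_\varepsilon$ without changing the equation. Finally, property (e) follows from (c) plus a standard spatial-then-temporal mollification: the spatial equicontinuity of $h_0$ transfers to $h_\varepsilon(\cdot,t)$ uniformly in $t$ via the $L^1$-contraction (c), and then the weak equation gives a one-sided time modulus through the identity $\partial_t h_\varepsilon = -(-\Delta)^s \Phi_\varepsilon(h_\varepsilon)$ tested against suitable cutoffs.

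The main obstacle is really bookkeeping rather than analysis: one must make sure that in applying \cite[Theorem A.1]{DEV1} the constants appearing in (a)--(e) come out independent of $\varepsilon$, so that the estimates survive the limit $\varepsilon\to 0$ used later in Section \ref{section 5}. Since the Lipschitz constant of $\Phi_\varepsilon$ is bounded by $1$ uniformly in $\varepsilon$, and no inverse modulus of $\Phi_\varepsilon$ enters either (a), (b) or (c), this uniformity is automatic for those items; uniformity in (d) is trivial, and in (e) it reduces to the uniform spatial translation continuity of $h_0$, which is a condition on the data alone. Hence no further argument is needed beyond citing \cite[Theorem A.1]{DEV1}.
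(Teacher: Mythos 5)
Your proposal is correct and follows essentially the same route as the paper: existence and uniqueness come from Theorem \ref{the 2.4}, and properties (a)--(e) are obtained by noting that $\Phi_\varepsilon$ is nondecreasing and globally Lipschitz and then specializing \cite[Theorem A.1]{DEV1}. The additional sketches of the mechanisms behind (a)--(e) and the remarks on $\varepsilon$-uniformity go beyond what the paper records here, but they do not change the argument.
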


\begin{proof}
    Thanks to Theorem \ref{the 2.4} we have uniqueness and existence of solution of \eqref{59}. The properties $a)-e)$ are a consequence of \cite[Theorem A.1]{DEV1}.
\end{proof}

\subsection{Preliminary selfsimilar results}

Now we study the existence and uniqueness of selfsimilar solutions of the problem \eqref{59}. Firstly, we prove the next scaling invariance:
\begin{lem}[Scaling invariance]
\label{lemma 2.6}
    Let $s \in (0,1)$, $\varepsilon\in (0,1)$ and $h_\varepsilon \in L^\infty(\mathbb{R}^n \times (0,T))$ be a very weak solution of \eqref{59} with initial data $h_0 \in L^\infty(\mathbb{R}^n)$. Then for all $a > 0$ the function $h_a(x,t) := h(ax,a^{2s}t)$ is a very weak solution of \eqref{59} with initial data $h_{0,a}(x) := h(ax)$.
\end{lem}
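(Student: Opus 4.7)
The claim is a standard self-similar scaling argument, so the plan is to plug $h_a$ into the very weak formulation (Definition \ref{def 2.2}) and use the scaling behaviour of the fractional Laplacian under dilations. The whole argument rests on two ingredients: the change of variables $(y,\tau)=(ax,a^{2s}t)$, and the identity $(-\Delta)^s(\psi(\cdot/a))(y)=a^{-2s}\bigl((-\Delta)^s\psi\bigr)(y/a)$. The choice of the exponent $a^{2s}$ in the time rescaling is precisely what makes these two factors compensate each other.

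\textbf{Main steps.} First I would fix a test function $\phi\in C_c^\infty(\mathbb{R}^n\times[0,T/a^{2s}))$ and write the putative weak identity for $h_a$, namely
\[
\int_0^{T/a^{2s}}\!\!\int_{\mathbb{R}^n}\bigl(h_a\,\partial_t\phi-\Phi_\varepsilon(h_a)(-\Delta)^s\phi\bigr)\,dx\,dt+\int_{\mathbb{R}^n}h_{0,a}(x)\phi(x,0)\,dx=0.
\]
Then I would change variables $y=ax$, $\tau=a^{2s}t$ (so $dx=a^{-n}dy$, $dt=a^{-2s}d\tau$), introduce the rescaled test function $\widetilde\phi(y,\tau):=\phi(y/a,\tau/a^{2s})$, which belongs to $C_c^\infty(\mathbb{R}^n\times[0,T))$, and apply the two scaling identities $\partial_t\phi(y/a,\tau/a^{2s})=a^{2s}\partial_\tau\widetilde\phi(y,\tau)$ and $\bigl((-\Delta)^s\phi(\cdot,\tau/a^{2s})\bigr)(y/a)=a^{2s}(-\Delta)^s_y\widetilde\phi(y,\tau)$. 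After substitution, every factor of $a$ in the space-time integral collapses to a single prefactor $a^{-n}$, and likewise for the initial data integral. What remains inside the bracket is precisely the very weak formulation of \eqref{59} for $h_\varepsilon$ tested against $\widetilde\phi$, which vanishes by hypothesis. Dividing by $a^{-n}\neq 0$ yields the required identity for $h_a$.

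\textbf{Initial trace and bounds.} I would also check that $h_a\in L^\infty(\mathbb{R}^n\times(0,T/a^{2s}))$ with $\|h_a\|_\infty=\|h_\varepsilon\|_\infty$, which is immediate from the definition, and that the initial datum is attained in the sense of Definition \ref{def 2.2}: since $h_\varepsilon$ attains $h_0$ as an ess-limit, the dilation $y\mapsto h_0(ay)$ is attained by $h_a$ as an ess-limit by the same change of variables applied to a test function at $t=0$.

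\textbf{Main difficulty.} There is no serious obstacle; the only delicate point is the scaling identity for $(-\Delta)^s$, which must be justified because $\phi$ is acted upon in the space variable with the time variable frozen. This is standard and follows from the singular integral representation of $(-\Delta)^s$ (with the principal value commuting with the dilation). Everything else is bookkeeping of powers of $a$, and the fact that the exponent $2s$ is exactly the right one to obtain invariance.
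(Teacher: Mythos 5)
Your proposal is correct and is essentially the argument the paper invokes: the paper's proof of this lemma simply cites \cite[Lemma 3.5]{DEV1} with $\Phi$ replaced by $\Phi_\varepsilon$, and that proof is precisely your change of variables $(y,\tau)=(ax,a^{2s}t)$ in the very weak formulation together with the dilation identity for $(-\Delta)^s$, with all powers of $a$ collapsing to a harmless factor $a^{-n}$. Your additional remarks on the rescaled time interval $(0,T/a^{2s})$ and the attainment of the initial datum are consistent with Definition \ref{def 2.2} and complete the bookkeeping correctly.
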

\begin{proof}
    The proof is the same as in the \cite[Lemma 3.5]{DEV1} changing $\Phi(x)$ into $\Phi_\varepsilon(x)$.
\end{proof}
Once we know the selfsimilarity of the equation, choosing an appropriate initial data will provide us with a selfsimilar solution and a selfsimilar profile.

\begin{lem}[Existence and uniqueness of a selfsimilar solution]
\label{lemma 2.7}
Let $s \in (0,1)$, $\varepsilon\in(0,1)$ and $h_\varepsilon \in L^\infty(\mathbb{R}^n \times (0,T))$ the unique very weak solution of \eqref{59} with initial data $h_0\in L^\infty(\mathbb{R}^n)$. If $h_0(x) = h_0(ax)$ for all $a > 0$ and a.e $x \in \mathbb{R}^n$, then
\[
h_\varepsilon(x,t) = h_\varepsilon(ax,a^{2s}t) \quad \textup{for a.e} \quad (x,t) \in \mathbb{R}^n \times (0,T),
\]
and all $a > 0$. In particular, $h(x,t) = h(xt^{-1/(2s)},1)$ for a.e $(x,t) \in \mathbb{R}^n \times (0,T)$.
\end{lem}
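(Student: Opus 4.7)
The plan is to deduce the selfsimilar structure directly from the scaling invariance of Lemma \ref{lemma 2.6} together with the uniqueness part of Theorem \ref{teo 2.5} (equivalently Theorem \ref{the 2.4}). The strategy mirrors the standard argument for self-similar solutions of scaling-invariant evolution equations: whenever the initial datum is invariant under the symmetry group of the equation, uniqueness forces the whole solution to inherit that symmetry.

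First I would fix $a>0$ and set $h_a(x,t):=h_\varepsilon(ax,a^{2s}t)$. By Lemma \ref{lemma 2.6}, $h_a$ is a bounded very weak solution of \eqref{59} on $\mathbb{R}^n\times(0,T)$ with initial datum $h_{0,a}(x)=h_0(ax)$. The hypothesis $h_0(x)=h_0(ax)$ a.e.\ (for all $a>0$) gives $h_{0,a}=h_0$ a.e.\ in $\mathbb{R}^n$, so both $h_a$ and $h_\varepsilon$ are bounded very weak solutions of \eqref{59} with the same initial datum $h_0\in L^\infty(\mathbb{R}^n)$.

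Next I would invoke the uniqueness statement from Theorem \ref{the 2.4} (or Theorem \ref{teo 2.5}, part $(a)$--$(b)$ combined) to conclude
\[
h_\varepsilon(ax,a^{2s}t)=h_a(x,t)=h_\varepsilon(x,t) \qquad \text{for a.e.\ }(x,t)\in\mathbb{R}^n\times(0,T),\ \forall a>0.
\]
Finally, specializing to $a=t^{-1/(2s)}$ (legitimate for each fixed $t\in(0,T)$) gives
\[
h_\varepsilon(x,t)=h_\varepsilon\!\left(xt^{-1/(2s)},1\right) \qquad \text{for a.e.\ }(x,t)\in\mathbb{R}^n\times(0,T),
\]
which is exactly the claimed selfsimilar form, with profile $H_\varepsilon(\xi):=h_\varepsilon(\xi,1)$.

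There is no real obstacle here; the only mild subtlety is an almost-everywhere bookkeeping issue: the identity $h_a=h_\varepsilon$ holds for each fixed $a>0$ up to a null set depending on $a$, so in order to substitute $a=t^{-1/(2s)}$ cleanly one should either work on a full-measure set obtained via a countable family of $a$'s and a Fubini argument, or appeal to the continuity statement in Theorem \ref{teo 2.5}$(e)$ to pass to a pointwise representative. Either route is standard and does not affect the argument.
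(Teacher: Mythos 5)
Your proof is correct and follows essentially the same route as the paper: scaling invariance (Lemma \ref{lemma 2.6}) plus the scale-invariance of the initial datum and uniqueness from Theorem \ref{the 2.4}, followed by the choice $a=t^{-1/(2s)}$. The extra remark on the almost-everywhere bookkeeping is a reasonable refinement but does not change the argument, which matches the paper's proof.
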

\begin{proof}
    Recall that given $\varepsilon \in (0,1)$ and an initial data $h_0\in L^\infty(\mathbb{R}^n)$, by the Theorem \ref{the 2.4}, we know that there exists a unique solution of $h_\varepsilon$.

    For $\varepsilon\in(0,1)$ and $a >0$, by Lemma \ref{lemma 2.6} we have that $h_\varepsilon(ax,a^{2s}x)$ is a very weak solution of \eqref{59} with initial data $h_0(ax)$. By the scaling property $h_0(ax) = h_0(x)$ we thus get that $h_\varepsilon(ax,a^{2s}t)$ is a solution with initial data $h_0(x)$ for all $a >0$. By Theorem \ref{the 2.4} we deduce that $h_\varepsilon(x,t) = h_\varepsilon(ax,a^{2s}t)$ for all $a >0$. In particular, by choosing $a = t^{-1/(2s)}$ we get the identity.
\end{proof}

We can express the selfsimilar solution in terms of a profile satisfying a stationary equation.

\begin{lem}[Equation of the profile]
    \label{lemma 2.8}
    Under the assumptions of Lemma \ref{lemma 2.7}, consider the function $H_\varepsilon(\xi):= h_\varepsilon(\xi,1)$ for a.e $\xi \in \mathbb{R}^n$. Then, we have that $H_\varepsilon$ satisfies
    \begin{equation}
     \label{62}
    -\frac{1}{2s}\xi \cdot \nabla H_\varepsilon(\xi) + (-\Delta)^s \Phi_\varepsilon(H_\varepsilon)(\xi) = 0 \qquad \textup{in} \qquad \mathcal{D}'(\mathbb{R}^n).
    \end{equation}
\end{lem}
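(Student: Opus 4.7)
My plan is to deduce the stationary profile equation by substituting the selfsimilar form of $h_\varepsilon$ into its own equation and tracking how each term rescales. The first step is to upgrade regularity: since $\Phi_\varepsilon \in C^\infty(\mathbb{R})$ with $\Phi_\varepsilon' \geq \varepsilon > 0$ by construction in \eqref{60}, Theorem \ref{teo 2} applies and the unique very weak solution $h_\varepsilon$ lies in $C^\infty(\mathbb{R}^n \times (0,\infty))$, solving the equation classically. In particular, $H_\varepsilon(\xi) = h_\varepsilon(\xi,1) \in C^\infty(\mathbb{R}^n)$, and it will suffice to establish the profile equation pointwise, since the pointwise identity trivially implies the distributional one claimed in \eqref{62}.

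Setting $\xi := x\,t^{-1/(2s)}$, Lemma \ref{lemma 2.7} gives $h_\varepsilon(x,t) = H_\varepsilon(\xi)$; the chain rule then yields
\[
\partial_t h_\varepsilon(x,t) \;=\; \nabla H_\varepsilon(\xi) \cdot \partial_t\bigl(xt^{-1/(2s)}\bigr) \;=\; -\frac{1}{2s\,t}\,\xi \cdot \nabla H_\varepsilon(\xi).
\]
For the nonlocal term I would invoke the scaling identity $(-\Delta)^s[f(\lambda \cdot)](x) = \lambda^{2s}\,((-\Delta)^s f)(\lambda x)$, obtained by a direct change of variables in the singular-integral definition and valid here because $\Phi_\varepsilon(H_\varepsilon)$ is bounded and smooth. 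With $\lambda = t^{-1/(2s)}$, this produces
\[
(-\Delta)^s_x \Phi_\varepsilon(h_\varepsilon)(x,t) \;=\; t^{-1}\,(-\Delta)^s \Phi_\varepsilon(H_\varepsilon)(\xi).
\]
Substituting both expressions into $\partial_t h_\varepsilon + (-\Delta)^s \Phi_\varepsilon(h_\varepsilon) = 0$ and multiplying by $t > 0$ gives the desired identity for every $\xi \in \mathbb{R}^n$, since at any fixed $t$ the map $x \mapsto x t^{-1/(2s)}$ is a bijection of $\mathbb{R}^n$.

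There is no genuine obstacle in this approach; it reduces to a smoothness-plus-computation verification once Theorem \ref{teo 2} is invoked. If one preferred to avoid the smoothness upgrade, an alternative route would be to work directly in the very weak formulation of Definition \ref{def 2.2}, testing against product functions of the form $\psi(x,t) = \varphi(x t^{-1/(2s)})\eta(t)$, performing the change of variables $y = x t^{-1/(2s)}$, and using the symmetry of $(-\Delta)^s$ to transfer it from $\psi$ onto $\Phi_\varepsilon(H_\varepsilon)$; the arbitrariness of $\varphi \in C_c^\infty(\mathbb{R}^n)$ together with a suitable choice of $\eta$ would then isolate \eqref{62} in $\mathcal{D}'(\mathbb{R}^n)$.
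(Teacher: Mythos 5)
Your proposal is correct, but it takes a genuinely different route from the paper. The paper's proof of Lemma \ref{lemma 2.8} is simply a reference to the corresponding result for the unregularized problem (Lemma 3.7 of \cite{DEV1}), which is a purely distributional argument: one tests the very weak formulation of Definition \ref{def 2.2} with suitable test functions, changes variables $\xi = x t^{-1/(2s)}$, and extracts the stationary equation in $\mathcal{D}'(\mathbb{R}^n)$ --- essentially the ``alternative route'' you sketch in your last paragraph. Your main argument instead exploits what is special about the regularized problem: since $\Phi_\varepsilon$ is smooth with $\Phi_\varepsilon' \ge \varepsilon > 0$, Theorem \ref{teo 2} upgrades $h_\varepsilon$ to a classical $C^\infty$ solution, after which the chain rule plus the scaling identity $(-\Delta)^s[f(\lambda\,\cdot)](x) = \lambda^{2s}((-\Delta)^s f)(\lambda x)$ (legitimate here because $\Phi_\varepsilon(H_\varepsilon)$ is bounded and smooth) give the profile equation pointwise, hence in $\mathcal{D}'$. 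This is sound; the only points worth making explicit are that the a.e.\ identity $h_\varepsilon(x,t) = H_\varepsilon(x t^{-1/(2s)})$ from Lemma \ref{lemma 2.7} holds everywhere by continuity before you differentiate it, and that boundedness of $h_\varepsilon$ (Theorem \ref{teo 2.5}) is what licenses Theorem \ref{teo 2}. Note that the paper itself proves the smoothness you need, but only afterwards (Lemma \ref{lemma 2.6b}), which is presumably why it keeps the weak derivation at this stage. What each approach buys: yours is shorter and yields the equation in the classical pointwise sense for free; the paper's (i.e.\ \cite{DEV1}'s) argument is more robust, since it needs no regularity of the nonlinearity beyond Lipschitz and therefore applies verbatim to the limit Stefan nonlinearity $\Phi$, where no smoothness upgrade is available.
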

\begin{proof}
    The proof is the same as \cite[Lemma 3.7]{DEV1}.
\end{proof}

\begin{rem} \rm
   Note that the relation $u_\varepsilon(x,t) := \Phi_\varepsilon(h(x,t))$ allows us to  define a selfsimilar profile for $u_\varepsilon$ too. More precisely,
    \[
    u_\varepsilon(x,t) = \Phi_\varepsilon(h_\varepsilon(x,t)) = \Phi_\varepsilon(H_\varepsilon(\xi))=: U_\varepsilon(\xi),
    \]
    with $\xi= xt^{-1/(2s)}$.
 Then, we can rewrite our problem as
\begin{equation}
    \label{63}
     -\frac{1}{2s}\xi \cdot \nabla H_\varepsilon(\xi) + (-\Delta)^s U_\varepsilon(\xi) = 0 \qquad \textup{in} \qquad \mathcal{D}'(\mathbb{R}^n).
\end{equation}
\end{rem}

\subsection{Preliminary regularity results}
Thanks to the good properties of $\Phi_\varepsilon(x)$ we can prove some useful regularity results. We will  see that all the solutions of \eqref{59} are $C^\infty(\mathbb{R}^n)\cap C^{0,1}(\mathbb{R}^n)$. Moreover, we will prove that the selfsimilar profile of the temperature $U_\varepsilon$ is $C^{0,1}(\mathbb{R}^n)$.
\begin{lem}
\label{lemma 2.6b}
    Let $s\in (0,1)$, $\varepsilon \in (0,1)$, $h_0\in L^\infty(\mathbb{R}^n)$ and $h_\varepsilon$ the unique solution of \eqref{59}. Then $h_\varepsilon \in C^\infty(\mathbb{R}^n \times (0,T))$.
\end{lem}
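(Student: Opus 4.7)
The plan is to reduce the statement to a direct application of Theorem \ref{teo 2}, which is the smoothing result that is precisely designed for fractional diffusion equations of the form $\partial_t h + (-\Delta)^s \phi(h) = 0$ with a smooth, strictly increasing nonlinearity $\phi$.

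First I would verify that the regularized nonlinearity $\Phi_\varepsilon$ defined in \eqref{60} satisfies the hypotheses of Theorem \ref{teo 2}. By construction $\Phi_\varepsilon$ coincides with the affine function $x\mapsto x-L$ for $x\ge L$, with the affine function of slope $\varepsilon$ for $x < L-\varepsilon$, and with $\psi_\varepsilon(x-L)$ on the matching interval $[L-\varepsilon,L)$, where the matching data $\psi_\varepsilon(0)=0$, $\psi'_\varepsilon(0)=1$, $\psi_\varepsilon(-\varepsilon)=-2\varepsilon$, $\psi'_\varepsilon(-\varepsilon)=\varepsilon$ guarantee that the two lateral pieces can be glued to $\psi_\varepsilon$ so that $\Phi_\varepsilon\in C^\infty(\mathbb{R})$. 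Moreover, the stated inequality $\varepsilon\le \psi'_\varepsilon\le 1$ together with the two affine pieces (of slopes $1$ and $\varepsilon$) gives $\varepsilon\le \Phi'_\varepsilon(x)\le 1$ for every $x\in\mathbb{R}$, so in particular $\Phi'_\varepsilon(x)>0$ everywhere. Thus both structural assumptions of Theorem \ref{teo 2} are in force for $\phi=\Phi_\varepsilon$.

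Next I would use Theorem \ref{the 2.4} (or equivalently Theorem \ref{teo 2.5}) to produce the unique bounded very weak solution $h_\varepsilon\in L^\infty(\mathbb{R}^n\times(0,T))$ associated with the data $h_0\in L^\infty(\mathbb{R}^n)$, and note that it is by construction a very weak solution of
\[
\partial_t h_\varepsilon + (-\Delta)^s \Phi_\varepsilon(h_\varepsilon)=0 \quad \text{in } \mathbb{R}^n\times (0,T).
\]
At this point $h_\varepsilon$ fulfills all the hypotheses of Theorem \ref{teo 2} (bounded very weak solution; $\Phi_\varepsilon$ smooth and strictly increasing). Invoking that theorem yields $h_\varepsilon\in C^\infty(\mathbb{R}^n\times(0,\infty))$ together with the fact that the equation is satisfied in the classical pointwise sense. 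Restricting to the time interval $(0,T)$ gives the conclusion of the lemma.

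I do not anticipate any serious obstacle: all the analytic work is already encoded in Theorem \ref{teo 2}. The only mildly delicate item is to be explicit about the regularity and non-degeneracy of $\Phi_\varepsilon$, which follows entirely from the definition \eqref{60} and the conditions imposed on $\psi_\varepsilon$. Note that the constants controlling the various derivatives of $h_\varepsilon$ coming out of this argument depend on $\varepsilon$ through the regularity of $\Phi_\varepsilon$, which is the expected price of the regularization and is precisely the reason why the subsequent sections must develop $\varepsilon$-independent estimates.
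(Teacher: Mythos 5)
Your proposal is correct and follows essentially the same route as the paper: the paper's proof likewise observes that $\Phi_\varepsilon'>0$ makes the regularized equation non-degenerate and then applies Theorem \ref{teo 2} to conclude $h_\varepsilon\in C^\infty(\mathbb{R}^n\times(0,T))$. Your additional verification of the hypotheses on $\Phi_\varepsilon$ from \eqref{60} is a harmless elaboration of the same argument.
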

\begin{proof}
    By the definition of $\Phi_\varepsilon(x)$ we have that the equation \eqref{59} is uniformly parabolic, because $\Phi_\varepsilon'(x) > 0$ for all $x \in \mathbb{R}$. Then, by Theorem \ref{teo 2} we obtain that $h_\varepsilon \in C^\infty(\mathbb{R}^n \times (0,T))$.
\end{proof}

A consequence of  Lemma \ref{lemma 2.6b} is that the selfsimilar profile  $U_\varepsilon(\xi)$ is uniformly Lipschitz in $\mathbb{R}^n$.

\begin{lem}
\label{lemma 2.11}
    Let $s\in (0,1)$, $\varepsilon \in (0,1)$ and $h_0 \in L^\infty(\mathbb{R}^n)$. Suppose that the initial data satisfies $h_0(x) = h_0(ax)$ for all $a > 0$. Then, the  unique selfsimilar solution $H_\varepsilon$ of \eqref{59} and its temperature profile $U_\varepsilon $ are globally Lipschitz in $\mathbb{R}^n$.
\end{lem}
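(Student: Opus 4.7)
The plan is to deduce the global Lipschitz regularity of $H_\varepsilon$ from the interior spatial $C^{1,\alpha}$ bound on $h_\varepsilon$ provided by Corollary \ref{co 3}, and then transfer that regularity to $U_\varepsilon=\Phi_\varepsilon(H_\varepsilon)$ by a direct composition argument. The point is that, once we have regularized the nonlinearity, the parabolic problem \eqref{59} is nondegenerate and the selfsimilar scaling converts interior regularity for $h_\varepsilon(\cdot,t)$ at a single time into global regularity for the profile $H_\varepsilon$.

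First I would apply Corollary \ref{co 3} to $h_\varepsilon$. This is legitimate because $\Phi_\varepsilon\in C^\infty(\mathbb{R})$ with $\Phi_\varepsilon'>0$ on $\mathbb{R}$ and $h_0\in L^\infty(\mathbb{R}^n)$, so the hypotheses are met. The corollary yields, for any $\tau>0$, the bound $\partial_{x_i}h_\varepsilon\in L^\infty(\mathbb{R}^n\times(\tau,\infty))$ with a constant that may depend on $s$, $\varepsilon$ and $\|h_0\|_{L^\infty}$, but is independent of $x$ and of $t\ge\tau$. For fixed $\varepsilon$ this is exactly what is needed.

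Next I would invoke the selfsimilar representation of Lemma \ref{lemma 2.7}, namely $h_\varepsilon(x,t)=H_\varepsilon(xt^{-1/(2s)})$. Differentiating gives
\[
\nabla h_\varepsilon(x,t)=t^{-1/(2s)}\,\nabla H_\varepsilon\!\left(xt^{-1/(2s)}\right),
\]
and choosing $t=1$ yields $\nabla H_\varepsilon(\xi)=\nabla h_\varepsilon(\xi,1)$ for every $\xi\in\mathbb{R}^n$. Combining this identity with the uniform bound from the previous step (applied with, say, $\tau=1/2$) produces a finite constant $M_\varepsilon$ such that $|\nabla H_\varepsilon(\xi)|\le M_\varepsilon$ for every $\xi$, which is the desired global Lipschitz estimate for $H_\varepsilon$.

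Finally, for the temperature profile I would use the chain rule. Since $U_\varepsilon=\Phi_\varepsilon\circ H_\varepsilon$ and, by construction of $\Phi_\varepsilon$ in \eqref{60}, one has $0\le \Phi_\varepsilon'\le 1$ everywhere on $\mathbb{R}$, it follows that $|\nabla U_\varepsilon(\xi)|\le |\nabla H_\varepsilon(\xi)|\le M_\varepsilon$ for all $\xi$, so $U_\varepsilon$ is globally Lipschitz with the same constant. There is no real obstacle here beyond carefully citing Corollary \ref{co 3}; the only point worth flagging is that the Lipschitz constant depends on $\varepsilon$, which is acceptable at this stage. Obtaining a constant independent of $\varepsilon$ is the more delicate task and will be addressed later via the improved estimate \eqref{sec 4 ine} and the interpolation argument announced in the introduction.
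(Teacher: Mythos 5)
Your proposal is correct, but it follows a slightly different route from the paper for the $H_\varepsilon$ part. The paper also starts from Corollary \ref{co 3}, but it uses the \emph{time}-derivative bound $|\partial_t h_\varepsilon|\le C_\varepsilon$ together with the self-similar identity $\partial_t h_\varepsilon(x,t)=-\tfrac{1}{2s}t^{-1}\,\xi\cdot\nabla H_\varepsilon(\xi)$, which at $t=1$ only yields $|\xi\,\nabla H_\varepsilon(\xi)|\le 2sC_\varepsilon$; this controls $\nabla H_\varepsilon$ away from the origin, and the bound on $\overline{B}_\delta(0)$ is then patched in separately using the smoothness of $H_\varepsilon$ from Lemma \ref{lemma 2.6b}. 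You instead use the \emph{spatial}-derivative bound $\partial_{x_i}h_\varepsilon\in L^\infty(\mathbb{R}^n\times(\tau,\infty))$, also supplied by the same corollary, and simply evaluate $\nabla H_\varepsilon(\xi)=\nabla_x h_\varepsilon(\xi,1)$, which gives the global bound in one stroke and avoids the degenerate factor $\xi$ at the origin; this is legitimate since $H_\varepsilon(\xi)=h_\varepsilon(\xi,1)$ by definition and $t=1>\tau=1/2$. The trade-off is essentially cosmetic: the paper's argument only needs the time-derivative piece of the corollary (and would survive if only that were available), while yours is shorter given that the corollary already states the spatial gradient bound. For the temperature profile both arguments coincide: $U_\varepsilon=\Phi_\varepsilon\circ H_\varepsilon$ with $[\Phi_\varepsilon]_{C^{0,1}(\mathbb{R})}=1$ (indeed $\varepsilon\le\Phi_\varepsilon'\le 1$), so $U_\varepsilon$ inherits the same Lipschitz constant, and, as you note, the constant is allowed to depend on $\varepsilon$ at this stage.
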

\begin{proof}
    Recall that thanks to Lemma \ref{lemma 2.7} we know that there exists a unique selfsimilar solution $H_\varepsilon(xt^{-1/(2s)}) = h_\varepsilon(x,t)$ of \eqref{59} with initial data $h_0(x)$. Firstly we prove that $H_\varepsilon \in C^{0,1}(\mathbb{R}) $. Let $\xi = xt^{-1/(2s)}$ ,
 by Corollary \ref{co 3} we have
    \begin{equation}
    \label{64}
    \left|\partial_t h_\varepsilon(x,t)\right| = |\partial_t H_\varepsilon(xt^{-\frac{1}{2s}})| = \frac{1}{2s}|t^{-1}\xi  \cdot\nabla H_\varepsilon(\xi)| \leq C_\varepsilon,
    \end{equation}
    with $C_\varepsilon > 0$ a constant that only depends on $\varepsilon$. Taking in \eqref{64} $t=1$, we obtain
    \[
    |x \nabla H_\varepsilon(x)| \leq 2s C_\varepsilon.
    \]
    Then, if $x \in \mathbb{R}\setminus B_\delta(0)$ for $\delta > 0$ fixed, we deduce
\[
|\nabla H_\varepsilon(x)| \leq 2s C_\varepsilon \frac{1}{|x|} \leq 2s C_\varepsilon \frac{1}{\delta} ,
\]
i.e, the gradient is bounded in $ \mathbb{R}^n\setminus B_\delta(0)$. On the other hand, since by Lemma \ref{lemma 2.6b} $H_\varepsilon \in C^\infty(\mathbb{R}^n)$ in particular we know that $|\nabla H_\varepsilon(x)| \leq C_\varepsilon$ for all $x\in \overline{B}_\delta(0)$. Then we conclude that $|\nabla H_\varepsilon(x)| \leq C_\varepsilon$, i.e, $H_\varepsilon \in C^{0,1}(\mathbb{R}^n)$.

Finally, we prove that $U_\varepsilon \in C^{0,1}(\mathbb{R}^n)$. Let $x,y\in \mathbb{R}^n$,  by the definition of $U_\varepsilon$ and $\Phi_\varepsilon(x)$ we have
    \begin{align*}
     |U_\varepsilon(x) - U_\varepsilon(y)| &= |\Phi_\varepsilon(H_\varepsilon(x)) - \Phi_\varepsilon(H_\varepsilon(y))|\leq [\Phi_\varepsilon]_{C^{0,1}(\mathbb{R})}|H_\varepsilon(x) - H_\varepsilon(y)|\\
     & \leq [\Phi_\varepsilon]_{C^{0,1}(\mathbb{R})}[H_\varepsilon]_{C^{0,1}(\mathbb{R}^n)}|x-y|
     = [H_\varepsilon]_{C^{0,1}(\mathbb{R}^n)}|x-y|,
    \end{align*}
i.e.,  $U_\varepsilon$ is globally Lipschitz in $\mathbb{R}^n$ with the same constant as the function $H_\varepsilon$. Note that we are using that the function $\Phi_\varepsilon(x)$ and $H_\varepsilon$ are globally Lipschitz and by the definition of $\Phi_\varepsilon(x)$ that $[\Phi_\varepsilon]_{C^{0,1}(\mathbb{R})} = 1$.
\end{proof}

\subsection{Properties of the selfsimilar solution}

In this section we want to study some properties of the selfsimilar solutions of the one-dimensional regularized Fractional Stefan Problem.  The main result that we want to prove is the next Theorem:

\begin{thm}[Properties of the selfsimilar solution in $\mathbb{R}$]
\label{teo 2.12}
Let $s \in (0,1)$, $\varepsilon \in (0,1)$, $n=1$ and $P_1, L > 0$. Suppose that $L > \varepsilon$ and consider the initial data
\begin{equation*}
h_0(x):=\left\{
\begin{aligned}
 &L + P_1 \hspace{10mm}\textup{if} \hspace{7mm} x \leq 0, \\
&0 \hspace{19.8mm}\textup{if} \hspace{7mm}x > 0,
\end{aligned}
\right.
\end{equation*}
and let $h_\varepsilon(x,t) \in L^\infty(\mathbb{R}^n \times (0,T))$ be the corresponding very weak solution of \eqref{59}. Then
\begin{itemize}
    \item[{$a)$}] \textup{(Profile)}
    $h_\varepsilon$ and $u_\varepsilon$ are selfsimilar with formulas
    \[
    h_\varepsilon(x,t) = H_\varepsilon(xt^{-1/(2s)}), u_\varepsilon(x,t) = U_\varepsilon(xt^{-1/(2s)}) \quad\textup{for all} \quad (x,t) \in \mathbb{R} \times (0,T),
    \]
    where the selfsimilar profiles $H_\varepsilon$ and $U_\varepsilon = \Phi_\varepsilon(H_\varepsilon)$ satisfy the 1-D nonlocal equation:
    \begin{equation}
    \label{65}
     -\frac{1}{2s}\xi \cdot \nabla H_\varepsilon(\xi) + (-\Delta)^s U_\varepsilon(\xi) = 0 \qquad \textup{in} \qquad \mathcal{D}'(\mathbb{R}^n).
    \end{equation}
    \item[{$b)$}] \textup{(Boundedness and limits)} $0 \leq H_\varepsilon(\xi)\leq L+P_1$ for all $\xi \in \mathbb{R}$ and
    \[
    \lim_{\xi \to -\infty} H_\varepsilon(\xi) = L + P_1 \quad \textup{and} \quad  \lim_{\xi \to +\infty} H_\varepsilon(\xi) = 0.
    \]
    \item[{$c)$}] \textup{(Regularity)} $H_\varepsilon \in C^\infty(\mathbb{R}) \cap C^{0,1}(\mathbb{R})$. Moreover, $U_\varepsilon \in C^{0,1}(\mathbb{R})$.
\end{itemize}

\end{thm}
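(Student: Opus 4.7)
The plan is to deduce parts (a), (b), and (c) almost entirely from the preceding lemmas of this section, treating only the identification of the limits of $H_\varepsilon$ at $\pm\infty$ as a genuinely new step.

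For part (a), observe that the step initial datum $h_0$ is invariant under every dilation $x\mapsto ax$, $a>0$ (outside the null set $\{0\}$), so Lemma~\ref{lemma 2.7} produces the selfsimilar representation $h_\varepsilon(x,t)=H_\varepsilon(xt^{-1/(2s)})$ with $H_\varepsilon(\xi):=h_\varepsilon(\xi,1)$, and Lemma~\ref{lemma 2.8} turns this into the profile equation \eqref{65}, which may be rewritten via $U_\varepsilon=\Phi_\varepsilon(H_\varepsilon)$. Part (c) is equally immediate: Lemma~\ref{lemma 2.6b} gives $H_\varepsilon\in C^\infty(\mathbb{R})$, and Lemma~\ref{lemma 2.11} (whose hypothesis $h_0(ax)=h_0(x)$ holds here) supplies the global Lipschitz bounds on both $H_\varepsilon$ and $U_\varepsilon$.

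For the $L^\infty$ bound in (b), note that $\Phi_\varepsilon$ of a constant is constant, so the functions $0$ and $L+P_1$ are stationary solutions of \eqref{59}; combined with $0\leq h_0\leq L+P_1$, the comparison principle Theorem~\ref{teo 2.5}(b) yields $0\leq H_\varepsilon\leq L+P_1$. Applying the same comparison to the spatial shift $h_0(x)\geq h_0(x+\eta)$ for $\eta>0$ gives $h_\varepsilon(x,t)\geq h_\varepsilon(x+\eta,t)$, so $H_\varepsilon$ is non-increasing and the limits $L_\pm:=\lim_{\xi\to\pm\infty}H_\varepsilon(\xi)$ exist with $0\leq L_+\leq L_-\leq L+P_1$.

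To identify $L_+=0$ and $L_-=L+P_1$, I invoke the initial trace. A direct calculation gives $\|h_0(\cdot+\eta)-h_0\|_{L^1(\mathbb{R})}=(L+P_1)|\eta|\to 0$, so Theorem~\ref{teo 2.5}(e) applies and yields $h_\varepsilon\in C([0,T];L^1_{loc}(\mathbb{R}))$. Fix $\varphi\in C_c^\infty((0,\infty))$ with $\int\varphi>0$. On the one hand, $L^1$-continuity of the trace implies $\int h_\varepsilon(x,t)\varphi(x)\,dx\to \int h_0\varphi\,dx=0$ as $t\to 0^+$. On the other hand, writing $h_\varepsilon(x,t)=H_\varepsilon(xt^{-1/(2s)})$, noting that $xt^{-1/(2s)}\to +\infty$ on $\operatorname{supp}\varphi$, and applying dominated convergence with the bound $L+P_1$, the same integral tends to $L_+\int\varphi\,dx$. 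Comparing forces $L_+=0$; the symmetric argument with $\varphi\in C_c^\infty((-\infty,0))$ yields $L_-=L+P_1$. The only mildly delicate point is the verification of the $L^1$-translation hypothesis needed for Theorem~\ref{teo 2.5}(e), which is trivial here thanks to the single-step structure of $h_0$.
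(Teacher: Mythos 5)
Your proposal is correct and follows essentially the same route as the paper: parts (a) and (c) are deduced exactly as in the text from Lemmas \ref{lemma 2.7}, \ref{lemma 2.8}, \ref{lemma 2.6b} and \ref{lemma 2.11}, and the bound and monotonicity in (b) from comparison with constant solutions and with translates via Theorem \ref{teo 2.5}. The only (harmless) deviation is in identifying the limits at $\pm\infty$: you obtain the initial trace from the $C([0,T];L^1_{loc})$ continuity of Theorem \ref{teo 2.5}(e), after verifying the $L^1$-translation hypothesis for the step datum, whereas the paper reads the trace directly from the $\textup{ess}\lim_{t\to 0^+}$ built into Definition \ref{def 2.2} and then runs the same selfsimilarity-plus-dominated-convergence identification; both versions are equally valid.
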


We study each property of the Theorem \ref{teo 2.12} separately. Firstly, we start by showing Theorem \ref{teo 2.12}-$a)$.
\begin{lem}
    Under the assumptions of Theorem \ref{teo 2.12}, we have  that $h_\varepsilon(x,t) = H_\varepsilon(xt^{-1/(2s)})$ for a.e $(x,t) \in \mathbb{R}\times (0,T)$ and $H_\varepsilon$ satisfies \eqref{65}.
\end{lem}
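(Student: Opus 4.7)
The plan is straightforward: the statement is essentially a direct specialization of the abstract self-similarity results of Lemmas \ref{lemma 2.7} and \ref{lemma 2.8} once one checks that the particular Heaviside-type initial datum $h_0$ in Theorem \ref{teo 2.12} satisfies the scaling-invariance hypothesis of those lemmas.

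First I would verify that $h_0(x) = h_0(ax)$ for every $a > 0$ and almost every $x \in \mathbb{R}$. This is immediate since $a > 0$ preserves the sign of $x$: if $x \leq 0$ then $ax \leq 0$, so $h_0(ax) = L + P_1 = h_0(x)$, and if $x > 0$ then $ax > 0$, so $h_0(ax) = 0 = h_0(x)$. Hence $h_0$ is scale-invariant under the dilations $x \mapsto ax$.

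Next, the existence and uniqueness of a bounded very weak solution $h_\varepsilon \in L^\infty(\mathbb{R} \times (0,T))$ of \eqref{59} with this initial datum is provided by Theorem \ref{the 2.4}. Applying Lemma \ref{lemma 2.7} with the scale-invariant $h_0$ just obtained yields
\[
h_\varepsilon(x,t) = h_\varepsilon(ax, a^{2s}t) \quad \text{for a.e. } (x,t) \in \mathbb{R} \times (0,T) \text{ and every } a > 0.
\]
Selecting $a = t^{-1/(2s)}$ gives $h_\varepsilon(x,t) = h_\varepsilon(xt^{-1/(2s)}, 1)$ a.e., so defining $H_\varepsilon(\xi) := h_\varepsilon(\xi,1)$ produces the desired self-similar representation
\[
h_\varepsilon(x,t) = H_\varepsilon(xt^{-1/(2s)}).
\]

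Finally, I would invoke Lemma \ref{lemma 2.8}, which under exactly these hypotheses asserts that the profile $H_\varepsilon$ satisfies the stationary nonlocal equation \eqref{62} in $\mathcal{D}'(\mathbb{R})$. Since the temperature profile is defined by $U_\varepsilon := \Phi_\varepsilon(H_\varepsilon)$, this equation is precisely \eqref{65}. There is no real obstacle in this argument: everything has been prepared by the preceding lemmas, and the only thing to check for this specific datum is the scaling invariance of $h_0$, which is elementary. The mild subtlety that Lemma \ref{lemma 2.7} provides the identity only almost everywhere is harmless, since the equation \eqref{65} is formulated distributionally.
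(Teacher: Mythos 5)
Your proposal is correct and follows the same route as the paper: check the dilation invariance $h_0(ax)=h_0(x)$ of the step datum, use Theorem \ref{the 2.4} for existence and uniqueness, and then conclude via Lemma \ref{lemma 2.7} (with $a=t^{-1/(2s)}$) and Lemma \ref{lemma 2.8}. No discrepancies to report.
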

\begin{proof}
 Recall that by Theorem \ref{the 2.4} we have existence and uniqueness of solution. Note that $h_0(ax) = h_0(x)$ for all $x \in \mathbb{R}$ and $a > 0$. Then the result follows from Lemma \ref{lemma 2.7} and Lemma \ref{lemma 2.8}.
\end{proof}

Now we prove Theorem \ref{teo 2.12}-$b)$.
\begin{lem}
    Under the assumptions of Theorem \ref{teo 2.12}, we have that $0 \leq H_\varepsilon(\xi)\leq L+P_1$ for all $\xi \in \mathbb{R}$ and
    \[
    \lim_{\xi \to -\infty} H_\varepsilon(\xi) = L + P_1 \quad \textup{and} \quad  \lim_{\xi \to +\infty} H_\varepsilon(\xi) = 0.
    \]
\end{lem}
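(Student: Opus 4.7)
The plan is to combine four ingredients: comparison against constant solutions (for the $L^\infty$ bounds), spatial monotonicity inherited from the monotonicity of $h_0$, the self-similar scaling of Lemma~\ref{lemma 2.7} (to convert the spatial limits into a trace at $t=0$), and the $L^1_{loc}$ time continuity of Theorem~\ref{teo 2.5}(e) (to identify that trace).

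\textbf{Bounds and monotonicity.} Since $(-\Delta)^s$ annihilates constants, the functions $h\equiv 0$ and $h\equiv L+P_1$ are very weak solutions of \eqref{59}. As $0\le h_0\le L+P_1$, the comparison principle Theorem~\ref{teo 2.5}(b) gives $0\le h_\varepsilon(x,t)\le L+P_1$, whence the same bound for $H_\varepsilon$. Applying the same comparison to $h_\varepsilon(\cdot+\delta,t)$ and $h_\varepsilon(\cdot,t)$ for $\delta>0$, where the shifted function is a solution by translation invariance of \eqref{59} with initial data $h_0(\cdot+\delta)\le h_0(\cdot)$, yields $H_\varepsilon(\xi+\delta)\le H_\varepsilon(\xi)$, so $H_\varepsilon$ is non-increasing. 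Monotonicity and boundedness then guarantee that $\alpha:=\lim_{\xi\to-\infty}H_\varepsilon(\xi)$ and $\beta:=\lim_{\xi\to+\infty}H_\varepsilon(\xi)$ exist in $[0,L+P_1]$.

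\textbf{Reducing the spatial limits to a trace at $t=0$.} The scaling invariance of Lemma~\ref{lemma 2.6}, $h_\varepsilon(x,t)=h_\varepsilon(\lambda x,\lambda^{2s}t)$, applied with $\lambda=1/|\xi|$, yields
\[
H_\varepsilon(\xi)=h_\varepsilon(\xi,1)=h_\varepsilon\bigl(\mathrm{sgn}(\xi),|\xi|^{-2s}\bigr),
\]
so that $\alpha=\lim_{t\to 0^+}h_\varepsilon(-1,t)$ and $\beta=\lim_{t\to 0^+}h_\varepsilon(1,t)$. A direct computation gives $\|h_0(\cdot+\tau)-h_0\|_{L^1(\mathbb{R})}=(L+P_1)|\tau|\to 0$ as $\tau\to 0$, so Theorem~\ref{teo 2.5}(e) applies and $h_\varepsilon\in C([0,T];L^1_{loc}(\mathbb R))$. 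In particular, $h_\varepsilon(\cdot,t)\to h_0$ in $L^1((-1,-1/2))$ and in $L^1((1/2,1))$ as $t\to 0^+$.

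\textbf{Upgrading $L^1_{loc}$ to pointwise traces (the main step).} Since $h_0\equiv L+P_1$ on $(-1,-1/2)$ and $h_\varepsilon\le L+P_1$, the $L^1$ convergence on this interval means $\int_{-1}^{-1/2}((L+P_1)-h_\varepsilon(x,t))\,dx\to 0$; because the integrand is non-negative, for each $\eta>0$ and all sufficiently small $t$ there exists some $x_\eta\in(-1,-1/2)$ with $h_\varepsilon(x_\eta,t)>L+P_1-\eta$. Spatial monotonicity then gives $h_\varepsilon(-1,t)\ge h_\varepsilon(x_\eta,t)>L+P_1-\eta$, hence $\alpha=L+P_1$. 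Symmetrically, on $(1/2,1)$ we have $h_0\equiv 0$ and $h_\varepsilon\ge 0$; the same argument furnishes $x_\eta'\in(1/2,1)$ with $h_\varepsilon(x_\eta',t)<\eta$, and monotonicity yields $h_\varepsilon(1,t)\le h_\varepsilon(x_\eta',t)<\eta$, so $\beta=0$. The main obstacle is precisely this upgrade step: Theorem~\ref{teo 2.5}(e) only delivers averaged $L^1$ convergence, whereas one needs pointwise values at the specific points $\pm 1$, and the non-increasing spatial monotonicity is the essential tool that turns the averaged convergence into the required one-sided pointwise bound.
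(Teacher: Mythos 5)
Your proposal is correct: the $L^\infty$ bound via comparison with the constant solutions $0$ and $L+P_1$, and the monotonicity of $H_\varepsilon$ via comparison of $h_\varepsilon$ with its translates, are exactly the paper's first two steps, and your identification of the limits is sound. Where you genuinely diverge is in how the limits $\alpha,\beta$ are identified with the initial data. The paper stays at the level of the very weak formulation: it writes the essential initial trace $\mathrm{ess}\lim_{t\to 0^+}\int H_\varepsilon(xt^{-1/(2s)})\psi\,dx=\int h_0\psi\,dx$ from Definition~\ref{def 2.2}, passes the limit inside by dominated convergence (the pointwise limit being $\overline H_\varepsilon$ on $\{x>0\}$ and $\underline H_\varepsilon$ on $\{x<0\}$), and tests against $\phi\in C_c^\infty(\mathbb{R}_\pm)$ to conclude $\overline H_\varepsilon=0$, $\underline H_\varepsilon=L+P_1$ a.e. You instead use the scaling identity to rewrite $H_\varepsilon(\xi)=h_\varepsilon(\mathrm{sgn}(\xi),|\xi|^{-2s})$, invoke the $L^1_{loc}$-in-time continuity of Theorem~\ref{teo 2.5}(e) (whose hypothesis you rightly verify, $\|h_0(\cdot+\tau)-h_0\|_{L^1}=(L+P_1)|\tau|$), and then upgrade averaged convergence to one-sided pointwise bounds at $x=\pm1$ using the spatial monotonicity and the sign of the integrand. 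Both routes work; the paper's argument needs only the definition of very weak solution plus dominated convergence and never requires pointwise evaluation of $h_\varepsilon$, whereas yours leans on the additional regularity input of Theorem~\ref{teo 2.5}(e) (and implicitly on the continuity of $h_\varepsilon$ for $t>0$ to use the scaling identity and monotonicity pointwise), in exchange for a more concrete, quantitative identification of the trace. Your closing remark that monotonicity is what turns $L^1$ convergence into the pointwise statement is exactly the right observation for your route; the paper sidesteps that issue entirely by working distributionally.
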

\begin{proof}
  Since by hypothesis $P_2 > \varepsilon$ then $L+P_1$ and $0$ are stationary solutions of \eqref{59}.  Clearly $0 \leq h_0 \leq L+P_1$. Thus by Theorem \ref{teo 2.5}-$b)$, we have that $0\leq h_\varepsilon \leq L+P_1$ which by definition implies $0 \leq H_\varepsilon(\xi)\leq L+P_1$ for a.e $\xi \in \mathbb{R}$. For proving the limits we need to divide the proof in two steps:

  \noindent\underline{\textit{Step 1}}: Firstly, we will prove that the solution $H_\varepsilon$ is nonincreasing. Consider the initial data $\hat{h}_0(x) := h_0(x+a)$ for some $a > 0$ and the corresponding solution $\hat{h}_\varepsilon$. Note that $h_0 \geq \hat{h}_0(x)$, and thus by comparison, Theorem \ref{teo 2.5}, $h_\varepsilon\geq \hat{h}_\varepsilon$. Note that \eqref{59} is translational invariant, and so $h_\varepsilon(x+a,t)= \hat{h}_\varepsilon(x,t)$ for a.e $(x,t)\in \mathbb{R} \times (0,T)$. We have then that $h(x,t) \geq h(x+a,t)$ for a.e $(x,t)\in \mathbb{R} \times (0,T)$, which concludes that $H_\varepsilon$ is nonincreasing since $a > 0$ was arbitrary and the relation $H_\varepsilon(\xi) = h_\varepsilon(\xi,1)$ holds.\\

    \noindent\underline{\textit{Step 2}}: Now we prove the limits. By the Theorem \ref{teo 2.12}-$b)$ we know that the function $H_\varepsilon$ is bounded from above and from below. By the Step 1 we know that the $H_\varepsilon$ is monotone. This two facts imply the existence of the limits:
        \[
        \underline{H}_\varepsilon := \lim_{\xi \to -\infty} H_\varepsilon(\xi) \quad \textup{and} \quad \overline{H}_\varepsilon:=\lim_{\xi \to -\infty} H_\varepsilon(\xi).
        \]
        In particular,
        \[
        \textup{ess}\lim_{t \to 0^+} H_\varepsilon(xt^{-\frac{1}{2s}}) =\left\{.
    \begin{aligned}
    &\underline{H}_\varepsilon \hspace{10mm}\textup{if} \hspace{7mm} x < 0, \\
    &\overline{H}_\varepsilon \hspace{10mm}\textup{if} \hspace{7mm}x > 0.
    \end{aligned}
    \right.    \]
    Given any $\psi \in C_c^\infty(\mathbb{R} \times [0,T))$, we have by Definition \ref{def 2.2} that
    \begin{equation}
     \label{66}
    \textup{ess}\lim_{t \to 0^+} \int_{\mathbb{R}} H_\varepsilon(xt^{-\frac{1}{2s}})\psi(x,t) \hspace{1mm}dx = \textup{ess}\lim_{t \to 0^+}\int_{\mathbb{R}} h_\varepsilon(x,t)\psi(x,t) \hspace{1mm}dx = \int_{\mathbb{R}}h_0(x)\psi(x,0)\hspace{1mm}dx.
    \end{equation}
    Take now $\psi \in C_c^\infty(\mathbb{R} \times [0,T))$. The Lebesgue dominated convergence Theorem yields
    \begin{equation}
      \label{67}
     \textup{ess}\lim_{t \to 0^+} \int_{\mathbb{R}} H_\varepsilon(xt^{-\frac{1}{2s}})\psi(x,t) \hspace{1mm}dx = \int_{\mathbb{R}} \textup{ess}\lim_{t \to 0^+} (H_\varepsilon(xt^{-\frac{1}{2s}})\psi(x,t))\hspace{1mm}dx = \int_{\mathbb{R}_+}\overline{H}_\varepsilon\psi(x,0)\hspace{1mm}dx,
    \end{equation}
    and thus, we have by \eqref{66} and \eqref{67} the following identity for all $\phi \in C^\infty_c(\mathbb{R}_+)$
    \[
    \int_{\mathbb{R}_+}\overline{H}_\varepsilon \phi(x) \hspace{1mm}dx = \int_{\mathbb{R}}h_0(x) \phi(x) \hspace{1mm}dx.
    \]
    This implies that $\overline{H}_\varepsilon=h_0(x)$ for a.e $x \in \mathbb{R}_+$, i.e $\overline{H}_\varepsilon = 0$. The same argument taking $\psi\in C_c^\infty(\mathbb{R}_- \times[0,T))$ shows that $\underline{H}_\varepsilon = h_0(x)$ for a.e $x\in \mathbb{R}_-$, i.e $\underline{H}_\varepsilon = L + P_1$.
\end{proof}

Finally we prove Theorem \ref{teo 2.12}-$c)$ as a consequence of Lemma \ref{lemma 2.6b} and Lemma \ref{lemma 2.11}.

\begin{lem}
    Under the assumptions of Theorem \ref{teo 2.12}, we have that $H_\varepsilon \in C^\infty(\mathbb{R}) \cap C^{0,1}(\mathbb{R})$. Moreover, $U_\varepsilon \in C^{0,1}(\mathbb{R})$.
\end{lem}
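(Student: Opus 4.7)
The statement is essentially a bookkeeping corollary of the two preceding lemmas, so my plan is to verify that their hypotheses apply and then splice their conclusions together.

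First I would observe that the step-function initial data $h_0$ given in Theorem \ref{teo 2.12} satisfies the scaling invariance $h_0(ax) = h_0(x)$ for every $a>0$ and a.e.\ $x\in \mathbb{R}$, since the sign of $x$ is preserved by multiplication by a positive constant. Together with $h_0\in L^\infty(\mathbb{R})$ and $\varepsilon\in(0,1)$, this puts us squarely within the hypotheses of Lemma \ref{lemma 2.11}. Applying that lemma directly yields $H_\varepsilon\in C^{0,1}(\mathbb{R})$ and $U_\varepsilon\in C^{0,1}(\mathbb{R})$, and in fact the proof there already records that the Lipschitz constant of $U_\varepsilon$ coincides with that of $H_\varepsilon$ (using $[\Phi_\varepsilon]_{C^{0,1}(\mathbb{R})}=1$).

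For the $C^\infty$ part, I would invoke Lemma \ref{lemma 2.6b}, which asserts that the unique very weak solution $h_\varepsilon$ of the regularized problem \eqref{59} is smooth on $\mathbb{R}\times (0,T)$. The reason is that $\Phi_\varepsilon$ defined in \eqref{60} belongs to $C^\infty(\mathbb{R})$ and is uniformly non-degenerate, $\Phi_\varepsilon'\ge \varepsilon>0$, so Theorem \ref{teo 2} applies. Since the selfsimilarity identity from Lemma \ref{lemma 2.7} gives $H_\varepsilon(\xi) = h_\varepsilon(\xi,1)$, restricting the smooth function $h_\varepsilon$ to the smooth hypersurface $\{t=1\}$ yields $H_\varepsilon\in C^\infty(\mathbb{R})$.

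Combining the two conclusions gives $H_\varepsilon\in C^\infty(\mathbb{R})\cap C^{0,1}(\mathbb{R})$ and $U_\varepsilon\in C^{0,1}(\mathbb{R})$, which is the statement of the lemma. There is no real obstacle here beyond being careful about applying the prior results; the genuine work has already been done in the $L^\infty$-gradient bound inside Lemma \ref{lemma 2.11}, where the inequality $|x\,\nabla H_\varepsilon(x)|\le 2sC_\varepsilon$ obtained from $|\partial_t h_\varepsilon|\le C_\varepsilon$ via Corollary \ref{co 3} is patched with interior smoothness near $\xi=0$ to produce a uniform Lipschitz bound on all of $\mathbb{R}$. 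The present lemma is purely a statement-level synthesis.
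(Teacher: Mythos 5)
Your proposal is correct and follows exactly the route the paper intends: the paper itself states this lemma as a direct consequence of Lemma \ref{lemma 2.6b} (smoothness via Theorem \ref{teo 2} and the restriction $H_\varepsilon(\xi)=h_\varepsilon(\xi,1)$) and Lemma \ref{lemma 2.11} (the global Lipschitz bounds for $H_\varepsilon$ and $U_\varepsilon$), after noting the scaling invariance $h_0(ax)=h_0(x)$ of the step data. Nothing further is needed.
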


\section{Convergence to the selfsimilar solution}

The purpose of this section is to show that there exists a sequence of smooth solutions of the regularized fractional Stefan problem that converges to the unique solution of Problem \eqref{stefan}-\eqref{IC}. For proving this result will be crucial to use the Fr\'echet-Kolmogorov Theorem that characterizes the relatively compact subsets of the $L^p$ spaces. The main result of the section reads as follows:

\begin{prop}
\label{prop 4.1}
    Let $s \in (0,1)$, $\varepsilon \in (0,1)$, $\{h_{\varepsilon}(x,t)\}_\varepsilon$ a sequence of solutions of \eqref{59}-\eqref{IC} and $h(x,t)$ the unique solution of \eqref{stefan}-\eqref{IC}. Then there exists a subsequence $\{h_{\varepsilon_j}(x,t)\}_j$ such that
    \[
    h_{\varepsilon_{j}}(x,t) \to h(x,t) \qquad \textup{in} \qquad L^1_{loc}(\mathbb{R}\times (0,T))
    \]
and the convergence is also locally a.e.\end{prop}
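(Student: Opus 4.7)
The plan is to apply the Fréchet–Kolmogorov Theorem \ref{Frechet} on every space–time compact set, extract a subsequence that converges in $L^1_{\mathrm{loc}}(\mathbb{R}\times(0,T))$ and a.e., pass to the limit in the very weak formulation, and finally invoke uniqueness of the limit problem (Theorem \ref{teo 1}). The uniform $L^\infty$ bound is free from Theorem \ref{teo 2.5}-$a)$, namely $\|h_\varepsilon(\cdot,t)\|_{L^\infty(\mathbb{R})}\le L+P_1$, so the family is bounded in $L^1$ on every bounded subset of $\mathbb{R}\times(0,T)$, and the equitightness condition in Theorem \ref{Frechet} is automatic once we restrict to compact sets. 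The two real tasks are therefore uniform equicontinuity in space and in time.

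\textbf{Space equicontinuity.} For $\xi\in\mathbb{R}$, a direct computation on the step initial datum \eqref{IC} gives
\begin{equation*}
\|h_0(\cdot+\xi)-h_0\|_{L^1(\mathbb{R})}=|\xi|\,(L+P_1).
\end{equation*}
Because equation \eqref{59} is translation invariant in $x$, $h_\varepsilon(\cdot+\xi,t)$ is the very weak solution with datum $h_0(\cdot+\xi)$. Theorem \ref{teo 2.5}-$c)$ then yields, uniformly in $\varepsilon$ and $t$,
\begin{equation*}
\|h_\varepsilon(\cdot+\xi,t)-h_\varepsilon(\cdot,t)\|_{L^1(\mathbb{R})}\le |\xi|\,(L+P_1),
\end{equation*}
which is the desired uniform modulus in the space variable.

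\textbf{Time equicontinuity.} Since the space translation modulus of $h_0$ is controlled linearly by $|\xi|$, Theorem \ref{teo 2.5}-$e)$ supplies an $\varepsilon$-independent modulus $\varLambda_K(|t-s|)$ on every compact $K\Subset\mathbb{R}$. Combining this with the space estimate gives equicontinuity of $\{h_\varepsilon\}$ in $L^1(Q)$ for every compact $Q=K\times[\tau,T']\subset\mathbb{R}\times(0,T)$. Applying Fréchet–Kolmogorov on such a $Q$ and extracting by a standard diagonal argument over an exhaustion of $\mathbb{R}\times(0,T)$, we obtain a subsequence (still denoted $h_{\varepsilon_j}$) and a function $\tilde h\in L^\infty(\mathbb{R}\times(0,T))$ with $h_{\varepsilon_j}\to \tilde h$ in $L^1_{\mathrm{loc}}$ and, after a further extraction, a.e.

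\textbf{Passage to the limit and identification.} Fix $\psi\in C_c^\infty(\mathbb{R}\times[0,T))$; every term in \eqref{61} passes to the limit. The term $h_{\varepsilon_j}\partial_t\psi$ is handled by $L^1_{\mathrm{loc}}$ convergence. For the diffusion term we use the explicit form \eqref{60} to see that $\Phi_\varepsilon\to\Phi$ uniformly on bounded intervals with $\|\Phi_\varepsilon-\Phi\|_{L^\infty([-M,M])}\le C(M)\,\varepsilon$. Combining uniform convergence of the nonlinearity with the $L^\infty$ bound and the a.e.\ convergence of $h_{\varepsilon_j}$, dominated convergence yields $\Phi_{\varepsilon_j}(h_{\varepsilon_j})\to \Phi(\tilde h)$ in $L^1_{\mathrm{loc}}$, which suffices because $(-\Delta)^s\psi$ is bounded and integrable. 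The initial condition passes to the limit using Theorem \ref{teo 2.5}-$e)$ to get $C([0,T];L^1_{\mathrm{loc}})$-regularity uniformly in $\varepsilon$. Thus $\tilde h$ is a bounded very weak solution of \eqref{stefan}-\eqref{IC}, and Theorem \ref{teo 1} forces $\tilde h=h$. Since the limit is unique, the whole sequence (not just a subsequence) actually converges, and the a.e.\ convergence is then upgraded by a standard subsequence-of-every-subsequence argument.

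\textbf{Main obstacle.} The only delicate point is ensuring that the time modulus in Theorem \ref{teo 2.5}-$e)$ is truly $\varepsilon$-independent; this hinges on the fact that the $L^1$-contraction estimate underlying that statement depends only on the $L^1$ translation modulus of the initial datum, which is independent of $\varepsilon$. Once this uniform modulus is in hand the rest of the argument is routine compactness and stability.
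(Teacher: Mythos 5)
Your proposal is correct and follows essentially the same route as the paper: Fréchet--Kolmogorov compactness on compact space--time sets, with space equicontinuity from the $L^1$-contraction (Theorem \ref{teo 2.5}-$c)$) applied to translates of the step datum and time equicontinuity from the $\varepsilon$-independent modulus of Theorem \ref{teo 2.5}-$e)$, followed by passage to the limit in the very weak formulation (splitting near/far regions thanks to the decay of $(-\Delta)^s\psi$ and the uniform convergence $\Phi_\varepsilon\to\Phi$ on bounded sets) and identification of the limit via uniqueness. The only cosmetic difference is that you invoke a.e.\ convergence plus dominated convergence for the nonlinear term where the paper uses the Lipschitz bound $[\Phi_{\varepsilon_j}]_{C^{0,1}}=1$; both are fine.
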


\begin{proof}
    We divide the proof in two steps. Firstly, we prove that the family of solutions $\{h_\varepsilon(x,t)\}_\varepsilon$ is relatively compact and converges on compact sets of $\mathbb{R}\times (0,T)$ to some function $g \in L^1(\mathbb{R}\times (0,T))$. In the second step, we will show that $g = h$ for a.e. in $\mathbb{R}\times (0,T)$.\\

    \noindent\underline{\textit{Step 1}}: Let $K \subset \mathbb{R}\times (0,T)$ be a compact set. We define
    \[
    \Tilde{h}_\varepsilon(x,t) := h_\varepsilon(x,t) \mathcal{X}_K(x,t).
    \]
    Since $\{\tilde
{h}_\varepsilon(x,t)\}_\varepsilon \subset L^1(\mathbb{R}^2)$, by the Fr\'echet-Kolmogorov Theorem it is enough to prove that $\{\tilde
{h}_\varepsilon(x,t)\}_\varepsilon$ is equitight and equicontinuous to show that is relatively compact in $L^1(\mathbb{R}^2)$.  Clearly since for all $\varepsilon \in (0,1)$ the function $\tilde
{h}_\varepsilon(x,t)$ has compact support then is equitight. Thus, it is enough to prove equicontinuity.

Let $k>0$ be an increment\nc. We use the double integral sign  for clarity. By the triangular inequality we have
\begin{align}
    \label{4.2}
   &\iint_{\mathbb{R}^2}\nc |\Tilde{h}_\varepsilon(x-k\nc,t-k\nc) - \Tilde{h}_\varepsilon(x,t)| \hspace{1mm} dx\hspace{0.5mm}dt \leq \iint_{\mathbb{R}^2} |\Tilde{h}_\varepsilon(x-k,t-k) - \Tilde{h}_\varepsilon(x,t-k)| \hspace{1mm} dx\hspace{0.5mm}dt\\
    & \hspace{20mm}+ \iint_{\mathbb{R}^2} |\Tilde{h}_\varepsilon(x,t-k)-\Tilde{h}_\varepsilon(x,t)| \hspace{1mm} dx\hspace{0.5mm}dt \nonumber = I + II.
\end{align}

We study each integral separately. Splitting integral $I$ we obtain
\begin{align*}
    \iint_{\mathbb{R}^2} |\Tilde{h}_\varepsilon(x-k,t-k) - \Tilde{h}_\varepsilon(x,t-k)| \hspace{1mm} dx\hspace{0.5mm}dt &\leq I_i + I_{ii}
\end{align*}
with
\begin{align}
\label{4.3}
    & I_i := \iint_{\mathbb{R}^2} |h_\varepsilon(x-k,t-k) - h_\varepsilon(x, t-k)|\hspace{1mm}\mathcal{X}_K(x-k,t-k) \hspace{1mm} dx\hspace{0.5mm}dt,\\
\label{4.4}
&I_{ii} := \iint_{\mathbb{R}^2} |h_\varepsilon(x-k,t-k)|\hspace{1mm}|\mathcal{X}_K(x-k,t-k) - \mathcal{X}_K(x,t-k)|\hspace{1mm} dx\hspace{0.5mm}dt.
\end{align}
Notice that since $(h_0(x-k) - h_0(x))^+ \in L^1(\mathbb{R})$ then after doing the change $\Tilde{t} = t-k$ we have
\begin{align}
\label{4.5}
    I_i &\leq \iint_{\mathbb{R}\times (0,T)} |h_\varepsilon(x-k,t-k) - h_\varepsilon(x, t-k)| \hspace{1mm} dx\hspace{0.5mm}dt \\
    & \leq \iint_{\mathbb{R}\times (-k,T-k)} |h_\varepsilon(x-k,\Tilde{t}) - h_\varepsilon(x, \Tilde{t})| \hspace{1mm} dx\hspace{0.5mm}d\Tilde{t} \nonumber\\
    & \leq T \int_{\mathbb{R}} |h_0(x-k) - h_0(x)| \hspace{1mm}dx,\nonumber
\end{align}
where we have used the Theorem \ref{teo 2.5} in the last inequality. On the other hand, by the $L^\infty$ bound of Theorem \ref{teo 2.5} and after a change of variable we can bound $I_{ii}$ by
\begin{align}
\label{4.6}
    I_{ii} \leq \norm{h_0}_{L^\infty(\mathbb{R})}\iint_{\mathbb{R}^2} |\mathcal{X}_K(x-k,t-k) - \mathcal{X}_K(x,t-h)|\hspace{1mm} dx\hspace{0.5mm}dt.
\end{align}
Thus, thanks to \eqref{4.5} and \eqref{4.6} we deduce
\begin{equation}
\label{bound 1}
I\leq C\left(\norm{h_0(\cdot -k) - h_0}_{L^1(\mathbb{R})} + \norm{h_0}_{L^\infty(\mathbb{R})}\norm{\mathcal{X}_K(\cdot -k, \cdot -k) - \mathcal{X}_K(\cdot, \cdot -k)}_{L^1(\mathbb{R}^2)}\right),
\end{equation}
with $C > 0$ a constant that only depends on $T$.

For integral $II$ we have
\begin{align*}
    \iint_{\mathbb{R}^2} |\Tilde{h}_\varepsilon(x,t-k) - \Tilde{h}_\varepsilon(x,t)| \hspace{1mm} dx\hspace{0.5mm}dt &\leq II_i + II_{ii}
\end{align*}
with
\begin{align}
\label{4.7}
    & II_i := \iint_{\mathbb{R}^2} |h_\varepsilon(x,t-k) - h_\varepsilon(x, t-k)|\hspace{1mm}\mathcal{X}_K(x,t-k) \hspace{1mm} dx\hspace{0.5mm}dt,\\
\label{4.8}
&II_{ii} := \iint_{\mathbb{R}^2} |h_\varepsilon(x,t)|\hspace{1mm}|\mathcal{X}_K(x,t-k) - \mathcal{X}_K(x,t)|\hspace{1mm} dx\hspace{0.5mm}dt.
\end{align}
Arguing as before we can bound the integrals \eqref{4.7} and \eqref{4.8} by
\begin{align*}
    &II_i \leq T \iint_{\mathbb{R}} |h_0(x-k) - h_0(x)| \hspace{1mm}dx,\\
    & II_{ii} \leq \norm{h_0}_{L^\infty(\mathbb{R})}\int_{\mathbb{R}^2} |\mathcal{X}_K(x,t-k) - \mathcal{X}_K(x,t)|\hspace{1mm} dx\hspace{0.5mm}dt.
\end{align*}
Thus,
\begin{equation}
\label{bound 2}
II \leq C\left(\norm{h_0(\cdot -k) - h_0}_{L^1(\mathbb{R})} + \norm{h_0}_{L^\infty(\mathbb{R})}\norm{\mathcal{X}_K(\cdot, \cdot -k) - \mathcal{X}_K(\cdot, \cdot )}_{L^1(\mathbb{R}^2)}\right)
\end{equation}

Finally, thanks to \eqref{bound 1} and \eqref{bound 2} we can bound \eqref{4.2} by
\begin{align*}
    &\norm{\tilde{h}_\varepsilon(\cdot - h, \cdot -k) - \Tilde{h}_\varepsilon}_{L^1(\mathbb{R})} \leq C(\norm{h_0(\cdot -k) - h_0}_{L^1(\mathbb{R})} \\
    &+ \norm{h_0}_{L^\infty(\mathbb{R})}\norm{\mathcal{X}_K(\cdot -k, \cdot -k) - \mathcal{X}_K(\cdot, \cdot -k)}_{L^1(\mathbb{R}^2)}\\
    & \norm{h_0}_{L^\infty(\mathbb{R})}\norm{\mathcal{X}_K(\cdot, \cdot -k) - \mathcal{X}_K(\cdot, \cdot )}_{L^1(\mathbb{R}^2)}).
\end{align*}
By the continuity of the translations in $L^1$ we conclude that the sequence $\{\Tilde{h}_\varepsilon(x,t)\}_\varepsilon$ is equicontinuous. Thus, by the Fr\'echet-Kolmogorov Theorem we conclude that the set $\{\Tilde{h}_\varepsilon(x,t)\}_\varepsilon$ relatively compact in $L^1(\mathbb{R}^2)$, i.e., there exists a convergent subsequence of $\{\Tilde{h}_\varepsilon(x,t)\}_\varepsilon$ in $L^1(\mathbb{R}^2)$. In particular, by the definition of $\Tilde{h}_\varepsilon(x,t)$ we conclude that there exists a convergent subsequence of $\{h_\varepsilon(x,t)\}_\varepsilon$ in $L^1(K)$. Since the compact $K \subset \mathbb{R}\times (0,T)$  is arbitrary, by a covering argument, the convergence of the subsequence holds in $L^1_{loc}(\mathbb{R}\times (0,T))$.

\medskip

\noindent\underline{\textit{Step 2}}: In the first step we have shown that there exists a subsequence $\{h_{\varepsilon_j}(x,t)\}_j$ of $\{h_\varepsilon(x,t)\}_\varepsilon$ such that
\[
\{h_{\varepsilon_j}(x,t)\}_j \rightarrow g \qquad \textup{in} \qquad L^1_{loc}(\mathbb{R}\times (0,T)),
\]
for some $g \in L^1(\mathbb{R}\times (0,T))$, when $\varepsilon_j \to 0$. Now, we want to prove that the function $g $ is the unique solution of the problem  \eqref{stefan}-\eqref{IC}, i.e., $g = h$.
By the uniqueness theorem  of \eqref{stefan}-\eqref{IC} it is enough to show that $\forall \psi \in C^\infty_c(\mathbb{R}\times [0,T))$
 \begin{equation}
 \label{4.11}
     \int_0^T\int_{\mathbb{R}}(g \,\partial_t \psi - \Phi(g)(-\Delta)^{s}\psi)\hspace{1mm}dx\hspace{0.5mm}dt + \int_{\mathbb{R}}h_0(x)\psi(x,0)\hspace{1mm}dx = 0.
 \end{equation}
Since $h_{\varepsilon_j}$ is a solution of \eqref{59}-\eqref{IC} we have that $\forall \psi \in C^\infty_c(\mathbb{R}\times [0,T))$
 \begin{equation}
 \label{4.12}
     \int_0^T\int_{\mathbb{R}}(h_{\varepsilon_j} \partial_t \psi - \Phi_{\varepsilon_j}(h_{\varepsilon_j})(-\Delta)^{s}\psi)\hspace{1mm}dx\hspace{0.5mm}dt + \int_{\mathbb{R}}h_0(x)\psi(x,0)\hspace{1mm}dx = 0.
 \end{equation}
 Thus, if we prove
 \begin{equation}
 \label{4.13}
     \lim_{\varepsilon_j \to 0}\left(\int_0^T\int_{\mathbb{R}} (h_{\varepsilon_j} - g)\partial_t \psi - (\Phi_{\varepsilon_j}(h_{\varepsilon_j}) - \Phi(g))(-\Delta)^s \psi\hspace{1mm}dx\hspace{0.5mm}dt\right) = 0,
 \end{equation}
 passing to the limit in \eqref{4.12} we obtain \eqref{4.11}. Then we only have to study the limit \eqref{4.13}.

Since $\psi \in C^\infty_c(\mathbb{R}\times [0,T))$ has compact support, we can estimate
\begin{align}\label{4.14}
    & \left|\int_0^T\int_{\mathbb{R}} (h_{\varepsilon_j} - g)\partial_t \psi - (\Phi_{\varepsilon_j}(h_{\varepsilon_j}) - \Phi(g))(-\Delta)^s \psi\hspace{1mm}dx\hspace{0.5mm}dt\right| \nonumber  \\
    & \leq \norm{\partial_t \psi}_{L^\infty(\overline{Q}_1)} \int_{\overline{Q}_1}|h_{\varepsilon_j} - g|\hspace{1mm}dx\hspace{0.5mm}dt
    + \int_0^T\int_{\mathbb{R}} |\Phi_{\varepsilon_j}(h_{\varepsilon_j}) - \Phi(g)| |(-\Delta)^s \psi|\hspace{1mm}dx\hspace{0.5mm}dt,
\end{align}
for some compact cylinder $\overline{Q} \subset \mathbb{R} \times (0,T)$. Let $\overline{B}_R \times [0,t_0]\subset \mathbb{R} \times (0,T)$ another compact cylinder, with $t_0 \in (0,T)$ and $R > 0$ a constant that will be determined later. Then we have from \eqref{4.14}
\begin{align*}
    & \left|\int_0^T\int_{\mathbb{R}} (h_{\varepsilon_j} - g)\partial_t \psi - (\Phi_{\varepsilon_j}(h_{\varepsilon_j}) - \Phi(g))(-\Delta)^s \psi\hspace{1mm}dx\hspace{0.5mm}dt\right| \leq \norm{\partial_t \psi}_{L^\infty(\overline{Q})} \int_{\overline{Q}}|h_{\varepsilon_j} - g|\hspace{1mm}dx\hspace{0.5mm}dt\nonumber \\
    & \hspace{10mm}+ \int_0^{t_0}\int_{\overline{B}_R} |\Phi_{\varepsilon_j}(h_{\varepsilon_j}) - \Phi(g)| |(-\Delta)^s \psi|\hspace{1mm}dx\hspace{0.5mm}dt + \int_{t_0}^{T}\int_{\mathbb{R}\setminus\overline{B}_R} |\Phi_{\varepsilon_j}(h_{\varepsilon_j}) - \Phi(g)| |(-\Delta)^s \psi|\hspace{1mm}dx\hspace{0.5mm}dt\\
    & \leq \norm{\partial_t \psi}_{L^\infty(\overline{Q})} \int_{\overline{Q}}|h_{\varepsilon_j} - g|\hspace{1mm}dx\hspace{0.5mm}dt + \norm{(-\Delta)^s\psi}_{L^\infty(\overline{B}_R \times [0,t_0])}\int_0^{t_0}\int_{\overline{B}_R} |\Phi_{\varepsilon_j}(h_{\varepsilon_j}) - \Phi(g)|\hspace{1mm}dx\hspace{0.5mm}dt\\
    & \hspace{20mm}+\norm{\Phi_{\varepsilon_j}(h_{\varepsilon_j}) - \Phi(g)}_{L^\infty(\mathbb{R}\times (0,T))}\int_{t_0}^{T}\int_{\mathbb{R}\setminus\overline{B}_R}|(-\Delta)^s \psi| \hspace{1mm}dx\hspace{0.5mm}dt = I + II + III,\\
\end{align*}
with
\begin{align*}
    & I = \norm{\partial_t \psi}_{L^\infty(\overline{Q})} \int_{\overline{Q}}|h_{\varepsilon_j} - g|\hspace{1mm}dx\hspace{0.5mm}dt,\\
    &II = \norm{(-\Delta)^s\psi}_{L^\infty(\overline{B}_R \times [0,t_0])}\int_0^{t_0}\int_{\overline{B}_R} |\Phi_{\varepsilon_j}(h_{\varepsilon_j}) - \Phi(g)|\hspace{1mm}dx\hspace{0.5mm}dt,\\
    & III = \norm{\Phi_{\varepsilon_j}(h_{\varepsilon_j}) - \Phi(g)}_{L^\infty(\mathbb{R}\times (0,T))}\int_{t_0}^{T}\int_{\mathbb{R}\setminus\overline{B}_R}|(-\Delta)^s \psi| \hspace{1mm}dx\hspace{0.5mm}dt.
\end{align*}

Firstly, we study the integral $I$. Given $\Tilde{\varepsilon} > 0$ fix, thanks to the Step 1, since $h_{\varepsilon_j} \rightarrow g$ in $L^1_{loc}(\mathbb{R}\times (0,T))$, there exists a $\delta_0\in (0,1)$ such that $\forall \varepsilon_j \leq \delta_0$
\begin{equation}
    \label{4.15}
    \norm{\partial_t \psi}_{L^\infty(\overline{Q})} \int_{\overline{Q}}|h_{\varepsilon_j} - g|\hspace{1mm}dx\hspace{0.5mm}dt \leq \frac{\Tilde{\varepsilon}}{3}.
\end{equation}

On the other hand, we can bound the integral $II$ by
\begin{align}
\label{4.16}
    \norm{(-\Delta)^s\psi}&_{L^\infty(\overline{B}_R \times [0,t_0])}\int_0^{t_0}\int_{\overline{B}_R} |\Phi_{\varepsilon_j}(h_{\varepsilon_j}) - \Phi(g)|\hspace{1mm}dx\hspace{0.5mm}dt \leq \\
    &\leq \norm{(-\Delta)^s\psi}_{L^\infty(\overline{B}_R \times [0,t_0])} \left(\int_0^{t_0}\int_{\overline{B}_R} |\Phi_{\varepsilon_j}(h_{\varepsilon_j}) - \Phi_{\varepsilon_j}(g)|\hspace{1mm}dx\hspace{0.5mm}dt + \int_0^{t_0}\int_{\overline{B}_R} |\Phi_{\varepsilon_j}(g) - \Phi(g)|\hspace{1mm}dx\hspace{0.5mm}dt\right).\nonumber\\
    & \leq \norm{(-\Delta)^s\psi}_{L^\infty(\overline{B}_R \times [0,t_0])} \left(\int_0^{t_0}\int_{\overline{B}_R} |h_{\varepsilon_j} - g|\hspace{1mm}dx\hspace{0.5mm}dt + \int_0^{t_0}\int_{\overline{B}_R} |\Phi_{\varepsilon_j}(g) - \Phi(g)|\hspace{1mm}dx\hspace{0.5mm}dt\right)\nonumber,
\end{align}
where we have used that $\Phi_{\varepsilon_j}\in C^{0,1}(\mathbb{R})$ and $[\Phi_{\varepsilon_j}]_{C^{0,1}(\mathbb{R})} = 1$ for all $\varepsilon_j \in (0,1)$.
Then since $\Phi_{\varepsilon_j}(x) \rightarrow \Phi(x)$ and $\overline{B}_R \times [0,t_0]$ is a compact, arguing like as in the integral $I$ we have that there exist a $\delta_1 > 0$ such that $\forall \varepsilon_j \leq \delta_1$ we can bound \eqref{4.16} by
\begin{equation}
    \label{4.17}
    \norm{(-\Delta)^s\psi}_{L^\infty(\overline{B}_R \times [0,t_0])}\int_0^{t_0}\int_{\overline{B}_R} |\Phi_{\varepsilon_j}(h_{\varepsilon_j}) - \Phi(g)|\hspace{1mm}dx\hspace{0.5mm}dt \leq \frac{\Tilde{\varepsilon}}{3}.
\end{equation}

For the integral $III$, since $|(-\Delta)^s \psi (x,t)| \leq \frac{C}{(1 + |x|)^{1+2s}}$, we have that for $R>0$  big we can bound $III$ by
\begin{equation}
\label{4.18}
    \norm{\Phi_{\varepsilon_j}(h_{\varepsilon_j}) - \Phi(g)}_{L^\infty(\mathbb{R}\times (0,T))}\int_{t_0}^{T}\int_{\mathbb{R}\setminus\overline{B}_R}|(-\Delta)^s \psi| \hspace{1mm}dx\hspace{0.5mm}dt \leq \frac{\Tilde{\varepsilon}}{3}.
\end{equation}

Finally, thanks to the bounds \eqref{4.15}, \eqref{4.16} and \eqref{4.17} we have that for all $\Tilde{\varepsilon} > 0$ there exists $\Tilde{\delta} := \textup{min}\{\delta_0, \delta_1\}$ and $R > 0$ such that $\forall \varepsilon_j \leq \Tilde{\delta} $ then
\[
\left|\int_0^T\int_{\mathbb{R}} (h_{\varepsilon_j} - g)\partial_t \psi - (\Phi_{\varepsilon_j}(h_{\varepsilon_j}) - \Phi(g))(-\Delta)^s \psi\hspace{1mm}dx\hspace{0.5mm}dt\right| \leq \tilde{\varepsilon}.
\]
Thus, the limit \eqref{4.13} holds and we conclude the proof.
\end{proof}

\begin{rem} \rm
    Notice that by  Proposition \ref{prop 4.1} since $h_{\varepsilon_j} \rightarrow h$ in $L^1_{loc}(\mathbb{R}\times (0,T))$ then $h_{\varepsilon_j} \rightarrow h$ a.e. in $\mathbb{R} \times (0,T)$. In particular, since $h_{\varepsilon_j}$ and $h$ are selfsimilar and continuous solutions  then $H_{\varepsilon_j} \rightarrow H$ a.e. in $\mathbb{R}$.
\end{rem}

\begin{rem} \rm
    From this point on we will write $\{H_\varepsilon\}_\varepsilon$ instead of $\{H_{\varepsilon_j}\}_j$ to indicate a subsequence of selfsimilar solutions of \eqref{59}-\eqref{IC} that converges to $H$, the unique solution of \eqref{stefan}-\eqref{IC}.
\end{rem}

The next Lemma deals with the location of the intersection between the level $h=L$ and the sup of the positivity set of the functions  $\{H_\varepsilon\}$ of the approximate sequence.

\begin{lem}
\label{lema 4.19}
    Let $s\in (0,1)$, $\varepsilon \in (0,1)$, $L > 0$ and $\{H_\varepsilon \}_{\varepsilon}$ and sequence of solutions of \eqref{59}-\eqref{IC} such that $H_\varepsilon \rightarrow H$ a.e. in $\mathbb{R}$, with $H$ the unique solution of the problem \eqref{stefan}-\eqref{IC}. Let $\xi_0 > 0$  the free boundary of $H$. If we define $
    \xi_\varepsilon := \textup{sup} \{x\in \mathbb{R}\hspace{1mm}:\hspace{1mm} H_\varepsilon(x) > L\}$, then
   $$
     \liminf_{\varepsilon\to 0} \xi_\varepsilon\ge \xi_0.
    $$
\end{lem}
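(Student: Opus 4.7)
The plan is to argue by contradiction: assume $\liminf_{\varepsilon\to 0}\xi_\varepsilon = \xi^* < \xi_0$ (allowing the convention $\xi^* = -\infty$) and deduce an impossible inequality on $H$ in the water region. First I extract a subsequence along which $\xi_{\varepsilon_k}\to \xi^*$. By Theorem~\ref{teo 2.12}, each $H_{\varepsilon_k}$ is continuous, monotone nonincreasing, with limits $L+P_1$ at $-\infty$ and $0$ at $+\infty$, so that $\xi_{\varepsilon_k}$ is a well-defined finite real number and the superlevel set $\{H_{\varepsilon_k}>L\}$ is exactly the open interval $(-\infty,\xi_{\varepsilon_k})$. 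In particular $H_{\varepsilon_k}(\xi)\le L$ whenever $\xi>\xi_{\varepsilon_k}$.

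Next I exploit the a.e.\ convergence $H_{\varepsilon_k}\to H$ coming from Proposition~\ref{prop 4.1} and the remark that follows it. Since the interval $(\xi^*,\xi_0)$ has strictly positive Lebesgue measure, I can pick a point $\xi$ inside it at which this pointwise convergence holds. For all $k$ large enough, $\xi_{\varepsilon_k}<\xi<\xi_0$, and therefore $H_{\varepsilon_k}(\xi)\le L$; sending $k\to \infty$ yields $H(\xi)\le L$.

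The hard part---and the only place where a careful appeal to the earlier results is needed---is to turn this into a contradiction with the behaviour of $H$ in the water region. Theorem~\ref{teo 4} on its own only provides $H(\xi_0)=L$ together with monotonicity, which would merely yield $H(\xi)\ge L$ for $\xi<\xi_0$. What I really need is the strict inequality $H(\xi)>L$ for every $\xi<\xi_0$; this will follow from the uniqueness of $\xi_0$ stated in Theorem~\ref{teo 4}(c), since if $H(\xi_*)=L$ for some $\xi_*<\xi_0$, then monotonicity would force $H\equiv L$ on $[\xi_*,\xi_0]$, contradicting that $\xi_0$ is the unique value attaining $L$. Combining this strict inequality with the previously derived $H(\xi)\le L$ closes the contradiction, whence $\liminf_{\varepsilon\to 0}\xi_\varepsilon\ge \xi_0$, as required.
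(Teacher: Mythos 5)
Your proof is correct and follows essentially the same route as the paper: the paper argues directly that for any $\tilde\xi<\xi_0$ the pointwise convergence $H_\varepsilon(\tilde\xi)\to H(\tilde\xi)>L$ together with monotonicity forces $\xi_\varepsilon>\tilde\xi$ for small $\varepsilon$, which is just the contrapositive of your contradiction argument. If anything, you are more careful than the paper on two small points it glosses over: choosing the test point inside the set of full measure where the a.e.\ convergence actually holds, and justifying the strict inequality $H>L$ on $(-\infty,\xi_0)$ via the uniqueness of $\xi_0$ in Theorem~\ref{teo 4}(c).
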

\begin{proof}
    Let $h > 0$ small and $\Tilde{\xi} := \xi_0 -k$ such that $\Tilde{\xi} \in (0, \xi_0)$ and $A = H(\Tilde{\xi}) > 0$. Since by Proposition \ref{prop 4.1} $H_\varepsilon(\Tilde{\xi}) \rightarrow H(\Tilde{\xi})$, then there exists $\varepsilon_0$ small such that $H_\varepsilon(\Tilde{\xi}) > \frac{A}{2}$ for all $\varepsilon \leq \varepsilon_0$. Thanks to the monotonicity of  $H_\varepsilon$ we deduce that $H_\varepsilon(\xi) > \frac{A}{2}$ for all $\varepsilon \leq \varepsilon_0(h)$ and $\xi \leq \Tilde{\xi}$. Thus, $\xi_\varepsilon > \Tilde{\xi}$. Since we can do the argument for all $\Tilde{\xi} \in (0,\xi_0)$ the  result follows.  Note that a similar result for the approximation from the right-hand side may be false because of possible flat tails\nc.
\end{proof}

\section{$C^{1,\alpha}$ estimates of the selfsimilar solutions} \label{section 4}

The purpose of this section is to prove  that in the subcritical case, $0<s < \frac{1}{2}$, the selfsimilar solutions of the regularized problem \eqref{59}, with the same initial data as in the Theorem \ref{teo 2.12}, not only are smooth in $\mathbb{R}$ but they also  satisfy a property of \textit{improvement of regularity} that allows to pass from bounds from $C^{0,1}(\mathbb{R})$ to bounds in $C^{1,\alpha}(\mathbb{R})$ with global constants that do not depend on $\varepsilon$. This is a crucial step of the paper in order to apply this property to the limit solution (see next section). The main result of the section reads as follows:

\begin{thm}
\label{pro princ 4}
    Let $s \in (0,\frac{1}{2})$ and $\varepsilon > 0$ small. Under the assumptions of Theorem \ref{teo 2.12} then
    \begin{equation}
    \label{sec 4 ine}
        \norm{H_\varepsilon}_{C^{1,\alpha}(\mathbb{R})}\leq C_s\left(\norm{H_\varepsilon}_{L^\infty(\mathbb{R})} + \norm{H_\varepsilon}_{C^{0,1}(\mathbb{R})}\right),
    \end{equation}
    for some $\alpha\in (0,1)$ and $C_s > 0$
    a constant that only depends on $s$, not on $\varepsilon$.

\end{thm}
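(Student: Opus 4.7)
The plan is to apply the Chang--Dávila $C^{1,\alpha}$ regularity theorem (Theorem~\ref{davila}) to the smooth selfsimilar solution $h_\varepsilon(x,t)=H_\varepsilon(xt^{-1/(2s)})$ of~\eqref{59}. Naively viewing the nonlinear operator $h\mapsto -(-\Delta)^s\Phi_\varepsilon(h)$ as linear in $h$ with variable coefficient $a(x,y)\in[\varepsilon,1]$ would produce a constant that blows up as $\varepsilon\to 0$. To avoid this, my idea is to absorb the degeneracy into an inhomogeneous source by rewriting
\[
\partial_t h_\varepsilon+(-\Delta)^s h_\varepsilon\;=\;(-\Delta)^s\bigl(h_\varepsilon-\Phi_\varepsilon(h_\varepsilon)\bigr)\;=:\;F_\varepsilon(x,t),
\]
so that the principal operator is the bare fractional Laplacian, which lies in $\mathcal{L}_1$ with fixed ellipticity $\lambda=\Lambda=1$.

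Next I would control $F_\varepsilon$ uniformly in $\varepsilon$. Because $\Phi_\varepsilon'\in[\varepsilon,1]$, the auxiliary function $g_\varepsilon:=h_\varepsilon-\Phi_\varepsilon(h_\varepsilon)$ is Lipschitz in $x$ with $\|g_\varepsilon(\cdot,1)\|_{C^{0,1}}\le\|H_\varepsilon\|_{C^{0,1}}$, and it takes values in a bounded interval (for instance $g_\varepsilon\equiv L$ on $\{h_\varepsilon\ge L\}$). The crucial point is $s<1/2$: splitting the singular integral defining $(-\Delta)^s g_\varepsilon$ at $|x-y|=1$, the near-diagonal piece converges thanks to $2s<1$ while the tail converges by the $L^\infty$ bound, producing
\[
\|F_\varepsilon(\cdot,1)\|_{L^\infty(\mathbb{R})}\;\le\;C_s\bigl(\|H_\varepsilon\|_{L^\infty}+\|H_\varepsilon\|_{C^{0,1}}\bigr),
\]
with $C_s$ independent of $\varepsilon$. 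This is exactly where the subcritical assumption is used. With $F_\varepsilon\in L^\infty$ uniformly and $h_\varepsilon$ smooth (Lemma~\ref{lemma 2.6b}), hence a viscosity solution, I would then apply Theorem~\ref{davila} to $\partial_t h_\varepsilon-Ih_\varepsilon=F_\varepsilon$ with $I=-(-\Delta)^s$ on the cylinder $B_2(x_0)\times(1,2]$ for arbitrary $x_0\in\mathbb{R}$; the time window $[1,2]$ avoids the discontinuity of $h_0$ at $t=0$. All three norms on the right of~\eqref{form.Hchdv} are dominated by $C_s(\|H_\varepsilon\|_{L^\infty}+\|H_\varepsilon\|_{C^{0,1}})$ uniformly in $x_0,\varepsilon$, yielding a $C^\alpha$ bound on $\partial_x h_\varepsilon(\cdot,2)$ on $B_{1/4}(x_0)$.

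Finally I would pull the estimate back to the selfsimilar profile via the identity $\partial_x h_\varepsilon(\xi,2)=2^{-1/(2s)}H_\varepsilon'(2^{-1/(2s)}\xi)$ and cover $\mathbb{R}$ with translates of $B_{1/4}(x_0)$; for pairs of points at distance exceeding a fixed constant (depending only on $s$), the trivial bound $|H_\varepsilon'(\xi_1)-H_\varepsilon'(\xi_2)|\le 2\|H_\varepsilon\|_{C^{0,1}}$ handles the Hölder quotient. This produces the global estimate~\eqref{sec 4 ine}. The main obstacle is that Theorem~\ref{davila} is literally stated with a time-only source $f(t)$, while our $F_\varepsilon$ depends on $(x,t)$; this gap must be closed either by appealing to a more general version from~\cite{ChD} or by a short Schauder-type argument exploiting that $F_\varepsilon$ is in fact $C^{1-2s}$ in space, again thanks to the $s<1/2$ hypothesis.
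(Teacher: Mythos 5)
There is a genuine gap, and it sits exactly at the step you flag at the end. Your reduction $\partial_t h_\varepsilon+(-\Delta)^s h_\varepsilon=F_\varepsilon:=(-\Delta)^s\bigl(h_\varepsilon-\Phi_\varepsilon(h_\varepsilon)\bigr)$ and the uniform bounds $\|F_\varepsilon\|_{L^\infty}\lesssim_s \|H_\varepsilon\|_{L^\infty}+\|H_\varepsilon\|_{C^{0,1}}$ (and even $F_\varepsilon\in C^{1-2s}_x$) are fine, but they cannot be converted into a $C^{1,\alpha}$ bound for $h_\varepsilon$ by any parabolic regularity theorem when $s<\tfrac12$: the operator $(-\Delta)^s$ has order $2s<1$, so an $x$-dependent source that is merely bounded yields only $C^{2s-\delta}$ spatial regularity, and a source in $C^{1-2s}_x$ yields at best the borderline exponent $(1-2s)+2s=1$, where Schauder estimates fail — in neither case do you reach $C^{1,\alpha}$ with $\alpha>0$. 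Theorem \ref{davila} cannot be invoked as a substitute, because its right-hand side $f(t)$ depends on time only, and that restriction is essential (a $t$-only source can be subtracted off and does not influence spatial regularity); there is no ``more general version'' with bounded $f(x,t)$ giving spatial $C^{1,\alpha}$ in the range $2s<1$. Note also that the subcritical hypothesis works against you here: it helps bound $F_\varepsilon$, but it makes the order of the principal operator too low to regain a full derivative plus H\"older from the equation. Any attempt to bootstrap ($h\in C^{0,1}\Rightarrow F_\varepsilon\in C^{1-2s}\Rightarrow h\in C^{1-}\Rightarrow\dots$) stalls at the same integer threshold.

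The paper avoids this obstruction by never feeding an $x$-dependent source into a parabolic regularity theorem. It splits $\mathbb{R}$ into two overlapping regions: on $(-\infty,A)$ with $A<\xi_0$, one has $h_\varepsilon>L$, hence $\Phi_\varepsilon(h_\varepsilon)=h_\varepsilon-L$ and the equation is literally the source-free linear fractional heat equation, so Theorem \ref{davila} applies with $f=0$ and gives \eqref{new 2}; on $(A/2,\infty)$, which contains the free boundary, it uses the selfsimilar profile identity $\xi H_\varepsilon'(\xi)=2s\,(-\Delta)^s U_\varepsilon(\xi)$, so that $H_\varepsilon'$ is \emph{equal} to (a multiple of) the fractional Laplacian of the uniformly Lipschitz function $U_\varepsilon$; the splitting arguments of Lemma \ref{aco} and Lemma \ref{lem5.2} (the same kind of computation you use to bound $F_\varepsilon$) then show directly that $\xi H_\varepsilon'$ is bounded and $C^{1-2s}$, and dividing by $\xi\ge A/2>0$ gives \eqref{new 3}. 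In other words, the selfsimilar structure converts $\partial_t h_\varepsilon$ into $-\tfrac1{2s}\xi H_\varepsilon'$, so the H\"older regularity of the nonlocal term is transferred to $H_\varepsilon'$ with no loss — this is the gain that generic parabolic theory with an $x$-dependent source cannot supply, and it is the missing ingredient in your plan.
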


 Let us take an auxiliary $A > 0$ such that $A < \xi_0$, with $\xi_0$ the free boundary point of $H$, the unique solution of the original problem \eqref{stefan}-\eqref{IC}. The proof of estimate \eqref{sec 4 ine} is a consequence of the combination of two partial estimates. The first  one covers the region far to the left
\begin{equation}
    \label{new 2}
[H_\varepsilon]_{C^{1,\alpha}(-\infty, A)} \nc \leq C_d\,\norm{H_\varepsilon}_{L^\infty(\mathbb{R})},
\end{equation}
where a fractional linear heat equation holds in the region $(-\infty, A)$. Result  \eqref{new 2}  is a consequence of Theorem \ref{davila} following \cite{ChD} and will be proved in Subsection \ref{subsec 5.2}.  Here $\alpha = \alpha_1 >0$.

The other estimate covers the region around the free boundary and beyond:
\begin{equation}
    \label{new 3}
    \norm{ H_\varepsilon}_{C^{1,\alpha}(A/2, \infty)}\leq C_s\left(\norm{H_\varepsilon}_{L^\infty(\mathbb{R})} + \norm{H_\varepsilon}_{C^{0,1}(\mathbb{R})}\right),
\end{equation}
 for some $\alpha>0$, and is proved in  Proposition \ref{prop ine 2} below. Here we may take $\alpha_2=(1-2s)/2>0$.
We point out that the constants appearing in these two inequalities  only depend on $s$, but not on $\varepsilon$.

\begin{rem} \rm Note that the domains of both inequalities overlap, so that we get a joint $C^{1,\alpha}$ estimate in all of $\mathbb{R}$ with $\alpha=\min\{\alpha_1, \alpha_2\}$. The reader may wonder why two overlapping estimates and not just one. The reason is that estimate \eqref{new 2} does not apply in regions where the free boundary $\xi_0$ is included since we need a neighborhood where the linear equation holds. On the other hand,  estimate \eqref{new 3} is more specific but does not apply in any domain that includes $\xi=0$\nc.
\end{rem}

\nc
\subsection{Proof of inequality \eqref{new 2} to the left of $A$}\label{subsec 5.2}
We sketch the proof of the $C^{1,\alpha}$ regularity of $H$ and $H_\varepsilon$ in any region to the left of the free boundary, more precisely in the region $(-\infty, A)$ with $0< A < \xi_0$. We want to  use Theorem \ref{davila}. The latter applies in unitary domains while the solution $h$ and  $h_\varepsilon$ of the equivalent parabolic problem is defined in a region somewhat to the left of the parabolic free boundary. We proceed as follows.

(i) Let us prove the result for $H$. Notice that by Theorem \ref{teo 4} the free boundary of the selfsimilar solution $H(\xi)$ is $\xi_0>0$  so that the free boundary of the parabolic solution $h(\cdot, t)$  is  $X(t)=\xi_0\, t^{\frac{1}{2s}}$, which ensures that the parabolic free boundary  advances as the time passes. Then, we can choose a time $t_0 >1$ large enough so  that, after a convenient space-time translation, the set $Q=B_4(0)\times (-1,0]$ mentioned in Theorem \ref{davila} is fully contained in the region $x<X(t)$ where $h$ solves a linear fractional equation, so the Theorem applies.

The details are  as follows: we want to make sure that for any point $x_1\le A\, t_0^{\frac{1}{2s}}$, the translation of $Q$ of the form $\hat Q_4= B_4(x_1)\times (t_0-1,t_0]$ is always  contained in the region
 $$
 R=\{(x,t): t_0-1<t\le t_0, -\infty<x< \xi_0\, t^{\frac{1}{2s}} \}.
 $$
 The needed condition is $x_1+4\le \xi_0\, (t_0-1)^{\frac{1}{2s}}$.   Then Theorem \ref{davila} can be applied to the function
 $h_1(x,t)= h(x-x_1,t+t_0  )$ in the set $\widehat Q_{1/2}= B_{1/2}(x_1)\times (t_0-(1/2),t_0]$.  Now observe that
 $$
 h(x,t_0)= H(x \, \, t^{-\frac{1}{2s}}),
 $$
 which means that formula \eqref{form.Hchdv} with $t=s=t_0$ implies the H\"older regularity of $H$ with a constant that depends on $\xi_0$ and $t_0$. If $|x_2-x_1| \le 1$ then
\[
|\partial_{x}h(x_2, t_0) - \partial_{x}h(x_1, t_0)|\leq C_d\,\norm{H}_{L^\infty(\mathbb{R})}\, |x_2 -x_1|^\alpha.
\]
Here, $\alpha$ is a universal constant that depends on $s$. The estimate for $h(\cdot,t_0)$ immediately translates into a similar estimate for $H$ due to the selfsimilar formula.\nc

(ii) Let us now prove the result for $H_\varepsilon$.
    Notice that the approximations $H_\varepsilon$ satisfy the linear fractional heat equation in a set where the solution is positive which by Lemma \ref{lema 4.19} is $(-\infty, \xi_\varepsilon)$ where the end-points  $\xi_\varepsilon$ accumulate as $ \varepsilon\to 0$ at points to the right  of $\xi_0$. This means that the previous proof applies to $H_\varepsilon$
    with $\varepsilon > 0$ enough. In that case estimate \eqref{new 2}  holds. \nc

\subsection{Proof of the inequality \eqref{new 3}}

We use a different argument to obtain $C^{1,\alpha}$ regularity of $H$ is a region that covers the free boundary. As a preliminary to the proof of inequality \eqref{new 3} we prove a technical lemma that reflects the influence of the factor $\xi$ that accompanies $H'(\xi)$\nc.

\begin{lem}
\label{aco}
     Let $s \in (0,\frac{1}{2})$, $\varepsilon \in (0,1)$ and $A \in (0, \xi_0)$ with $\xi_0 > 0$ the free boundary point of the selfsimilar solution of the problem \eqref{stefan}-\eqref{IC}. Under the assumptions of Theorem \ref{teo 2.12} then
     \begin{equation}
       \label{bound}
      \norm{\xi H'_\varepsilon}_{L^\infty(A/2, \infty)} \leq C_s\left(\norm{U_\varepsilon}_{L^\infty(\mathbb{R})} + \norm{U_\varepsilon}_{C^{0,1}(\mathbb{R})}\right),
     \end{equation}
     where $C_s > 0$ is a constant that only depends on $s$ and not on $\varepsilon$.
\end{lem}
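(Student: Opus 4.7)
\medskip

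\textbf{Plan of proof.} The key observation is that equation \eqref{63} rearranges into the pointwise identity
\[
\xi\, H'_\varepsilon(\xi) \;=\; 2s\,(-\Delta)^s U_\varepsilon(\xi), \qquad \xi \in \mathbb{R},
\]
which is valid classically since $H_\varepsilon, U_\varepsilon \in C^\infty(\mathbb{R}) \cap C^{0,1}(\mathbb{R})$ by Theorem \ref{teo 2.12}(c). Thus the lemma reduces to showing a pointwise (in fact global) bound
\[
\|(-\Delta)^s U_\varepsilon\|_{L^\infty(\mathbb{R})} \;\leq\; C_s\bigl(\|U_\varepsilon\|_{L^\infty(\mathbb{R})} + [U_\varepsilon]_{C^{0,1}(\mathbb{R})}\bigr),
\]
with a constant independent of $\varepsilon$, which then yields the desired estimate on $(A/2,\infty)$ (and in fact on all of $\mathbb{R}$; the restriction in the statement reflects only the later use of the lemma, where one divides by $\xi \geq A/2$).

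\smallskip

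Once this is set up, I would estimate $(-\Delta)^s U_\varepsilon(\xi)$ by splitting the defining singular integral about the evaluation point:
\begin{equation*}
(-\Delta)^s U_\varepsilon(\xi) \;=\; \mathrm{p.v.}\!\int_{|y-\xi|<1} \frac{U_\varepsilon(\xi)-U_\varepsilon(y)}{|\xi-y|^{1+2s}}\,dy \;+\; \int_{|y-\xi|\geq 1} \frac{U_\varepsilon(\xi)-U_\varepsilon(y)}{|\xi-y|^{1+2s}}\,dy.
\end{equation*}
For the near-field piece, the Lipschitz bound $|U_\varepsilon(\xi)-U_\varepsilon(y)| \leq [U_\varepsilon]_{C^{0,1}}|\xi-y|$ reduces the integrand to $[U_\varepsilon]_{C^{0,1}}|\xi-y|^{-2s}$, which is integrable on $\{|z|<1\}$ precisely because $2s<1$; explicitly $\int_{-1}^{1}|z|^{-2s}\,dz = 2/(1-2s)$. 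For the far-field piece, $|U_\varepsilon(\xi)-U_\varepsilon(y)| \leq 2\|U_\varepsilon\|_{L^\infty}$ combined with $\int_{|z|\geq 1}|z|^{-1-2s}\,dz = 1/s$ gives a bound by $(2/s)\|U_\varepsilon\|_{L^\infty}$. Adding the two contributions and multiplying by $2s$ yields the claim with
\[
C_s \;=\; \max\!\left\{\tfrac{2s}{1-2s},\, 2\right\}\cdot 2.
\]

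\smallskip

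There is essentially no obstacle: the crucial structural point is the subcriticality condition $s<1/2$, which is what makes the near-field integral of $|z|^{-2s}$ finite. All constants are explicit in $s$ and independent of $\varepsilon$, since $U_\varepsilon$ is uniformly Lipschitz by Theorem \ref{teo 2.12}(c) (in the form we will invoke later, a uniform Lipschitz bound on $U_\varepsilon$ is provided separately). Finally, one observes that $|\xi H'_\varepsilon(\xi)| \leq 2s\,|(-\Delta)^s U_\varepsilon(\xi)|$ holds pointwise, so taking $L^\infty$ in $\xi \in (A/2,\infty)$ (or any subset of $\mathbb{R}$) yields \eqref{bound}.
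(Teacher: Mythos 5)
Your proposal is correct and follows essentially the same computation as the paper: rewrite the profile equation as $\xi H'_\varepsilon(\xi)=2s\,(-\Delta)^sU_\varepsilon(\xi)$ and split the singular integral into a near field controlled by $[U_\varepsilon]_{C^{0,1}(\mathbb{R})}\int|z|^{-2s}dz$ (finite since $2s<1$) and a far field controlled by $\|U_\varepsilon\|_{L^\infty(\mathbb{R})}\int_{|z|\ge 1}|z|^{-1-2s}dz$. The only difference is that the paper distinguishes a second case, when the ball around $\xi$ leaves $(A/2,\infty)$, and there invokes the $C^{1,\alpha}$ estimate of Theorem \ref{davila} to control the local Lipschitz seminorm by $\|U_\varepsilon\|_{L^\infty}$; since the stated bound already admits the global norm $\|U_\varepsilon\|_{C^{0,1}(\mathbb{R})}$ on the right-hand side, your uniform splitting makes that case distinction unnecessary and in fact yields the estimate on all of $\mathbb{R}$, not just $(A/2,\infty)$.
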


\begin{proof}
    By Theorem \ref{teo 2.12}-$a)$ we know that
    \[
     -\frac{1}{2s}\xi H'_\varepsilon(\xi) + (-\Delta)^s U_\varepsilon(\xi) = 0 \qquad \textup{in} \ \mathbb{R}.
    \]
    Taking $\delta > 0$ small and $\xi \in (A/2, \infty)$  we divide the proof in two cases depending on whether the small neighborhood of $\xi$, $B_\delta(\xi) $, is contained in $(A/2, \infty)$ or otherwise $B_\delta(\xi) \not\subset (A/2, \infty)$.

     (i) Firstly suppose that $B_\delta(\xi) \subset (A/2, \infty)$. Since by Lemma \ref{lemma 2.11} $U_\varepsilon \in C^{0,1}((A/2, \infty))$ then we have
    \begin{align*}
    \left|\xi H'_{\color{blue}\varepsilon}(\xi)\right| &= 2s |(-\Delta)^s U_\varepsilon(\xi)| = 2s\left|\int_{\mathbb{R}} \frac{U_\varepsilon(\xi) - U_\varepsilon(\eta)}{|\xi- \eta|^{1+2s}} \hspace{1mm}d\eta \right| \leq 2s \int_{\mathbb{R}} \frac{|U_\varepsilon(\xi) - U_\varepsilon(\eta)|}{|\xi- \eta|^{1+2s}} \hspace{1mm}d\eta\\
    &= 2s\left(\int_{B_{\delta}(\xi)} \frac{|U_\varepsilon(\xi) - U_\varepsilon(\eta)|}{|\xi- \eta|^{1+2s}} \hspace{1mm}d\eta + \int_{\mathbb{R} \setminus B_{\delta}(\xi)} \frac{|U_\varepsilon(\xi) - U_\varepsilon(\eta)|}{|\xi- \eta|^{1+2s}} \hspace{1mm}d\eta\right)\\
    & =2s\left([U_\varepsilon]_{C^{0,1}((\xi_0/2, \infty))}\int_{B_{\delta}(\xi)} \frac{1}{|\xi- \eta|^{2s}} \hspace{1mm}d\eta + \norm{U_\varepsilon}_{L^\infty(\mathbb{R})}\int_{\mathbb{R} \setminus B_{\delta}(\xi)} \frac{1}{|\xi- \eta|^{1+2s}} \hspace{1mm}d\eta\right)\\
    & \leq C_s\left(\norm{U_\varepsilon}_{L^\infty(\mathbb{R})} + [U_\varepsilon]_{C^{0,1}(\mathbb{R})}\right).
     \end{align*}

     (ii) Now suppose that $\xi -\delta < A/2$. Since $H_\varepsilon$ is a bounded selfsimilar solution we have by Theorem \ref{davila} the estimate
     \begin{equation*}
\norm{U_\varepsilon}_{C^{1,\alpha}(B_\delta(\xi))}\leq C_d\norm{U_\varepsilon}_{L^\infty(\mathbb{R})},
     \end{equation*}
     with $C_d > 0$ a constant that only depends on $s$. In particular we have
     \begin{equation}
         \label{dav 1}[U_\varepsilon]_{C^{0,1}(B_\delta(\xi))} \leq C_d\norm{U_\varepsilon}_{L^\infty(\mathbb{R})}.
     \end{equation}
     Thus,
     \begin{align*}
    \left|\xi H'(\xi)\right| &\leq 2s\left(\int_{B_{\delta}(\xi)} \frac{|U_\varepsilon(\xi) - U_\varepsilon(\eta)|}{|\xi- \eta|^{1+2s}} \hspace{1mm}d\eta + \int_{\mathbb{R} \setminus B_{\delta}(\xi)} \frac{|U_\varepsilon(\xi) - U_\varepsilon(\eta)|}{|\xi- \eta|^{1+2s}} \hspace{1mm}d\eta\right)\\
    & =2s\left([U_\varepsilon]_{C^{0,1}(B_\delta(\xi))}\int_{B_{\delta}(\xi)} \frac{1}{|\xi- \eta|^{2s}} \hspace{1mm}d\eta + \norm{U_\varepsilon}_{L^\infty(\mathbb{R})}\int_{\mathbb{R} \setminus B_{\delta}(\xi)} \frac{1}{|\xi- \eta|^{1+2s}} \hspace{1mm}d\eta\right)\\
    & \leq C_s\left(\norm{U_\varepsilon}_{L^\infty(\mathbb{R})} + [U_\varepsilon]_{C^{0,1}(B_\delta(\xi))}\right) \leq  C'_s\norm{U_\varepsilon}_{L^\infty(\mathbb{R})},
     \end{align*}
     where we have used  estimate \eqref{dav 1} in the last inequality. This concludes the proof.
\end{proof}

The next lemma improves the regularity of $\xi H_\varepsilon'(\xi)$ into H\"older regularity. This is the key point of the section.

 \begin{lem}\label{lem5.2} Under the same  assumptions, if $U_\varepsilon\in C^{0,1}(\mathbb{R})$ we prove that $\xi\,H_\ve' (\xi)$ is H\"older continuous with exponent at least $\alpha=1-2s$. We have for $\xi_1, \xi_2\ge A/2$
\begin{equation}
\label{newHolder}
        |\xi_2H_\varepsilon'(\xi_2) - \xi_1H_\varepsilon'(\xi_1)| \leq C [U_\varepsilon]_{C^{0,1}(\mathbb{R})} |\xi_2 - \xi_1|^{1-2s},
\end{equation}
where $C>0$ does not depend on $\ve$.
\end{lem}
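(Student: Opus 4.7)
The plan is to use the profile equation \eqref{65}, rewritten as
\[
\xi H_\varepsilon'(\xi) = 2s\,(-\Delta)^s U_\varepsilon(\xi),
\]
so that estimate \eqref{newHolder} reduces to a H\"older estimate with exponent $1-2s$ for $(-\Delta)^s U_\varepsilon$ in terms of $[U_\varepsilon]_{C^{0,1}(\mathbb{R})}$. This is the standard fact that for $s<1/2$ the fractional Laplacian maps Lipschitz functions into $C^{0,1-2s}$; I would spell it out cleanly to track the dependence on $\varepsilon$.

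First I would use the symmetric (second-difference) representation
\[
(-\Delta)^s U_\varepsilon(\xi) = \tfrac{1}{2}\int_{\mathbb{R}} \frac{2U_\varepsilon(\xi) - U_\varepsilon(\xi+z) - U_\varepsilon(\xi-z)}{|z|^{1+2s}}\,dz,
\]
which is well defined for $U_\varepsilon\in C^{0,1}(\mathbb{R})$ because the numerator is bounded by $2[U_\varepsilon]_{C^{0,1}}|z|$ and $1+2s<2$, so the principal value is unnecessary. I would then fix $\xi_1,\xi_2\in\mathbb{R}$, set $r:=|\xi_2-\xi_1|$, subtract the two representations, and split the $z$-integral into the near region $\{|z|\le r\}$ and the far region $\{|z|>r\}$.

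For the near region I would estimate each of the two second differences separately using Lipschitz regularity, obtaining an integrand bounded by $C[U_\varepsilon]_{C^{0,1}}|z|^{-2s}$, which integrates to $C[U_\varepsilon]_{C^{0,1}}r^{1-2s}$ (this is the step that crucially uses $s<1/2$). For the far region I would instead group the terms by $\xi$-translation and bound
\[
|U_\varepsilon(\xi_2\pm z) - U_\varepsilon(\xi_1\pm z)| \leq [U_\varepsilon]_{C^{0,1}}\,r, \qquad |U_\varepsilon(\xi_2) - U_\varepsilon(\xi_1)| \leq [U_\varepsilon]_{C^{0,1}}\,r,
\]
so that the numerator is bounded by $4[U_\varepsilon]_{C^{0,1}}r$ and integration against $|z|^{-1-2s}$ on $\{|z|>r\}$ yields $C\,s^{-1}[U_\varepsilon]_{C^{0,1}}r^{1-2s}$. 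Adding the two contributions produces
\[
|(-\Delta)^s U_\varepsilon(\xi_2) - (-\Delta)^s U_\varepsilon(\xi_1)| \leq C\,[U_\varepsilon]_{C^{0,1}(\mathbb{R})}\,|\xi_2-\xi_1|^{1-2s}
\]
with $C$ depending only on $s$, and multiplying by $2s$ gives \eqref{newHolder}.

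The restriction $\xi_1,\xi_2\ge A/2$ plays no role in the estimate above (it will matter only later, when one divides by $\xi$ to pass from $\xi H_\varepsilon'$ to $H_\varepsilon'$). The main point requiring care is purely bookkeeping: ensuring that the Lipschitz constant entering the bound is that of $U_\varepsilon$ (and hence uniform in $\varepsilon$, by Lemma \ref{lemma 2.11} together with the $\varepsilon$-uniform estimates already obtained), so no $\varepsilon$-dependent factor appears in the final constant $C$.
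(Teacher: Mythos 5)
Your proposal is correct and follows essentially the same route as the paper: both reduce via the profile equation to a H\"older estimate for $(-\Delta)^s U_\varepsilon$, split the integral at the scale $|\xi_2-\xi_1|$, use the Lipschitz bound in the increment variable on the near region (where $s<1/2$ enters) and in the $\xi$-translation on the tails, arriving at the same $|\xi_2-\xi_1|^{1-2s}$ bound with a constant depending only on $s$. The only differences are cosmetic (symmetric second-difference kernel versus the one-sided difference, and fixing the splitting radius equal to $|\xi_2-\xi_1|$ rather than optimizing over $d=ck$), and your remark that the restriction $\xi_1,\xi_2\ge A/2$ is irrelevant at this stage matches the paper.
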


\begin{proof}  Let $\xi_2, \xi_1 \in (A/2, \infty)$ and   $k= |\xi_2 - \xi_1|$.
Without loss of regularity we can concentrate on the case when $k < 1$. We use the stationary equation that satisfies $H_\varepsilon$, see \eqref{65}. It will be convenient to use an equivalent version of the formula for the 1D fractional Laplacian
   \begin{equation}
    \label{fl2}
    (-\Delta )^s u(x) = \,\textup{p.v.}\int_{\mathbb{R}} \frac{u(x) - u(x-y)}{|y|^{1+2s}} \hspace{1mm}dy\,,
    \end{equation}
where we have put $y=\xi-\eta$. This allows to simplify the calculations using the symmetry  of the formulas.
Applying this variant in the equation for $H_\varepsilon$ at $\xi_1$ and $\xi_2$ we get the desired expression
    \begin{equation}
    \label{70}
    \xi_2H_\ve'(\xi_2) - \xi_1 H_\ve'(\xi_1) =
    2s \int_{\mathbb{R} }\frac{U_\ve(\xi_2) - U_\ve(\xi_2-y) - U_\ve(\xi_1) + U_\ve(\xi_1-y)}{|y|^{1+2s}} \hspace{1mm}dy.
    \end{equation}

   \noindent     $\bullet$  We split the integral in the right-hand side into three domains of integration, the first being the left tail $y\in (-\infty,-d)$, the second
   $(-d, d)$ contains small increments around the points under scrutiny, and finally the right tail $(d,\infty)$. We take $d > k$  that will be determined later. We study each integral separately.
    \begin{align}
    \label{71}
        |I | &=\left|\int_{-\infty}^{- d}
        \frac{ U_\ve(\xi_2) - U_\ve(\xi_1)}{|y|^{1+2s}} \hspace{1mm}dy +
        \int_{-\infty}^{ - d}
        \frac{U_\ve(\xi_1-y) - U_\ve(\xi_2-y)}{|y|^{1+2s}} \hspace{1mm}dy \right|\nonumber\\
        & \leq 2[U_\varepsilon]_{C^{0,1}(\mathbb{R})} k \int_{-\infty}^{- d}
        \frac{dy}{|y|^{1+2s}} = [U_\varepsilon]_{C^{0,1}(\mathbb{R})} \frac{1}{s}\frac{k}{d^{2s}}
        \end{align}
The same argument and result  applies to the integral $III$ on the right-hand  tail $(d,\infty)$.

\noindent    $\bullet$ Study of integral $II$ that includes values of $U_\ve$ from $\xi_1-d$ to $\xi_2+d$. We have
\begin{equation}
    \label{76}
        II =        \int_{-d}^{d}
        \frac{ U_\ve(\xi_2) - U_\ve(\xi_2-y)}{|y|^{1+2s}} \hspace{1mm}dy +\int_{-d}^{d} \frac{U_\ve(\xi_1-y) - U_\ve(\xi_1)}{|y|^{1+2s}} \hspace{1mm}dy.
\end{equation}
Using the fact that $U_\ve \in C^{0,1}(\mathbb{R})$ we get $|U_\ve(\xi_i) - U_\ve(\xi_i-y)|\le |[U_\ve]_{C^{0,1}(\mathbb{R})} y |$, hence
    \begin{equation}
    \label{76b}
       | II |           \le     2 \int_{-d}^{d}\frac{[U_\ve]_{C^{0,1}(\mathbb{R})} |y|}{|y|^{1+2s}} \hspace{1mm}dy =
            2 [U_\ve]_{C^{0,1}(\mathbb{R})} \int_{-d}^{d} \frac{dy}{|y|^{2s}}= \frac{4}{1-2s}[U_\ve]_{C^{0,1}(\mathbb{R})} d^{1-2s}.
\end{equation}

 By combining \eqref{71} and \eqref{76b} with \eqref{70}, we arrive at the following result
 \begin{equation}
    \label{77}
    |\xi_2H_\ve'(\xi_2) - \xi_1 H_\ve'(\xi_1)| \le   [U_\varepsilon]_{C^{0,1}(\mathbb{R})} (\frac{4k}{d^{2s}} + \frac{8s}{1-2s}  d^{1-2s}).
    \end{equation}
\noindent    $\bullet$  We still have to find an optimal choice  $d$ as a function of $k$.  If in \eqref{77} we choose the value of $d = ck $ with a constant $c\ge 1$, we obtain
    \begin{align}
    \label{86}
        |\xi_2H_\varepsilon'(\xi_2) - \xi_1H_\varepsilon'(\xi_1)| \leq C [U_\varepsilon]_{C^{0,1}(\mathbb{R})} k^{1-2s}, \qquad
        C= \frac{4}{c^{2s}} + \frac{4sc^{1-2s}}{1-2s},
    \end{align}
which is the desired estimate, recalling that $k=|\xi_2-\xi_1|$\nc.
    \end{proof}

\nc  We may now complete the  result of the section:

\begin{prop}
\label{prop ine 2}
     Let $s \in (0,\frac{1}{2})$, $\varepsilon \in (0,1)$ and $A \in (0, \xi_0)$ with $\xi_0 > 0$ the free boundary point of the selfsimilar solution of  problem \eqref{stefan}-\eqref{IC} under the assumptions of Theorem \ref{teo 2.12}. Then,
     \[
      \norm{ H_\varepsilon}_{C^{1,\alpha}(A/2, \infty)}\leq C_s\left(\norm{H_\varepsilon}_{L^\infty(\mathbb{R})} + \norm{H_\varepsilon}_{C^{0,1}(\mathbb{R})}\right),
     \]
     where $C_s > 0$ is a constant that depends on $s$ and not on $\varepsilon$. Here the H\"older exponent we find is $\alpha_2=1-2s$.\nc
\end{prop}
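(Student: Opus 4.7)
The plan is to combine the two preceding lemmas---the $L^\infty$ bound for $\xi H_\varepsilon'$ from Lemma \ref{aco} and the H\"older estimate for the product $\xi H_\varepsilon'$ with exponent $1-2s$ from Lemma \ref{lem5.2}---and then peel off the factor $\xi$ by dividing, using that $\xi \geq A/2 > 0$ throughout the domain $(A/2,\infty)$. A final cosmetic step converts $U_\varepsilon$-norms into $H_\varepsilon$-norms via the 1-Lipschitz, $\varepsilon$-uniform character of $\Phi_\varepsilon$.

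For the pointwise bound on $H_\varepsilon'$ I would simply write $|H_\varepsilon'(\xi)| = |\xi H_\varepsilon'(\xi)|/\xi \leq (2/A)\,\norm{\xi H_\varepsilon'}_{L^\infty(A/2,\infty)}$ and invoke Lemma \ref{aco}. For the H\"older seminorm, I would set $g_\varepsilon(\xi) := \xi H_\varepsilon'(\xi)$ and use the identity
\[
H_\varepsilon'(\xi_2) - H_\varepsilon'(\xi_1) \;=\; \frac{g_\varepsilon(\xi_2) - g_\varepsilon(\xi_1)}{\xi_2} \;+\; g_\varepsilon(\xi_1)\Bigl(\frac{1}{\xi_2} - \frac{1}{\xi_1}\Bigr).
\]
Lemma \ref{lem5.2} controls the first term by $C[U_\varepsilon]_{C^{0,1}(\mathbb{R})}|\xi_2-\xi_1|^{1-2s}/\xi_2$, and since $1/\xi$ is smooth on $[A/2,\infty)$ the second term is bounded by $\|g_\varepsilon\|_\infty (2/A)^2 |\xi_2-\xi_1|$.

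The subtle point is that the domain $(A/2,\infty)$ is unbounded, so the linear factor $|\xi_2-\xi_1|$ in the second term does not automatically dominate $|\xi_2-\xi_1|^{1-2s}$. To get around this I would split into cases: when $|\xi_2-\xi_1|\leq 1$ one has $|\xi_2-\xi_1| \leq |\xi_2-\xi_1|^{1-2s}$ and the estimate closes directly; when $|\xi_2-\xi_1|>1$ I would bypass the identity entirely and just use $|H_\varepsilon'(\xi_2) - H_\varepsilon'(\xi_1)| \leq 2\norm{H_\varepsilon'}_{L^\infty(A/2,\infty)} \leq 2\norm{H_\varepsilon'}_{L^\infty(A/2,\infty)}|\xi_2-\xi_1|^{1-2s}$, which is already controlled by the first step.

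Finally, since $U_\varepsilon = \Phi_\varepsilon(H_\varepsilon)$ with $\Phi_\varepsilon$ globally 1-Lipschitz and $|\Phi_\varepsilon(0)|\leq L + 2\varepsilon$ uniformly in $\varepsilon$, we have $\norm{U_\varepsilon}_{L^\infty(\mathbb{R})} \leq C + \norm{H_\varepsilon}_{L^\infty(\mathbb{R})}$ and $[U_\varepsilon]_{C^{0,1}(\mathbb{R})} \leq [H_\varepsilon]_{C^{0,1}(\mathbb{R})}$, which translates the right-hand side into $H_\varepsilon$-norms with constants independent of $\varepsilon$. Assembling the pointwise bound, the near-point H\"older bound, and the far-point bound yields the announced estimate with exponent $\alpha_2 = 1-2s$. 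The only real obstacle is the near/far dichotomy forced by working on an unbounded set; everything else is direct algebra from the two preparatory lemmas.
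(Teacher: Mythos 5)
Your proposal is correct and follows essentially the same route as the paper: the paper bounds $[H'_\varepsilon]_{C^{\alpha}(A/2,\infty)}$ by the product rule $[H'_\varepsilon]_{C^\alpha}\leq \norm{1/\xi}_{L^\infty}[\xi H'_\varepsilon]_{C^\alpha}+\norm{\xi H'_\varepsilon}_{L^\infty}[1/\xi]_{C^\alpha}$, then invokes Lemma \ref{aco} and Lemma \ref{lem5.2} and converts $U_\varepsilon$-norms into $H_\varepsilon$-norms, exactly as you do, with your pointwise identity being the same decomposition written out and your near/far dichotomy simply making explicit the paper's implicit claim that $1/\xi$ (bounded and Lipschitz on $[A/2,\infty)$) has finite $C^{0,\alpha}$ seminorm there.
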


\begin{proof}
  Notice that since $A > 0$ then $1/\xi \in C^\infty(A/2,\infty) \cap C^{0,1}(A/2,\infty)$. Hence, we can get the bound
     \[
[H'_\varepsilon]_{C^\alpha(A/2, \infty)}\leq \norm{1/\xi}_{L^\infty(A/2, \infty)}[\xi H'_\varepsilon]_{C^{\alpha}(A/2, \infty)} + \norm{\xi H'_\varepsilon}_{L^\infty(A/2, \infty)}[1/\xi]_{C^{\alpha}(A/2, \infty)}.
\]
In particular, using the inequality \eqref{bound} of Lemma \ref{aco} we have
\begin{equation}
\label{ine ine}
[H'_\varepsilon]_{C^\alpha(A/2, \infty)}\leq C_1 [\xi H'_\varepsilon]_{C^{\alpha}(A/2, \infty)} + C_2 \left(\norm{U_\varepsilon}_{L^\infty(\mathbb{R})} + \norm{U_\varepsilon}_{C^{0,1}(\mathbb{R})}\right),
\end{equation}
where $C_1, C_2 > 0$ are constants that depend on $s$ and the choice of $A$.
Using Lemma \ref{lem5.2}  we show that
\begin{equation}
\label{ine to prove}
 [\xi H'_\varepsilon]_{C^{\alpha}(A/2, \infty)}\leq C_s\left(\norm{U_\varepsilon}_{L^\infty(\mathbb{R})} + \norm{U_\varepsilon}_{C^{0,1}(\mathbb{R})}\right),
\end{equation}
with $C_s > 0$ a constant that only depends on $s$. Since \nc $\norm{U_\varepsilon}_{L^\infty(\mathbb{R})}\leq \norm{H_\varepsilon}_{L^\infty(\mathbb{R})}$, $[U_\varepsilon]_{C^{0,1}(\mathbb{R})} \le [H_\varepsilon]_{C^{0,1}(\mathbb{R})}$, we can bound \eqref{ine ine} by
\[
[H'_\varepsilon]_{C^\alpha(A/2, \infty)}\leq C_s\left(\norm{H_\varepsilon}_{L^\infty(\mathbb{R})} + \norm{H_\varepsilon}_{C^{0,1}(\mathbb{R})}\right).
\]
This ends the proof.
\nc \end{proof}


\section{Subcritical Case. $C^{1,\alpha}$ regularity}\label{section 5}

The aim of this section is to complete the proof of Theorem \ref{principal}.  The idea we are following is to define a sequence of selfsimilar solutions of the regularized problem \eqref{59}-\eqref{IC} and prove that it is uniformly bounded and equicontinuous. Then, by the Arzelà-Ascoli Theorem we will extract a subsequence that converges uniformly to the unique selfsimilar solution of the problem \eqref{stefan}-\eqref{IC}. Finally, passing to the limit in \eqref{sec 4 ine} we want prove the $C^{1,\alpha}$ regularity using the a priori estimates of the previous section.

\begin{proof}[Proof of Theorem \ref{principal}]
    Recall that thanks to Theorem \ref{teo 4} there exists a unique selfsimilar solution $H$ of \eqref{stefan}-\eqref{IC}. Let $\{\varepsilon_n\}_{n\in \mathbb{N}}$ a sequence that $\varepsilon_n \rightarrow 0$ when $n \rightarrow \infty$, and $\{P_{\varepsilon_n}\}_{n\in \mathbb{N}}$  a sequence of regularized problems with initial data equal to \eqref{IC}. By Lemma \ref{lemma 2.7} we know that for each element of the sequence $\{\varepsilon_n\}_{n\in \mathbb{N}}$ the problem $P_{\varepsilon_n}$ has a unique solution that we denote by $H_{\varepsilon_n}$. Then, from the sequence $\{P_{\varepsilon_n}\}_{n\in \mathbb{N}}$ we can construct a sequence of selfsimilar solutions $\{H_{\varepsilon_n}\}_{n\in \mathbb{N}}$  of the regularized problem. We  show that there exists a subsequence of $\{H_{\varepsilon_n}\}_{n\in \mathbb{N}}$ that converges uniformly to $H$. This fact will be enough to prove $C^{1,\alpha}$ regularity and the estimate \eqref{prin prin}. We divide the remaining proof in two steps: \\

   \noindent\underline{\textit{Step 1}}: Firstly, we prove the existence of a subsequence of   $\{H_{\varepsilon_n}\}_{n\in \mathbb{N}}$ such that converges uniformly on compact subsets of $\mathbb{R}$ to $H$. By Theorem \ref{teo 2.12} and the choice  of the initial data \eqref{IC} we have that
   \begin{equation}
   \label{uniform}
\norm{H_{\varepsilon_n}}_{L^\infty(\mathbb{R})} = L + P_1 \qquad \textup{for all} \quad n\in \mathbb{N},
   \end{equation}
   i.e., the sequence is uniformly bounded.

   Now we want to show the equicontinuity. Thanks to the interpolation inequality, \cite[Lemma 6.35]{7}, we know that for all $\eta > 0$ and $n \in \mathbb{N}$
    \begin{equation}
    \label{87}
        \norm{H_{\varepsilon_n}}_{C^{0,1}(\mathbb{R})} \leq C_{\eta}\norm{H_{\varepsilon_n}}_{L^\infty(\mathbb{R})} + \eta\norm{H_{\varepsilon_n}}_{C^{1,\alpha}(\mathbb{R})},
    \end{equation}
 with $\alpha \in (0,1)$ and $C_\eta > 0$ a constant that only depends on $\eta$ and $\alpha$\nc. By the previous Theorem \ref{pro princ 4} there exists some $\alpha >0$ such that we also have that for all $n \in \mathbb{N}$
    \begin{equation}
    \label{88}
    \norm{H_{\varepsilon_n}}_{C^{1,\alpha}(\mathbb{R})}\leq C_s\left(\norm{H_{\varepsilon_n}}_{L^\infty(\mathbb{R})} + \norm{H_{\varepsilon_n}}_{C^{0,1}(\mathbb{R})}\right),
    \end{equation}
    with $C_s > 0$ a constant that only depends on $s$.
    Using \eqref{88} in \eqref{87} and taking $\eta = \frac{1}{2(1+ C_s)}$ we have
    \begin{align*}
    \norm{H_{\varepsilon_n}}_{C^{0,1}(\mathbb{R})} &\leq C_{\eta}\norm{H_{\varepsilon_n}}_{L^\infty(\mathbb{R})} + \frac{1}{2}\left(\norm{H_{\varepsilon_n}}_{L^\infty(\mathbb{R})} + \norm{H_{\varepsilon_n}}_{C^{0,1}(\mathbb{R})}\right)\\
    & \leq \left(C_\eta + \frac{1}{2}\right)\norm{H_{\varepsilon_n}}_{L^\infty(\mathbb{R})} + \frac{1}{2}\norm{H_{\varepsilon_n}}_{C^{0,1}(\mathbb{R})}.
    \end{align*}
    Thus,
    \begin{equation}
     \label{89}
    \norm{H_{\varepsilon_n}}_{C^{0,1}(\mathbb{R})} \leq 2\left(C_\eta + \frac{1}{2}\right)\norm{H_{\varepsilon_n}}_{L^\infty(\mathbb{R})}.
    \end{equation}
    By \eqref{uniform} we conclude that for all $n \in \mathbb{N}$ we have
    \begin{equation}
     \label{90}
    \norm{H_{\varepsilon_n}}_{C^{0,1}(\mathbb{R})} \leq 2\left(C_\eta + \frac{1}{2}\right)(L + P_1).
    \end{equation}
    In \eqref{90} we have  proved that the family $\{H_{\varepsilon}\}$ is Lipschitz continuous uniformly in space $\xi$ and also w.r.t. $\varepsilon$, \nc hence it is equicontinuous. Thanks to the Arzelà-Ascoli Theorem there exists a subsequence $\{H_{\varepsilon_j}\}_{j\in \mathbb{N}}$ that converges uniformly in compact sets of $\mathbb{R}$ to some function $g \in C(\mathbb{R})$. By the uniqueness of the limit and the Proposition \ref{prop 4.1} we conclude that $g = H$, the unique solution of the problem \eqref{stefan}-\eqref{IC}.
    Notice that taking the limit of the subsequence $\{H_{\varepsilon_j}\}_{j\in \mathbb{N}}$ in \eqref{90} we obtain that $H \in C^{0,1}(\mathbb{R})$.

\medskip

    \noindent\underline{\textit{Step 2}}: Finally, we prove estimate \eqref{prin prin}. Recall that by the \textit{Step 1} there exists a subsequence $\{H_{\varepsilon_{n_{j}}}\}_{n\in \mathbb{N}}$ of $\{H_{\varepsilon_n}\}_{n\in \mathbb{N}}$ that converges uniformly on compact sets of $\mathbb{R}$ to $H$. We define
    \[
    \Tilde{H}_{\varepsilon_{n_j}} := \frac{H_{\varepsilon_{n_j}}}{\norm{H_{\varepsilon_{n_j}}}_{C^{0,1}(\mathbb{R})} + \norm{H_{\varepsilon_{n_j}}}_{L^\infty(\mathbb{R})}}.
    \]
    By \eqref{88} we have
    \begin{equation}
        \label{new}
        \norm{\Tilde{H}_{\varepsilon_{n_j}}}_{C^{1,\alpha}(\mathbb{R})}\leq C_s,
    \end{equation}
    for all $j \in \mathbb{N}$. Since by \eqref{90}, $\norm{H_\varepsilon}_{C^{0,1}(\mathbb{R})} \longrightarrow \norm{H}_{C^{0,1}(\mathbb{R})}$, and by \eqref{uniform}, $ \norm{H_\varepsilon}_{L^\infty(\mathbb{R})} \longrightarrow \norm{H}_{L^\infty(\mathbb{R})}$, then thanks to \cite[H.8]{11} taking the limit in \eqref{new}  we conclude that
    \[\norm{\Tilde{H}}_{C^{1,\alpha}(\mathbb{R})}\leq C_s,
    \]
    with
    \[
    \Tilde{H} := \frac{H}{\norm{H}_{C^{0,1}(\mathbb{R})} + \norm{H}_{L^\infty(\mathbb{R})}}.
    \]
    Hence,
    \begin{equation}
    \label{fifi}
     \norm{H}_{C^{1,\alpha}(\mathbb{R})}\leq C_s\left(\norm{H}_{L^\infty(\mathbb{R})} + \norm{H}_{C^{0,1}(\mathbb{R})}\right).
\end{equation}
Since  $H \in C^{0,1}(\mathbb{R})$ we conclude from \eqref{fifi} that $H\in C^{1,\alpha}(\mathbb{R})$. This concludes the proof.  \end{proof}

\begin{rem} \nc
    Note that if we take the subsequence $\{H_{\varepsilon_n}\}_{n\in \mathbb{N}}$ of \textit{Step 1}, and pass to the limit in \eqref{90} we obtain that $H \in C^{0,1}(\mathbb{R})$. In addition, since $H = U + L$ in $(-\infty, \xi_0)$, then $U \in C^{0,1}(\mathbb{R})$. This is an important fact because  the solution of the problem

\begin{equation*}
\left\{
\begin{aligned}
\partial_t u + (-\Delta)^s u &= 0 \hspace{7mm}\textup{in} \hspace{6mm}  Q=\{(x,t): \ 0<t<T,-\infty <x <\xi_0 \,t^{1/2s} \},\nc\\
u &= 0 \hspace{7mm}\textup{for} \hspace{7mm}  \xi_0  \, t^{1/2s}<x<\infty, \ 0<t<T,\nc\\
u( \cdot,0) &= P_1 \hspace{5.1mm} \textup{for} \hspace{6mm}  -\infty< x < 0,\nc
\end{aligned}
\right.
\end{equation*}
that we are discussing satisfies the regularity \ $u(\cdot,t) \in C^{0,1}(\mathbb{R})$ for all $t > 0$. The effect of the free boundary allows us to prove more regularity than in \rm \cite{14}\sl, where for fixed domains the optimal regularity is $C^s$.
    \end{rem}

\begin{rem}
    We prove global $C^{1,\alpha}$ regularity of the enthalpy $H$. This is a big difference with the classical Stefan problem where the enthalpy has a  vertical jump  at the free boundary point\nc. Moreover, since $H = U+L$  we have proved that the temperature is smooth up to the boundary, i.e., $U \in C^{1,\alpha}(-\infty, \xi_0]$.
\end{rem}

  Finally,  we show that since $H$ and $U$ are regular enough, the selfsimilar solution satisfies equation \eqref{65} in the classical sense.

  \begin{prop}
  \label{propo 5.7}
      Under the assumptions of Theorem \ref{principal} we have that $\partial_t h$ and $(-\Delta)^s u$ exist and
      \[
      \partial_t h(x,t) + (-\Delta)^s u(x,t) = 0 \qquad \textup{for all} \qquad (x,t) \in \mathbb{R}\times (0,T).
      \]
      In the same way, $H'$ and $(-\Delta)^s U$ exist and
      \[
      -\frac{1}{2s}\xi H'(\xi) + (-\Delta)^{s} U(\xi) = 0 \qquad \textup{for all} \qquad  \xi \in \mathbb{R}.
      \]
  \end{prop}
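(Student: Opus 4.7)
The plan is to upgrade the distributional equation \eqref{eq.ssh2}, already known from Theorem \ref{teo 4}, to a pointwise identity by exploiting the extra regularity furnished by Theorem \ref{principal}, and then transfer back to the parabolic statement via the selfsimilar formula. The key observation is that once both sides of the selfsimilar equation are continuous functions of $\xi$, the distributional identity forces pointwise equality.

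First I would check that each term is a well-defined continuous function on $\mathbb{R}$. By Theorem \ref{principal}, $H\in C^{1,\alpha}(\mathbb{R})$, so $H'(\xi)$ exists in the classical sense everywhere and $\xi\mapsto \xi H'(\xi)$ is continuous. For the nonlocal term, use the symmetric representation
\[
(-\Delta)^s U(\xi) \;=\; \tfrac{1}{2}\int_{\mathbb{R}} \frac{2U(\xi)-U(\xi+y)-U(\xi-y)}{|y|^{1+2s}}\,dy.
\]
Since $U=\Phi(H)=(H-L)_+$ inherits global Lipschitz continuity from $H$, the numerator is bounded by $C|y|^2\wedge (4\|U\|_{L^\infty})$; because $2s<1$ the near-origin contribution converges absolutely, and the tail converges because $U$ is bounded. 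A dominated convergence argument, with majorant given by the uniform Lipschitz and $L^\infty$ bounds on $U$, then shows that $\xi\mapsto (-\Delta)^s U(\xi)$ is continuous on all of $\mathbb{R}$, including the free boundary point $\xi_0$.

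Second step: the identity $-\tfrac{1}{2s}\xi H'(\xi)+(-\Delta)^s U(\xi)=0$ is known in $\mathcal{D}'(\mathbb{R})$ by Theorem \ref{teo 4}. Since both sides are now continuous functions of $\xi$, the usual duality argument (a continuous function that vanishes against every test function vanishes pointwise) promotes the identity to the classical pointwise equation for every $\xi\in\mathbb{R}$. This settles the selfsimilar profile statement.

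Finally, the parabolic identity follows from the selfsimilar ansatz $h(x,t)=H(\xi)$, $u(x,t)=U(\xi)$ with $\xi=x t^{-1/(2s)}$. Direct differentiation gives $\partial_t h(x,t)=-\tfrac{1}{2s\,t}\,\xi H'(\xi)$, and the scaling of the fractional Laplacian yields $(-\Delta)^s u(x,t)=t^{-1}(-\Delta)^s U(\xi)$. Adding the two and invoking the just-proved pointwise selfsimilar equation gives $\partial_t h(x,t)+(-\Delta)^s u(x,t)=0$ for every $(x,t)\in\mathbb{R}\times(0,T)$. The only delicate point in this program is the continuity of $(-\Delta)^s U$ across $\xi_0$, which is where global Lipschitz regularity of $U$ combined with the restriction $s<1/2$ is crucial; everywhere else $H$ and $U$ are $C^\infty$ and the pointwise identification is immediate.
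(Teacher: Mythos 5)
Your proposal is correct and follows essentially the same route as the paper: both use the regularity from Theorem \ref{principal} (so that $H'$ is continuous and, since $U$ is bounded and Lipschitz with $2s<1$, $(-\Delta)^s U$ is a well-defined continuous function) to promote the known very weak/distributional identity to a pointwise one via the standard duality argument; the paper does this at the parabolic level by integrating by parts in the very weak formulation and remarks that the profile statement follows similarly, while you upgrade the profile equation first and then recover the parabolic identity by the selfsimilar scaling, which is only a reordering of the same idea. One small slip: with only global Lipschitz regularity of $U$ the symmetric second difference is bounded by $C|y|$, not $C|y|^2$; this is harmless, since the bound $C|y|$ together with $2s<1$ is exactly what you invoke to get absolute convergence near the origin.
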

  \begin{proof}
      Since $H$ is a bounded very weak solution of \eqref{stefan}-\eqref{IC} then
      \begin{equation}
        \label{weak}
      \int_0^T\int_{\mathbb{R}}(h\partial_t \psi - \Phi(h)(-\Delta)^{s}\psi)\hspace{1mm}dx\hspace{0.5mm}dt  = 0 \qquad \textup{for all} \qquad \psi \in C_c^\infty(\mathbb{R}\times (0,T)).
      \end{equation}
      On the other hand, by the Theorem \ref{principal} we know that $U \in  C^{0,1}(\mathbb{R})$ and $H \in C^{1,\alpha}(\mathbb{R})$. In particular, $(-\Delta)^{s} U \in C^{1-2s}(\mathbb{R})$ and $H' \in C^{\alpha}(\mathbb{R})$. Hence, integrating by parts in \eqref{weak} we obtain
      \[
      \int_0^T\int_{\mathbb{R}}(\partial_t h \, \psi - (-\Delta)^{s}\Phi(h)\psi)\hspace{1mm}dx\hspace{0.5mm}dt  = 0 \qquad \textup{for all} \qquad \psi \in C_c^\infty(\mathbb{R}\times (0,T)).
      \]
      As a consequence we get that
      \[
      \partial_t h(x,t) + (-\Delta)^s u(x,t) = 0 \qquad \textup{for all} \qquad (x,t) \in \mathbb{R}\times (0,T).
      \]
      The results on $H$ and $U$ follow in a similar way.
  \end{proof}

\subsection{Improvement of regularity in the water region} \label{section 7}

 We point out that the $\alpha$ of the global H\"older regularity is a positive constant obtained by combining two different estimates, as done in the previous section. Since we already know that $H'$ is Lipschitz continuous in $\mathbb{R}$ we may use   Proposition \ref{prop ine 2} a posteriori  to improve the value of $\alpha$ into the an explicit value  on any closed sub-interval of $(-\infty,0)\cup (0,\xi_0)$. The $C^\infty$ regularity of $H$ in  the ice region
  $(\xi_0,\infty)$ has been proved before by easier methods, cf. {\rm \cite[Theorem 3.1]{DEV1}}\nc.

\begin{lem}\label{lem6.2} Under the current  assumptions, if $U\in C^{0,1}(\mathbb{R})$ and $U'\in C^{\alpha}$ locally in closed  subinterval $K$ of $(-\infty, 0)\cup (0,\xi_0)$ we conclude that $U'\in C^{\beta}(K_1)$ for all $K_1$ strictly contained in $K$. We may take $\beta=(1+\alpha-2s)/(1+\alpha).$
\end{lem}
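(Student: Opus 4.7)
In the water region $(-\infty,\xi_{0})$ one has $H=U+L$, so $H'=U'$, and the self-similar equation of Proposition~\ref{propo 5.7} gives
\[
U'(\xi)\;=\;\frac{2s}{\xi}\,(-\Delta)^{s}U(\xi).
\]
Since $K_{1}$ is a closed subinterval of $(-\infty,0)\cup(0,\xi_{0})$, the factor $1/\xi$ is smooth and uniformly Lipschitz on $K_{1}$. Exactly as in the product computation from Proposition~\ref{prop ine 2}, this reduces the claim to proving the H\"older estimate
\[
\bigl[(-\Delta)^{s}U\bigr]_{C^{\beta}(K_{1})}\;\le\;C\bigl([U']_{C^{\alpha}(K)}+[U]_{C^{0,1}(\mathbb{R})}\bigr).
\]

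For $\xi_{1},\xi_{2}\in K_{1}$ with $k:=|\xi_{2}-\xi_{1}|$ small, I use the symmetric second-difference form
\[
(-\Delta)^{s}U(\xi)\;=\;-\tfrac12\,\mathrm{p.v.}\!\int_{\mathbb{R}}\frac{\Delta^{2}U(\xi,y)}{|y|^{1+2s}}\,dy,\qquad \Delta^{2}U(\xi,y):=U(\xi+y)+U(\xi-y)-2U(\xi),
\]
and split the integration at $|y|=d$, with $k<d<\delta$ where $\delta:=\tfrac12\,\mathrm{dist}(K_{1},K^{c})>0$. On the near region $|y|<d$ I exploit the \emph{local} $C^{1,\alpha}$ regularity: Taylor's theorem applied at each $\xi_{i}\in K_{1}$ yields $|\Delta^{2}U(\xi_{i},y)|\le C\,[U']_{C^{\alpha}(K)}\,|y|^{1+\alpha}$, and the resulting near integral is bounded by
\[
C\,[U']_{C^{\alpha}(K)}\int_{|y|<d}\frac{|y|^{1+\alpha}}{|y|^{1+2s}}\,dy\;\le\;C\,[U']_{C^{\alpha}(K)}\,d^{1+\alpha-2s},
\]
which is finite because $1+\alpha-2s>0$ for $s<1/2$. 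On the tail $|y|\ge d$, only the \emph{global} Lipschitz bound is needed, and the computation of \eqref{71} in Lemma~\ref{lem5.2} gives a contribution of order $[U]_{C^{0,1}(\mathbb{R})}\,k\,d^{-2s}$.

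Adding the two contributions and balancing them forces $d^{1+\alpha}=k$, i.e.\ $d=k^{1/(1+\alpha)}$. For $k$ small this value lies in $(k,\delta)$ and is therefore admissible, and both terms then become of order $k^{(1+\alpha-2s)/(1+\alpha)}=k^{\beta}$, yielding the desired estimate. The only delicate point is the bookkeeping between local and global regularity: the Taylor step for the near region relies on $U'\in C^{\alpha}(K)$ and so legitimately applies only while $d<\delta$, whereas the tail step relies on the global Lipschitz bound on $U$; the optimizing choice $d=k^{1/(1+\alpha)}\to 0$ is compatible with the first constraint for $k$ small, so there is no real obstruction.
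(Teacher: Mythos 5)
Your proposal is correct and follows essentially the same route as the paper: split the kernel integral at a scale $d$, use the local $C^{1,\alpha}$ regularity of $U$ on the near region (with the linear term cancelling — you via the symmetric second difference, the paper via oddness of the p.v. increment), use only the global Lipschitz bound on the tails to get the $k\,d^{-2s}$ contribution, and optimize $d=k^{1/(1+\alpha)}$ to obtain $\beta=(1+\alpha-2s)/(1+\alpha)$, finally removing the factor $\xi$ (equivalently $1/\xi$) since $K_1$ avoids $\xi=0$. The only differences are cosmetic: the paper estimates the increment of $\xi H'(\xi)=2s(-\Delta)^sU(\xi)$ directly in the first-difference form of Lemma \ref{lem5.2}, whereas you work with the second-difference representation of $(-\Delta)^sU$; both require, as you note, that $d$ stay below the distance from $K_1$ to $\partial K$, which the choice $d=k^{1/(1+\alpha)}$ satisfies for $k$ small.
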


\begin{proof}  We redo the proof of Lemma \ref{lem5.2} with minor changes. Let $K=[a,b]$ and $K'=[a_1,b_1]$, with inclusions of the form $a+d_0<a_1$, $a_1+d_0<b_1$, $b_1+ d_0<b$. Then as in the mentioned lemma we take
$\xi_2, \xi_1 \in K_1$ and   $k= |\xi_2 - \xi_1|\le d_0$. We add $k<1$. Following the argument of Lemma \ref{lem5.2} we arrive at formula \eqref{70} for the increment
$\xi_2H_\ve'(\xi_2) - \xi_1 H_\ve'(\xi_1)$. We split the estimate of this increment in three subintegrals, and we find nothing to change in the parts labeled $I$ and $III$, However integral $II$ needs special attention.  We recall that
\begin{equation}
    \label{II.1}
        II =        \int_{-d}^{d}
        \frac{ U(\xi_2) - U(\xi_2-y)}{|y|^{1+2s}} \hspace{1mm}dy +\int_{-d}^{d} \frac{U(\xi_1-y) - U(\xi_1)}{|y|^{1+2s}} \hspace{1mm}dy.
\end{equation}
Note that for $\xi_1,\xi_2\in K_1$ we need  $\xi_1\pm d,\xi_2\pm d\in K$.
Using the fact that $U'\in C^{\alpha}(K)$   we have the expression $ U(\xi_2) - U(\xi_2-y)= U'(\xi_2) y + O(y^{1+\alpha}) $, so that
\begin{equation}
\int_{-d}^{d}
        \frac{ U(\xi_2) - U(\xi_2-y)}{|y|^{1+2s}} \,dy= U'(\xi_2) \int_{-d}^{d} \frac{1}{|y|^{1+2s}} \,dy+ C_0
        \int_{-d}^{d} \frac{|y|^{1+\alpha}}{|y|^{1+2s}} \,dy
\end{equation}
where $C_0$ comes from the local H\"older regularity of $U'$. The first integral on the right-hand side cancels out, and from the second we get
 a contribution $C_1 d^{1+\alpha-2s}$.  The same happens with other integral involving $\xi_1$. Hence,
\begin{equation}
    \label{II.2}
        |II | \le C_2 d^{1+\alpha-2s}.
\end{equation}
We combine this inequality with the previous ones for $I$ and $III$ to get
$$
|\xi_2H_\ve'(\xi_2) - \xi_1 H_\ve'(\xi_1)|\le C_3 \frac{4k}{d^{2s}} + C_4\,d^{1+\alpha-2s}
$$
The optimization in the choice of $d$ leads to $k=d^{1+\alpha}$ and then
$$
|\xi_2H_\ve'(\xi_2) - \xi_1 H_\ve'(\xi_1)|\le C_5 \, k^{\beta}, \quad \beta=\frac{1+\alpha-2s}{1+\alpha}.
$$
Eliminating the factor $\xi$ is easy since we are away from $\xi=0$.
\end{proof}

\begin{rem} \rm  The argument does not work near the free boundary, since we  recall that $U'$ is not continuous since $U=(H-L)_+$.
\end{rem}
\medskip

\begin{rem} \rm
We can see that $\beta$ improves $\alpha$ but  only if $\alpha$ is not very large. Figure \ref{fig:xy}  shows the comparison between $y=\beta$ and $y=\alpha$ to show how we can improve by iteration. We will obtain a limit value $\alpha^*$ in the interval $1-2s <\alpha^*< 1-s$. See calculations in Appendix I.\nc
\end{rem}

\begin{figure}[ht!]
 \centering
 \includegraphics[width=10cm]{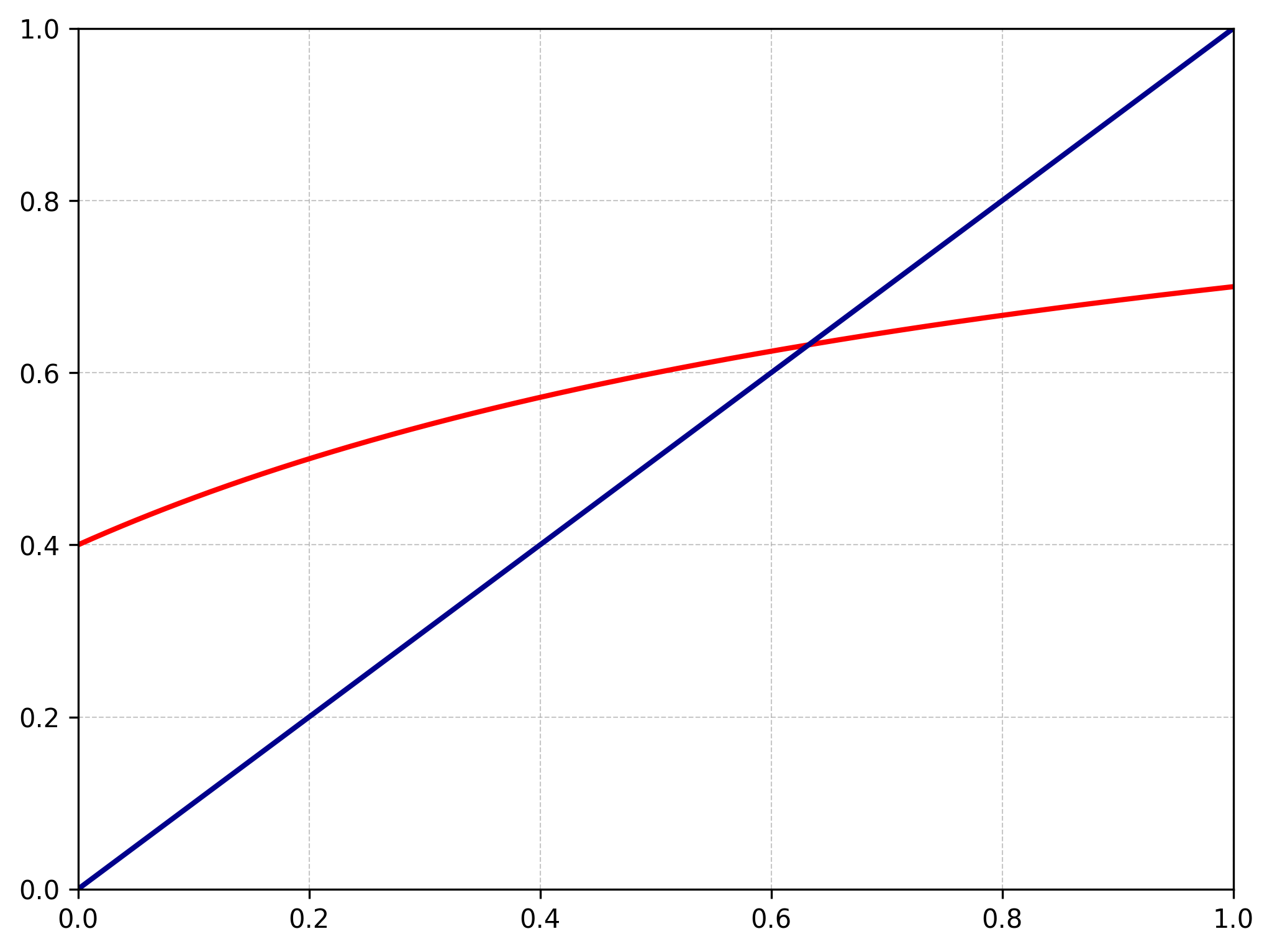}
 \caption{Dependence of $y=\beta$ on $x=\alpha$ in red (for $s=0.3$),  $y=x$ in blue.}
 \label{fig:xy}
\end{figure}

\section{Subcritical FSP. Lateral regularity}\label{section 6}

In this section we study some lateral regularity properties of the Fractional Stefan Problem \eqref{stefan}-\eqref{IC}. We will show the different behaviour of the solution on both sides of the free boundary.   In the literature on free boundaries the situation where $|H'|$ is not zero at least on one side of the free boundary is usually called \textit{non-degenerate} free boundary. It usually allows to prove further regularity\nc.

\subsection{Regularity to the left of $\xi_0$}
 First, we consider the situation to the left of the free boundary, which is one half of formula \eqref{eq1.2}.

\begin{prop}
\label{prop 1.8}
    Let $s \in (0, \frac{1}{2})$. Then the selfsimilar profile satisfies
    \begin{equation}
        \label{41}
        -H'(\xi_0^-) = \frac{2s}{\xi_0}\int_{-\infty}^{\xi_0} \frac{U(\eta)}{|\xi_0 - \eta|^{1+2s}} \hspace{1mm}d\eta > 0.
    \end{equation}
\end{prop}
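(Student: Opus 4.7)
The plan is to obtain \eqref{41} by evaluating the stationary self-similar identity directly at the free boundary. Thanks to the global $C^{1,\alpha}(\mathbb{R})$ regularity from Theorem \ref{principal}, Proposition \ref{propo 5.7} guarantees that
\[
-\frac{1}{2s}\,\xi\,H'(\xi)+(-\Delta)^{s}U(\xi)=0
\]
holds in the classical pointwise sense at every $\xi\in\mathbb{R}$, and in particular at $\xi=\xi_0$. Since $H'$ is continuous on $\mathbb{R}$, one has $H'(\xi_0^-)=H'(\xi_0)$, so it suffices to evaluate this identity at $\xi_0$.

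The first step is to rewrite $(-\Delta)^{s}U(\xi_0)$ using the two facts $U(\xi_0)=0$ and $U\equiv 0$ on $[\xi_0,\infty)$. These immediately collapse the defining singular integral to
\[
(-\Delta)^{s}U(\xi_0)=\textup{p.v.}\!\int_{\mathbb{R}}\frac{U(\xi_0)-U(\eta)}{|\xi_0-\eta|^{1+2s}}\,d\eta=-\int_{-\infty}^{\xi_0}\frac{U(\eta)}{|\xi_0-\eta|^{1+2s}}\,d\eta.
\]
Combining with the equation and using $\xi_0>0$ yields \eqref{41}.

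Two points require justification. First, the integral on the right-hand side must converge as a Lebesgue integral, i.e.\ no principal value should be needed. By Theorem \ref{principal}, $U$ is Lipschitz on $\mathbb{R}$ with $U(\xi_0)=0$, so $U(\eta)\le C|\xi_0-\eta|$ near $\xi_0$; the integrand is dominated by $C|\xi_0-\eta|^{-2s}$, which is integrable precisely because $s<1/2$. Far from $\xi_0$, the $L^{\infty}$ bound on $U$ together with the decay $|\xi_0-\eta|^{-1-2s}$ gives integrability at $-\infty$. Second, strict positivity of the right-hand side follows because $\lim_{\xi\to-\infty}H(\xi)=L+P_1>L$ and $H$ is continuous and nonincreasing, so $U>0$ on a left half-line, making the nonnegative integrand non-trivial.

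The main (only) technical obstacle is the passage from the distributional equation to its pointwise evaluation at $\xi_0$, which is exactly where the hypothesis $s<1/2$ enters through Theorem \ref{principal}; once the Lipschitz regularity of $U$ is in hand, the remaining steps are algebraic.
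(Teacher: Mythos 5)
Your argument is correct, but it is genuinely different from the paper's. You evaluate the classical identity of Proposition \ref{propo 5.7} directly at $\xi=\xi_0$, noting that $U(\xi_0)=0$ and $U\equiv 0$ on $[\xi_0,\infty)$ collapse the singular integral, and that the Lipschitz bound $U(\eta)\le C|\xi_0-\eta|$ together with $2s<1$ makes the resulting integral absolutely convergent, so no principal value is needed; continuity of $H'$ then identifies $H'(\xi_0^-)$ with $H'(\xi_0)$, and positivity follows from monotonicity and the limit of $H$ at $-\infty$. There is no circularity, since Proposition \ref{propo 5.7} rests only on the global $C^{1,\alpha}$ and $C^{0,1}$ regularity established in Section \ref{section 5}, not on the lateral results of Section \ref{section 6}. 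The paper instead proves a quantitative statement: it estimates
\begin{equation*}
\Bigl|\xi_1H'(\xi_1) + 2s\int_{-\infty}^{\xi_0} \frac{U(\eta)}{|\xi_0 - \eta|^{1+2s}}\,d\eta\Bigr| \leq C\,|\xi_1 - \xi_0|^{\frac{1-2s}{2}}
\end{equation*}
for $\xi_1<\xi_0$ by splitting the nonlocal integral into near, intermediate and far ranges and optimizing the splitting parameter, and then lets $\xi_1\to\xi_0^-$. Your soft argument is shorter and closer in spirit to the paper's own treatment of the right limit in Proposition \ref{prop 1.1}; what the paper's longer computation buys is an explicit rate of convergence of $\xi H'(\xi)$ to its boundary value as $\xi\to\xi_0^-$, which is extra information beyond the identity \eqref{41} itself.
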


\begin{proof}  By Theorem \ref{principal} we know that there exists $H'(\xi_0^-)$ and
      \[
      |H'(\xi_0^-)| \leq \frac{2s [U]_{C^{0,1}(\mathbb{R})}}{\xi_0} \int_{\xi_0 - \varepsilon}^{\xi_0} \frac{1}{|\xi_0 - \eta|^{2s}} \hspace{1mm}d\eta  + \frac{2s [U]_{L^{\infty}(\mathbb{R})}}{\xi_0} \int_{ -\infty}^{\xi_0 - \varepsilon} \frac{1}{|\xi_0 - \eta|^{2s}} \hspace{1mm}d\eta < \infty.
      \]
Then for proving \eqref{41} it is enough to prove that for  $\xi_1 < \xi_0$
    \begin{equation}
    \label{42}
        \left|\xi_1H'(\xi_1) + 2s\int_{-\infty}^{\xi_0} \frac{U(\eta)}{|\xi_0 - \eta|^{1+2s}} \hspace{1mm}d\eta\right| \leq C|\xi_1 - \xi_0|^{\frac{1-2s}{2}} ,
    \end{equation}
    for $C > 0$ a constant that only depends on $s$, since taking the limit in \eqref{42} when $\xi_1 \rightarrow \xi_0^-$ we see that \eqref{41} holds.

    The proof of \eqref{42} follows the same ideas as in Theorem  \ref{pro princ 4}. Let $\xi_1\in (-\infty,\xi_0)$  and we denote by $h_1:= |\xi_1 - \xi_0|$, then
    \begin{align}
    \label{43}
        \left|\xi_1H'(\xi_1) + 2s\int_{-\infty}^{\xi_0} \frac{U(\eta)}{|\xi_0 - \eta|^{1+2s}}\hspace{1mm}d\eta\right| &\leq 2s\left|\int_{\mathbb{R} }\frac{U(\xi_1) - U(\eta)}{|\xi_1 - \eta|^{1+2s}} \hspace{1mm}d\eta + \int_{-\infty}^{\xi_0} \frac{U(\eta)}{|\xi_0 - \eta|^{1+2s}} \hspace{1mm}d\eta\right|\nonumber\\
        & \leq 2s (I + II + III),
    \end{align}
    where
    \begin{align*}
        & I := \left|\int_{-\infty}^{\xi_1 -d}\frac{U(\xi_1) - U(\eta)}{|\xi_1 - \eta|^{1+2s}} \hspace{1mm}d\eta + \int_{-\infty}^{\xi_1 - d} \frac{U(\eta)}{|\xi_0 - \eta|^{1+2s}} \hspace{1mm}d\eta\right|,\\
        &  II := \left|\int_{\xi_1 - d}^{\xi_1 +d}\frac{U(\xi_1) - U(\eta)}{|\xi_1 - \eta|^{1+2s}} \hspace{1mm}d\eta + \int_{\xi_1 - d}^{\xi_1 + d} \frac{U(\eta)}{|\xi_0 - \eta|^{1+2s}} \hspace{1mm}d\eta\right|,\\
        & III := \left|\int_{\xi_1 + d}^{\infty} \frac{U(\xi_1)}{|\xi_1-\eta|^{1+2s}}\hspace{1mm}d\eta\right|,
    \end{align*}
    and for $d > h$ that it will be determined later. Now we study each integral separately. Since $U \in C^{0,1}(\mathbb{R})$ we have
    \begin{align*}
        I &=\left|\int_{-\infty}^{\xi_1 - d} U(y)\left(\frac{1}{|\xi_0-\eta|^{1+2s}} - \frac{1}{|\xi_1-\eta|^{1+2s}}\right)\hspace{1mm}d\eta + \int_{-\infty}^{\xi_1 - d} \frac{U(\xi_1)}{|\xi_1-\eta|^{1+2s}} \hspace{1mm} d\eta \right|\\
        & \leq \left|\int_{-\infty}^{\xi_1 - d} U(\eta)\left(\frac{1}{|\xi_0-\eta|^{1+2s}} - \frac{1}{|\xi_1-\eta|^{1+2s}}\right)\hspace{1mm}d\eta\right| + \left|\int_{-\infty}^{\xi_1 - d} \frac{U(\xi_1)}{|\xi_1-\eta|^{1+2s}} \hspace{1mm} d\eta\right|\nonumber\\
        & \leq \norm{U}_{L^\infty(\mathbb{R})}\left|\int_{-\infty}^{\xi_1 - d} \frac{1}{|\xi_0-\eta|^{1+2s}} - \frac{1}{|\xi_1-\eta|^{1+2s}} \hspace{1mm}d\eta\right|\\
        &\hspace{45mm}+ [U]_{C^{0,1}(\mathbb{R})}h\int_{-\infty}^{\xi_1 - d} \frac{1}{|\xi_1-\eta|^{1+2s}} \hspace{1mm} d\eta\nonumber.
       \end{align*}
       Thus,
       \begin{equation}
       \label{44}
           I \leq \norm{U}_{L^\infty(\mathbb{R})}I_i + [U]_{C^{0,1}(\mathbb{R})}hI_{ii},
       \end{equation}
    with
    \begin{align*}
        I_i &:= \left|\int_{-\infty}^{\xi_1 - d} \frac{1}{|\xi_0-\eta|^{1+2s}} - \frac{1}{|\xi_1-\eta|^{1+2s}} \hspace{1mm}d\eta\right|,\\
        I_{ii} &:= \left|\int_{-\infty}^{\xi_1 - d} \frac{1}{|\xi_1-\eta|^{1+2s}} \hspace{1mm} d\eta\right|.
    \end{align*}
    Studying each integral separately we obtain
     \begin{align}
    \label{45}
        I_i &:= \left|\int_{-\infty}^{\xi_1 - d} \frac{1}{|\xi_0-\eta|^{1+2s}} - \frac{1}{|\xi_1-\eta|^{1+2s}} \hspace{1mm}d\eta\right| = \frac{1}{2s}\left|\frac{1}{(h+d)^{2s}} - \frac{1}{d^{2s}}\right| \\
        & = \frac{1}{2s}\left|\frac{d^{2s} - (h+d)^{2s}}{(d+h)^{2s}d^{2s}}\right| \leq \frac{hd^{2s-1}}{(d+h)^{2s}d^{2s}} \leq \frac{hd^{2s-1}}{d^{4s}} = \frac{h}{d^{1+2s}},\nonumber\\
        \label{46}
        I_{ii}&:= \left|\int_{-\infty}^{\xi_1 - d} \frac{1}{|\xi_1-\eta|^{1+2s}}\hspace{1mm}d\eta\right| = \frac{1}{2s}\frac{1}{d^{2s}}.
    \end{align}
    Substituting \eqref{45} y \eqref{46} in \eqref{44} we have
    \begin{equation}
        \label{47}
        I \leq \norm{U}_{L^\infty(\mathbb{R})} \frac{h}{d^{1+2s}} + \frac{1}{2s}[U]_{C^{0,1}(\mathbb{R})}\frac{h}{d^{2s}}.
    \end{equation}

    For the integral $II$, using the triangular inequality and the fact that  $U \in C^{0,1}(\mathbb{R})$ we obtain
    \begin{align}
    \label{48}
        II &\leq \int_{\xi_1-d}^{\xi_1 +d}\frac{|U(\xi_1) - U(\eta)|}{|\xi_1 - \eta|^{1+2s}} \hspace{1mm}d\eta + \int_{\xi_1-d}^{\xi_1 +d}\frac{| U(\eta)|}{|\xi_0 - \eta|^{1+2s}} \hspace{1mm}d\eta\\
        & \leq [U]_{C^{0,1}(\mathbb{R})}\left(\int_{\xi_1-d}^{\xi_1 +d}\frac{1}{|\xi_1 - \eta|^{2s}}\hspace{1mm}d\eta + \int_{\xi_1-d}^{\xi_1 +d}\frac{1}{|\xi_0 - \eta|^{2s}}\hspace{1mm}d\eta\right)\nonumber\\
        &= [U]_{C^{0,1}(\mathbb{R})}\left(II_i + II_{ii}\right),\nonumber
    \end{align}
    with
    \begin{align*}
        &II_i := \int_{\xi_1-d}^{\xi_1 +d}\frac{1}{|\xi_1 - \eta|^{2s}}\hspace{1mm}d\eta,\\
        &II_{ii} := \int_{\xi_1-d}^{\xi_1 +d}\frac{1}{|\xi_0 - \eta|^{2s}}\hspace{1mm}d\eta.
    \end{align*}
    Studying each integral we have
    \begin{align}
    \label{49}
        II_i &= 2\int_{\xi_1}^{\xi_1 +d} \frac{1}{|\xi_1 - \eta|^{2s}}\hspace{1mm}d\eta = \frac{2}{1-2s}d^{1-2s},\\
        \label{50}
        II_{ii} &= \int_{\xi_0}^{\xi_1 + d} \frac{1}{(\eta-\xi_0)^{2s}}\hspace{1mm}d\eta + \int_{\xi_0}^{\xi_0 + d + h} \frac{1}{(\eta-\xi_0)^{2s}}\hspace{1mm}d\eta = \frac{1}{1-2s}\left((d-k)^{1-2s} + (d+h)^{1-2s}\right).
    \end{align}
    Substituting \eqref{49} and \eqref{50} in \eqref{48} we obtain
    \begin{equation}
        \label{51}
        II \leq \frac{6}{1-2s}[U]_{C^{0,1}(\mathbb{R})}d^{1-2s}.
    \end{equation}

    Finally we study the integral III. Since $U\in C^{0,1}(\mathbb{R})$ we have
    \begin{align}
        \label{52}
        III &\leq \int_{\xi_1 + d}^{\xi_0 + d} \frac{|U(\xi_1)|}{|\xi_1-\eta|^{1+2s}}\hspace{1mm}d\eta +  \int_{\xi_0 + d}^{\infty} \frac{|U(\xi_1)|}{|\xi_1-\eta|^{1+2s}}\hspace{1mm}d\eta\\
        &\leq \int_{\xi_1 + d}^{\xi_0 + d} \frac{|U(\xi_1)|}{|\xi_1-\eta|^{1+2s}}\hspace{1mm}d\eta +  \int_{\xi_0 + d}^{\infty} \frac{|U(\xi_1)|}{|\xi_0-\eta|^{1+2s}}\hspace{1mm}d\eta\nonumber\\
        & \leq [U]_{C^{0,1}(\mathbb{R})}\int_{\xi_1 + d}^{\xi_0 + d} \frac{1}{|\xi_1-\eta|^{2s}}\hspace{1mm}d\eta +  [U]_{C^{0,1}(\mathbb{R})}\int_{\xi_0 + d}^{\infty} \frac{|\xi_1 - \xi_0|}{|\xi_1-\eta|^{1+2s}}\hspace{1mm}d\eta\nonumber\\
        &= [U]_{C^{0,1}(\mathbb{R})}III_i + [U]_{C^{0,1}(\mathbb{R})}h_1 \, III_{ii},\nonumber
     \end{align}
    with
    \begin{align*}
        III_i &:= \int_{\xi_1 + d}^{\xi_0 + d} \frac{1}{|\xi_1-\eta|^{2s}}\hspace{1mm}d\eta,\\
        III_{ii} &:= \int_{\xi_0 + d}^{\infty} \frac{1}{|\xi_1-\eta|^{1+2s}}\hspace{1mm}d\eta.
    \end{align*}
    Calculating each integral we obtain
    \begin{align}
        \label{53}
        III_i &= \int_{\xi_1 + d}^{\xi_0 + d} \frac{1}{|\xi_1-\eta|^{2s}}\hspace{1mm}d\eta = \frac{1}{1-2s}\left((d+h)^{1-2s} - d^{1-2s}\right) \leq \frac{(2^{1-2s} -1)}{1-2s}d^{1-2s},\\
        \label{54}
        III_{ii} &= \int_{\xi_0 + d}^{\infty} \frac{1}{|\xi_1-\eta|^{1+2s}}\hspace{1mm}d\eta = \frac{1}{2s}\frac{1}{(d+h)^{2s}} \leq \frac{1}{2s}\frac{1}{d^{2s}}.
    \end{align}
    Substituting \eqref{53} and \eqref{54} in \eqref{52} we obtain
    \begin{equation}
    \label{55}
        III \leq \frac{(2^{1-2s} -1)}{1-2s}[U]_{C^{0,1}(\mathbb{R})}d^{1-2s} + \frac{1}{2s}[U]_{C^{0,1}(\mathbb{R})}\frac{h_1}{d^{2s}} .
    \end{equation}
    Thanks to \eqref{47}, \eqref{51} and \eqref{55} we can bound \eqref{43} by
    \begin{align}
        \label{55b}
        \left|\xi_1H'(\xi_1) + 2s\int_{-\infty}^{\xi_0} \frac{U(\eta)}{|\xi_0 - \eta|^{1+2s}}\hspace{1mm} d\eta\right| &\leq \frac{20s}{1-2s} [U]_{C^{0,1}(\mathbb{R})} d^{1-2s} \\
        & + 2s\norm{U}_{L^\infty(\mathbb{R})}\frac{h_1}{d^{1+2s}}\ +2[U]_{C^{0,1}(\mathbb{R})}\frac{h_1}{d^{2s}}.\nonumber
    \end{align}
    If we take in \eqref{55b} $d = h_1^\gamma$ with $\gamma \in (0,1)$ we have
    \begin{align}
    \label{57}
        \left|\xi_1 H'(\xi_1) + 2s\int_{-\infty}^{\xi_0} \frac{U(\eta)}{|\xi_0 - \eta|^{1+2s}}\hspace{1mm} d\eta\right| &\leq C\left([U]_{C^{0,1}(\mathbb{R})}(h_1^{\gamma(1-2s)} + h_1^{1-2s\gamma})+ \norm{U}_{L^\infty(\mathbb{R})}h_1^{1 - (1+2s)\gamma}\right),
    \end{align}
    with $C > 0$ a constant that only depends on $s$. Arguing in the same way that in the proof of the Proposition \ref{prop 1.8} we have that the optimal value of $\gamma$ is $\gamma = \frac{1}{2}$. Thus, taking $d = h_1^{\frac{1}{2}}$ in the integrals that we solve above we obtain from \eqref{57}
    \begin{align*}
    \left|\xi_1 H'(\xi_1) + 2s\int_{-\infty}^{\xi_0} \frac{U(\eta)}{|\xi_0 - \eta|^{1+2s}}\hspace{1mm}d\eta\right| &\leq Ch_1^{\frac{1-2s}{2}}\left([U]_{C^{0,1}(\mathbb{R})} + \norm{U}_{L^\infty(\mathbb{R})}\right)\\
    &= \Tilde{C}|\xi_0 - \xi_1|^{\frac{1-2s}{2}},
    \end{align*}
    with $\Tilde{C} > 0$ a constant that not depends on $x_1$. Therefore we conclude the proof.
\end{proof}


\subsection{Regularity to the right of $\xi_0$}
Next,  we study the regularity of$H$ on the ice region to the right of $\xi_0$. The main result that we want to prove  is that $H \in C^{1, 1-2s}[\xi_0, \infty)$.

Before proving it,  we start by showing through another method that  the
right derivative of $H$ at the point $\xi_0$ satisfies the formula \eqref{41}.

\begin{prop}
\label{prop 1.1}
    Let $s \in (0,\frac{1}{2})$. Then
    \[
    H'(\xi_0^+) = \frac{2s}{\xi_0}\int_{-\infty}^{\xi_0} \frac{U(\eta)}{|\xi_0 - \eta|^{1+2s}} \hspace{1mm}d\eta.
    \]
\end{prop}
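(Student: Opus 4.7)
The plan is to exploit the fact that on the ice side $\xi > \xi_0$ the temperature profile $U$ vanishes identically, so the stationary equation reduces to an \emph{explicit} integral formula for $\xi H'(\xi)$ that can be passed to the limit $\xi \to \xi_0^+$ by dominated convergence. This approach is cleaner than the left-sided computation of Proposition \ref{prop 1.8}, where one had to estimate a principal-value singularity and obtain a quantitative rate.

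First I would start from the stationary equation $-\frac{1}{2s}\,\xi H'(\xi) + (-\Delta)^s U(\xi) = 0$, valid pointwise by Proposition \ref{propo 5.7}. For any $\xi > \xi_0$, since $U$ vanishes on an open neighborhood of $\xi$, the principal value collapses to an absolutely convergent integral:
\[
(-\Delta)^s U(\xi) \;=\; -\int_{-\infty}^{\xi_0} \frac{U(\eta)}{|\xi - \eta|^{1+2s}}\, d\eta,
\]
so that
\[
\xi H'(\xi) \;=\; -2s \int_{-\infty}^{\xi_0} \frac{U(\eta)}{|\xi - \eta|^{1+2s}}\, d\eta \qquad \text{for all } \xi > \xi_0.
\]
Next, for $\xi > \xi_0 > \eta$ the inequality $|\xi - \eta| \geq |\xi_0 - \eta|$ provides the uniform domination $U(\eta)/|\xi-\eta|^{1+2s} \leq U(\eta)/|\xi_0-\eta|^{1+2s}$, while the integrand converges pointwise to $U(\eta)/|\xi_0 - \eta|^{1+2s}$ as $\xi \to \xi_0^+$. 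Dividing by $\xi_0 > 0$ and invoking dominated convergence then yields the claimed formula; its agreement with the left-sided value of Proposition \ref{prop 1.8} is consistent with the global $C^{1,\alpha}$ regularity of $H$ proved in Theorem \ref{principal}.

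The only real point to check, and the main obstacle, is the integrability on $(-\infty, \xi_0)$ of the dominating function $U(\eta)/|\xi_0 - \eta|^{1+2s}$. Near $\xi_0$, Theorem \ref{principal} gives $U \in C^{0,1}(\mathbb{R})$ together with $U(\xi_0) = 0$, so $|U(\eta)| \leq [U]_{C^{0,1}}(\xi_0 - \eta)$ and the integrand is controlled by a constant multiple of $(\xi_0 - \eta)^{-2s}$, which is integrable precisely when $2s < 1$. Far from $\xi_0$ the function $U$ is bounded and the tail decays like $|\eta|^{-1-2s}$, causing no issue. Thus the subcritical condition $s < 1/2$ is exactly what is needed to apply dominated convergence, foreshadowing why this one-sided derivative must blow up in the critical and supercritical regimes treated in Theorems \ref{critical} and \ref{supercritical}.
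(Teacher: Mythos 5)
Your proposal is correct and follows essentially the same route as the paper: both start from the pointwise stationary equation of Proposition \ref{propo 5.7}, use that $U\equiv 0$ on $(\xi_0,\infty)$ to write $\xi H'(\xi)=-2s\int_{-\infty}^{\xi_0}U(\eta)\,|\xi-\eta|^{-1-2s}\,d\eta$ for $\xi>\xi_0$, and pass to the limit $\xi\to\xi_0^+$ — the paper via monotone convergence along a decreasing sequence $\xi_n\downarrow\xi_0$, you via dominated convergence with the majorant justified by $U(\xi_0)=0$, $U\in C^{0,1}$ and $s<1/2$, which is an equivalent step. Note only that your computation yields $H'(\xi_0^+)=-\frac{2s}{\xi_0}\int_{-\infty}^{\xi_0}\frac{U(\eta)}{|\xi_0-\eta|^{1+2s}}\,d\eta$, i.e.\ the sign consistent with \eqref{eq1.2} and with the last line of the paper's own proof, so the discrepancy with the displayed statement is a sign slip in the proposition, not in your argument.
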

\begin{proof}
     By Proposition \ref{propo 5.7} we know that the function $H(\xi)$ satisfies
    \begin{equation}
    \label{1}
         -\frac{1}{2s}\xi H'(\xi) + (-\Delta)^s U(\xi) = 0 \qquad\textup{for all} \qquad \xi \in \mathbb{R}.
    \end{equation}
    Let $\{\xi_n\}_{n\in \mathbb{N}}$ a decreasing sequence such that $\xi_n \rightarrow
 \xi_0$. For $\eta \in (-\infty, \xi_0]$ we define  the sequence of functions
    \[
    f_n(\eta):= \frac{U(\eta)}{|\xi_n - \eta|^{1+2s}}\qquad \textup{for} \qquad n \in \mathbb{N}.
    \]
    Note that $\forall n \in \mathbb{N}$   and $\eta\in (-\infty, \xi_0]$, then $0 \leq f_n \leq f_{n+1}$. Thanks to \eqref{1} and the Monotone convergence Theorem,  we have
    \begin{align*}
       \lim_{n \to \infty} |\xi_n H'(\xi_n)| &= 2s\lim_{n \to \infty} |(-\Delta)^s U(\xi_n)| = 2s\lim_{n \to \infty} \int_{-\infty}^{\xi_0} f_n(\eta) \hspace{1mm}d\eta \\
       &= 2s\int_{-\infty}^{\xi_0} \lim_{n \to \infty} f_n(\eta) \hspace{1mm}d\eta
       = \int_{-\infty}^{\xi_0} \frac{U(\eta)}{|\xi_0 - \eta|^{1+2s}} \hspace{1mm}d\eta \nonumber.
    \end{align*}
    Thus,
    \begin{equation*}
    H'(\xi_0^+) := \frac{2s}{\xi_0}\int_{-\infty}^{\xi_0} \frac{-U(\eta)}{|\xi_0 - \eta|^{1+2s}} \hspace{1mm}d\eta.
    \end{equation*}
\end{proof}

 \noindent $\bullet$  {\bf Lack of regularity.} Finally, we obtain a limitation to the regularity of the selfsimilar solution $H$ to the problem \eqref{stefan}-\eqref{IC}  at $\xi_0$. The key idea is to prove that  second derivative of $H$ on the right of $\xi_0$ blows up. This completes part of c) of Theorem  \ref{principal}.

   \begin{prop}
    \label{lemma 1}
    Let $s \in (0,1/2)$. Then $H(x) \notin C^2(\mathbb{R})$ because $H''(\xi_0^+)=\infty$. More precisely, $H''(\xi)>0 $ in the interval $(\xi_0,\infty)$ and there is a constant $C_1>0$ such that
   \begin{equation}\label{est.H''}
    H''(\xi)\,|\xi-\xi_0|^{2s}\ge C_1 \quad \mbox{ for all $\xi>\xi_0$ with $|\xi-\xi_0|$ small.}
      \end{equation}
\end{prop}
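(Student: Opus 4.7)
The approach rests on the fact that $U\equiv 0$ throughout the ice region $[\xi_0,\infty)$, which turns the fractional Laplacian of $U$ at points $\xi>\xi_0$ into a genuine (non-principal-value) integral depending smoothly on the parameter $\xi$. First, I would observe that for every $\xi>\xi_0$,
\[
(-\Delta)^s U(\xi) \;=\; -\int_{-\infty}^{\xi_0}\frac{U(y)}{(\xi-y)^{1+2s}}\,dy,
\]
so that the selfsimilar equation \eqref{eq.ssh} can be solved explicitly for the first derivative,
\[
H'(\xi) \;=\; -\frac{2s}{\xi}\int_{-\infty}^{\xi_0}\frac{U(y)}{(\xi-y)^{1+2s}}\,dy.
\]
Since the kernel $(\xi-y)^{-(1+2s)}$ is $C^{\infty}$ in $\xi$ on any compact subset of $(\xi_0,\infty)$, uniformly in $y\in(-\infty,\xi_0]$, and $U$ is bounded (so the differentiated integrand is dominated by $C|y|^{-(2+2s)}$ at infinity), one may differentiate under the integral sign to obtain
\[
H''(\xi) \;=\; \frac{2s}{\xi^{2}}\int_{-\infty}^{\xi_0}\frac{U(y)}{(\xi-y)^{1+2s}}\,dy \;+\; \frac{2s(1+2s)}{\xi}\int_{-\infty}^{\xi_0}\frac{U(y)}{(\xi-y)^{2+2s}}\,dy.
\]

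Both integrands are nonnegative, and $U>0$ on some left-neighborhood of $\xi_0$ (by the monotonicity in Theorem \ref{teo 4} and $H(\xi_0)=L$), so immediately $H''(\xi)>0$ for every $\xi>\xi_0$, giving the strict convexity of $H$ in the ice region at once. To prove the blow-up estimate \eqref{est.H''} the plan is to exploit the non-degeneracy at the free boundary: by Proposition \ref{prop 1.8} combined with the global $C^{1,\alpha}$ regularity of Theorem \ref{principal}, we have $-H'(\xi_0^-)>0$, and since $U=H-L$ on $(-\infty,\xi_0)$ with $U(\xi_0^-)=0$, a first-order Taylor expansion produces constants $c_0,\eta>0$ with
\[
U(y) \;\geq\; c_0(\xi_0-y) \qquad\text{for all } y\in[\xi_0-\eta,\xi_0].
\]
Plugging this linear lower bound into the second (dominant) integral in the formula for $H''(\xi)$ and changing variables $u=\xi-y$ with $\delta=\xi-\xi_0$, the relevant integral is bounded below by a multiple of $\int_{\delta}^{\delta+\eta}(u-\delta)\,u^{-(2+2s)}\,du$; restricting attention to the sub-interval $u\in[2\delta,3\delta]$ (valid for $\delta<\eta/2$) a trivial pointwise comparison yields a lower bound of order $\delta^{-2s}$. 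This gives $H''(\xi)\ge C_1|\xi-\xi_0|^{-2s}$ for $\xi$ slightly above $\xi_0$, which is exactly \eqref{est.H''}, and in particular forces $H''(\xi_0^+)=+\infty$.

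The main obstacle is really the non-degeneracy $H'(\xi_0^-)\neq 0$, which is what supplies the linear lower bound on $U$ at the free boundary: without it the integral estimate would degenerate and no blow-up could be extracted. This non-degeneracy is however exactly the content of the Fractional Stefan Formula \eqref{eq1.2}, already obtained earlier in the paper, so the whole argument reduces to combining the explicit representation of $H''(\xi)$ in the ice region with a one-sided Hopf-type bound that is already in hand. Once both ingredients are in place, what remains is an elementary one-variable integral computation with no further delicate analysis, and strict positivity of $H''$ throughout $(\xi_0,\infty)$ follows directly from the explicit formula with no extra work.
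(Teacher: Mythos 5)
Your proposal is correct and follows essentially the same route as the paper: differentiate the explicit representation of $H'$ in the ice region (where $U\equiv 0$ makes the fractional Laplacian a genuine integral), identify the second term as dominant, and feed in the linear lower bound $U(y)\ge c_0(\xi_0-y)$ near $\xi_0^-$ coming from the non-degeneracy $-H'(\xi_0^-)>0$ of the Fractional Stefan Formula, which yields the $|\xi-\xi_0|^{-2s}$ blow-up. The only differences are cosmetic (your restriction to $u\in[2\delta,3\delta]$ versus the paper's comparison $|\xi-\eta|\le 2|\xi_0-\eta|$ on $[\xi_0-a,\xi_0-h]$), so no further comment is needed.
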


\begin{proof}
    For proving that $H \notin C^2(\mathbb{R})$ it is enough to show that $H''(\xi_0^+) = \infty$. By Theorem \ref{principal} we know that
    \begin{equation}
      \label{derivative}
    -H'(\xi) = \frac{1}{\xi}\int_{-\infty}^{\xi_0}\frac{U(\eta)}{|\xi-\eta|^{1+2s}}\hspace{1mm}d\eta
    \end{equation}
    for all $\xi \in \mathbb{R}$. Let $\xi > \xi_0$, taking the derivative in \eqref{derivative} and using the Leibnitz Rule for integration we can pass the derivative inside the integral and we obtain
    \begin{equation}
    \label{7}
    H''(\xi) = \frac{2s}{\xi^2}\int_{-\infty}^{\xi_0} \frac{U(\eta)}{|\xi-\eta|^{1+2s}}\hspace{1mm}d\eta + \frac{2s(1+2s)}{\xi}\int_{-\infty}^{\xi_0} \frac{U(\eta)}{|\xi-\eta|^{2+2s}}\hspace{1mm}d\eta= I(\xi)+II(\xi).
    \end{equation}
    Now we want to see what happens when take the limit when $\xi \rightarrow \xi_0$ in \eqref{7}. Arguing as before and using the fact that $U$ is linear to the left of $\xi_0$, we easily see that the first integral $I(\xi)$ remains bounded as $\xi \rightarrow \xi_0$. Therefore, we only have to examine the divergence of the last integral, $II(\xi)$.  It is interesting to point out that direct inspection of the integrands shows that $I(\xi)$ is lower order than $II(\xi)$ when $\xi\sim \xi_0$.

    Proceeding with the proof, the integrand in $II(\xi)$ is positive and we can bound it from below the integral as follows, using $\xi_1<\xi_0<$ such that $\xi_0-\xi_1=\xi-\xi_0$:
     \begin{equation}
    II(\xi) \ge  \frac{2s(1+2s)}{\xi_0+h}    \int_{-\infty}^{\xi_1} \frac{U(\eta)}{|\xi-\eta|^{2+2s}}\hspace{1mm}d\eta.
    \end{equation}

    We know by Theorem  \ref{principal} that $|H'(\xi_0^-)| = |U'(\xi_0^-)| =C_0> 0$,  then by the continuity of $U '$ there exists  $a > 0$ and  such that $|U'(\xi)| \geq C_0/2 >0$ for all $\xi \in [\xi_0 - a, \xi_0]$. Then we have the linear lower bound to the right of $\xi_0$:
    \begin{equation}
        \label{desi}
       U(\eta) = U(\eta) - U(\xi_0) \geq \frac12 C_0 \,|\xi_0-\eta |,
    \end{equation}
    for all $\eta \in [\xi_0 - a, \xi_0]$.
Letting now $\xi-\xi_0=h>0$ be very small we have \ $\xi_1=\xi_0-h$ and   $|\xi - \eta |\le 2\,|\xi_0 - \eta|$  \, for $\eta\le \xi_1$. It follows that \nc
  \begin{equation}\label{est.H2}
    II(\xi) \ge  \frac{s(1+2s)C_0}{3\xi_0}    \int_{\xi_0-a}^{\xi_0-h} \frac{1}{|\xi-\eta|^{1+2s}}\hspace{1mm}d\eta.
    \end{equation}
    When we fix $ a$ and  let $h=\xi-\xi_0$ be close to zero we get
 \begin{equation}
 H''(\xi)\ge II(\xi)\ge C_1\, |\xi-\xi_0|^{-2s}
 \end{equation}
 with a constant that we can easily estimate. \end{proof}

\noindent {\bf Remark.} The proof of estimate \eqref{est.H2} can be repeated later for $s\ge 1/2$ on the condition of obtaining a linear lower bound for $U$ to the right of $\xi_0$. See sections \ref{eleven} and \ref{section 13}.

\medskip

\noindent Finally, we prove the optimality assertion mentioned in Theorem \ref{principal}.

\begin{cor}
    Let $s \in (0,1/2)$. Then, $H \in C^{1,\alpha}([\xi_0,\infty))$ with optimal exponent $\alpha = 1-2s$.
\end{cor}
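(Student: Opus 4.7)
The plan is to obtain matching upper and lower bounds on the blow-up rate of $H''(\xi)$ as $\xi\to\xi_0^+$ and then integrate. Since Theorem \ref{teo 4} already gives $H\in C^\infty(\xi_0,\infty)$ and Theorem \ref{principal} gives $H\in C^{1,\alpha}(\mathbb{R})$ with $H'$ globally bounded, only a one-sided local estimate of the modulus of continuity of $H'$ at $\xi_0^+$ is needed, and patching with the interior smoothness will yield the global conclusion $H\in C^{1,1-2s}([\xi_0,\infty))$.

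\textbf{Upper bound on $H''$ near $\xi_0^+$.} I would start from the explicit formula \eqref{7} used in the proof of Proposition \ref{lemma 1},
\begin{equation*}
H''(\xi) = \frac{2s}{\xi^2}\int_{-\infty}^{\xi_0} \frac{U(\eta)}{|\xi-\eta|^{1+2s}}\,d\eta + \frac{2s(1+2s)}{\xi}\int_{-\infty}^{\xi_0} \frac{U(\eta)}{|\xi-\eta|^{2+2s}}\,d\eta =: I(\xi)+II(\xi),
\end{equation*}
valid on $(\xi_0,\infty)$. Up to constants, $I(\xi)=-H'(\xi)/\xi$, which remains bounded as $\xi\to\xi_0^+$. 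The main work is to prove $II(\xi)\le C(\xi-\xi_0)^{-2s}$: I would split the inner integral into a small neighborhood $[\xi_0-a,\xi_0]$ and its complement. The outer piece is uniformly bounded in $\xi$, and on the inner piece I would use the global Lipschitz bound $U(\eta)\le [U]_{C^{0,1}(\mathbb{R})}(\xi_0-\eta)$ from Theorem \ref{principal}. Setting $t=\xi_0-\eta$ and $h=\xi-\xi_0$, the inner piece reduces to estimating
\begin{equation*}
\int_0^a \frac{t}{(t+h)^{2+2s}}\,dt \le h^{-2s}\int_0^\infty \frac{u}{(1+u)^{2+2s}}\,du,
\end{equation*}
with the last integral convergent since $2s>0$. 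Integrating $|H''(\xi)|\le C(\xi-\xi_0)^{-2s}$ from $\xi_0^+$ to $\xi$ yields $|H'(\xi)-H'(\xi_0^+)|\le \frac{C}{1-2s}(\xi-\xi_0)^{1-2s}$ locally. Combined with the uniform bound of $H''$ on $[\xi_0+\delta,\infty)$ that follows from the same formula and with $H'$ bounded at infinity, this produces $H\in C^{1,1-2s}([\xi_0,\infty))$.

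\textbf{Optimality.} I would invoke the matching lower bound \eqref{est.H''} of Proposition \ref{lemma 1}, namely $H''(\xi)\ge C_1(\xi-\xi_0)^{-2s}$ for $\xi>\xi_0$ close to $\xi_0$, and integrate it to obtain
\begin{equation*}
H'(\xi)-H'(\xi_0^+)\ge \frac{C_1}{1-2s}(\xi-\xi_0)^{1-2s}.
\end{equation*}
If $H$ were to belong to $C^{1,\beta}([\xi_0,\infty))$ with some $\beta>1-2s$, the same increment would also be bounded by $C(\xi-\xi_0)^\beta$, forcing $(\xi-\xi_0)^{1-2s-\beta}$ to stay bounded as $\xi\to\xi_0^+$; since $1-2s-\beta<0$, this is impossible. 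Hence $\alpha=1-2s$ is sharp.

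\textbf{Main difficulty.} The most delicate point is the sharp upper estimate for $II(\xi)$: one must exploit the Lipschitz (and essentially linear) vanishing of $U$ at $\xi_0^-$ given by the strict inequality $H'(\xi_0^-)<0$ from Theorem \ref{principal} in order to cancel enough of the near-singularity of the kernel $|\xi-\eta|^{-2-2s}$ and obtain the rate $(\xi-\xi_0)^{-2s}$ rather than a worse power. Once this is in hand, the matching lower bound is already available from Proposition \ref{lemma 1}, so the optimality portion is immediate by integration.
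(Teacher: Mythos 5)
Your proposal is correct, and it coincides with the paper on the optimality half: the paper also integrates the lower bound $H''(\xi)\,|\xi-\xi_0|^{2s}\ge C_1$ of Proposition \ref{lemma 1} to get $|H'(\xi)-H'(\xi_0)|\ge C_2|\xi-\xi_0|^{1-2s}$, exactly as you do. Where you diverge is the upper bound. The paper simply quotes Proposition \ref{prop ine 2}, i.e.\ the estimate \eqref{above} $|H'(\xi)-H'(\xi_0)|\le C_s|\xi-\xi_0|^{1-2s}$, which was obtained at the level of first derivatives by proving H\"older continuity of $\xi H_\varepsilon'(\xi)$ for the regularized solutions (Lemma \ref{lem5.2}) and passing to the limit; you instead work directly with the limit profile, differentiate the representation \eqref{7} (legitimate by Proposition \ref{propo 5.7} and the smoothness of $H$ in the ice region), and show $H''(\xi)\le C(\xi-\xi_0)^{-2s}$ by splitting the integral and using only $U\in C^{0,1}(\mathbb{R})$ with $U(\xi_0)=0$, then integrate. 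Your route has the merit of producing matching two-sided bounds on $H''$ (so the blow-up rate of the second derivative is itself identified) and of avoiding any further appeal to the $\varepsilon$-uniform machinery, at the price of being confined to the right of $\xi_0$ — which is all the corollary needs. One small remark: for the upper bound you only use the Lipschitz bound $U(\eta)\le[U]_{C^{0,1}(\mathbb{R})}(\xi_0-\eta)$, not the nondegeneracy $H'(\xi_0^-)<0$ that your closing comment alludes to; the latter is only needed for the lower bound, which is already supplied by Proposition \ref{lemma 1}. Your patching step away from $\xi_0$ (boundedness of $H''$ on $[\xi_0+\delta,\infty)$ plus boundedness of $H'$) is also fine for the global $C^{1,1-2s}([\xi_0,\infty))$ statement.
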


\begin{proof}
    Since $H\in C^\infty(\xi_0, \infty)$, it is enough to show that the exponent $\alpha = 1-2s$ it is optimal near the free boundary. By Proposition \ref{prop ine 2} we know that
    \begin{equation}
     \label{above}
    |H'(\xi) - H'(\xi_0)| \leq C_s |\xi - \xi_0|^{1-2s} \quad \forall \xi > \xi_0.
    \end{equation}
    On the other hand, by Proposition \ref{lemma 1} we know that
    \begin{equation}
        \label{inferior}
        H''(\xi)|\xi - \xi_0|^{2s} \geq C_1 \quad \forall \xi > \xi_0.
    \end{equation}
    Integrating in \eqref{inferior} over $(\xi_0, \xi)$ we obtain
    \begin{equation}
    \label{below}
     |H'(\xi) - H'(\xi_0)| \geq C_2 |\xi - \xi_0|^{1-2s}
    \end{equation}
 for all $\xi>\xi_0$ with $|\xi-\xi_0|$ small.  The optimality of the exponent is a consequence of the inequalities \eqref{above} and \eqref{below}.
\end{proof}

\newpage

\section{Subcritical tail behaviour at spatial infinity}
\label{sec.nine}

In this section we introduce a different topic. We want to study the tail behaviour of the unique solution of Problem \eqref{stefan}-\eqref{IC} when $s \in (0, 1/2)$. We want to estimate $H$ and $H'$ as  functions of $\xi$ when  $\xi$ is very large, either positive or negative.

\noindent {\bf Notation.}  In the expressions below $O(\cdot) $ and $o(\cdot)$ are the usual  Landau notations, here taken in the limits when either $\xi\to\infty$ or $\xi\to -\infty$. For  the relative limit behaviour of two nonnegative quantities $a(x)$ and $b(x)$, we use the notations $a(x)\precsim b(x)$ when there is a positive constant $C$ such that $a(x)\le C b(x)$, while the stricter notation  $a(x)\asymp b(x)$ means that there exist positive $C_1<C_2$ such that  $C_1 b(x) \le a(x)\le C_2 b(x)$.
If the limit ratio is a given constant $C>0$ we will let it know with the $  \approx$ sign. $O(\cdot) $ and $o(\cdot)$ are usual Landau notations.

\subsection{Tail behaviour at plus infinity}   The behaviour of $H(\xi)$ and $H'(\xi)$ has been stated  in first asymptotic approximation in \cite[Theorem 3.1(g)]{DEV1} and proved in Section 3.8, by estimating $H'(\xi)$ for $\xi>\xi_0$ by means of the selfsimilar
equation \eqref{eq.ssh}. The result is
        \begin{equation}
H(\xi) \asymp \frac{1}{|\xi|^{2s}}, \quad \mbox{and} \quad H'(\xi) \asymp \frac{1}{|\xi|^{1+2s}}
        \end{equation}
as $\xi\to\infty$. It works for all $0<s<1$. We will come back to these formulas later to get precise constants in the first order approximation and an estimate of the lower-order rest.

\subsection{Subcritical tail behaviour at minus infinity} Since the study at plus infinity is easier we turn our attention to the study as $\xi\to-\infty$ looking for similar asymptotics\nc. The main result of the section reads as follows:
\begin{thm}
\label{teo 2.1}
    Let $s \in (0, 1/2)$ and $H \in L^{\infty}(\mathbb{R})$ the unique selfsimilar solution of \eqref{stefan}-\eqref{IC}. Then, for $\xi \ll \xi_0$
        \begin{equation}
        \label{sub1}
        L+P_1 - H(\xi)= P_1-U(\xi) \approx \frac{P_1}{2s}\,|\xi|^{-2s},
        \end{equation}
        and
        \begin{equation}
        \label{sub2}
        |H'(\xi)| \approx P_1\,|\xi|^{-(1+2s)}.
        \end{equation}
\end{thm}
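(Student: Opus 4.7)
The plan is to change variables by setting $W(\xi):=P_1-U(\xi)=L+P_1-H(\xi)$, extended by $W\equiv P_1$ on $[\xi_0,\infty)$. By Theorem \ref{principal}, $W$ is nonnegative, nondecreasing, globally $C^{1,\alpha}(\mathbb{R})$, with $W(-\infty)=0$, $W(\xi_0)=P_1$, and $|H'|=W'$ on the water region. Since the fractional Laplacian annihilates constants, $(-\Delta)^s U=-(-\Delta)^s W$, and the selfsimilar equation $-\tfrac{1}{2s}\xi H'+(-\Delta)^s U=0$ becomes
\begin{equation}\label{plan.Weq}
(-\Delta)^s W(\xi)=\tfrac{1}{2s}\,\xi\,W'(\xi),\qquad \xi<\xi_0,
\end{equation}
with both sides negative for $\xi\ll 0$. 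The theorem reduces to the sharp pointwise asymptotic
\begin{equation}\label{plan.target}
(-\Delta)^s W(\xi)=-\tfrac{P_1}{2s}\,|\xi|^{-2s}(1+o(1))\quad\text{as }\xi\to-\infty,
\end{equation}
because \eqref{plan.Weq} then yields $W'(\xi)=P_1\,|\xi|^{-(1+2s)}(1+o(1))$, which is \eqref{sub2}, and an integration from $-\infty$ using $W(-\infty)=0$ delivers \eqref{sub1}.

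To target \eqref{plan.target} I would split the defining integral at the free boundary:
\begin{equation}\label{plan.split}
(-\Delta)^s W(\xi)=\textup{p.v.}\!\int_{-\infty}^{\xi_0}\frac{W(\xi)-W(y)}{|\xi-y|^{1+2s}}\,dy\;+\;(W(\xi)-P_1)\,\frac{(\xi_0-\xi)^{-2s}}{2s}.
\end{equation}
The second piece is exact, and behaves like $-\tfrac{P_1}{2s}|\xi|^{-2s}(1+o(1))$ as $\xi\to-\infty$, precisely the targeted leading term. The whole game is to show that the water-region integral in \eqref{plan.split} is $o(|\xi|^{-2s})$.

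The main obstacle lies exactly here, since a bound based only on $0\le W\le P_1$ produces a contribution of the same order as the leading term. The natural remedy is a bootstrap. First, using explicit candidate super- and sub-solutions of \eqref{plan.Weq} of the form $c_\pm|\xi|^{-2s}$ (with a suitable cutoff near $\xi_0$), together with comparison for the regularized problems of Section \ref{section 3} (Theorem \ref{teo 2.5}) and passage to the limit via Proposition \ref{prop 4.1}, one extracts a preliminary two-sided decay $c_-|\xi|^{-2s}\le W(\xi)\le c_+|\xi|^{-2s}$ for $\xi\ll 0$. Second, with this information at hand, the water-region integral in \eqref{plan.split} is treated by partitioning the domain into a local region $|y-\xi|<|\xi|/2$, a far-left region $y<3\xi/2$, and an intermediate region $(\xi/2,\xi_0)$: in the local region one uses the principal-value cancellation of the linear part of $W(\xi)-W(y)$ together with a rescaled interior $C^{1,\alpha}$ estimate in the spirit of Theorem \ref{davila}; in the far-left region the preliminary decay $W(y)\precsim|y|^{-2s}$ combined with $|y-\xi|\ge|\xi|/2$; and in the intermediate region the fact that $|\xi-y|\asymp|\xi|$ turns the contribution into a lower-order correction of the exact second term of \eqref{plan.split}. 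All three pieces are $O(|\xi|^{-2s-\delta})$ for some $\delta>0$ depending on $s$.

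Given \eqref{plan.target}, the derivative estimate \eqref{sub2} is immediate from \eqref{plan.Weq} and the decay \eqref{sub1} follows by one integration. The harder of the two is \eqref{sub2}; once it is proved, \eqref{sub1} is essentially free. The analytically hardest single step is the construction of the two-sided barriers $c_\pm|\xi|^{-2s}$, which requires a sharp bound on $(-\Delta)^s(|\xi|^{-2s})$ on a half-line to make it compatible, up to lower-order corrections, with the explicit right-hand side $-|\xi|^{-2s}$ of \eqref{plan.Weq} evaluated on the candidate profile.
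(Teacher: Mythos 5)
Your overall architecture is genuinely different from the paper's and, in principle, sound: writing $W=P_1-U$, isolating the exact ice-region contribution $(W(\xi)-P_1)\,(\xi_0-\xi)^{-2s}/(2s)\to -\tfrac{P_1}{2s}|\xi|^{-2s}$, and showing the water-region integral is of lower order would indeed yield \eqref{sub2} from the profile equation and then \eqref{sub1} by integration, even with the sharp constants. (Minor slip: $W=P_1-U$ is only Lipschitz across $\xi_0$, not globally $C^{1,\alpha}$, since $U=(H-L)_+$ has a corner there; this is harmless for your argument.) However, the two steps you yourself flag as the crux are genuine gaps as written. First, the preliminary decay $W(\xi)\precsim|\xi|^{-2s}$: Theorem \ref{teo 2.5}\,$b)$ compares two very weak \emph{solutions} with ordered initial data; it does not let you compare $H_\varepsilon$ (or $H$) with a static barrier $c_\pm|\xi|^{-2s}$, which is not a solution of the (regularized) parabolic problem, and no sub/supersolution comparison principle for the degenerate drift profile equation on an unbounded domain is available in the paper. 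So the step ``barriers $+$ Theorem \ref{teo 2.5} $+$ Proposition \ref{prop 4.1}'' does not go through as cited: you would either have to prove such a comparison lemma, or do what the paper does in Lemma \ref{lemma 9.2}, namely sandwich $h$ between two \emph{explicit exact solutions} built from the fractional heat kernel (the linear fractional heat flow with step data from above, the two-phase Stefan solution from below), which directly gives $P_1-U\asymp|\xi|^{-2s}$.

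Second, the local part of the water-region integral: you invoke a rescaled interior $C^{1,\alpha}$ estimate ``in the spirit of Theorem \ref{davila}'', but under the natural rescaling $z=(y-\xi)/R$ with $R\asymp|\xi|$ the drift term $\tfrac{\xi}{2s}W'$ acquires a coefficient of size $|\xi|^{2s}\to\infty$, so that theorem does not apply to the profile equation as stated; to make this quantitative you would have to work at the parabolic level inside the water region and track the weighted tail term through the scaling, which is real additional work. The paper avoids both difficulties by a more elementary route: it first proves $\xi H'(\xi)\to0$ directly from the equation (Lemma \ref{lem9.1}, splitting the nonlocal integral and using monotonicity, the global Lipschitz bound and $U\to P_1$), then feeds that derivative bound back into the local piece via the mean value theorem to get $|H'|\precsim|\xi|^{-1-2s}$, reads off the matching lower bound from the dominant right-hand tail, and proves \eqref{sub1} separately by the kernel comparison. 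In fact your local term could be handled the same cheap way: with $|W'(\eta)|=o(1/|\eta|)$ the local integral is already $o(|\xi|^{-2s})$, with no Hölder seminorm needed. So the plan is salvageable, but the two key inputs must be replaced or proved; as proposed they are missing.
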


\medskip

 We divide the proof of the Theorem in some Lemmas. We start with \eqref{sub2}.

 \begin{lem}\label{lem9.1}
Under the assumptions of Theorem \ref{teo 2.1} we have $H'(\xi)=o(|\xi|^{-1})$, i.e.,
  \begin{equation}
    \label{8.2}
\lim_{\xi \to - \infty} \xi H'(\xi) = 0.
\end{equation}
\end{lem}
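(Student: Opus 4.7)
The plan is to recast the claim via the self-similar profile equation and apply the Lebesgue dominated convergence theorem. From Proposition \ref{propo 5.7}, the pointwise identity $\xi H'(\xi) = 2s\,(-\Delta)^s U(\xi)$ holds at every $\xi\in\mathbb{R}$, so it suffices to prove $(-\Delta)^s U(\xi)\to 0$ as $\xi\to-\infty$. After the substitution $\eta=\xi+y$, this fractional Laplacian takes the symmetric form
\[
(-\Delta)^s U(\xi) \;=\; \int_{\mathbb{R}}\frac{U(\xi)-U(\xi+y)}{|y|^{1+2s}}\,dy,
\]
and the strategy is to show that the integrand vanishes pointwise in $y$ as $\xi\to-\infty$ while being dominated by a single integrable function independent of $\xi$.

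For the pointwise limit, fix $y\in\mathbb{R}\setminus\{0\}$: by Theorem \ref{teo 4}(b) (and $H=U+L$ on $(-\infty,\xi_0)$), both $U(\xi)$ and $U(\xi+y)$ tend to $P_1$, so the integrand vanishes. For the uniform $L^1$ majorant, split at $|y|=1$. On $|y|\leq 1$ the global Lipschitz bound $U\in C^{0,1}(\mathbb{R})$ from Theorem \ref{principal} yields $|U(\xi)-U(\xi+y)|\leq[U]_{C^{0,1}(\mathbb{R})}|y|$, giving an integrand bounded by $[U]_{C^{0,1}(\mathbb{R})}|y|^{-2s}$; this is integrable at the origin \emph{precisely because} $s<1/2$. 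On $|y|>1$ the estimate $0\leq U\leq P_1$ produces an integrand bounded by $2P_1|y|^{-(1+2s)}$, integrable at infinity. Patching the two bounds yields an $L^1(\mathbb{R})$ majorant independent of $\xi$.

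Dominated convergence then gives $(-\Delta)^s U(\xi)\to 0$, and the self-similar equation yields $\xi H'(\xi)\to 0$, as claimed. The main subtlety—and the only place the hypothesis $s<1/2$ enters—is the integrability of $|y|^{-2s}$ near $y=0$ coming from the Lipschitz estimate; this is exactly the mechanism that separates the subcritical tail behaviour from the critical and supercritical regimes treated in later sections, where the same approach would fail precisely at the near-origin part of the integral.
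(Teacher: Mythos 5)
Your proof is correct: the identity $\xi H'(\xi)=2s\,(-\Delta)^sU(\xi)$ holds pointwise by Proposition \ref{propo 5.7}, the integrand tends to $0$ for each fixed $y\neq 0$ because $U(\xi),U(\xi+y)\to P_1$, and your majorant (Lipschitz bound $[U]_{C^{0,1}(\mathbb{R})}|y|^{-2s}$ on $|y|\le 1$, which is integrable exactly because $s<1/2$, and $2P_1|y|^{-(1+2s)}$ on $|y|>1$) is integrable and independent of $\xi$, so dominated convergence applies; note also that the absolute convergence you establish makes the principal value superfluous here. The route is genuinely different in execution from the paper's: there the integral is split into three explicit pieces (far left tail, a window of radius $d$, and a right part further cut at $\lambda\xi$), the left tail is discarded by the monotonicity/sign argument $\xi H'(\xi)\ge 0$, and the remaining pieces are estimated quantitatively with an optimized choice $d=\delta$ before letting $\delta\to 0$. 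Your soft DCT argument is shorter and avoids all parameter bookkeeping, and it delivers exactly the conclusion the next lemma needs (namely $|H'(\xi)|\le\varepsilon(\xi)/|\xi|$ with $\varepsilon(\xi)\to 0$); what the paper's quantitative splitting buys is a template that is then recycled, with a different choice of the window size, to upgrade the bound to the rate $O(|\xi|^{-1-2s})$ in the subsequent lemma, a refinement your argument does not produce by itself but is not required for the statement at hand.
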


\begin{proof} The expression means that
\begin{equation}
    \label{8.3}
    |H'(\xi)| \leq \frac{\varepsilon(\xi)}{|\xi|}  \qquad \textup{for all} \qquad \xi \ll \xi_0,
\end{equation}
where $\varepsilon(\xi) > 0$ is a constant that depends on $\xi$ and satisfies $\varepsilon(\xi)\to 0 $ when $\xi \to - \infty$. The  bound obtained here is not enough for our purposes  but it represents a preliminary step.

The idea  for proving \eqref{8.2} is to split the integral in three pieces. Let $\xi \ll 0$, thanks to Theorem \ref{teo 4}-$a)$ we have
\begin{equation}
    \label{8.5}
   0\le  \xi H'(\xi) = 2s  \,  (-\Delta)^s U(\xi) = \int_{\mathbb{R}} \frac{U(\xi) - U(\eta)}{|\xi-\eta|^{1+2s}}\hspace{1mm}d\eta = 2s \, (I + II + III),
\end{equation}
where
\begin{align*}
    & I := \int_{-\infty}^{\xi -d}\frac{U(\xi) - U(\eta)}{|\xi - \eta|^{1+2s}} \hspace{1mm}d\eta,\\
        &  II := \int_{\xi-d}^{\xi +d}\frac{U(\xi) - U(\eta)}{|\xi - \eta|^{1+2s}} \hspace{1mm}d\eta,\\
        & III := \int_{\xi + d}^{+ \infty}\frac{U(\xi) - U(\eta)}{|\xi - \eta|^{1+2s}} \hspace{1mm}d\eta,
\end{align*}
with $d$ a value that will be determined later. By Theorem \ref{teo 4}-$d)$ the function $H$ is nonincreasing, hence we have $U(\eta) \geq U(\xi)$ $\forall \eta \in (-\infty, \xi]$, hence $I$ is negative. Looking at the sign of  the left-hand side $\xi H'(\xi)\ge 0$ we can drop $I$ and pass \eqref{8.5} into  the form
\begin{align}
    \label{8.25}
    |\xi H'(\xi)| & \leq 2s\, ( |II| + |III|).\nonumber
\end{align}

\noindent    $\bullet$ Let us examine integral $II$\nc. Since  $U \in C^{0,1}(\mathbb{R})$, then we have
\begin{equation}
    \label{8.8}
    |II| \leq \int_{\xi -d}^{\xi +d} \frac{|U(\xi) - U(\eta)|}{|\xi - \eta|^{1+2s}} \hspace{1mm}d\eta \leq [U]_{C^{0,1}(\mathbb{R})}\int_{\xi -d}^{\xi +d} \frac{1}{|\xi - \eta|^{2s}}\hspace{1mm}d\eta \leq \frac{2[U]_{C^{0,1}(\mathbb{R})}}{1-2s}d^{1-2s}.
\end{equation}

\noindent    $\bullet$ Let us now look at  integral $III$. Again, it will be convenient to split
in two pieces. First observe that  by Theorem \ref{teo 4}-$b)$, $\lim_{\xi \to -\infty} H(\xi) = P_1$,, so that given $\delta>0$ small there exists $ {\xi}^*$ negative  with $ |\xi|^*$ large enough such that
$$
P_1-U(\eta)< \delta \quad \mbox{for } \eta<\xi^*(\delta).
$$
Given $\delta>0$ we now take $\xi$ much to the left of $\xi^*(\delta)$ and then put $z=\lambda\, \xi$ with $\lambda \in (0,1/2) $ and take a $d>0$ such that $\xi+d<z$. We split the integral III as follows:
\begin{equation}
    \label{8.9}
    III = \int_{\xi + d}^{z}\frac{U(\xi) - U(\eta)}{|\xi - \eta|^{1+2s}} \hspace{1mm}d\eta + \int_{z}^{+ \infty}\frac{U(\xi) - U(\eta)}{|\xi - \eta|^{1+2s}} \hspace{1mm}d\eta= III_i + III_{ii}
\end{equation}
Using the above information we deduce
\begin{align}
\label{8.13}
    |III_i |&\leq \int_{\xi + d}^{z}\frac{|U(\xi) - U(\eta)|}{|\xi - \eta|^{1+2s}} \hspace{1mm}d\eta \leq \delta \int_{\xi + d}^{z}\frac{1}{|\xi - \eta|^{1+2s}} \leq \frac{\delta}{2s}\left(\frac{1}{d^{2s}} + \frac{1}{|\xi-z|^{2s}}\right)\\
    & \leq \frac{1}{2s}\left(\frac{\delta}{d^{2s}} + \frac{\delta}{(1-\lambda)^{2s}|\xi-\lambda \xi|^{2s}}\right).
\end{align}
Besides, we can bound $III_{ii}$ by
\begin{equation}
    \label{8.14}
    |III_{ii} |\leq  \norm{U}_{L^{\infty}(\mathbb{R})}\int_{z}^{+ \infty} \frac{1}{|\xi - \eta|^{1+2s}}\hspace{1mm} d\eta = \frac{P_1}{2s}\frac{1}{|\xi - z|^{2s}} = \frac{P_1}{2s(1-\lambda)^{2s}}\frac{1}{|\xi|^{2s}}.
\end{equation}
Substituting \eqref{8.13} and \eqref{8.14} into \eqref{8.9}, we obtain
\begin{equation}
    \label{8.15}
    |III | \leq \frac{1}{2s}\left( \frac{\delta}{d^{2s}} + \frac{\delta}{(1-\lambda)^{2s}|\xi|^{2s}}\right)+ \frac{P_1}{2s(1-\lambda)^{2s}}\frac{1}{|\xi|^{2s}}.
\end{equation}

\noindent    $\bullet$ Thanks to \eqref{8.8} and \eqref{8.15}  we can get a bound \eqref{8.5}. If moreover choose $d=\delta$ we have
\begin{equation}
    \label{8.16}
    |\xi H'(\xi)| \leq \left( \frac{ 4[U]_{C^{0,1}(\mathbb{R}) }}{s(1-2s)} + 1 \right)  d^{1-2s} +
    \frac{P_1}{(1-\lambda)^{2s}}\frac{1}{|\xi|^{2s}}.
\end{equation}
We can  now show that both terms on the right-hand side can be made to tend to 0 as $\xi\to-\infty $. The last one is obvious as long as $\lambda$ is not close to 0. Hence we get for all  $\xi\ll 1$
$$
\lim_{\xi\to -\infty}     |\xi H'(\xi)| \leq  C \delta^{1-2s}.
$$
But $\delta $ can be taken as small as desired, so that in the end
$$
\lim_{\xi\to-\infty}     |\xi H'(\xi)| =0
$$
as we wanted to prove.  \end{proof}

This estimate can now be used to refine the calculation.

 \begin{lem}
Under the same assumptions we get
     \begin{equation}
        \label{sub2c}
 |H'(\xi)| =O( 1/|\xi|^{1+2s}) \qquad \textup{as} \quad \xi\to -\infty.
  \end{equation}
 \end{lem}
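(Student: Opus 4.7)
The plan is to start from the self-similar identity
\[
\xi H'(\xi) \;=\; 2s \int_{\mathbb{R}} \frac{U(\xi) - U(\eta)}{|\xi - \eta|^{1+2s}} \, d\eta
\]
and split the integration at the scale $R := |\xi|/2$, writing $\xi H'(\xi) = 2s(J_{\mathrm{out}} + J_{\mathrm{in}})$, where $J_{\mathrm{out}}$ collects the contribution from $\{|\eta-\xi|>R\}$ and $J_{\mathrm{in}}$ the one from $\{|\eta-\xi|<R\}$. The motivation is that for $\xi\ll 0$ the interval $(\xi-R,\xi+R)=(3\xi/2,\xi/2)$ sits entirely inside the water region $(-\infty,\xi_0)$, so that all points in it lie uniformly far to the left of the free boundary, where $U=H-L$ is smooth and Lemma~\ref{lem9.1} applies.

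For $J_{\mathrm{out}}$ I would simply use $|U(\xi)-U(\eta)|\le 2P_1$ together with the elementary computation $\int_{|\eta-\xi|>R}|\xi-\eta|^{-1-2s}\,d\eta = (sR^{2s})^{-1}$ to obtain
\[
|J_{\mathrm{out}}| \le \frac{2P_1}{sR^{2s}} = \frac{2^{1+2s}P_1}{s}\,|\xi|^{-2s},
\]
which is already of the desired order.

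For $J_{\mathrm{in}}$ the decisive input is Lemma~\ref{lem9.1}: given any $\delta>0$ there exists $M$ such that $|H'(t)|\le \delta/|t|$ for $|t|>M$. Once $|\xi|>2M$, every $t$ on the segment between $\xi$ and $\eta\in(\xi-R,\xi+R)$ satisfies $|t|\ge R=|\xi|/2$, so integrating $U'=H'$ along that segment gives the strengthened quantitative bound
\[
|U(\xi)-U(\eta)| \;\le\; \frac{2\delta}{|\xi|}\,|\eta-\xi|.
\]
Inserting it into the integrand kills one power of the singularity and yields
\[
|J_{\mathrm{in}}| \;\le\; \frac{2\delta}{|\xi|}\int_{|\eta-\xi|<R}\frac{d\eta}{|\xi-\eta|^{2s}} \;=\; \frac{4\delta}{1-2s}\,\frac{R^{1-2s}}{|\xi|} \;\le\; C_s\,\delta\,|\xi|^{-2s}.
\]

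Combining the two estimates gives $|\xi H'(\xi)|\le C\,|\xi|^{-2s}$ for $|\xi|$ large, i.e. $|H'(\xi)|=O(|\xi|^{-1-2s})$. The main thing to be careful with is the geometric bookkeeping that ensures $(3\xi/2,\xi/2)\subset(-\infty,\xi_0)$ for $|\xi|$ large enough and that all intermediate $t$ in the integral for $U(\xi)-U(\eta)$ satisfy $|t|\ge |\xi|/2$, so that Lemma~\ref{lem9.1} can be invoked uniformly; once this setup is in place the rest is a short computation. Notice that the preliminary estimate $\xi H'(\xi)\to 0$ of Lemma~\ref{lem9.1} is precisely what turns the would-be Lipschitz bound $C\,|\eta-\xi|$ on the inner piece into one with a vanishing constant, which is what allows the inner contribution to match the outer one at the sharp order $|\xi|^{-2s}$ rather than producing a worse exponent.
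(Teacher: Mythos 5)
Your proposal is correct and follows essentially the same route as the paper: both start from the pointwise identity $\xi H'(\xi)=2s\,(-\Delta)^sU(\xi)$, use the preliminary bound $|H'(t)|\le \varepsilon/|t|$ of Lemma \ref{lem9.1} to upgrade the Lipschitz constant of $U$ on an inner interval of length comparable to $|\xi|$ (the paper takes $d=\delta|\xi|$, you take $R=|\xi|/2$), and control the remaining region by crude bounds, yielding $|\xi H'(\xi)|=O(|\xi|^{-2s})$. Your bookkeeping (a single split at $|\xi|/2$ with the $L^\infty$ bound outside) is a slightly cleaner version of the paper's three-piece estimate, but the key idea and the key input are identical.
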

\noindent \begin{proof}   The idea is to use the previous lemma to  improve the speed of this bound as $\xi \to\infty$. If we look at the proof done for Lemma \ref{lem9.1},  we need only change \nc the bound  obtained for integral $II$. Thus, let $\xi \ll \xi_0$, thanks to the mean value Theorem and \eqref{8.3} we have
\begin{equation}
    \label{8.17}
   | II | \leq \int_{\xi -d}^{\xi +d} \frac{|U(\xi) - U(\eta)|}{|\xi - \eta|^{1+2s}} \hspace{1mm}d\eta \leq \int_{\xi -d}^{\xi +d} \frac{|U'(x_o)| }{|\xi - \eta|^{2s}}\hspace{1mm}d\eta \leq \frac{2\varepsilon(\xi) d^{1-2s}}{(1-2s)|\xi|},
\end{equation}
where $x_o \in (\xi-d, \xi+d)$. Arguing in the same way as in the Step 1 in the integrals $III_i$ and $III_{ii}$ and applying  \eqref{8.17} we can bound \eqref{8.5} by
\begin{equation}
\label{8.19}
|\xi H'(\xi)|\leq C\left(\frac{d^{1-2s}}{|\xi|}+ \frac{\delta}{d^{2s}} + \frac{\delta+P_1}{|\xi|^{2s}}\right),
\end{equation}
where $C >0$ a constant that depends on $s$, $\lambda$, $[U]_{C^{0,1}(\mathbb{R})}$, $\norm{U}_{L^{\infty}(\mathbb{R})}$ and $\varepsilon(\xi)$. Taking this time $d = \delta|\xi|$ in \eqref{8.19} we obtain
\[
|\xi H'(\xi)| \leq \frac{C}{|\xi|^{2s}},
\]
where $C$  depends on $s$, $\delta$, $\lambda$, $[U]_{C^{0,1}(\mathbb{R})}$, $\norm{U}_{L^{\infty}(\mathbb{R})}$ and $\varepsilon(\xi)$. Then for all $\xi \ll \xi_0$ we conclude that
\[
|H'(\xi)| \leq \frac{C}{|\xi|^{1+2s}}.
\]
\noindent{\bf Remark.}
The proof of this upper bound for $H'(\xi)$  implies after integration that the tail estimate \eqref{sub1} for $H$  holds  with $\precsim$ sign estimate.

\medskip

\noindent\underline{\textit{Step 3}}: In order to get a lower bound for $|H'(\xi)|$ we go back to formula \eqref{8.5} and write
$$
0\le  \xi H'(\xi) = 2s \, (I + II + III),
$$
Then we observe that the leading term as $\xi\to -\infty$ is $III$ which is positive. More precisely, is can be estimated  in first approximation on both sides by
$$
C_1\frac{P_1}{|\xi|^{2s}} \le  III\le C_2\frac{P_1}{|\xi|^{2s}}
$$
for all large $|\xi|$, $\xi<0$ as is  easily seen. The term $II$ has a lower value, whatever its sign, $II=O({|\xi|^{-1-2s}})$. Besides, the first integral is negative but is estimated as
\begin{equation}
     |I| = \int_{-\infty}^{\xi -d}\frac{U(\eta) - U(\xi)}{|\xi - \eta|^{1+2s}}  \hspace{1mm}d\eta \le
     \int_{-\infty}^{\xi -d}\frac{P_1 - U(\xi)}{|\xi - \eta|^{1+2s}}  \hspace{1mm}d\eta \le
      \frac{C}{|\xi-d |^{2s}} \int_{-\infty}^{\xi -d} \frac{d\eta}{|\xi - \eta|^{1+2s}}
\end{equation}
so that
$$
|I| \le \frac{C}{|\xi|^{2s}}\int_{-\infty}^{\xi -d}\frac{  d\eta}{|\xi - \eta|^{1+2s}}= \frac{\delta}{2s|\xi-d|^{2s}} \frac{1}{d^{2s}}.
$$
Since we take $d = \delta|\xi|$  the estimate for $I$ has a lower size than $III$. This proves the lower part of the estimate \eqref{sub2} for $H'(\xi)$\nc.
\end{proof}

\medskip
\noindent{\bf Remark.}
The proof of the upper bound for $H'(\xi)$ \eqref{sub2} implies after integration that \eqref{sub1} holds.
Thanks to a refined comparison principle we can prove a stricter formula \eqref{sub1} with exact size estimate, both power and constant.

\begin{lem}
\label{lemma 9.2}
Under the assumptions of Theorem \ref{teo 2.1} we have that
\[
L+P_1 - H(\xi) = P_1-U(\xi) \approx P_1/|\xi|^{2s} \qquad \textup{for} \quad \xi \ll \xi_0.
\]
\end{lem}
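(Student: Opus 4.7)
The claim is the antiderivative of the sharp two-sided pointwise estimate on $|H'(\eta)|$ that was obtained at the end of the preceding lemma. Recall that there the bound
\[
\frac{c_1\,P_1}{|\eta|^{1+2s}}\ \le\ |H'(\eta)|\ \le\ \frac{c_2\,P_1}{|\eta|^{1+2s}}
\]
was established for $\eta\ll\xi_0$, with positive constants $c_1\le c_2$ independent of $\eta$: the upper inequality is contained in \eqref{8.19}, while the matching lower inequality was extracted in Step~3 of that proof by identifying the tail integral $III$ as the unique dominant contribution of size $P_1/|\xi|^{2s}$ in $\xi H'(\xi)=2s(-\Delta)^s U(\xi)$, the terms $I$ and $II$ being of strictly lower order.

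First I would note that, by monotonicity of $H$ (Theorem~\ref{teo 4}) and $\lim_{\eta\to-\infty}H(\eta)=L+P_1$, one has $-H'\ge 0$ and the fundamental theorem of calculus yields
\[
L+P_1-H(\xi)=\int_{-\infty}^{\xi}\bigl(-H'(\eta)\bigr)\,d\eta,\qquad \xi\ll\xi_0.
\]
Inserting the two-sided pointwise bound together with the elementary identity $\int_{-\infty}^{\xi}|\eta|^{-(1+2s)}\,d\eta=(2s)^{-1}|\xi|^{-2s}$ gives
\[
\frac{c_1\,P_1}{2s\,|\xi|^{2s}}\ \le\ L+P_1-H(\xi)\ \le\ \frac{c_2\,P_1}{2s\,|\xi|^{2s}}.
\]
Since $U(\xi)=H(\xi)-L$ in the water region $\xi<\xi_0$, this rewrites as $P_1-U(\xi)\approx P_1/|\xi|^{2s}$, i.e.\ the assertion of the lemma.

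The only delicate point hidden in this short argument is thus having the lower bound $|H'(\eta)|\ge c_1\,P_1/|\eta|^{1+2s}$ with the correct $P_1$-proportionality; once that is in hand, the present statement is pure antidifferentiation. To upgrade the conclusion to the precise asymptotic $P_1-U(\xi)\sim P_1/(2s\,|\xi|^{2s})$ promised in Theorem~\ref{teo 2.1} one would additionally establish that $|\xi|^{2s}(-\Delta)^s U(\xi)\to P_1/(2s)$, computing the limit from the exact outer-tail evaluation
\[
U(\xi)\int_{\xi_0}^{\infty}\frac{d\eta}{(\eta-\xi)^{1+2s}}=\frac{U(\xi)}{2s\,(\xi_0-\xi)^{2s}}\longrightarrow \frac{P_1}{2s\,|\xi|^{2s}},
\]
and showing that the remaining inner integral over $(-\infty,\xi_0)$ is $o(|\xi|^{-2s})$. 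This last point is the main technical obstacle: the integrand $U(\xi)-U(\eta)$ is a difference of two quantities both close to $P_1$, and extracting the required smallness is a bootstrap built on the upper bound $P_1-U\le C|\cdot|^{-2s}$ just obtained in Steps~1--2. Alternatively, this refinement can be realised by the ``refined comparison principle'' the authors allude to, sandwiching $H$ between explicit super- and subsolutions of the selfsimilar equation of the form $L+P_1-c_\pm\,|\xi|^{-2s}$ on $\xi\le -M$ and invoking the maximum principle.
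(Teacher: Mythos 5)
Your argument is correct, but it follows a genuinely different route from the paper. You obtain the lemma by integrating the two-sided derivative estimate $|H'(\eta)|\asymp P_1|\eta|^{-(1+2s)}$ from the preceding lemma (upper half from \eqref{8.19}, lower half from Step~3 of that proof), together with monotonicity and $L+P_1-H(\xi)=\int_{-\infty}^{\xi}|H'(\eta)|\,d\eta$; this is sound and non-circular, since the Step~3 lower bound only uses the $\precsim$ half of \eqref{sub1}, which itself comes from integrating the upper derivative bound, exactly as the paper's preceding remark notes. The paper instead proves the lemma by a comparison/barrier argument: an upper barrier $\overline h$ solving the pure fractional heat equation with data $L+P_1$ for $x\le 0$ and $L$ for $x>0$ (no free boundary), and a lower barrier coming from the two-phase fractional Stefan problem with data $L+P_1$ and $-P_1$, whose temperature also solves the fractional heat equation; both barriers are explicit through the fractional heat kernel, and the tail estimate $\mathcal{B}(x,1)\asymp |x|^{-(1+2s)}$ then yields the two-sided bound. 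What each approach buys: yours is shorter and stays entirely within the selfsimilar equation analysis, but it leans on a lower bound for $|H'|$ that appears only inside the proof (not the statement) of the previous lemma and carries its implicit constants; the paper's barrier proof is independent of the derivative asymptotics, produces explicit super- and subsolutions with kernel-based constants, and is reused verbatim in the critical case $s=1/2$ in Section~\ref{twuelve}, where your derivative-based input is not available. You also rightly observe that neither argument, as written, produces the sharp constant $P_1/(2s)$ announced in \eqref{sub1}; your sketched refinement via the outer-tail contribution of $(-\Delta)^s U$ is plausible but would require the inner remainder estimate to be carried out in detail.
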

\begin{proof}
    The idea of the proof is to use the comparison principle for the solutions of the Fractional Stefan Problem to bound $L+P_1 - H(\xi)$. In order to give a rigorous proof, we split the proof in two steps.

    \noindent\underline{\textit{Step 1}}: First we prove a \textit{lower bound} of \eqref{sub1}. Let $\overline{h}(x,t) = \overline{H}(xt^{-\frac{1}{2s}})$ the unique selfsimilar solution of the problem
\begin{equation}
\label{pro1}
\left\{
\begin{aligned}
\partial_t \overline{h} + (-\Delta)^s \Phi (\overline{h}) &= 0 \hspace{7mm}\textup{in} \hspace{7mm} \mathbb{R} \times (0,T),\\
\overline{h}( \cdot,0) &= \overline{h}_0 \hspace{5.3mm} \textup{on} \hspace{6.2mm} \mathbb{R}.
\end{aligned}
\right.
\end{equation}
with
\begin{equation*}
\overline{h}_0(x):=\left\{
\begin{aligned}
 &L + P_1 \hspace{10mm}\textup{if} \hspace{7mm} x \leq 0, \\
&L  \hspace{19.7mm}\textup{if} \hspace{7mm}x > 0.
\end{aligned}
\right.
\end{equation*}
By Theorem \ref{teo 4} we know that $L \leq \overline{H}(\xi) \leq L+P_1$, then the problem \eqref{pro1} doesn't have a free boundary. In particular we have that $\Phi(\overline{h}) = \overline{h} - L$. Since $(-\Delta)^s \Phi(\overline{h}) = (-\Delta)^s \overline{h}$ we can rewrite \eqref{pro1} like
\begin{equation}
\label{pro2}
\left\{
\begin{aligned}
\partial_t \overline{h} + (-\Delta)^s  \overline{h} &= 0 \hspace{7mm}\textup{in} \hspace{7mm} \mathbb{R} \times (0,T),\\
\overline{h}( \cdot,0) &= \overline{h}_0 \hspace{5.3mm} \textup{on} \hspace{6.2mm} \mathbb{R}.
\end{aligned}
\right.
\end{equation}
Notice that the problem \eqref{pro2} is a fractional heat equation with a step initial data.

On the other hand, the solution of the problem
\begin{equation*}
\left\{
\begin{aligned}
\partial_t h_1 + (-\Delta)^s  h_1 &= 0 \hspace{15.8mm}\textup{in} \hspace{7mm} \mathbb{R} \times (0,T),\\
h_1( \cdot,0) &= P \delta_0(x) \hspace{5.3mm} \textup{on} \hspace{6.2mm} \mathbb{R},
\end{aligned}
\right.
\end{equation*}
is given by
\begin{equation*}
    h_1(x,t) = P\mathcal{B}(x,t),
\end{equation*}
where $\mathcal{B}(x,t)$ is the fundamental solution of the fractional heat equation. Since the distributional derivative of a step function is a Dirac delta, then the solution of the problem \eqref{pro1} is
\[
\overline{h}(x,t) = L+ P_1 - \int_{-\infty}^{x}h_1(y,t) \hspace{1mm}dy = L+P_1 - P_1\int_{-\infty}^{x} \mathcal{B}(y,t)\hspace{1mm}dy.
\]
Thus, since $h_0 \leq \overline{h}_0$ by the comparison principle  then $h \leq \overline{h}$, where $h$ is the unique selfsimilar solution of \eqref{stefan}-\eqref{IC}. In particular taking $t=1$ we deduce
\begin{equation}
    \label{upper bound}
    h(x,1) = H(x) \leq  L+P_1 - P_1\int_{-\infty}^{x} \mathcal{B}(y,1)\hspace{1mm}dy.
\end{equation}

\noindent\underline{\textit{Step 2}}: Now we compute an \textit{upper bound} of \eqref{sub1}. Let $\underline{h}$ the unique solution of the two-phase Fractional Stefan Problem
\begin{equation}
\label{pro3}
\left\{
\begin{aligned}
\partial_t \underline{h} + (-\Delta)^s \Phi_2 (\underline{h}) &= 0 \hspace{7mm}\textup{in} \hspace{7mm} \mathbb{R} \times (0,T),\\
\underline{h}( \cdot,0) &= \underline{h}_0 \hspace{5.3mm} \textup{on} \hspace{6.2mm} \mathbb{R}.
\end{aligned}
\right.
\end{equation}
with
\begin{equation}
\label{ini2}
\underline{h}_0(x):=\left\{
\begin{aligned}
 & L + P_1 \hspace{10mm}\textup{if} \hspace{7mm} x \leq 0, \\
&-P_1  \hspace{13mm}\textup{if} \hspace{7mm}x > 0,
\end{aligned}
\right.
\end{equation}
and $\Phi_2(\underline{h}) := \textup{max}\{\underline{h}- L\} + \textup{min}\{\underline{h},0\}$, with $L > 0$ the latent heat. By \cite[Theorem 5.4]{DEV1} we know that the temperature $\underline{u} = \Phi_2(\underline{h})$ satisfies the problem
\begin{equation}
\label{pro 4}
\left\{
\begin{aligned}
\partial_t \underline{u} + (-\Delta)^s  \underline{u} &= 0 \hspace{7mm}\textup{in} \hspace{7mm} \mathbb{R} \times (0,T),\\
\underline{u}( \cdot,0) &= \underline{u}_0 \hspace{5.3mm} \textup{on} \hspace{6.2mm} \mathbb{R}.
\end{aligned}
\right.
\end{equation}
with
\begin{equation}
\label{ini 3}
\underline{u}_0(x):=\left\{
\begin{aligned}
 & P_1 \hspace{10mm}\textup{if} \hspace{7mm} x \leq 0, \\
&-P_1  \hspace{5.3mm}\textup{if} \hspace{7mm}x > 0.
\end{aligned}
\right.
\end{equation}
In particular, arguing like in the \textit{Step 1}, the solution of the problem \eqref{pro 4}-\eqref{ini 3} is given by
\[
\underline{u}(x,t) = P_1 - 2P_1\int_{-\infty}^{x}\mathcal{B}(y,t)\hspace{1mm}dy.
\]

On the other hand, since the problem \eqref{stefan}-\eqref{IC}, then, by the comparison principle we have that $\underline{h}\leq h$. Notice that we are seeing the problem \eqref{stefan}-\eqref{IC} like a two-phase problem. Since for all $t \in (0,T)$ $\underline{u}(x,t) = \underline{h}(x,t) - L$ for $x < 0$ we conclude
\[
\underline{h}(x,t) = \underline{u}(x,t)  + L = L + P_1 - 2P_1\int_{-\infty}^{x}\mathcal{B}(y,t)\hspace{1mm}dy \leq h(x,t),
\]
for all $ t \in (0,T)$ and $x < 0$. Thus, taking $t = 1$ we obtain
\begin{equation}
\label{kaka}
\underline{h}(x,1) = \underline{H}(x) = L + P_1 - 2P_1\int_{-\infty}^{x}\mathcal{B}(y,1)\hspace{1mm}dy \leq H(x).
\end{equation}

\noindent\underline{\textit{Step 3}}: Finally, we obtain the bound \eqref{sub1}. By \eqref{upper bound} and \eqref{kaka} we have for $x \ll \xi_0$
\begin{equation}
\label{step3}
L + P_1 - 2P_1\int_{-\infty}^{x}\mathcal{B}(y,1)\hspace{1mm}dy \leq H(x) \leq L + P_1 - P_1\int_{-\infty}^{x}\mathcal{B}(y,1)\hspace{1mm}dy.
\end{equation}
Since $\frac{C}{|x|^{1+2s}} \leq \mathcal{B}(x,1) \leq \frac{\Tilde{C}}{|x|^{1+2s}} $ we obtain from \eqref{step3} that
\[
L+P_1 -\frac{2P_1\tilde{C}}{|x|^{2s}}\leq H(x) \leq L+P_1 -\frac{P_1C}{|x|^{2s}}.
\]
Thus,
\[
(L+P_1) - H(\xi) \approx P_1/|\xi|^{2s}.
\]
\end{proof}

\subsection{\bf  Decay of the enthalpy at plus infinity and comparison}

\begin{thm}
Let $s \in (0, 1/2)$ and $\xi > \xi_0$. (i)  When  $\xi \to + \infty$ we have
 \begin{equation}
     \label{4.4b}
     H'(\xi) = -\frac{P_1}{|\xi|^{1+2s}} + \textup{o}\hspace{0.5mm}(|\xi|^{-(1+2s)}).
 \end{equation}

  \noindent (ii) Integrating we obtain in the same limit
\[
H(\xi) = \frac{P_1}{2s} \frac{1}{|\xi|^{2s}} + \textup{o}\hspace{0.5mm}(|\xi|^{-2s}).
\]
\end{thm}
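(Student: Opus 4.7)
The plan is to exploit the selfsimilar equation of Proposition \ref{propo 5.7}, namely
\[
-\frac{1}{2s}\xi H'(\xi) + (-\Delta)^s U(\xi) = 0 \qquad \text{for all } \xi\in\mathbb{R}.
\]
For $\xi>\xi_0$ we have $U(\xi)=0$ and $U$ is supported in $(-\infty,\xi_0]$, so the equation reduces to
\[
\xi H'(\xi) \;=\; -2s\int_{-\infty}^{\xi_0} \frac{U(\eta)}{(\xi-\eta)^{1+2s}}\,d\eta.
\]
Everything hinges on analysing this integral as $\xi\to+\infty$.

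The cleanest route is a rescaling: substitute $\eta=\xi u$ (so $d\eta=\xi\,du$) to obtain
\[
\xi H'(\xi) \;=\; -\frac{2s}{\xi^{2s}}\int_{-\infty}^{\xi_0/\xi} \frac{U(\xi u)}{(1-u)^{1+2s}}\,du .
\]
The strategy is then to pass to the limit $\xi\to+\infty$ inside the integral by dominated convergence. First, by Lemma \ref{lemma 9.2} and Theorem \ref{teo 4}(b), $U(\xi u)\to P_1$ for every fixed $u<0$, while the integrand is dominated by the integrable function $(L+P_1)/(1-u)^{1+2s}$ on $(-\infty,0)$ (note $s\in(0,1/2)$ gives integrability at $-\infty$, and the factor $(1-u)^{-1-2s}$ is bounded near $u=0$). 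Hence
\[
\int_{-\infty}^{0}\frac{U(\xi u)}{(1-u)^{1+2s}}\,du \;\longrightarrow\; P_1\int_{-\infty}^{0}\frac{du}{(1-u)^{1+2s}} \;=\; \frac{P_1}{2s}.
\]
The leftover piece $\int_{0}^{\xi_0/\xi} U(\xi u)(1-u)^{-1-2s}\,du$ vanishes as $\xi\to\infty$ since $U$ is bounded by $L$ on $[0,\xi_0]$ and the interval $[0,\xi_0/\xi]$ shrinks to a point. Combining gives
\[
\xi H'(\xi) \;=\; -\frac{2s}{\xi^{2s}}\Bigl(\frac{P_1}{2s}+o(1)\Bigr) \;=\; -\frac{P_1}{\xi^{2s}}+o\bigl(\xi^{-2s}\bigr),
\]
and dividing by $\xi$ yields part (i).

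For part (ii), since $H(\xi)\to 0$ as $\xi\to\infty$ (Theorem \ref{teo 4}(b)), I integrate $H'$ from $\xi$ to $+\infty$:
\[
H(\xi)\;=\;-\int_{\xi}^{\infty}H'(\tau)\,d\tau
\;=\;\int_{\xi}^{\infty}\frac{P_1}{\tau^{1+2s}}\,d\tau + \int_{\xi}^{\infty} o\bigl(\tau^{-(1+2s)}\bigr)\,d\tau
\;=\;\frac{P_1}{2s}\,\frac{1}{\xi^{2s}}+o\bigl(\xi^{-2s}\bigr),
\]
where the $o$-term is integrated by a standard Cesàro-type argument (given $\varepsilon>0$, pick $\xi_\varepsilon$ so that $|H'(\tau)+P_1\tau^{-1-2s}|\le\varepsilon\tau^{-1-2s}$ for $\tau\ge\xi_\varepsilon$, then integrate).

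The main obstacle is the rigorous justification of the dominated-convergence step: one must check that the domination $U(\xi u)/(1-u)^{1+2s}\le(L+P_1)/(1-u)^{1+2s}$ is integrable over $(-\infty,0)$, and that the restriction to $u<0$ is harmless because the slice $u\in(0,\xi_0/\xi)$ contributes a term that is $O(\xi_0/\xi)$, which is smaller than the leading order $\xi^{-2s}$ as $s<1/2$ implies $2s<1$; these bookkeeping steps are routine but essential to pin down the $o(\cdot)$ error.
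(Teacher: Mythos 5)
Your proof is correct, but it takes a genuinely different route from the paper. Both start from the same representation $\xi H'(\xi) = -2s\int_{-\infty}^{\xi_0} U(\eta)\,|\xi-\eta|^{-1-2s}\,d\eta$, but the paper proceeds by a two-sided squeeze: the upper bound comes from $U\le P_1$ integrated explicitly, and the lower bound uses the quantitative tail estimate $P_1-U(\eta)\approx C|\eta|^{-2s}$ from Lemma \ref{lemma 9.2}, which yields the stronger remainder $-H'(\xi)=P_1|\xi|^{-1-2s}+O(|\xi|^{-1-4s})$. You instead rescale $\eta=\xi u$ and pass to the limit by dominated convergence, which needs only the soft fact $U(\eta)\to P_1$ as $\eta\to-\infty$ (Theorem \ref{teo 4}(b)) and delivers exactly the $o(|\xi|^{-1-2s})$ claimed in the statement; this is cleaner and requires less input, at the price of losing the explicit rate the paper's argument provides. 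Two small bookkeeping remarks: on $[0,\xi_0]$ the natural bound is $U\le P_1$ (not $L$), and your appeal to $s<1/2$ is unnecessary — the slice $u\in(0,\xi_0/\xi)$ contributes $O(1/\xi)=o(1)$ to the bracketed integral for every $s\in(0,1)$, consistent with the paper's observation that the behaviour at $+\infty$ holds for the whole range of $s$; neither point affects the validity of your argument, and the integration step in (ii) is handled correctly.
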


This calculation was done or $H$ in \cite[Section 3.8]{DEV1} for $s>1/2$ and is based on the formula for $H'(\xi)$ if $\xi>\xi_0$
$$
 -\frac{\xi}{2s} H'(\xi) = \int_{-\infty}^{\xi_0} \frac{U(\eta)}{|\xi - \eta|^{1+2s}} \,d\eta.
$$
We will apply the formula here with slightly different consequences.
Since $U(\eta)\le P_1$ we immediately get
$$
 -{\xi} H'(\xi)\le {P_1} \, |\xi-\xi_0|^{-2s},
$$
and from that we get
$$
H(\xi)\le P_1 \int_{\xi}^{\infty} \frac{d\eta}{\xi(\xi-\xi_0)^{2s}}= \frac {P_1}{2s}|\xi|^{-2s}+O(|\xi|^{-2s-1})
$$
which is  the same behaviour we obtain on the other end, $\xi\to\infty$ (as a first-order error with respect to the limit).

In other to get the lower bound we realize that $U(\xi)\to P_1$ as $\xi\to-\infty$ so that for all $\varepsilon>0$ there is a point $\xi_1\ll 0$ such that
$U(\eta)\ge P_1-\varepsilon$ for $\eta\le \xi_1$. Moreover, we may use the approximation we have obtained before in this section:
 $$
 P_1-U(\eta)\approx C|\eta|^{-2s}
 $$
for all $\eta\le \xi_1<0$ to get
$$
 -\xi H'(\xi)\ge 2s \int_{-\infty}^{\xi_1} \frac{P_1-C\eta^{-2s}} {|\xi - \eta|^{1+2s}}\,d\eta = P_1 |\xi|^{-2s}+O(|\xi|^{-4s})+ O(|\xi|^{-1-2s}).
$$
Therefore,
$$
-H'(x)=P_1|\xi|^{-2s-1}+O(|\xi|^{-1-4s}), \qquad
H(x)=\frac {P_1}{2s}|\xi|^{-2s} + O(|\xi|^{-4s}).
$$
\nc

\subsection{Mass transfer for the subcritical regime}
\label{ten}

In this section we study the question of mass transfer when $s \in (0,1/2)$. In \cite{DEV1} the authors proved the following result  for the  supercritical case:

\begin{thm}
    If $s \in (1/2, 1)$, then
    \begin{equation}
    \label{4.2b}
    \int_{-\infty}^{0} ((L+P_1) - H(\xi))\hspace{1mm}d\xi = \int_{0}^{\infty} H(\xi) \hspace{1mm}d\xi < + \infty.
    \end{equation}
\end{thm}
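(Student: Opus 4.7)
The plan is to transfer the conservation property $\int_{\mathbb{R}}(h(x,t)-h_0(x))\,dx = 0$ from the evolution equation to an identity for the self-similar profile at $t=1$. First I would verify absolute convergence of the two integrals in \eqref{4.2b}: the comparison argument in the proof of Lemma \ref{lemma 9.2} (using a one-phase and a two-phase fractional-heat comparison) actually works for any $s\in(0,1)$ and yields $(L+P_1)-H(\xi)\asymp|\xi|^{-2s}$ as $\xi\to-\infty$, while the bound $H(\xi)\asymp|\xi|^{-2s}$ as $\xi\to+\infty$ holds for all $s\in(0,1)$ by \cite[Thm.~3.1(g)]{DEV1}. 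When $2s>1$ both tails are integrable on the respective half-lines, so $h(\cdot,t)-h_0\in L^1(\mathbb{R})$ for every $t>0$.

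Next I would test the pointwise equation $\partial_t h=-(-\Delta)^s u$ (Proposition \ref{propo 5.7}) against the cutoff $\phi_R(x):=\phi(x/R)$, where $\phi\in C_c^\infty(\mathbb{R})$ equals $1$ on $[-1,1]$ and is supported in $[-2,2]$. The compact support of $\phi_R$ together with the boundedness of $(-\Delta)^s u$ permit interchanging $\tfrac{d}{dt}$ with $\int$ and transferring the fractional Laplacian onto the cutoff by self-adjointness:
\begin{equation*}
\frac{d}{dt}\int_{\mathbb{R}}(h(x,t)-h_0(x))\phi_R(x)\,dx = -\int_{\mathbb{R}} u(x,t)\,(-\Delta)^s\phi_R(x)\,dx.
\end{equation*}
Using the scaling identity $(-\Delta)^s\phi_R(x)=R^{-2s}\bigl((-\Delta)^s\phi\bigr)(x/R)$ and the change of variable $z=x/R$,
\begin{equation*}
\left|\int_{\mathbb{R}} u(x,t)(-\Delta)^s\phi_R(x)\,dx\right| \le R^{1-2s}\,\|u\|_{L^\infty}\,\bigl\|(-\Delta)^s\phi\bigr\|_{L^1(\mathbb{R})},
\end{equation*}
and $\bigl\|(-\Delta)^s\phi\bigr\|_{L^1(\mathbb{R})}$ is finite because $(-\Delta)^s\phi$ decays like $|z|^{-1-2s}$ at infinity. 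Since $s>1/2$, the prefactor $R^{1-2s}$ tends to $0$ as $R\to\infty$, uniformly in $t$ on bounded time intervals.

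Integrating the identity in time from $0$ to $t$, using $h(\cdot,0)=h_0$, and then letting $R\to\infty$ via dominated convergence (the majorant $\|\phi\|_{L^\infty}|h(\cdot,t)-h_0|$ being in $L^1$ by the first paragraph) yields $\int_{\mathbb{R}}(h(x,t)-h_0(x))\,dx=0$ for every $t>0$. Setting $t=1$, using $h(\cdot,1)=H$ and the explicit form of $h_0$, this splits as $\int_{-\infty}^0 [H(\xi)-(L+P_1)]\,d\xi + \int_0^\infty H(\xi)\,d\xi = 0$, which is exactly \eqref{4.2b}.

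The delicate step will be the flux estimate in the second paragraph: a naive pointwise bound for $\int u(-\Delta)^s\phi_R$ is not enough, since one must exploit simultaneously the $R^{-2s}$ scaling of the kernel and the length scale $\sim R$ of the support. Their combined effect is the net factor $R^{1-2s}$, which decays precisely when $s>1/2$. The same scheme fails in the (sub)critical regime, which is consistent with the fact that the left-hand integral in \eqref{4.2b} need not even be finite there, so the mass-transfer identity as stated is a genuinely supercritical phenomenon.
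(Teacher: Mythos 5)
There is nothing in this paper to compare against: the statement appears in Section \ref{ten} as a quoted result, attributed to \cite{DEV1}, and no proof is given here. Your argument is therefore an independent derivation, and its core is sound: the tail bounds $(L+P_1)-H(\xi)\asymp|\xi|^{-2s}$ at $-\infty$ and $H(\xi)\asymp|\xi|^{-2s}$ at $+\infty$ make $h(\cdot,t)-h_0\in L^1(\mathbb{R})$ precisely when $2s>1$, and the cutoff computation with the scaling $(-\Delta)^s\phi_R(x)=R^{-2s}((-\Delta)^s\phi)(x/R)$ gives a flux of order $R^{1-2s}\to 0$, so the mass identity $\int_{\mathbb{R}}(h(x,1)-h_0(x))\,dx=0$ follows and splits into \eqref{4.2b}. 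This is the natural "approximate conservation of mass" proof and correctly isolates why the identity is a supercritical phenomenon.

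Two justification points need repair before the argument is complete. First, you invoke Proposition \ref{propo 5.7} to use the pointwise equation $\partial_t h=-(-\Delta)^s u$, but that proposition is proved only under the hypotheses of Theorem \ref{principal}, i.e. for $s\in(0,1/2)$; in the supercritical regime the pointwise regularity is exactly what breaks down (Theorem \ref{supercritical}), so you cannot cite it. This is avoidable: the identity $\int(h(x,t_1)-h_0)\phi_R\,dx=-\int_0^{t_1}\!\int u\,(-\Delta)^s\phi_R\,dx\,dt$ follows directly from the very weak formulation of Definition \ref{def 2.2} with test functions $\psi(x,t)=\phi_R(x)\chi(t)$, $\chi$ approximating the indicator of $[0,t_1]$, together with the $L^1_{loc}$-in-time continuity of $h$ (as in Theorem \ref{teo 2.5}(e) / \cite[Theorem A.1]{DEV1}); self-adjointness of $(-\Delta)^s$ against $u$ is never needed. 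Second, the left-tail estimate for $s\in(1/2,1)$ is nowhere stated in this paper: Lemma \ref{lemma 9.2} is formulated for $s<1/2$ and Section \ref{twuelve} handles $s=1/2$. Your claim that the comparison argument extends is correct — for finiteness of the left integral you only need the upper bound on $(L+P_1)-H$, i.e. the two-phase subsolution $\underline h$ whose temperature solves the linear fractional heat equation with antisymmetric step data, and the kernel bound $\mathcal{B}(y,1)\lesssim|y|^{-1-2s}$ holds for every $s\in(0,1)$ — but this extension must be carried out (or cited from \cite{DEV1}) rather than asserted, since it is the hypothesis that makes the dominated-convergence step legitimate. With these two repairs the proof stands.
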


Clearly by this theorem we have equal balance  of mass transfer for the supercritical case. We mean the mass of the solution that evolves from jump-like initial data propagates equally in both directions: from $-\infty$ to 0, and from 0 to $+\infty$.

In the subcritical case we cannot have an analogous theorem because
\[
(L+P_1) - H(\xi) \asymp 1/|\xi|^{2s} \quad \textup{for all}\quad \xi \ll \xi_0,
\]
so that the left-hand side of the mass equality in \eqref{4.2b} is infinite. Hence, we cannot speak about global mass transfer. Nonetheless, we can show that the solution has an asymptotic constant with different sign at $-\infty$ and $+\infty$. This allows us to tell that we have in some sense a balance of  infinite masses at infinity\nc.


\section{Critical case. Regularity}\label{eleven}

Here we are concerned with the regularity of the solution of the problem \eqref{stefan}-\eqref{IC} when we examine the critical exponent $s = 1/2$. In Theorem \ref{principal} we have proved that the selfsimilar solutions are $C^{1,\alpha}$ in the whole subcritical exponent range $0<s<1/2$ with and $\alpha>0$ that depends on $s$. In this section we show that the  selfsimilar solution fails to be $C^1$ smooth at the critical value $s =1/2$. Indeed, $H'$ las a singularity of at least logarithmic size\nc.

\medskip

\subsection{Linear lower bound on the left side of $\xi_0$. The wedge}\label{ssec.supcritreg12}
What we want to prove first is that the temperature $u(x,t)$ approaches the free boundary point from the left in a non-degenerate way, i.e., for $t > 0$ there exists a $C(t) > 0$ such that
\begin{equation}
\label{criti criti}
u(x,t) \geq C(t)(\xi_o t- x)
\end{equation}
with $x$ near to $\xi_o t$. Equivalently, in terms of the selfsimilar profile we will prove that
\[
U(\eta) \geq C(\xi_0 - \eta) \qquad \forall  \eta\in (\xi_0/2, \xi_0).
\]
 We call this situation having a ``linear wedge from below''. Then the argument to prove the singularity of $H'(\xi)$ at $\xi_0$ goes as follows. Since  for $\xi>\xi_0$ we have
\[
-H'(\xi) = \frac{1}{\xi}\int_{-\infty}^{\xi_0}\frac{U(\eta)}{|\xi-\eta|^{2}}\hspace{1mm}d\eta,
\]
then by \eqref{criti criti} we have for every  $\xi_1=\xi_0+\varepsilon$,  $\varepsilon>0$ small:
\[
-H'(\xi_1) = \frac{1}{\xi_1}\int_{-\infty}^{\xi_0}\frac{U(\eta)}{|\xi_1-\eta|^{2}}\hspace{1mm}d\eta \geq \frac{C}{\xi_1}\int_{\xi_0/2 }^{\xi_0- \varepsilon} \frac{|\xi_0 - \eta|}{|\xi_1 - \eta|^2}\hspace{1mm}d\eta.
\]
But in the integration interval we have $|\xi_0 - \eta|\ge 1/2|\,\xi_1 - \eta|$ so  that
\[
-H'(\xi_1) \ge \frac{C}{2\xi_1}\int_{\xi_0/2 }^{\xi_0 - \varepsilon} \frac{1}{|\xi_1 - \eta|}\hspace{1mm}d\eta=
\frac{C}{2\xi_1}\left(-\log(2\varepsilon)+\log(\xi_0/2- \varepsilon)\right)
\]
For $\varepsilon=\xi_1-\xi_0>0$ much smaller than $\xi_0/2$ we get
\[
-H'(\xi_1) \ge \frac{C}{4\xi_1}\,|\log(1/2\varepsilon)|\sim \frac{C_1}{\xi_1}\,|\log(\xi_1-\xi_0)|.
\]

\nc  Thus, it is enough to prove the lower bound \eqref{criti criti} to show that in the critical case the solutions are not $C^1$. We picture this situation as having a small wedge below. Because of the self-similarity we can prove it at any particular $t > 0$. We will use a comparison theory based on the Caffarelli-Silvestre extension, cf. \cite{CaSi}.

\subsection{Caffarelli-Silvestre extensions}
Let $h(x,t)$ the solution of Problem \eqref{stefan}-\eqref{IC} with $s =1/2$ and let $u(x,t)$ be its temperature. We denote by  $W(x,y): \mathbb{R} \times \mathbb{R} \rightarrow \mathbb{R}$ the harmonic extension of  $u(x,0)$ towards $\mathbb{R}^{n+1}_+ := \{(x,y) \in \mathbb{R}^n \times \mathbb{R} \hspace{1mm}:\hspace{1mm} y > 0\}$. That is, $W(x,y)$ is the only solution to
\[
\left\{
\begin{aligned}
\Delta W(x,y) &= 0 \hspace{15mm}\textup{in} \hspace{7mm} \mathbb{R}^2_+ ,\\
W(x,0) &= u(\cdot,0) \hspace{6.6mm} \textup{on} \hspace{6.2mm} \mathbb{R}.
\end{aligned}
\right.
\]
With a more descriptive notation
\begin{equation}
\label{11.1}
W(x,y) = E[u(\cdot,0), \mathbb{R}^2_+](x,y),
\end{equation}
which means that W is the harmonic extension in $\{(x,y)\hspace{1mm}:\hspace{1mm} y >0\}$ that takes boundary value $W(x,0) = u(\cdot,t)$. We know that the standard fractional Laplacian for $s =1/2$ is defined as
\[
(-\Delta)^s u(x,0) = - \partial_y W(x,0).
\]
Then, since for $x < \xi_o t$, $u(x,t)$ satisfies a fractional heat equation we obtain that
\begin{equation}
\label{derivada}
\partial_y W(x,0) = \partial_t u(x,0) = \partial_t W(x,0),
\end{equation}
i.e., it is the same to take the derivative in $y$ or in $t$. This is an important property of the solutions in the critical case.

A simple inspection at the domains and the extension definition show that
\[
E[W(x,y_1), \mathbb{R}^2_+](x,y) = E[u(\cdot,0), \mathbb{R}^2_+](x,y+y_1).
\]
Moreover, if we define the family of extension functions $W(x,y,t) = E[\partial_t u, \mathbb{R}^2_+](x,y)$ we have
\[
W(x,y,t) = E[u(\cdot,0), \mathbb{R}^2_+](x,y+t).
\]
Here, $t > 0$ is a parameter that points at the initial level height where the initial data are taken.

\subsection{Proof of Theorem \ref{critical}}
The idea of the proof is to construct a subsolution of \eqref{stefan}-\eqref{IC} using the Caffarelli-Silvestre extension  \cite{CaSi} and the comparison principle.

\begin{proof}
    In order to prove a non-degenerate contact from the right we find a lower barrier or subsolution. We will consider a curvilinear domain in space-time strictly contained in the support of $u(x,t)$ which is
\[
D =\{(x,t) \hspace{0,5mm}:\hspace{0,5mm}  x < \xi_0, \hspace{0,5mm}t > 0\}.
\]
Let that domain be limited by a bottom and two side lines
\[
Q := \{(x,t) \hspace{0,5mm}:\hspace{0,5mm} t > \tau, \hspace{0,5mm}|x| < L(t) = a + b(t-\tau)\} \subset D,
\]
where $0 < \tau < 1$ and $a,b > 0$ such that $a + b(1-\tau) = \xi_0$.  That means that $Q$ and $D$ touch at the end time. In that domain we consider the solution $\tilde{u}$ obtained as the harmonic extension to $Q$ of a very smooth and small $\phi \geq 0$ as datum for $t = \tau$. After $t = 1$ we continue expanding $D$ and we put zero on the lateral boundaries. Moreover, $\tilde{u}$ is continued as zero for $|x| > L(t)$.

Let $w(x,y,t) = \tilde{u}(x,t+y)$. We call it the family of $Q$ extensions of $\tilde{u}$ with parameter $t$. We observe that with that definition, for every point in $Q$ we will have
\begin{equation}
\label{11.2}
\partial_t \tilde{u}(x_1,t_1) = \partial_y w(x_1, 0, y =t_1).
\end{equation}
This property will be very important to compare $W(x,y,t)$ and $w(x,y,t)$.

It is clear from continuity that $\tilde{u}(\cdot,t)$ is strictly less than $u(\cdot,t)$ in its support $Q(t)$ for times $t$ near the initial $\tau$. We want to compare $\tilde{u}$ and $u$, for it we argue by contradiction. We concentrate on the first point at with $\tilde{u}$ and $u$ touch at a point where they are not zero, and we discuss on the possibility that the coincidence takes place at time $t_1 < 1$. By assumption for $\tau < t < t_1$ we have
\[
\tilde{u}(x,t) < u(x,t) \qquad \forall x \in Q(t)
.\]
Hence, by the properties of the harmonic extensions, for such $t$ we have
\[
w(x,y,t) < W(x,y,t) \qquad \forall x \in Q(t) \quad \textup{and} \quad y > 0.
\]
In the limit we get for all $x \in Q(t)$, $y > 0$ that
\[
w(x,y,t_1) \leq W(x,y,t_1).
\]

Let the coincidence point at $t = t_1$ be $x_1$. Near that point we have classical solutions and it follows from first coincidence that
\[
\partial_t u(x_1,t_1) \leq \partial_t \tilde{u}(x_1,t_1).
\]
On the other hand, by \eqref{derivada} and the definition of $w(x,y,t)$ we have that
\[
\partial_y W(x_1,0,t_1) \leq \partial_y w(x_1,0,t_1).
\]
Let us now look at the function
\[
F(x,y) := W(x,y,t_1) - w(x,y,t_1) \geq 0.
\]
It is a nonnegative and locally harmonic function and it is equal to zero at $y = 0$ and $x=x_1$ by assumptions. The Hopf principle for harmonic functions at minimal points at the boundary implies that
\[
\partial_y F(x_1,0) > 0,
\]
i.e.,
\[
\partial_y W(x_1,0,t_1) > \partial_y w(x_1,0,t_1),
\]
that is a contradiction. This proves that there is strict comparison for all $t_1 < 1$. In the limit for $t = 1$ we have that
\[
\tilde{u}(x,1) \leq u(x,1) \qquad \forall x \in Q(1).
\]
This set includes the free boundary point as a border point. Thus, \eqref{11.1} is a consequence of the Hopf lemma for the harmonic function $\tilde{u}(x,t)$ at the lateral boundary of the domain $Q$. This concludes the proof of the Theorem.
\end{proof}

\section{Critical case. Behaviour at $-\infty$}
\label{twuelve}
In this section we want to study the tail behaviour of the unique solution of the problem \eqref{stefan}-\eqref{IC} when $s = 1/2$. The main result of the section reads as follows:
\begin{thm}
    Let $s =1/2$ and $H$ the unique selfsimilar solution of \eqref{stefan}-\eqref{IC}. Then, for $\xi \ll \xi_0$
        \begin{equation}
        \label{sub11}
        (L+P_1) - H(\xi) \approx P_1/|\xi|,
        \end{equation}
        and
        \begin{equation}
        \label{sub22}
        |H'(\xi)| \approx P_1/|\xi|^{2}.
        \end{equation}
 Similar  behaviour at $+\infty$.
\end{thm}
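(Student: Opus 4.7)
The strategy parallels Theorem \ref{teo 2.1} and its Lemma \ref{lemma 9.2}, with the explicit form of the fractional heat kernel now specialized to $s=1/2$. The plan is: sandwich $H$ between two explicit linear barriers; read off \eqref{sub11} from the left tail of the Poisson kernel; and derive \eqref{sub22} by inserting \eqref{sub11} back into the selfsimilar equation.

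The sandwich argument of Lemma \ref{lemma 9.2} is agnostic to the value of $s\in(0,1)$. The upper barrier $\overline h$ uses the one-phase datum $\overline h_0=L+P_1$ for $x\le 0$ and $\overline h_0=L$ for $x>0$, so $\overline h\ge L$ everywhere and the nonlinearity $\Phi(\overline h)=\overline h-L$ is linear; hence $\overline h$ solves the linear fractional heat equation. The lower barrier $\underline h$ uses the antisymmetric two-phase datum $L+P_1$ for $x\le 0$ and $-P_1$ for $x>0$, and by \cite[Theorem 5.4]{DEV1} its temperature solves the linear fractional heat equation. Both are therefore explicit convolutions with the kernel $\mathcal B$, and comparison (Theorem \ref{teo 2.5}) gives
\[
L+P_1-2P_1\int_{-\infty}^x\mathcal B(y,1)\,dy \;\le\; H(x)\;\le\; L+P_1-P_1\int_{-\infty}^x\mathcal B(y,1)\,dy.
\]
At $s=1/2$ the kernel is the Poisson kernel $\mathcal B(x,1)=\tfrac{1}{\pi(1+x^2)}$, whose primitive is $\tfrac12+\tfrac{1}{\pi}\arctan x$. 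The expansion $\arctan x=-\tfrac{\pi}{2}+\tfrac{1}{|x|}+O(|x|^{-3})$ as $x\to-\infty$ yields $\int_{-\infty}^x\mathcal B(y,1)\,dy\sim\tfrac{1}{\pi|x|}$, and the sandwich gives $(L+P_1)-H(\xi)\asymp P_1/|\xi|$. Sharpening $\asymp$ to $\approx$ with the stated constant requires a further refinement of the barriers by choosing data that match the expected far-field up to a vanishing perturbation.

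For \eqref{sub22}, insert \eqref{sub11} back into the selfsimilar equation $-\xi H'(\xi)=(-\Delta)^{1/2}U(\xi)$ and split the fractional Laplacian at a cutoff $d=\delta|\xi|$ as in Lemma \ref{lem9.1}. The local piece on $(\xi-d,\xi+d)$ is the delicate point at $s=1/2$: the Lipschitz bound used in the subcritical case produces $\int_{-d}^d|y|^{-1}\,dy$, which diverges logarithmically. To avoid this, I would exploit principal-value cancellation via the Taylor expansion $U(\xi-y)=U(\xi)-U'(\xi)y+O(y^2)$, valid since $U\in C^\infty$ in the open water region far from $\xi_0$; the odd linear term integrates to zero by symmetry and the remainder is $O(d)=O(\delta|\xi|)$. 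The far-tail contributions carry the leading order: the left tail yields $\int_{-\infty}^{\xi-d}(P_1-U(\eta))|\xi-\eta|^{-2}\,d\eta\asymp 1/|\xi|$ by the bound just proved, and the right tail is of the same size since $U(\eta)\asymp 1/|\eta|$ as $\eta\to+\infty$ (Section 9.3). Dividing by $\xi$ yields $|H'(\xi)|\asymp 1/|\xi|^2$.

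The main obstacle is this local piece of $(-\Delta)^{1/2}U$, where the subcritical argument fails by a logarithm and one must invoke second-order cancellation through principal-value symmetry. Recovering the \emph{precise} constants in $\approx$ from $\asymp$ requires the refined barriers mentioned above. The parallel behaviour at $+\infty$ already follows from Section 9.3, which holds for every $s\in(0,1)$ once one has the left-tail decay of $U$ and the linear lower bound of $U$ at $\xi_0^-$; the latter is guaranteed in the critical case by the wedge lemma of Section \ref{eleven}.
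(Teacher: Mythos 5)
Your treatment of \eqref{sub11} coincides with the paper's: it simply invokes the barrier/sandwich argument of Lemma \ref{lemma 9.2}, which is indeed independent of $s$. The gap is in \eqref{sub22}, precisely at the local principal-value piece that you yourself flag as ``the delicate point''. You propose to kill it in one shot by writing $U(\xi-y)=U(\xi)-U'(\xi)y+O(y^{2})$ on $(-d,d)$ with $d=\delta|\xi|$ and using odd symmetry. Two problems: (i) the $O(y^{2})$ constant is $\sup|U''|$ near $\xi$, and no bound on $U''$ that is uniform (let alone decaying) as $\xi\to-\infty$ is available — smoothness of $U$ in the open water region gives no quantitative control far out; (ii) even granting a uniform constant, the resulting bound is $\int_{-d}^{d}O(y^{2})|y|^{-2}dy=O(d)=O(\delta|\xi|)$, which \emph{grows} in $|\xi|$ and is therefore useless against the target size $|\xi H'(\xi)|\asymp 1/|\xi|$; after dividing by $\xi$ it only yields $|H'|=O(\delta)$, not $O(|\xi|^{-2})$. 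The paper avoids exactly this trap with a three-step bootstrap: Step 1 uses only the uniform $C^{1,\alpha}$ estimate of Theorem 5.2 of \cite{3} (constant independent of the base point) together with the cancellation $\int_{-d}^{d}U'(\xi)\eta|\eta|^{-2}d\eta=0$ to get $|II|\lesssim d^{\alpha}$ and hence $\xi H'(\xi)\to 0$; Step 2 feeds the resulting bound $|U'|\le\varepsilon_\xi/|\xi|$ back into the annulus $B_d(\xi)\setminus B_r(\xi)$ via the mean value theorem (with a tiny inner radius $r=|\xi|^{-1/\alpha}$ where the $C^{1,\alpha}$ estimate is used again), giving $|H'|\lesssim\log|\xi|/|\xi|^{2}$; Step 3 iterates once more to remove the logarithm. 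Without this self-improving loop (or some genuine decay estimate for $U''$, which you would have to prove separately), your argument does not close.

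Two smaller points. Your justification of the right-tail contribution is off: for $\eta\ge\xi_0$ the temperature satisfies $U(\eta)=0$ (it is $H$, not $U$, that decays like $|\eta|^{-2s}$ at $+\infty$), and the correct reason the right tail is of size $P_1/|\xi|$ is that $U(\xi)\to P_1$ while $U$ vanishes beyond $\xi_0$. Also, you assert the two-sided conclusion $|H'|\asymp|\xi|^{-2}$, but to get the lower bound you need the local and left-tail pieces to be $o(1/|\xi|)$ so they cannot cancel the dominant right-tail term; that is again exactly what the bootstrap delivers and what your one-shot estimate fails to provide.
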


\nc
\begin{proof}
    The proof of \eqref{sub11} is the same as in the Lemma \ref{lemma 9.2}. Then we only have to prove \eqref{sub22}.  We divide the proof in several steps:\\

    \noindent\underline{\textit{Step 1}}: Firstly, we want to prove that
    \begin{equation}
    \label{12.4}
\lim_{\xi \to - \infty} \xi H'(\xi) = 0.
\end{equation}
that can be written as $H'(\xi)=o(|\xi|^{-1})$ and means that
\begin{equation}
    \label{12.5}
    |H'(\xi)| \leq \frac{\varepsilon(\xi)}{|\xi|}  \qquad \textup{for all} \qquad \xi \ll \xi_0,
\end{equation}
where $\varepsilon(\xi) > 0$ is a constant that depends on $\xi$ and that satisfies $\varepsilon(\xi)\to 0 $ when $\xi \to - \infty$.

Let $\xi \ll \xi_0$. In order to  proving \eqref{12.4} the idea is to split the integral in three pieces. Thanks to Theorem \ref{teo 4}-$a)$ we have
\begin{equation}
    \label{12.6}
    |\xi H'(\xi)| = \left| (-\Delta)^s U(\xi)\right| = \left|\int_{\mathbb{R}} \frac{U(\xi) - U(\eta)}{|\xi-\eta|^{2}}\hspace{1mm}d\eta\right| \leq |I + II + III|,
\end{equation}
where
\begin{align*}
    & I := \int_{-\infty}^{\xi -d}\frac{U(\xi) - U(\eta)}{|\xi - \eta|^{2}} \hspace{1mm}d\eta,\\
        &  II := \int_{\xi-d}^{\xi +d}\frac{U(\xi) - U(\eta)}{|\xi - \eta|^{2}} \hspace{1mm}d\eta,\\
        & III := \int_{\xi + d}^{+ \infty}\frac{U(\xi) - U(\eta)}{|\xi - \eta|^{2}} \hspace{1mm}d\eta,
\end{align*}
with $d$ a value that will be determined later. Notice that since by Theorem \ref{teo 4}-$d)$ the function $H$ is nonincreasing we have $U(\eta) \geq U(\xi)$ $\forall \eta \in (-\infty, \xi]$. Then we can bound \eqref{12.6} by
\begin{align}
    \label{12.7}
    |\xi H'(\xi)| &= \left| (-\Delta)^s U(\xi)\right| = \left|\int_{\mathbb{R}} \frac{U(\xi) - U(\eta)}{|\xi-\eta|^{2}}\hspace{1mm}d\eta\right|\\
    &\leq |I + II + III| \leq  |II + III| \leq ( |II| + |III|).\nonumber
\end{align}
Now we study each integral separately.

For the integral $II$, by Theorem 5.2 of \cite{3} we know that $U \in C^{1,\alpha}(-\infty,\xi_0)$ for some $\alpha\in (0,1)$ uniformly away from $\xi_0$. On the other hand, since $\int_{-d}^{d} \frac{U'(\xi)\eta}{|\eta|^2}\hspace{1mm}d\eta = 0$ we have
\begin{align}
    \label{12.8}
    |II| & =\left|\int_{\xi -d}^{\xi +d} \frac{U(\xi) - U(\eta)}{|\xi - \eta|^{2}} \hspace{1mm}d\eta\right| = \left|\int_{-d}^{d} \frac{U(\xi + z) - U(\xi)}{|z|^{2}} \hspace{1mm}dz\right| = \left|\int_{-d}^{d} \frac{U(\xi + \eta) - U(\xi)}{|\eta|^{2}} \hspace{1mm}d\eta\right|\\
    &= \left|\int_{-d}^{d} \frac{U(\xi + \eta) - U(\xi) - U'(\xi)\eta}{|\eta|^{2}} \hspace{1mm}d\eta\right|\leq \int_{-d}^{d} \frac{|U(\xi + \eta) - U(\xi) - U'(\xi)\eta|}{|\eta|^{2}} \hspace{1mm}d\eta \nonumber\\
    &\leq C_d\int_{-d}^{d}\frac{|\eta|^{1+\alpha}}{|\eta|^2}\hspace{1mm}d\eta = \frac{2C_d}{\alpha}d^\alpha\nonumber
\end{align}
where we have used that if $\xi \ll \xi_0$ then
\[
|U(\xi + \eta) - U(\xi) - U'(\xi)\eta| \leq C_d|\eta|^{1+\alpha},
\]
where $C_d >0$ is the Lipschitz constant that appears in  Theorem 5.2 of \cite{3}. Note that this constant does not depend on the point $\xi$.

Now, we study the integral $III$. Splitting the integral in two pieces we have
\begin{align}
    \label{12.9}
    |III| &= \left|\int_{\xi + d}^{z}\frac{U(\xi) - U(\eta)}{|\xi - \eta|^{2}} \hspace{1mm}d\eta + \int_{z}^{+ \infty}\frac{U(\xi) - U(\eta)}{|\xi - \eta|^{2}} \hspace{1mm}d\eta \right|\\
    &\leq \left|\int_{\xi + d}^{z}\frac{U(\xi) - U(\eta)}{|\xi - \eta|^{2}} \hspace{1mm}d\eta\right| + \left|\int_{z}^{+ \infty}\frac{U(\xi) - U(\eta)}{|\xi - \eta|^{2}} \hspace{1mm}d\eta \right| = III_i + III_{ii}\nonumber,
\end{align}
where $z = \lambda \xi$ with $\lambda < 1$ a constant that will depend on d, and
\begin{align}
\label{12.10}
& III_i := \left|\int_{\xi + d}^{z}\frac{U(\xi) - U(\eta)}{|\xi - \eta|^{2}} \hspace{1mm}d\eta\right|,\\
\label{12.11}
& III_{ii} := \left|\int_{z}^{+ \infty}\frac{U(\xi) - U(\eta)}{|\xi - \eta|^{2}} \hspace{1mm}d\eta \right|.
\end{align}
Firstly, we study the integral $III_i$. Let $\Tilde{\xi}_1 \ll \xi_0$ and $\lambda \in (0,1)$ such that $\forall \xi \in (- \infty, \Tilde{\xi}_1]$
\begin{equation}
\label{12.12}
\xi \leq \xi + d \leq z = \lambda \xi,
\end{equation}
and
\begin{equation}
\label{12.13}
|U(\xi) - U(\eta)| \leq \delta \qquad  \textup{for all} \qquad \xi, \eta \in [-\infty, \lambda \Tilde{\xi}_1].
\end{equation}
Notice that by Theorem \ref{teo 4}-$b)$, $\lim_{\xi \to \infty} H(\xi) = P_1$, then there exists $\Tilde
{\xi}_1$ large enough such that \eqref{12.13} holds.
Using the above information we deduce
\begin{align}
\label{12.14}
    III_i &\leq \int_{\xi + d}^{z}\frac{|U(\xi) - U(\eta)|}{|\xi - \eta|^{2}} \hspace{1mm}d\eta \leq \delta \int_{\xi + d}^{z}\frac{1}{|\xi - \eta|^{2}} \leq \frac{\delta}{2s}\left(\frac{1}{d} + \frac{1}{|\xi-z|}\right)\\
    & \leq \delta\left(\frac{1}{d^{2s}} + \frac{1}{|\xi-\lambda \xi|^{2s}}\right) = \frac{\delta}{(1-\lambda)}\left(\frac{(1-\lambda)}{d} + \frac{1}{|\xi|}\right)\nonumber.
\end{align}
On the other hand, taking $\xi \ll \xi_0$, we can bound $III_{ii}$ by
\begin{equation}
    \label{12.15}
    III_{ii} \leq 2 \norm{U}_{L^{\infty}(\mathbb{R})}\int_{z}^{+ \infty} \frac{1}{|\xi - \eta|^{2}}\hspace{1mm} d\eta = 2 \norm{U}_{L^{\infty}(\mathbb{R})}\frac{1}{|\xi - z|} = \frac{2 \norm{U}_{L^{\infty}(\mathbb{R})}}{1-\lambda} \frac{1}{|\xi|}.
\end{equation}
Substituting \eqref{12.14} and \eqref{12.15} in \eqref{12.9}, we obtain
\begin{equation}
    \label{12.16}
    III \leq \frac{\delta}{1-\lambda}\left(\frac{1}{d} + \frac{1}{|\xi|}\right) + \frac{2 \norm{U}_{L^{\infty}(\mathbb{R})}}{1-\lambda}\frac{1}{|\xi|}.
\end{equation}

Thanks to \eqref{12.8} and \eqref{12.16} we can bound \eqref{12.7} by
\begin{align}
    \label{12.17}
    |\xi H'(\xi)| &\leq \frac{2C}{1-\alpha}d^\alpha + \frac{\delta}{1-\lambda}\left(\frac{1}{d} + \frac{1}{|\xi|}\right) + \frac{2 \norm{U}_{L^{\infty}(\mathbb{R})}}{1-\lambda}\frac{1}{|\xi|}\\
    & \leq C\left( d^{\alpha} + \frac{\delta}{d} + \frac{1}{|\xi|}\right),\nonumber
\end{align}
with $C > 0$ a constant that depends on $s$, $\lambda$, $C_s$ and $\norm{U}_{L^{\infty}(\mathbb{R})}$. The idea now is to optimize the two errors that we have in \eqref{12.17}. For it we define the function
\[
F(d) := \frac{\delta}{d} + d^{\alpha}.
\]
Taking the derivative we have
\[
F'(d) = -\frac{\delta}{d^2} + \alpha\frac{1}{d^{1-\alpha}}.
\]
Studying the singular points we obtain a minimum when
\[
d = \sqrt[1+\alpha]{\frac{\delta}{\alpha}}
\]
 Substituting this value of $d$ in \eqref{12.17} we obtain
\begin{equation}
    \label{3.17}
    |\xi H'(\xi)| \leq C\left(\delta^{\frac{\alpha}{1+\alpha}} + \frac{1}{|\xi|}\right),
\end{equation}
where we have redefined the constant $C$.

Finally to conclude the proof of \eqref{12.4} we argue as follows. For $\varepsilon > 0$ fix, we choose $\tilde{\xi}_1$ and $\lambda$ such that \eqref{12.12} and \eqref{12.13} holds for
\[
\delta = \sqrt[\frac{1+\alpha}{\alpha}]{\frac{\varepsilon}{2C}}, \quad \textup{and} \quad d = \sqrt[1+\alpha]{\frac{\delta}{\alpha}}.
\]
On the other hand, we choose $\tilde{\xi}_2 \ll \xi_0$ such that
\[
\frac{C}{|\xi|} \leq \frac{\varepsilon}{2} \qquad \textup{for all} \qquad \xi \in (-\infty, \Tilde{\xi}_2].
\]
Finally, taking  $\Tilde{\xi} := \textup{min}\{\Tilde{\xi}_1, \Tilde{\xi}_2\}$
we conclude that \nc
\[
|\xi H'(\xi)| \leq \varepsilon \qquad \textup{for all} \qquad \xi \in (-\infty, \Tilde{\xi}].
\]
This proves \eqref{12.4}.

\medskip

\noindent\underline{\textit{Step 2}}:  Note that by the Step 1 we have that
\begin{equation}
\label{12.19}
|H'(\xi)| \leq \frac{\varepsilon_\xi}{|\xi|} \qquad \textup{forall} \qquad \xi \ll \xi_0,
\end{equation}
where $\varepsilon_\xi > 0$ is a constant that depends on $\xi$ and that satisfies  $\varepsilon_\xi \to 0$ when $\xi \to -\infty$. The idea in this step is to use this information in the integral $II$ to improve the bound of $H'$. Let $r > 0$ a enough small constant  that will be determined later. We split the integral $II$ in two pieces:
\begin{equation}
    \label{12.20}
    |II| = \left|\int_{B_d(\xi) \setminus B_r(\xi)}\frac{U(\xi) - U(\eta)}{|\xi - \eta|^{2}} \hspace{1mm}d\eta + \int_{B_r(\xi)}\frac{U(\xi) - U(\eta)}{|\xi - \eta|^{2}} \hspace{1mm}d\eta\right| = |II_i + II_{ii}| \leq |II_i| + |II_{ii}|,
\end{equation}
where
\begin{align*}
    & II_i := \int_{B_d(\xi) \setminus B_r(\xi)}\frac{U(\xi) - U(\eta)}{|\xi - \eta|^{2}} \hspace{1mm}d\eta,\\
        &  II_{ii} := \int_{B_r(\xi)}\frac{U(\xi) - U(\eta)}{|\xi - \eta|^{2}} \hspace{1mm}d\eta.
\end{align*}
Now we study each integral separately.

Firstly, we study integral $|II_i|$. Thanks to the mean value Theorem and \eqref{12.19} we obtain
\begin{align}
    \label{12.21}
     |II_i| &= \left|\int_{B_d(\xi) \setminus B_r(\xi)}\frac{U(\xi) - U(\eta)}{|\xi - \eta|^{2}} \hspace{1mm}d\eta\right| \leq \int_{B_d(\xi) \setminus B_r(\xi)}\frac{|U(\xi) - U(\eta)|}{|\xi - \eta|^{2}} \hspace{1mm}d\eta \\
     & = \int_{B_d(\xi) \setminus B_r(\xi)}\frac{|U'(x_o)||\xi - \eta|}{|\xi - \eta|^{2}} \hspace{1mm}d\eta \leq \frac{\varepsilon_\xi}{|\xi|}\int_{B_d(\xi) \setminus B_r(\xi)} \frac{1}{|\xi - \eta|}\hspace{1mm}d\eta = \frac{2\varepsilon_\xi}{|\xi|}\ln{\frac{d}{r}}\nonumber.
\end{align}
where $x_o \in (\xi -d, \xi + d)$.
On the other hand, since $B_r(\xi)$ is a symmetric domain, we can argue like in the Step 1 to obtain
\begin{equation}
    \label{12.22}
    |II_{ii}| \leq \int_{-r}^{r} \frac{|U(\xi + \eta) - U(\xi) - U'(\xi)\eta|}{|\eta|^{2}} \hspace{1mm}d\eta = \frac{2C_d}{\alpha}r^\alpha.
\end{equation}
Finally, arguing the same as in the Step 1 in the integral $III$ and applying the bounds \eqref{12.21} and \eqref{12.22} we can bound \eqref{12.7} by
\begin{equation}
    \label{12.23}
    |\xi H'(\xi)| \leq C\left(\frac{1}{|\xi|}\ln{\left(\frac{d}{r}\right)} + r^\alpha +\frac{\delta}{d} + \frac{1}{|\xi|} \right),
\end{equation}
where $C > 0$ is a constant that depends on $\lambda$, $\varepsilon_\xi$, $\norm{U}_{L^\infty(\mathbb{R})}$, $\alpha$, $\delta$ and $C_d$. Taking $r = \frac{1}{|\xi|^{\frac{1}{\alpha}}}$ in \eqref{12.23} we obtain
\begin{equation}
    \label{12.24}
      |\xi H'(\xi)| \leq C\left(\frac{1}{|\xi|}\ln{(|\xi|^{\frac{1}{\alpha}}d)} + \frac{1}{|\xi|} + \frac{\delta}{d} + \frac{1}{|\xi|}\right) \leq  C\left(\frac{1}{|\xi|}\ln{(|\xi|^{\frac{1}{\alpha}}d)} + \frac{\delta}{d} + \frac{1}{|\xi|}\right),
\end{equation}
for $\xi \ll \xi_0$. Optimizing the error that we  have in \eqref{12.24} \nc we obtain a minimum when $d = \delta |\xi|$, then
\[
|\xi H'(\xi)| \leq C\left(\frac{\ln{(\delta |\xi|^{\frac{1 + \alpha}{\alpha}})}}{|\xi|} + \frac{2}{|\xi|}\right).
\]
Therefore, since
\[
\frac{1}{|\xi|} \leq \frac{\ln{(\delta |\xi|^{\frac{1+\alpha}{\alpha}})}}{|\xi|} \qquad \textup{for all} \qquad \xi \ll \xi_0,
\]
we conclude that
\begin{equation}
  \label{12.25}
|H'(\xi)|\leq C \frac{\ln{(\delta |\xi|^{\frac{1+\alpha}{\alpha}})}}{|\xi|^2} \qquad \textup{for all} \qquad \xi \ll \xi_0,
\end{equation}
for some $\alpha \in (0,1)$. \\

\noindent\underline{\textit{Step 3}}:
Finally in this step we want to use the bound \eqref{12.25} to prove the estimate \eqref{sub22}. We use the same idea that in the Step 2, splitting the integral $II$ in to integrals and applying the bound \eqref{12.25} to the integral $II_i$. Then we have
\begin{align}
\label{12.26}
    |II_i| &= \left|\int_{B_d(\xi) \setminus B_r(\xi)}\frac{U(\xi) - U(\eta)}{|\xi - \eta|^{2}} \hspace{1mm}d\eta\right| \leq \int_{B_d(\xi) \setminus B_r(\xi)}\frac{|U(\xi) - U(\eta)|}{|\xi - \eta|^{2}} \hspace{1mm}d\eta \\
     & = \int_{B_d(\xi) \setminus B_r(\xi)}\frac{|U'(x_o)||\xi - \eta|}{|\xi - \eta|^{2}} \hspace{1mm}d\eta \leq C \frac{\ln{(\delta |\xi|^{\frac{1+\alpha}{\alpha}})}}{|\xi|^2} \int_{B_d(\xi) \setminus B_r(\xi)} \frac{1}{|\xi - \eta|}\hspace{1mm}d\eta\nonumber\\
     &= C \frac{\ln{(\delta |\xi|^{\frac{1+\alpha}{\alpha}})}}{|\xi|^2} \ln{\frac{d}{r}}\nonumber.
\end{align}
where $x_o \in (\xi-d, \xi+d)$. Arguing the same as in the Step 2 with the integrals $II_{ii}$ and $III$ and thanks to \eqref{12.26}, we can bound \eqref{12.6} by
\begin{equation}
    \label{12.27}
    |\xi H'(\xi)| \leq C\left(\frac{\ln{(\delta |\xi|^{\frac{1+\alpha}{\alpha}})}}{|\xi|^2} \ln{\left(\frac{d}{r}\right)} + r^\alpha +\frac{\delta}{d} + \frac{1}{|\xi|} \right),
\end{equation}
where $C > 0$ is a constant that depends on $\lambda$, $\varepsilon_\xi$, $\norm{U}_{L^\infty(\mathbb{R})}$, $\alpha$, $\delta$ and $C_d$. Taking $r = \frac{1}{|\xi|^{\frac{1}{\alpha}}}$ in \eqref{12.27} we obtain
\begin{equation}
    \label{12.28}
      |\xi H'(\xi)| \leq C\left(\frac{\ln{(\delta |\xi|^{\frac{1+\alpha}{\alpha}})}}{|\xi|^2} \ln{(|\xi|^{\frac{1}{\alpha}}d)} + \frac{\delta}{d} + \frac{1}{|\xi|}\right),
\end{equation}
for $\xi \ll \xi_0$. Optimizing the error that we have in \eqref{12.28} we have a minimum when $d = \delta |\xi|$ then since
\[
|\xi H'(\xi)| \leq C\left(\frac{\ln^2{(\delta |\xi|^{\frac{1 + \alpha}{\alpha}})}}{|\xi|^2} + \frac{1}{|\xi|}\right).
\]
Therefore, since
\[
\frac{\ln^2{(\delta |\xi|^{\frac{1+\alpha}{\alpha}})}}{|\xi|^2} \leq \frac{1}{|\xi|}\qquad \textup{for all} \qquad \xi \ll \xi_0,
\]
we conclude that
\begin{equation*}
|H'(\xi)|\leq C \frac{1}{|\xi|^2} \qquad \textup{for all} \qquad \xi \ll \xi_0,
\end{equation*}
for some $\alpha \in (0,1)$. This conclude the proof.

\end{proof}



\section{Supercritical case. Limited regularity}
\label{section 13}

In this section we study the regularity in the supercritical case, $s\in (1/2,1)$. We prove that the solution of \eqref{stefan}-\eqref{IC} is not $C^1$ proving that near the free boundary point $\xi_0$ the temperature is bounded below by a function that is linear near $\xi_0$.

\subsection{The wedge. Linear lower bound on the left side of $\xi_0$}

In this section we prove the existence of a lower linear barrier to the right of the free boundary.
\begin{prop}
    Let $s \in (\frac{1}{2}, 1)$ and $H$ the unique selfsimilar solution of \eqref{stefan}-\eqref{IC}. Then, there exists a linear function $V(\xi)\ge 0$ such that $0< V(\xi) \leq H(\xi)$ for all $\xi \le \xi_0$.
\end{prop}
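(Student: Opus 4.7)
The plan is to mimic the extension-based comparison argument of Section~\ref{eleven}, now replacing the classical Caffarelli--Silvestre harmonic extension with the weighted $s$-extension that is appropriate for $s\in(1/2,1)$. For each time slice $t$, define $W(x,y,t)$ as the solution of $\operatorname{div}(y^{1-2s}\nabla W)=0$ in the upper half--plane $\{y>0\}$ with trace $W(x,0,t)=u(x,t)$. The Caffarelli--Silvestre identity then reads
\begin{equation*}
c_{s}\,\lim_{y\to 0^{+}}y^{1-2s}\partial_{y}W(x,0,t)=-(-\Delta)^{s}u(x,t)=\partial_{t}u(x,t)
\end{equation*}
in the water region $\{x<X(t)\}$, exactly as in the critical case except for the weight $y^{1-2s}$.

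Next, I would reproduce the geometric setup of Section~\ref{eleven}: choose a curvilinear cylinder $Q\subset\{x<X(t)\}$ whose final slice $Q(1)$ touches the free boundary endpoint $\xi_{0}$, and take the comparison function $\tilde u$ to be the solution of the linear fractional heat equation in $Q$ with a small smooth nonnegative bump as initial datum at $t=\tau$, continued by $0$ outside $Q$. Smallness guarantees $\tilde u(\cdot,\tau)\le u(\cdot,\tau)$, and both $\tilde u$ and $u$ obey the same linear equation where they are positive. The ordering $\tilde u\le u$ is then propagated up to $t=1$ by contradiction, copying the argument of Section~\ref{eleven}: at a hypothetical first touching point $(x_{1},t_{1})$ with $t_{1}\le 1$, the difference of extensions $F=W-w$ is a nonnegative solution of $\operatorname{div}(y^{1-2s}\nabla F)=0$ vanishing at $(x_{1},0)$, so the Hopf boundary principle for this degenerate elliptic operator forces $c_{s}\lim_{y\to 0^{+}}y^{1-2s}\partial_{y}F(x_{1},y)>0$; through the identity above this gives $\partial_{t}(u-\tilde u)(x_{1},t_{1})>0$, contradicting the first-contact condition.

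Hence $\tilde u(\cdot,1)\le u(\cdot,1)=U$ on $Q(1)$. At $t=1$ the function $\tilde u(\cdot,1)$ is a positive classical solution of the fractional heat equation, supported in a set whose right endpoint is exactly $\xi_{0}$. A second Hopf-type estimate applied to its extension at the lateral corner point $(\xi_{0},0)$ produces linear vanishing from the left, $\tilde u(\xi,1)\ge c\,(\xi_{0}-\xi)$ in a one-sided neighbourhood of $\xi_{0}$. Setting $V(\xi):=L+c(\xi_{0}-\xi)$ in that neighbourhood (extended below $L$ elsewhere so as to remain a valid affine lower barrier) gives the claimed linear function with $0<V(\xi)\le L+U(\xi)=H(\xi)$, which is the desired wedge.

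The main obstacle is the Hopf boundary principle in the degenerate-elliptic setting: for the first touching step one needs the $A_{2}$-weighted Hopf at an interior boundary point of $\{y=0\}$, which is classical since $y^{1-2s}\in A_{2}$ for $s\in(0,1)$, while the second application is at a corner where $\{y=0\}$ meets the lateral boundary of $Q\times(0,\infty)_{y}$. This second step is the most delicate: the expected route is to continue $w$ by even reflection across the portion of $\{y=0\}$ where $\tilde u(\cdot,1)$ vanishes, thereby reducing the corner analysis to a boundary Hopf estimate in a $C^{1,\alpha}$ domain. A minor check is that $\tilde u$ is a true subsolution of the full one-phase problem \eqref{stefan} rather than just of the linear fractional heat equation, which holds because $\Phi$ acts linearly on the water regime and $\tilde h=\tilde u+L$ stays above $L$ in $Q$ by construction.
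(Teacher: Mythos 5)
Your route is genuinely different from the paper's. For $s\in(\tfrac12,1)$ the paper does not use any extension at all: it writes down the explicit piecewise-linear barrier $V(\xi)=C\xi_0$ for $\xi<0$, $V(\xi)=C(\xi_0-\xi)_+$ for $\xi\ge 0$, computes $(-\Delta)^sV$ on $J=(\xi_0-a,\xi_0)$ by splitting the integral, and observes that with $k=\xi_0-\xi$ the negative singular contribution $-\tfrac{C}{2s-1}k^{1-2s}$ dominates the positive one $\tfrac{C}{2s}k^{1-2s}$ precisely because $s>\tfrac12$, so that $-\xi V'+2s(-\Delta)^sV<0$ in $J$; choosing $C$ small gives $V\le U$ outside $J$, and the comparison principle for the stationary profile equation yields the wedge. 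This is elementary, quantitative, and is exactly where the supercritical hypothesis enters. You instead transplant the $s=\tfrac12$ extension/Hopf scheme of Section~\ref{eleven} using the weighted extension; a genuinely good observation in your write-up is that by applying the extension slice-wise at the touching time you never need the identity $W(x,y,t)=u(x,t+y)$, which is special to $s=\tfrac12$, so the first-contact step works for all $s$ (in fact it can be done even more simply with the nonlocal strong maximum principle, with no extension at all). If completed, your method would treat $s=\tfrac12$ and $s>\tfrac12$ uniformly, at the price of much heavier machinery.

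The gap is the step you yourself call delicate, and it is the heart of the matter: the wedge comes entirely from the quantitative lateral estimate for $\tilde u(\cdot,1)$ at the corner point $\xi_0$ of the moving domain $Q$. In the paper's critical-case argument $\tilde u$ is \emph{harmonic in the space-time variables}, so the classical Hopf lemma on the lateral boundary gives the linear rate directly; in your version $\tilde u$ solves a fractional Dirichlet heat problem with zero exterior data in a Lipschitz-in-time moving domain, and for such problems the natural boundary rate is $(\xi_0-\xi)^s$, not linear, and even that requires a parabolic nonlocal Hopf/boundary Harnack estimate in a moving domain which you only gesture at through an even-reflection trick. The conclusion can be rescued, since $(\xi_0-\xi)^s\ge c(\xi_0-\xi)$ on a bounded one-sided neighbourhood of $\xi_0$, but as written ``produces linear vanishing'' conflates the local and fractional Hopf lemmas, and the key quantitative ingredient is asserted rather than proved. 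Until that lemma is supplied, your argument is incomplete, whereas the paper's direct subsolution computation is short and closes the proof; the final repackaging of the bound as a globally defined barrier below $H=U+L$ on $(-\infty,\xi_0]$ is harmless in both approaches, since $H\ge L$ there.
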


\begin{proof}
    We recall that since $U = 0$ for all $\xi \geq \xi_0$ and $H = U+L$, when $\xi \leq \xi_0$ the temperature $U$ satisfies the elliptic equation
    \begin{equation}
    \label{tipo equa}
    -\xi U'(\xi) + 2s(-\Delta)^s U(x) = 0 \qquad \xi \leq \xi_0.
    \end{equation}
    The idea of the proof is to find a linear subsolution $V$ to this problem in a interval $J=(\xi_0-a, \xi_0)$, with some small $a > 0$  in the sense that
     \begin{equation}\label{V.eq}
    -\xi V'(\xi) + 2s(-\Delta)^s V(\xi) \leq 0 \qquad \forall \ \xi\in J,
     \end{equation}
     and the exterior condition
      \begin{equation}\label{V.ext}
    V(x)\le U(x) \qquad  \mbox{for } \ \xi\in J.
    \end{equation}

    \noindent (i) We propose the candidate function $V$
    \begin{equation*}
    V(\xi):=\left\{
    \begin{aligned}
     &C \hspace{0.5mm}\xi_0 \hspace{16.5mm}\textup{if} \hspace{7mm} \xi < 0, \\
    &C (\xi_0 -\xi)_+ \hspace{5mm}\textup{if} \hspace{7mm}\xi \geq 0,
    \end{aligned}
    \right.
    \end{equation*}
    We choose  $C > 0$ a  small enough constant such that the continuous function $U(\xi)$ is larger than $V(\xi)$ in the interval $(0, \xi_0- a]$, with $a > 0$ fixed but small (in particular $a< \xi_i/2$). In this way the exterior condition \eqref{V.ext} will be met. Note that $V>0$ for $\xi <\xi_0$ and $V=0$ for  $\xi\ge\xi_0$.

  \noindent (ii) In  order to check \eqref{V.eq} we study the integral
    \begin{equation}
    \label{lapla fracciona}
    (-\Delta)^s V(\xi) = \int_{\mathbb{R}} \frac{V(\xi)- V(\eta)}{|\xi-\eta|^{1+2s}} \hspace{1mm}d\eta = I + II + III+ IV,
    \end{equation}
    we estimate the four subintegrals for $\xi\in J.$
    \begin{align*}
        & I := \int_{- \infty}^{0} \frac{V(\xi)- V(\eta)}{|\xi-\eta|^{1+2s}} \hspace{1mm}d\eta,\\
        & II := \int_{0}^{\xi-k} \frac{V(\xi)- V(\eta)}{|\xi-\eta|^{1+2s}} \hspace{1mm}d\eta,\\
        & III :=  \int_{\xi-k}^{\xi+h} \frac{V(\xi)- V(\eta)}{|\xi-\eta|^{1+2s}} \hspace{1mm}d\eta,\\
        & IV := \int_{\xi+k}^{\infty} \frac{V(\xi)- V(\eta)}{|\xi-\eta|^{1+2s}} \hspace{1mm}d\eta.
    \end{align*}

    We take  $k=\xi_0-\xi<a  $. We study each integral separately. For $I$ we have
    \begin{align}
        \label{I}
        I &=  \int_{- \infty}^{0} \frac{V(\xi)- V(\eta)}{|\xi-\eta|^{1+2s}} \hspace{1mm}d\eta =  \int_{- \infty}^{0} \frac{V(\xi)- C\hspace{0.5mm}\xi_0}{|\xi-\eta|^{1+2s}} \hspace{1mm}d\eta =  -\int_{\xi}^{+ \infty} \frac{C\hspace{0.5mm}\xi_0- V(\xi)}{|z|^{1+2s}} \hspace{1mm}dz\\
        &= \frac{C\hspace{0.5mm}\xi_0 - V(\xi)}{2s} \left[z^{-2s}\right]_\xi^{+\infty} = - \left(\frac{C\hspace{0.5mm}\xi_0 - V(\xi)}{2s}\right)\frac{1}{\xi^{2s}}.\nonumber
    \end{align}
    This integral gives a negative bounded contribution for different $\xi$ in $J$.     Now, we study $II$
    \begin{align}
        \label{II}
        II &= \int_{0}^{\xi-k} \frac{V(\xi)- V(\eta)}{|\xi-\eta|^{1+2s}} \hspace{1mm}d\eta =
        \int_{0}^{\xi-k} \frac{C(\eta - \xi)}{|\eta-\xi|^{1+2s}}\hspace{1mm}d\eta =
         - C\int_{0}^{\xi-k} \frac{d\eta}{(\xi-\eta)^{2s}}\hspace{1mm}\\
        &= \frac{C}{(2s-1)}\left(\frac{1}{\xi^{2s-1}}- \frac{1}{k^{2s-1}}\right).\nonumber
    \end{align}
    This negative integral diverges when $k$ tends to 0, the leading term is proportional to $k^{-(2s-1)}$.
   For integral $III$  we obtain
    \begin{equation}
        \label{III}
        III = \int_{\xi-k}^{\xi+k} \frac{V(\xi)- V(\eta)}{|\xi-\eta|^{1+2s}} \hspace{1mm}d\eta = \int_{\xi-k}^{\xi+k} \frac{C(\eta-\xi)}{|\xi-\eta|^{1+2s}}\hspace{1mm}d\eta = 0,
    \end{equation}
    by cancellation of left side and right side.   Recalling that $\xi+k=\xi_o$ we get the last integral gives
    \begin{align}
        \label{IV}
        IV = \int_{\xi+k}^{\infty} \frac{V(\xi)- V(\eta)}{|\xi-\eta|^{1+2s}} \hspace{1mm}d\eta = \int_{\xi_0}^{\infty}\frac{C|\xi_0 - \xi|-0}{|\xi-\eta|^{1+2s}}\hspace{1mm}d\eta = \int_{k}^{\infty} \frac{Ck}{z^{1+2s}}\hspace{1mm}dz = \frac{C}{2s}\frac{1}{k^{2s-1}},
    \end{align}
    This positive integral is singular when $k\to 0$, i.e., $\xi\to\xi_o$ with leading term  $O(k^{-(2s-1)})$. Since we want $(-\Delta)^s V(\xi) $ to be negative in $J$ we compare the two leading terms and find that
    $$
    -\frac{C}{(2s-1)}\frac{1}{k^{2s-1}} + \frac{C}{2s}\frac{1}{k^{2s-1}} =-\frac{C}{2s(2s-1)}\frac{1}{k^{2s-1}},
    $$
Finally, substituting \eqref{I}, \eqref{II}, \eqref{III} and \eqref{IV} into \eqref{lapla fracciona} we obtain the inequality for the $V$ equation
    \begin{equation}
        \label{fracc final}
        (-\Delta)^s V(\xi) = -\left(\frac{C\hspace{0.5mm}\xi_0 - V(\xi)}{2s}\right)\frac{1}{\xi^{2s}} + \frac{C}{(2s-1)}\frac{1}{\xi^{2s-1}}  -\frac{C}{2s(2s-1)}\frac{1}{k^{2s-1}},
    \end{equation}
    as needed.

\noindent (iii)     Since \ $-\xi V'(\xi) = C \xi \le C\xi_0$   \ in $(\xi_0-a,\xi_0)$  we obtain by \eqref{fracc final} that
    \begin{equation}
    \label{final final}
    -\xi V'(\xi) + 2s (-\Delta)^s V(\xi) < C\,\left(\xi_0 + \frac{2s}{(2s-1)}\frac{1}{\xi^{2s-1}}- \frac{1}{(2s-1)}\frac{1}{k^{2s-1}}\right) < 0,
    \end{equation}
    for all $\xi \in J=(\xi_0-a,\xi)$. The negative sign depends only on the fact that $a$ is small enough. We conclude by the standard comparison principle for the nonlocal elliptic equation \eqref{tipo equa} that $V(\xi) \leq U(\xi)$ for all $\xi \in J$. Since the comparison outside of $J$ was chosen since the very beginning, we get  \    $\xi \in \mathbb{R}$. This concludes the proof of the linear lower bound (the wedge) on the left side of $\xi_0$.
    \end{proof}


\subsection{Bad behaviour  on the right side of $\xi_0$}\label{ssec.supcritreg}

 We use the same argument of Subsection \ref{ssec.supcritreg12} but now with $s>1/2$.  What have proved that the temperature $u(x,t)$ approaches the free boundary point from the left in a non-degenerate way, i.e., in terms of the selfsimilar profile we have
\begin{equation}
\label{super criti}
U(\eta) \geq C(\xi_0 - \eta) \qquad \forall  \eta\in (\xi_0/2, \xi_0).
\end{equation}
Since
\[
-H'(\xi_0^+) = \frac{1}{\xi}\int_{-\infty}^{\xi_0}\frac{U(\eta)}{|\xi_0-\eta|^{1+2s}} \hspace{1mm}d\eta,
\]
then by \eqref{super criti} we have for every  $\xi_1=\xi_0+\varepsilon$,  $\varepsilon>0$ small:
\[
-H'(\xi_1) = \frac{1}{\xi_1}\int_{-\infty}^{\xi_0}\frac{U(\eta)} {|\xi_1-\eta|^{1+2s}}\hspace{1mm}d\eta \geq \frac{C}{\xi_1}\int_{\xi_0/2 }^{\xi_0- \varepsilon} \frac{|\xi_0 - \eta|}{|\xi_1 - \eta|^{1+2s}}\hspace{1mm}d\eta.
\]
But in the integration interval we have $|\xi_0 - \eta|\ge 1/2|\,\xi_1 - \eta|$ so  that
than $\xi_0$ we have
\[
-H'(\xi_1) \ge \frac{C_1}{\xi_1}\int_{\xi_0/2 }^{\xi_0- \varepsilon} \frac{1}{|\xi_1 - \eta|^{2s}}\hspace{1mm}d\eta=
\frac{C_2}{\xi_1}\left(\frac{1}{\varepsilon^{2s-1}}-\frac{1}{(\xi_0/2- \varepsilon)^{2s-1}}\right)
\]
For $\varepsilon=\xi_1-\xi_0>0$ much smaller than $\xi_0/2$ we get
\[
-H'(\xi_1) \ge \frac{C_2}{\xi_1}\,(\xi_1-\xi_0)^{1-2s}.
\]
This ends the proof of Theorem \ref{supercritical}. \qed

\medskip

\noindent {\bf Remark.} We can also use the proof of estimate \eqref{est.H''} to get a lower estimate of $H''$  for $s\ge 1/2$ once we have established the  condition the linear lower bound for $U$ to the left of $\xi_0$.

\subsection{More on regularity}\label{ssec.supcritreg2}

We do not know the optimal regularity of the temperature on the left of the free boundary. As minimal behaviour we have the linear wedge subsolution. This would be a very good result if true as an upper bound, For moment we have the regularity proved in Lemma 3.15 of \cite{3}.

\begin{lem}
\label{lemma Cs}
Under our current assumptions there is a constant $C=C_s>0$ such that
\begin{equation}
U(\xi) \le C(\xi_0 - \xi)^s  \quad \mbox{for all $\xi \le  \xi_0.$}
\end{equation}
\end{lem}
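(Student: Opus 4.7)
My plan is to build an explicit nonlocal barrier that enforces the claimed $C^s$ growth at the free boundary, and then invoke a minimum principle. In the water region $\xi<\xi_0$ one has $H=U+L$, so $H'=U'$, and the selfsimilar equation reduces to the purely elliptic operator
\[
LV(\xi):=-\frac{\xi}{2s}V'(\xi)+(-\Delta)^{s}V(\xi),\qquad LU\equiv 0 \text{ on } (-\infty,\xi_0).
\]
I would compare $U$ with the explicit profile $W(\xi):=C(\xi_0-\xi)_{+}^{s}$, where $C>0$ is chosen large later.

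First I would invoke the classical one-dimensional identity $(-\Delta)^{s}[(y)_{+}^{s}]\equiv 0$ on $\{y>0\}$ (a convergent principal-value integral when $0<s<1$), which by reflection and translation yields $(-\Delta)^{s}W(\xi)=0$ for every $\xi<\xi_0$. A direct computation of the drift piece then gives
\[
LW(\xi)=\frac{C\xi}{2}(\xi_0-\xi)^{s-1}>0\qquad \text{for } 0<\xi<\xi_0,
\]
so $W$ is a strict classical supersolution of $LV=0$ on $(0,\xi_0)$. Next I would fix an auxiliary point $a\in(0,\xi_0)$ and take $C$ large enough that $W(\xi)\ge P_1\ge U(\xi)$ whenever $\xi\le a$; it suffices to choose $C\ge P_1(\xi_0-a)^{-s}$. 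On $[\xi_0,\infty)$ one has $W\equiv U\equiv 0$, so the difference $V:=W-U$ satisfies $V\ge 0$ on the exterior set $\mathbb{R}\setminus(a,\xi_0)$ while $LV>0$ pointwise on $(a,\xi_0)$.

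Now I would apply the minimum principle for $L$ on the bounded interval $(a,\xi_0)$: if $V$ attained a negative minimum at an interior point $\xi^{\ast}\in(a,\xi_0)$, then $V'(\xi^{\ast})=0$ and $(-\Delta)^{s}V(\xi^{\ast})<0$, since $V(\xi^{\ast})<V(\eta)$ strictly for $\eta$ outside $(a,\xi_0)$; this would contradict $LV(\xi^{\ast})>0$. Hence $U(\xi)\le W(\xi)=C(\xi_0-\xi)^{s}$ on $(a,\xi_0)$, and on $(-\infty,a]$ the inequality $U(\xi)\le P_1\le C(\xi_0-a)^{s}\le C(\xi_0-\xi)^{s}$ is automatic, so the estimate extends to all of $(-\infty,\xi_0]$ after possibly enlarging $C$. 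The only delicate points are the correct interpretation of $(-\Delta)^{s}W$ as a convergent principal-value integral despite $W$ being unbounded at $-\infty$ (integrability at infinity holds precisely because $s<1$), and the observation that the minimum principle only needs sign control of $V=W-U$ in the exterior, which is built into the choice of $C$; the drift coefficient $\xi/(2s)$ is uniformly bounded on $(a,\xi_0)$, so no further technicalities arise.
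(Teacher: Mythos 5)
Your proof is correct, and it takes a genuinely different route from the paper: the paper does not prove Lemma \ref{lemma Cs} at all, it simply quotes the $C^s$ growth bound from Lemma 3.15 of \cite{3}, whereas you give a self-contained barrier argument. Your supersolution $W(\xi)=C(\xi_0-\xi)_+^s$ exploits the classical fact that $x_+^s$ is $s$-harmonic on $\{x>0\}$, so the nonlocal part of the profile operator vanishes and only the drift term $\tfrac{C\xi}{2}(\xi_0-\xi)^{s-1}$ survives, whose sign is favourable exactly because $\xi_0>0$ and you confine the comparison to $(a,\xi_0)$ with $a>0$; this is the mirror image of the paper's own wedge construction in Section \ref{section 13}, where a piecewise-linear \emph{sub}solution is pushed from below, and what it buys is an explicit constant (depending on $P_1$, $\xi_0$, $a$, hence on $s$ and the data) and an argument valid for every $s\in(0,1)$ without importing the external lemma. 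Two caveats, both at the level of rigor the paper itself accepts: the contradiction at the negative interior minimum $\xi^*$ uses that $U$ satisfies the elliptic profile equation pointwise in the water region and that $(-\Delta)^s U(\xi^*)$ is a convergent integral, which needs interior regularity of $U$ in $(-\infty,\xi_0)$ beyond the bare $C^{1,\alpha}$ statement when $\alpha\le 2s-1$ (or, alternatively, a viscosity formulation of the comparison principle); the paper makes the same implicit assumption when it writes \eqref{tipo equa} and invokes the ``standard comparison principle'' there, so you should simply flag it. Also, the convergence of the tail integral for your unbounded barrier holds because its growth order $s$ is strictly below the order $2s$ of the operator, not ``because $s<1$'' --- a harmless slip in the justification, not in the conclusion.
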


So the regularity of the temperature on the water side is estimated somewhere between $C^s$ and $C^1$.
The obtained regularity on the water side of the free boundary can be translated to the ice side working as before:

\begin{prop} Let $s\in  ( 1/2,1) $ and assume that $U(\xi) \le C(\xi_0 - \xi)^{\alpha}$  for all $\xi \le  \xi_0.$  with some $s\le \alpha \le 1$. Then
$H \in C^{\gamma}([\xi_0, \infty))$ with $\gamma= 1+\alpha - 2s$. For $\alpha=s$ we get $\gamma=1-s$, for $\alpha=1$ we get $\gamma=2-2s$.
\end{prop}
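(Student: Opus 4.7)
The plan is to turn the hypothesis on $U$ (water side) into a pointwise blow-up estimate for $H'$ (ice side), and then integrate to get Hölder control on $H$. Since $U$ vanishes identically on $[\xi_0,\infty)$, the selfsimilar equation \eqref{eq.ssh2} gives, for $\xi>\xi_0$,
\[
H'(\xi) = -\frac{2s}{\xi}\int_{-\infty}^{\xi_0}\frac{U(\eta)}{(\xi-\eta)^{1+2s}}\,d\eta,
\]
which is the representation already exploited in Propositions \ref{prop 1.8} and \ref{prop 1.1}. Writing $h=\xi-\xi_0>0$ and splitting the integral into a near part over $(\xi_0-1,\xi_0)$ and a uniformly bounded tail $(-\infty,\xi_0-1)$, I will plug in the hypothesis $U(\eta)\le C(\xi_0-\eta)^{\alpha}$, change variables $t=\xi_0-\eta$, and rescale $t=hu$ to reach
\[
\int_{\xi_0-1}^{\xi_0}\frac{U(\eta)}{(\xi-\eta)^{1+2s}}\,d\eta \;\le\; C\,h^{\alpha-2s}\int_0^{\infty}\frac{u^{\alpha}}{(1+u)^{1+2s}}\,du.
\]
The final integral is a finite Beta-type constant precisely because $\alpha\le 1<2s$ in the supercritical regime. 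Together with the obvious $O(1)$ bound on the tail piece (using $\norm{U}_{L^\infty(\mathbb{R})}<\infty$), this yields the key pointwise estimate $|H'(\xi)|\le C(\xi-\xi_0)^{\alpha-2s}$ for $\xi$ close to $\xi_0^+$.

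The Hölder bound then follows by integration. For $\xi_0\le \xi_1<\xi_2$ near $\xi_0$, Fubini and the previous estimate give
\[
|H(\xi_2)-H(\xi_1)| \;\le\; \int_{\xi_1}^{\xi_2}|H'(\tau)|\,d\tau \;\le\; \frac{C}{\gamma}\bigl((\xi_2-\xi_0)^{\gamma}-(\xi_1-\xi_0)^{\gamma}\bigr),
\]
with $\gamma=1+\alpha-2s\in(0,1)$. Applying the elementary subadditivity $b^{\gamma}-a^{\gamma}\le(b-a)^{\gamma}$, valid for $0\le a\le b$ and $0<\gamma\le 1$, one obtains $|H(\xi_2)-H(\xi_1)|\le C|\xi_2-\xi_1|^{\gamma}$. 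Away from $\xi_0$, $H\in C^{\infty}(\xi_0,\infty)$ by Theorem \ref{teo 4}(e), and $H$ is bounded with smooth decay to $0$ at infinity; a standard patching combining the local estimate near $\xi_0$ with the smoothness in the interior then upgrades the bound to the global statement $H\in C^{\gamma}([\xi_0,\infty))$.

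The main obstacle is conceptually mild. Everything rests on the convergence of the Beta-type integral, which would fail if $\alpha\ge 2s$; the restriction $\alpha\le 1$ combined with $s>1/2$ safely keeps us below that threshold (and, incidentally, ensures $\gamma<1$, so no further regularity of $H'$ is implicitly being claimed). The secondary care point is globalizing a one-sided local estimate at $\xi_0$ into a genuine Hölder modulus on all of $[\xi_0,\infty)$, which is exactly where the subadditivity inequality $b^{\gamma}-a^{\gamma}\le(b-a)^{\gamma}$ is invoked rather than a naive triangle inequality centered at $\xi_0$.
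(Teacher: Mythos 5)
Your proposal is correct and follows essentially the same route the paper intends when it says ``the proof is obtained working as before'': insert the upper bound $U(\eta)\le C(\xi_0-\eta)^{\alpha}$ into the representation $-H'(\xi)=\frac{2s}{\xi}\int_{-\infty}^{\xi_0}\frac{U(\eta)}{(\xi-\eta)^{1+2s}}\,d\eta$ valid on the ice side, split near/far to get $|H'(\xi)|\le C(\xi-\xi_0)^{\alpha-2s}$ (finite Beta constant since $\alpha\le 1<2s$), and integrate, using $\alpha-2s>-1$, to obtain the $C^{\gamma}$ modulus with $\gamma=1+\alpha-2s$. The details you supply (rescaling $t=hu$, the subadditivity $b^{\gamma}-a^{\gamma}\le(b-a)^{\gamma}$, and patching with the interior smoothness) are exactly the standard fill-in of the paper's sketch.
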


The proof is obtained working as before.


\section{Open problems}\label{sec.op}

We comment on some open problems which are relevant in view of  our work.

\begin{enumerate}

\item As shown in Theorem \ref{principal} and Proposition \ref{lemma 1}, we get $H\in C^{1,\alpha}(\mathbb{R})$ but $H \notin C^2(\mathbb{R})$ if $0<s<1/2$. A first natural question is: can the regularity of $H$ be improved?

 We study in Section \ref{section 6} the lateral regularity of $H$ at the free boundary point. We show that near the free boundary we have  $H \in C^{1, 1-2s}$ and this regularity  is optimal towards the right, i.e. at $\xi_0^+$. Is it also optimal towards the left?      What is the optimal local regularity in $(-\infty,\xi_0)$?  \nc

\item A very important open problem is the study of the lateral regularity of the selfsimilar solutions of \eqref{stefan}-\eqref{IC} when $s \geq \frac{1}{2}$ , i.e., in the critical and supercritical cases. Are the selfsimilar solutions $C^{1}(-\infty,\xi_0]$?
    Numerical evidence displayed in \cite{DEV1} seems to indicate that  for the critical exponent $s=1/2$ the temperature is not even Lipschitz continuous at $\xi_0$\nc.

\item We have centered our study on the fine behaviour of the bounded selfsimilar solutions. Could we prove similar results for more general classes of solutions?
\end{enumerate}


\section*{Appendix I}
  Let the assume that the iteration method allows to pass from H\"older exponent $\alpha\in (0,1)$ to the exponent $\beta(\alpha)$:
$$
\beta(\alpha)=\frac{1+\alpha-2s}{1+\alpha}.
$$
The question is how far we can go with this iteration. We prove next nice upper and lower bounds for the optimal iterated $\alpha$.

\noindent 1) First, we prove that $0<d\beta/d\alpha<2s$, with end values
$$
\beta(0)= 1-2s>0, \qquad \beta(1)= 1-s.
$$
Therefore, for small $\alpha\sim 0$ we get $\beta(\alpha)> \alpha$. However, for $\alpha\sim 1$ we get $\beta \sim 1-s$ which is less than $\alpha$, so the iteration is useless near that end.

\medskip

\noindent 2) We conclude that for $\alpha> 0$ we have $\beta> 1 -2s$.

\medskip

\noindent 3) On the other hand, we have seen that for $\alpha<1$ we have $\beta < 1-s$. \\
Moreover, for $\alpha\le 1-s$ we have $\beta < 1 -2s$.
Indeed,
$$
\beta= \frac{1+\alpha-2s}{1+\alpha}=\frac{2-3s}{2-s}<1 -s,
$$
Hence, the optimal value that we get by this method is a H\"older exponent $\alpha^*$ between $1-2s$ and $1-s$. \qed \nc


\section*{Acknowledgments}
The research of M. Llorca is part of his Ph.~D. project at the University Aut\'onoma de Madrid. J.~L.~V\'azquez has been partially funded by grants PID2021-127105NB-I00 and CNS2024-154515 \nc from Agencia Estatal de Investigación, AEI, of the Spanish Ministry of Science, Innovation and Universities. We are grateful to F. del Teso and D. G\'omez-Castro for fruitful discussions. Figure 1 is taken from  \cite{DEV1} with the consent of the authors.

\medskip


\


\noindent {\sc Addresses:}

\noindent Marcos Llorca. Departamento de Matem\'{a}ticas, \\
Universidad Aut\'{o}noma de Madrid,\\ Campus de Cantoblanco, 28049 Madrid, Spain. \nc \\
e-mail address:~\texttt{marcosllorca@gmail.com}\\

\medskip

\noindent Juan Luis V\'azquez. Departamento de Matem\'{a}ticas, \\
Universidad Aut\'{o}noma de Madrid,\\ Campus de Cantoblanco, 28049 Madrid, Spain.  \\
e-mail address:~\texttt{juanluis.vazquez@uam.es}\\

\end{document}